\documentclass[11pt]{amsart}
\usepackage{amsmath,latexsym,amsfonts,amssymb,amsthm}
\usepackage{geometry}
\usepackage{fullpage}
\usepackage{mathrsfs}
\usepackage{graphicx,color}
\usepackage{tikz-cd}
\usepackage[all,cmtip]{xy}
\usepackage{enumitem}
\usepackage{verbatim}
\usepackage{bbm}
\usepackage{bm}
\usepackage{mathtools}
\usepackage[colorlinks=true, linkcolor=blue!70!black, urlcolor=purple, citecolor=blue!70!black]{hyperref}
\usepackage{tabu}
\usepackage{appendix}
\usepackage{wrapfig}
\usepackage{tablefootnote}
\usepackage{multirow}
\usepackage{array}

\numberwithin{equation}{section}
\newtheorem{theorem}{Theorem}[section]
\newtheorem{prop}[theorem]{Proposition}
\newtheorem{lemma}[theorem]{Lemma}

\newtheorem{thm}[theorem]{Theorem}

\newtheorem{cor}[theorem]{Corollary}

\theoremstyle{definition}
\newtheorem{corollary}[theorem]{Corollary}
\newtheorem{definition}[theorem]{Definition}
 
 \newtheorem{defn}[theorem]{Definition}

  \newtheorem{property}[theorem]{Property}

\newtheorem{remark}[theorem]{Remark}
\newtheorem{example}[theorem]{Example}

\newtheorem{defn-prop}[theorem]{Definition-Proposition}

\newcommand{\bC}{\mathbb{C}}

\newcommand{\bQ}{\mathbb{Q}}
\newcommand{\bR}{\mathbb{R}}
\newcommand{\bF}{\mathbb{F}}
\newcommand{\bG}{\mathbb{G}}
\newcommand{\calY}{\mathcal{Y}}
\newcommand{\calL}{\mathcal{L}}

\newcommand{\calX}{\mathcal{X}}

\newcommand{\PGL}{\mathrm{PGL}}
\newcommand{\GL}{\mathrm{GL}}

\newcommand{\Proj}{\mathrm{Proj}}

\newcommand{\KSBA}{{\rm KSBA}}

\newcommand{\bP}{\mathbb{P}}
\newcommand{\bA}{\mathbb{A}}

\newcommand{\calD}{\mathcal{D}}
\newcommand{\calO}{\mathcal{O}}
\newcommand{\cO}{\mathcal{O}}

\newcommand{\calM}{\mathcal{M}}

\newcommand{\calU}{\mathcal{U}}

\usepackage{graphicx}
\newcommand{\bZ}{\mathbb{Z}}

\newcommand{\oM}{\overline M}

\newcommand{\cX}{\mathcal X}

\newcommand{\cU}{\mathcal{U}}

\newcommand{\Supp}{\textrm{Supp}}

\newcommand{\hvol}{\widehat{\mathrm{vol}}}

\newcommand{\ord}{\mathrm{ord}}
\newcommand{\Val}{\mathrm{Val}}

\newcommand{\vol}{\mathrm{vol}}

\newcommand{\ind}{\mathrm{ind}}

\newcommand{\klt}{\mathrm{klt}}
\newcommand{\kst}{\mathrm{kst}}

\newcommand{\lct}{\mathrm{lct}}

\newcommand{\cM}{\mathcal M}

\newcommand{\cL}{\mathcal L}

\newcommand{\Fut}{\mathrm{Fut}}
\newcommand{\fm}{\mathfrak{m}}

\newcommand{\calC}{\mathcal C}

\newcommand{\K}{\mathrm{K}}

\begin{document}

\title{A guide to wall crossing for moduli of varieties}
\author{Kristin DeVleming}
\date{\today}

\begin{abstract}
    There have been major developments in the theory of moduli of varieties in the past decade, essentially settling the construction of moduli spaces of log canonically polarized slc pairs (c.f. \cite{KolNewBook}) and moduli spaces of K-polystable log Fano pairs (c.f. \cite{Chenyang}).  Given the construction of these moduli spaces of pairs $(X, D)$, it is natural to ask how the moduli spaces vary as the coefficients of $D$ are perturbed.  This phenomenon is known as \textit{wall crossing}, the theory of which has been developed in several important cases in the past five years.  This semi-expository article is an introduction to moduli of varieties and wall crossing, capturing a portion of the theory developed in \cite{ADLpub, Zhou, ABIP, MZ, BABWILD, BL}.  It also introduces tools and techniques used in explicit computations and examples, applying them in new examples.
\end{abstract}

\maketitle

\section{Introduction}

In the past decade, there has been a host of developments in the theory of moduli of algebraic varieties.  A moduli functor of canonically polarized varieties and pairs admitting a projective coarse moduli space has been settled upon (see \cite{KolNewBook}) and K-stability, an algebro-geometric notion originating in differential geometry, has been used to develop a good moduli theory of Fano varieties and log Fano pairs (see \cite{Chenyang}).  These different moduli theories can be used to provide compactifications of moduli spaces of smooth varieties of a given type.  

However, when compactifying moduli spaces of smooth varieties, there often exist \textit{several} choices of compactifications.  For example, if $M_g$ is the moduli space of smooth genus $g$ curves, there exists the celebrated Deligne-Mumford compactification $\oM_g$ parametrizing stable genus $g$ curves.  Several alternative compactifications exist: for example, taking the Jacobian of each curve yields a map $M_g \to A_g$, the moduli space of abelian varieties, and the closure of $M_g$ inside the Satake compactification of $A_g$ provides an alternative compactification.  In genus $2$, we could view a smooth genus $g$ curve as a double cover of $\bP^1$ branched over $6$ points, and could instead compactify the $M_2$ using Geometric Invariant Theory (GIT) for binary sextics.  If we add marked points and consider the moduli space of smooth genus $g$ curves with $n$ marked points $M_{g,n}$, we have even more flexibility: in the Deligne-Mumford compactification $\oM_{g,n}$, we parametrize stable curves with $n$ distinct points (that are distinct from any nodes of the curve).  We could form alternative compactifications by allowing the markings to come together, e.g. allowing a subset of the $n$ points to be equal. 

Each of these compactifications produces a moduli space birational to $M_g$ or $M_{g,n}$.  Given these birational varieties, it is natural to ask if there exists a morphism between them or a way to resolve the rational map. In the case of $M_{g,n}$, Hassett proves in \cite{Hassettcurves} that there are explicit birational morphisms from the moduli space $\oM_{g,n}$ to the moduli spaces where more and more points are allowed to collide. The aim of this article is to discuss in general the theory of `wall crossing' in moduli of varieties, which is a study of varying compactifications of moduli spaces and describing the morphisms between them.

In what follows, we will primarily consider moduli spaces of pairs $(X,D)$, where $X$ will be a Fano variety\footnote{This assumption is unnecessary to phrase the question, but will be used in this survey to connect several different moduli theories.} and $D \in |-rK_X|$ for some $r \in \bQ$ is an ample divisor on $X$.  Through general theory, for each $c \in (0,1]$, there exists a moduli stack parametrizing pairs $(X,cD)$ and their degenerations satisfying certain stability conditions.  As $c$ varies, \textit{how do the moduli stacks change}?  What values of $c$ cause changes in stability?  Are there explicit descriptions of these critical values of $c$ and associated moduli stacks?  In a precise way, as $c$ increases from $0$ to $1$, the change in stability conditions will come from interchanging the singularities of $D$ and the singularities of $X$: as $c$ increases, $D$ must become `less' singular while $X$ may become `more' singular.  This article will explore these phenomenona.

Wall crossing appears in many other contexts--e.g. moduli of vector bundles or sheaves--but the present writing will focus only on the case of moduli of varieties.

In what follows, we outline the proofs of the following results.  We will primarily be concerned with the case that the general point of the moduli stack $[(X,cD)]$ is such that $X$ is a smooth Fano variety and $D \in |-rK_X|$ for fixed $r \in \bQ \cap (0,1]$ is an ample divisor on $X$.  Both theorems hold in more generality; for full statements and generalizations, see \cite{ABIP, MZ, ADLpub, Zhou}. 

\begin{theorem}[\cite{ADLpub}]\label{thm:introKwallcrossing}
Let $\cM^\K_{n,v,r,c}$ (respectively, $M^\K_{n,v,r,c}$) be the proper K-moduli stack (respectively, projective K-moduli space) of K-semistable (respectively, K-polystable) log Fano pairs $(X,cD)$ such that $X$ is a smoothable klt Fano variety of $\dim X = n$, $(-K_X)^n = v$, and $D \in |-rK_X|$.  Then, there exist rational numbers \[0=c_0<c_1<c_2<\cdots<c_k=\min\{1,r^{-1}\}\] such that $c$-K-(poly/semi)stability conditions do not change for $c\in (c_i,c_{i+1})$. For each $1\leq i\leq k-1$ and $0<\epsilon\ll 1$, we have open immersions
 \[
 \cM^\K_{n,v,r,c_i+\epsilon}\xhookrightarrow{}
 \cM^\K_{n,v,r,c_i}\xhookleftarrow{}\cM^\K_{n,v,r,c_i-\epsilon}
 \]
 which induce projective morphisms of good moduli spaces
 \[
 M^\K_{n,v,r,c_i+\epsilon}\xrightarrow{}
 M^\K_{n,v,r,c_i}\xleftarrow{}M^\K_{n,v,r,c_i-\epsilon}.
 \]
\end{theorem}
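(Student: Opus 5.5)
The plan is to follow the strategy of \cite{ADLpub}, whose key inputs are three: boundedness, the interpolation (convexity) of K-semistability in the coefficient $c$, and the existence of K-moduli stacks with good moduli spaces (\cite{Chenyang}).

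\textbf{Step 1: boundedness.} Fix $\epsilon_0>0$. The family of all $c$-K-semistable pairs $(X,cD)$ with $c\in[\epsilon_0,\min\{1,r^{-1}\}]$ and $X$ smoothable of the given dimension and volume is bounded. For $c$ near $0$ we instead use that $X$ itself is K-semistable, which lets us handle $[0,\epsilon_0]$ uniformly. Boundedness follows from the lower bound on the normalized volume, or equivalently on $\delta$. A $c$-K-semistable pair satisfies $\hvol(x,X,cD)\ge$ a constant depending only on $n,v,r$. Together with Jiang's theorem, this bounds the $X$ with $(X,cD)$ K-semistable, uniformly once $c$ is bounded away from the endpoint where the pair stops being log Fano. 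The divisor $D$ then varies in a bounded linear system.

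\textbf{Step 2: finiteness of walls.} Put all such pairs into a finite-type parameter scheme $T$, namely a locally closed subscheme of a Hilbert scheme of pairs embedded by $-mK_X$. Two facts follow from the formula $\beta_{(X,cD)}(E)=A_X(E)-cS$-type expressions:
\begin{itemize}
\item $\delta(X,cD)$ is piecewise behaved in $c$.
\item The set $\{c : (X_t,cD_t)\text{ is K-semistable}\}$ is a closed interval, since $\beta$ is affine in $c$ after normalizing by $1-rc$.
\end{itemize}
Using constructibility of $\delta$ in families (openness of K-semistability, Blum--Liu--Xu) and the finite stratification of $T$ by the behavior of the minimizing divisors, we get finitely many rational walls $c_i$. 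The K-semistability interval endpoints come from divisorial valuations computing $\delta$, which can be chosen in finitely many families over strata of $T$; rationality follows because those valuations give linear equations in $c$ with rational coefficients. This is the main obstacle. My first attempt would be to show that the function $c\mapsto$ (whether $(X_t,cD_t)$ is K-semistable) is locally constant on each stratum of a finite stratification of $T\times[\epsilon_0,1]$ into pieces of the form stratum $\times$ interval. The tool for this is the interpolation of $\delta$: if $(X,c_1D)$ and $(X,c_2D)$ are K-semistable, then so is $(X,cD)$ for $c$ in between. Combined with constructibility of $\delta$, this reduces the problem to finitely many endpoints. The walls near $c=0$ are handled by the fact that K-semistability of $(X,\epsilon D)$ implies K-semistability of $X$, so no walls accumulate at $0$.

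\textbf{Step 3: open immersions.} Let $c_-=c_i-\epsilon$ and $c_+=c_i+\epsilon$. First, a $c_\pm$-K-semistable pair is $c_i$-K-semistable. This holds because the set of semistable $c$ for a given pair is closed, and no wall lies strictly between $c_i$ and $c_\pm$, so the semistable set containing $c_\pm$ must extend to $c_i$. Second, openness of K-semistability in families shows that $\cM^\K_{c_\pm}\subset\cM^\K_{c_i}$ is an open substack, since both are open in the same Hilbert-scheme stack. The good moduli spaces $M^\K_{c}$ exist and are proper and projective by \cite{Chenyang}.

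\textbf{Step 4: induced morphisms.} To get the morphisms $M^\K_{c_\pm}\to M^\K_{c_i}$, we check that the open immersions are saturated, i.e.\ they send closed points to closed points up to S-equivalence (Alper's criterion). Equivalently, a $c_\pm$-K-polystable pair degenerates to its $c_i$-polystable limit inside $\cM_{c_i}$. This is realized by taking the $c_i$-polystable degeneration and using local VGIT on the Luna slice at the $c_i$-polystable point, where $c_\pm$ correspond to changing the linearization by $\pm$ the CM line bundle of $D$. This gives the maps as VGIT maps locally. Projectivity then follows from properness together with projectivity of the targets.
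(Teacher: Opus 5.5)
\textbf{Gap: boundedness near the log Calabi--Yau endpoint (Step 1).} Your bounds $\delta(X)\ge 1-cr$ and $\hvol(x,X,cD)\ge(\frac{n}{n+1})^n(1-cr)^nv$ tend to $0$ as $c\to r^{-1}$. So when $r\ge 1$ they do not bound the set of all $X$ occurring for $c\in(0,r^{-1})$. You concede this (``uniformly once $c$ is bounded away from the endpoint''), but Step 2 then works on a single finite-type $T$ as if it contained every pair. Without uniform boundedness up to $r^{-1}$, nothing prevents walls from accumulating at $r^{-1}$, so you do not get finitely many $c_i$ with $c_k=\min\{1,r^{-1}\}$.

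The missing idea is the paper's uniform $\epsilon_0$: $(X,cD)$ is K-semistable for some $c\in(0,r^{-1})$ if and only if it is K-semistable for some $c\le r^{-1}-\epsilon_0$. This is where the paper uses smoothability, which your proposal never invokes. The argument runs as follows.
\begin{enumerate}
\item The statement holds uniformly for smooth Fano $X$ (\cite[Lemma 3.4]{ADLpub}, \cite[Theorem 1.8]{LXZ}).
\item For a smoothable $(X,cD)$, take the K-polystable limit $(X',c'D')$ of the smooth fibers, using properness of K-moduli.
\item ACC for log canonical thresholds \cite{HMX} gives a uniform $\epsilon_2$ with $(X',(r^{-1}-\epsilon_2)D')$ klt if and only if $(X',r^{-1}D')$ is lc.
\item Interpolation (Proposition \ref{prop:k-interpolation}) then makes $(X',c'D')$ K-polystable on a uniform interval below $r^{-1}$.
\item This transfers back to $(X,cD)$ through the special degeneration $(X,cD)\rightsquigarrow(X',cD')$ and openness of K-semistability.
\end{enumerate}
Only after this is the whole set log bounded, so that boundedness away from the endpoint suffices.

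\textbf{Rationality in Step 2.} Your phrase ``linear equations in $c$ with rational coefficients'' presupposes $S_X(E)\in\bQ$, which fails for general divisors. The paper obtains the wall equation $A_X(E_n)-c\,\ord_D(E_n)-(1-cr)S_X(E_n)=0$ from divisors $E_n$ that compute $\delta(X,(c+\frac{1}{n})D)<1$ and induce special test configurations \cite{LXZ}. It then uses boundedness of their central fibers and finiteness of $\delta$-values \cite{BLXOpenness} to force equality at $c$. Finally it invokes rationality of $S$ for such divisors \cite{BLZ}.

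\textbf{Saturation in Step 4.} The open immersions are \emph{not} saturated in general. If they were, $M^\K_{n,v,r,c_i\pm\epsilon}\to M^\K_{n,v,r,c_i}$ would be open immersions of proper spaces, hence isomorphisms onto components, and no wall would ever be crossed. Compare the double conic at $c=\frac{3}{8}$, where the map is a blow-up. You need neither saturation nor the local VGIT picture to get the morphisms. The composite $\cM^\K_{n,v,r,c_i\pm\epsilon}\to\cM^\K_{n,v,r,c_i}\to M^\K_{n,v,r,c_i}$ factors uniquely through $M^\K_{n,v,r,c_i\pm\epsilon}$ by the universal property of good moduli spaces. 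A morphism between projective varieties is automatically projective.
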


\begin{theorem}[\cite{ABIP}]\label{thm:introKSBAwallcrossing}
Let $\cM^{\KSBA}_{n,V,r,c}$ (respectively, $M^{\KSBA}_{n,V,r,c}$) be the proper KSBA moduli stack (respectively, projective KSBA moduli space) of stable pairs $(X,cD)$ of $\dim X = n$, $(K_X+cD)^n = V$, with general point satisfying $(X,cD)$ klt and $D \in |-rK_X|$.  Then, there exist rational numbers \[r^{-1}=c_0<c_1<c_2<\cdots<c_k=1\] such that $c$-stability conditions do not change for $c\in (c_i,c_{i+1})$.  Let $(\cM^{\KSBA}_{n,V,r,c})^\nu$ (respectively, $(M^{\KSBA}_{n,V,r,c})^\nu$) denote the normalization.  For each $1\leq i\leq k-1$ and $0<\epsilon\ll 1$, we have morphisms
 \[
 (\cM^{\KSBA}_{n,V,r,c_i+\epsilon})^\nu\xrightarrow{}
 (\cM^{\KSBA}_{n,V,r,c_i})^\nu\xleftarrow{\cong}(\cM^{\KSBA}_{n,V,r,c_i-\epsilon})^\nu
 \]
which induce projective morphisms of coarse moduli spaces
 \[
 (M^{\KSBA}_{n,V,r,c_i+\epsilon})^\nu\xrightarrow{}
 (M^{\KSBA}_{n,V,r,c_i})^\nu\xleftarrow{\cong}(M^{\KSBA}_{n,V,r,c_i-\epsilon})^\nu.
 \]
\end{theorem}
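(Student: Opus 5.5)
We prove Theorem \ref{thm:introKSBAwallcrossing} in three stages. First we find the finitely many walls, using boundedness. Next we compare stability on the two sides of each wall. Finally we build the morphisms on normalizations by constructing families over normal bases.

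\textbf{Step 1: finiteness of walls.} Fix $c_0' \in (r^{-1},1]$. We want all stable pairs $(X,cD)$ with $c \in [c_0',1]$ in the closure of the smooth locus to lie in one bounded family, in the sense of Hacon--McKernan--Xu. The volume $(K_X+cD)^n = (cr-1)^n(-K_X)^n$ is fixed on the general fiber and depends polynomially on $c$. The coefficients lie in a DCC set, namely $\{c\}$ together with the coefficients coming from the standard set, so boundedness applies.

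On a bounded family there is a stratification of the parameter space and a fixed finite set of divisors over it that compute log discrepancies. The condition that $(X,cD)$ is slc is then given by finitely many linear inequalities in $c$, and ampleness of $K_X+cD$ is governed by finitely many relevant curve classes. This produces finitely many rational thresholds $c_i$, and stability is constant on each open interval $(c_i,c_{i+1})$. For small $c$ near $r^{-1}$ the divisor $K_X+cD$ is barely ample, so an extra argument is needed there. We would use ACC for log canonical thresholds and for numerically trivial pairs to show that the walls do not accumulate at $r^{-1}$.

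\textbf{Step 2: the left wall is an isomorphism.} Take a stable family for the coefficient $c_i-\epsilon$. Its stable model for $c_i$ is the same underlying family. The reason is that lc is a closed condition in $c$ by Step 1, and ampleness is preserved as long as $K_X+(c_i-\epsilon)D$ and $K_X+c_iD$ are both ample. The latter follows because no wall lies strictly between the two values, so no curve becomes $(K_X+cD)$-trivial.

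This shows that the universal family over $(\cM^{\KSBA}_{c_i-\epsilon})^\nu$ is $c_i$-stable, which gives a morphism to $\cM^{\KSBA}_{c_i}$. The converse direction is to decrease the coefficient on a $c_i$-stable family over a normal base. Since the base is normal, we can run the MMP family-wise and use local stability in the sense of Koll\'ar's book. We expect no contraction to be needed, because the lc condition at $c_i$ implies lc at $c_i-\epsilon$, and we expect ampleness to persist. We then check the two maps are inverse on the normalization. Normalization is needed because the universal family is only known to deform well over normal (or seminormal) bases.

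\textbf{Step 3: the right wall is a contraction.} Let $\mathcal{X}\to B$, with $B$ normal, be the universal family for $c_i+\epsilon$. Then $K+c_iD$ is nef and semiample over $B$, being a limit of ample divisors. We take the relative lc model
\[
\operatorname{Proj}_B \bigoplus_m \pi_*\mathcal{O}(m(K+c_iD)).
\]
The key point, which we expect to be the \textbf{main obstacle}, is that this construction commutes with base change, so that the fibers of the model are exactly the $c_i$-stable models of the fibers. Over a normal base we would argue via invariance of plurigenera and deformation of log canonical pluricanonical sections for stable families: the pushforwards $\pi_*\mathcal{O}(m(K+c_iD))$ are locally free and commute with base change. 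Once this holds, we obtain a family of $c_i$-stable pairs and hence the morphism of stacks. Properness of the stacks gives projectivity of the induced maps on coarse spaces via Koll\'ar's ampleness lemma.
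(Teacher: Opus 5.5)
Your Step 3 is essentially the paper's argument: take the relative $c_i$-canonical model, with the key input being that it is the family of canonical models of the fibres. Step 2, however, contains a genuine error.

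You assert that a $(c_i-\epsilon)$-stable family is already $c_i$-stable because no wall lies strictly between $c_i-\epsilon$ and $c_i$. That only gives ampleness of $K+cD$ for $c$ in the open interval $(c_i-\epsilon,c_i)$. At the endpoint, $K+c_iD$ is merely nef, and walls where it fails to be ample do occur. Example~\ref{ex:octic} is a counterexample:
\begin{itemize}
\item $(X\cup Y,c(D_X+D_Y))$ is stable for $c\in(\frac{25}{66},\frac12)$;
\item at $c=\frac12$ the $(-2)$-curve $D_X$ is $(K+\frac12 D)$-trivial;
\item the $\frac12$-stable model is therefore $(\overline{X}\cup Y,\frac12 D_Y)$, obtained by contracting $D_X$.
\end{itemize}
So the left-hand map must also be defined by taking relative canonical models. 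The contracted curves $C$ satisfy $(K+c_iD)\cdot C=0<(K+(c_i-\epsilon)D)\cdot C$, hence $D\cdot C<0$ and $C\subset\Supp D$. This map inherits the same base-change issue you flag in Step 3.

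Your proposed inverse fails for the same reason. On $(\overline{X}\cup Y,\frac12 D_Y)$ neither $K$ nor $D_Y$ is $\bQ$-Cartier, so $(\overline{X}\cup Y,cD_Y)$ is not slc for any $c\neq\frac12$. Decreasing the coefficient therefore does not give a $(c_i-\epsilon)$-stable family. Recovering one would require re-extracting the contracted curves, and you have no way to do that uniformly over a normal base.

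The missing idea is that no inverse is constructed at all. One shows the reduction morphism $\cM_{c_i-\epsilon}\to\cM_{c_i}$ is:
\begin{itemize}
\item quasi-finite, using the $D$-negativity of the contracted curves to see that a $c_i$-stable pair has only finitely many $(c_i-\epsilon)$-stable preimages;
\item birational, because the general klt pair is stable on both sides of the wall.
\end{itemize}
Zariski's Main Theorem then makes it an isomorphism after normalization. That, rather than how families deform over normal bases, is why normalizations appear in the statement.

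A secondary point on Step 1: $[c_0',1]$ is not a DCC set, so HMX does not directly give boundedness uniformly in $c$. Non-accumulation of walls at $r^{-1}$ is also left to an unspecified argument. The paper avoids both issues differently. It shows every $c$-stable pair is the $c$-canonical model of a fibre of the universal family over the $c=1$ moduli space, using properness at $c=1$ and uniqueness of canonical models. So all the $\cM_c$ are canonical models of a single family, and BCHM's finiteness of models yields finitely many rational walls.
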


The previous two theorems explain wall crossing when the pairs $(X,cD)$ are either log canonically polarized or log Fano.  When $\dim X = 2$, there is also a \textit{boundary polarized Calabi Yau} wall crossing developed in \cite{BABWILD, BL}: 

\begin{theorem}[\cite{BABWILD, BL}]\label{thm:introbpcywallcrossing}
In what follows, let $X$ be a smoothable slc Fano variety of $\dim X = 2$, $(-K_X)^2= v$, and $D \in |-rK_X|$. 
Let $\cM^\K_{2,v,r,r^{-1} - \epsilon}$ (respectively, $M^\K_{2,v,r,r^{-1} - \epsilon}$) be the K-moduli stack (respectively, K-moduli space) of log Fano pairs $(X,(r^{-1} - \epsilon)D)$.  Let $\cM^\KSBA_{2,v,r,r^{-1} + \epsilon}$ (respectively, $M^\KSBA_{2,v,r,r^{-1} + \epsilon}$) be the KSBA moduli stack (respectively, KSBA moduli space) of stable pairs $(X,(r^{-1} + \epsilon)D)$.  Then, there exists an Artin stack locally of finite type $\cM^{\rm CY}_{2,v,r}$ and an asymptotically good moduli space $M^{\rm CY}_{2,v,r}$ parametrizing slc log Calabi Yau pairs $(X,r^{-1}D)$.  Furthermore, there exist open immersions
 \[
 \cM^\KSBA_{2,v,r,r^{-1}+\epsilon}\xhookrightarrow{}
 \cM^{\rm CY}_{2,v,r}\xhookleftarrow{}\cM^\K_{2,v,r,r^{-1}-\epsilon}
 \]
 which induce projective morphisms
  \[
 M^\KSBA_{2,v,r,r^{-1}+\epsilon}\xrightarrow{}
 M^{\rm CY}_{2,v,r}\xleftarrow{}M^\K_{2,v,r,r^{-1}-\epsilon}.
 \]
\end{theorem}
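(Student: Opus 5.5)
The plan is to build $\cM^{\rm CY}_{2,v,r}$ as a moduli stack of boundary polarized Calabi--Yau pairs, following \cite{BABWILD, BL}. Its objects are slc pairs $(X, r^{-1}D)$ with $K_X + r^{-1}D \sim_\bQ 0$, where $D$ is ample and $\bQ$-Cartier, $X$ is smoothable with $(-K_X)^2 = v$, and the pair is the limit of pairs with $X$ smooth Fano. First I will show this stack is algebraic and locally of finite type. Since $D$ is ample, $X$ is polarized by a fixed multiple of $D$. On a bounded-degree locus one can embed into a fixed projective space, and slc-ness together with $\bQ$-linear triviality of $K_X + r^{-1}D$ are locally closed conditions, as in Koll\'ar's theory \cite{KolNewBook}. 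Boundedness genuinely fails in the CY setting: slc CY surface pairs degenerate in unbounded families, for example through type II and III degenerations. So I will settle for local finite type and write the stack as an increasing union of bounded open substacks indexed by the Cartier index.

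Next come the open immersions. For $0<\epsilon\ll 1$, I claim a K-semistable pair $(X,(r^{-1}-\epsilon)D)$ has $(X, r^{-1}D)$ log canonical. The reason is that lct is lower semicontinuous, and in dimension two the K-semistable pairs in the family form a bounded set by the results of \cite{Chenyang}. The limiting pair is then lc and CY, hence an object of $\cM^{\rm CY}$. Openness follows from openness of K-semistability in families together with the fact that the condition only depends on the pair. Symmetrically, a KSBA-stable $(X,(r^{-1}+\epsilon)D)$ has $(X,r^{-1}D)$ slc with $K_X+r^{-1}D\equiv 0$. This uses the wall crossing of Theorem~\ref{thm:introKSBAwallcrossing} near $c_0=r^{-1}$, plus the fact that slc is open under small coefficient perturbation for bounded families. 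Openness in $\cM^{\rm CY}$ comes from openness of ampleness of $K_X+(r^{-1}+\epsilon)D = \epsilon D$ and of slc-ness.

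The main step, which I expect to be the main obstacle, is producing the good moduli space. Since the stack is not of finite type, the notion has to be relaxed to an \emph{asymptotically good} moduli space, meaning a colimit of good moduli spaces of an exhausting sequence of finite type open substacks. For this I will apply the Alper--Halpern-Leistner--Heinloth existence criterion, which requires S-completeness and $\Theta$-reductivity, plus reductivity of the automorphism groups of closed points. The tool for these is the one \cite{BABWILD} uses: complements and the dual complex. Given two degenerations over a DVR, I will run an MMP on a common dlt modification and compare the resulting families. S-completeness then reduces to uniqueness of the ``polystable'' CY degeneration, which I would try to get by showing that closed points are exactly the pairs of maximal degeneration type (minimal lc centers, toric-like). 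To control the exhaustion I will use the boundedness of K-moduli and of KSBA moduli, so that each image lies in a finite type substack.

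Finally, the maps of moduli spaces come from the universal property of good moduli spaces. The open immersions send the K-moduli and KSBA stacks into finite type pieces of $\cM^{\rm CY}$, and composing with the map to $M^{\rm CY}_{2,v,r}$ gives morphisms. These factor through $M^\K$ and $M^\KSBA$ because the target is an algebraic space. Properness of the source then gives properness of the morphisms. For projectivity, I would pull back the Hodge-type line bundle $\lambda_{\rm Hodge}$ on $M^{\rm CY}$, which is ample on finite type pieces by \cite{BL}, and show it descends and is ample on the image. Since the sources are projective, the maps are projective.
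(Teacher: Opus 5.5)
Your first two steps follow the paper's outline. The paper also builds the stack of boundary polarized CY pairs, exhausts it by the open substacks $\calU_n$ on which $nK_X$ is Cartier, and gets the open immersions by perturbing the coefficient. On the K side, your use of the finitely many $\lct$ values on the fixed bounded K-semistable family is a fine way to get log canonicity at $c=r^{-1}$. On the KSBA side you still owe $K_X+r^{-1}D\sim_{\bQ}0$ on possibly non-normal fibers, which the paper takes from \cite{hacking, devleming, KX19}.

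The genuine gap is in the main step. An asymptotically good moduli space is not just a colimit of good moduli spaces of an exhaustion. The content of the theorem is that for $n\gg 0$ each $\calU_n$ has a good moduli space $U_n$, and the induced maps $U_n\to U_{n+1}$ are \emph{isomorphisms}; $M^{\rm CY}_{2,v,r}$ is this stable value. The $\calU_n$ are not closed under specialization: the index can jump, e.g.\ $\bP^2\rightsquigarrow\bP(1,1,4)$. So the maps $U_n\to U_{n+1}$ need not be open immersions, and without stabilization your colimit need not be a separated algebraic space of finite type. Then neither the finite type of $M^{\rm CY}_{2,v,r}$ nor the projectivity of the two morphisms is available, since you deduce the latter from projectivity of the sources and that needs a separated target. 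Knowing that the K and KSBA images lie in some $\calU_N$ does not give stabilization.

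Moreover, Alper--Halpern-Leistner--Heinloth applies only to finite type stacks, so S-completeness and $\Theta$-reductivity must be proved for each $\calU_n$ itself. These valuative properties do not pass to open substacks, because the filled-in point over $\overline{ST}_R$ or $\Theta_R$ can leave $\calU_n$. The paper singles this out as the essential technical point, and your plan does not address it.

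Your proposed route to S-completeness would also fail. S-completeness is a filling property. Uniqueness of closed points in S-equivalence classes is a \emph{consequence} of it, together with finite type and $\Theta$-reductivity, so you cannot reduce S-completeness to that uniqueness.

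Your characterization of closed points as maximally degenerate, toric-like pairs is also false in both directions. Klt pairs such as $(\bP^2,\frac34 C)$ with $C$ a cuspidal quartic are polystable. Conversely, the toric pairs $(\bP(a^2,b^2,c^2),\{xyz=0\})$ in the paper's example are as degenerate as possible, yet there is no closed point to which they all specialize, because their Cartier index is unbounded. That example is exactly why one must work on the index-bounded pieces $\calU_n$ rather than describe polystable objects in the whole stack.

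What is missing, then, is twofold:
\begin{enumerate}
\item The fillings for S-completeness and $\Theta$-reductivity (e.g.\ via your MMP on a dlt modification) must be shown to stay inside $\calU_n$.
\item You must prove that the resulting good moduli spaces $U_n$ stabilize for $n\gg 0$.
\end{enumerate}
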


The rest of this article will apply these results in several examples, first explaining the wall crossing for the moduli space of pairs compactifying the locus of pairs $(\bP^2, cD)$ where $D$ is a smooth plane quartic curve.   We will then study the compactification of the locus of surface pairs more generally and use the theory of wall crossing to produce KSBA stable surface pairs with arbitrarily many components.  

\begin{theorem}\label{introthm:manycomps}
    If $(X,cD)$ is a smoothable pair of dimension 2 parametrized by a KSBA moduli space such that $D$ has an isolated $A_n$ singularity for some $n$ at a smooth point of $X$, $D$ is at worst nodal elsewhere, and $K_X + D$ is ample, then in the associated KSBA moduli space of pairs with coefficient $c = 1$, there exists a surface with at least $k+n-1$ components, where $k$ is the number of irreducible components of $X$.
\end{theorem}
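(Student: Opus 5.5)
We will work with a one-parameter smoothing of $(X,D)$ and run the KSBA wall crossing of Theorem~\ref{thm:introKSBAwallcrossing}, taking the coefficient up to $c=1$. The $A_n$ singularity of $D$ at a smooth point $p\in X$, locally $y^2 = x^{n+1}$, has log canonical threshold $\lct_p(X,D) = \tfrac12 + \tfrac{1}{n+1} < 1$ once $n\ge 2$. So $(X,D)$ is not slc at $p$, and for $c$ near $1$ the pair $(X,cD)$ cannot lie in $\cM^{\KSBA}_{2,V,r,1}$. Away from $p$ the curve $D$ is nodal, so $(X,D)$ is slc there. The plan is to compute the KSBA stable limit of a smoothing $(\calX,c\calD)\to T$ with central fiber $(X,cD)$ and count its components.

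\textbf{Step 1: localize the modification.} Choose a smoothing $(\calX,\calD)\to T$ over a DVR whose generic fiber is smooth with $\calD_\eta$ smooth. First we will base change and take a semistable-type modification. Then we will run the log MMP for $K_{\calX}+\calD$ over $T$, producing the stable limit at coefficient $1$. Since $(X,D)$ is already slc, with $K_X+D$ ample, away from $p$, the components $X_1,\dots,X_k$ of $X$ survive in the limit, via their strict transforms. They remain ample-polarized because $K_X+D$ is ample, so the MMP only modifies a neighbourhood of $p$. This gives $k$ components. The new components arise from the exceptional divisors over $p$.

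\textbf{Step 2: local model of the $A_n$ degeneration.} After a local analytic change of coordinates, the family near $p$ is $\{y^2 = x^{n+1} + t\cdot(\text{deformation})\}\subset \bA^2\times T$. We will use the standard stable reduction of $A_n$ singularities of plane curves: a weighted blowup with weights $(n+1,2)$ in $(x,y)$ after base change of order $2(n+1)$, iterated. Each weighted blowup extracts one divisor. We expect the stable limit to replace a neighbourhood of $p$ by a chain of surfaces $E_1,\dots,E_{m}$ glued along curves; these are weighted projective planes or their partial resolutions, each carrying a piece of the tail curve. Alternatively, and more cleanly, we will degenerate inductively using the wall crossing. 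We lower $c$ to the threshold $c_1=\lct_p$ and extract the divisor computing the lct. On that divisor the induced boundary acquires an $A_{n-2}$-type singularity at the new point, so the induction on $n$ adds one component per step. We will then need to verify that each extracted surface is not contracted: $K+D$ must be positive on it, which we check via the intersection numbers of the curve on the weighted projective plane.

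\textbf{Step 3: count.} Combining the steps gives $k$ old components plus the new chain. The expected main obstacle is showing that the chain has at least $n-1$ members that survive the MMP and are not contracted, rather than only about $n/2$ members as the naive induction $A_n\to A_{n-2}$ gives. We would try first to track the tail explicitly. The stable tail of an $A_n$ singularity, in the sense of Hassett's stable tails of curve singularities, is a hyperelliptic-type curve on $\bP(1,2,n+1)$-like surfaces. We would then show that the degeneration of the surface forces the tail surface to break into components according to the roots of $x^{n+1}+\dots$ colliding, giving $n-1$ intermediate surfaces. If the count comes out short, we would instead choose a special smoothing in which the $n+1$ branch points of the local double cover collide one at a time, forcing $n-1$ successive bubbles in the limit. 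Since the theorem asserts only existence of such a surface, we are free to pick this favourable smoothing.
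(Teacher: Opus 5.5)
There is a genuine gap. The proposal never supplies a mechanism that produces $n-1$ new components, and the mechanism sketched in Step 2 is not what actually happens.

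For a generic smoothing, one weighted blow-up at the wall $c_0=\frac12+\frac{1}{n+1}$ already gives the stable limit for every $c\in(c_0,1]$. The weights are $(2,n+1)$ when $n$ is even and $(1,\frac{n+1}{2})$ when $n$ is odd; for odd $n$ your weights $(n+1,2)$ are not primitive. The new central fiber is $\tilde X\cup\bP(1,a,b)$. The strict transform $\tilde D$ is smooth where it meets the exceptional curve, and by genericity the tail curve $D_1$ on $\bP(1,a,b)$ is smooth. The log canonical divisor has positive degree on the bubble for $c>c_0$ (for $n$ even the degree is $2c(n+1)-n-3$), and Lemma \ref{lem:ampleforc>c0} gives ampleness on $\tilde X$. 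So nothing further changes up to $c=1$, and a generic smoothing yields only $k+1$ components.

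In particular, there is no chain $E_1,\dots,E_m$, and the $A_{n-2}$ point you place on the extracted divisor does not exist. The drop $A_n\to A_{n-2}$ is what an ordinary blow-up of the plane curve does, not what stable reduction does. Step 3 then leaves the count open. The fallback of a one-parameter family whose branch points separate one at a time is only asserted: to make it a proof you would have to compute the stable limit through $n-1$ nested weighted blow-ups of a threefold and check slc and ampleness at each stage.

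Separately, your Step 1 justification is too quick. Ampleness of $K_X+D$ alone does not keep $K_{\tilde X}+c\tilde D+E$ ample, because you must handle components $C\subset\tilde D$ with $\tilde D\cdot C<0$. This is where you need rational components of $D$ to meet the rest of the curve in enough points (condition (3) of Theorem \ref{thm:manycomps}), a hypothesis your argument never uses.

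The missing idea is to exploit the existential statement by specializing the \emph{curve on the fixed new surface}, and then smoothing again. The linear system on the bubble contains a special member $D_2$ with an $A_{n-1}$ point at a smooth point of $\bP(1,a,b)$ off the double curve:
\[ w^2=v^nq_2(u,v) \text{ on } \bP(1,2,n+1) \ (n \text{ even}), \qquad w^2=v^nu \text{ on } \bP(1,1,\tfrac{n+1}{2}) \ (n \text{ odd}). \]
This $D_2$ is Cartier and deforms to a generic $D_1$ with the surface held fixed. Hence $(\tilde X\cup\bP(1,a,b),\tilde D+D_2)$ lies in the same irreducible component and is stable for $c\in(\frac12+\frac{1}{n+1},\frac12+\frac1n]$.

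It also satisfies the hypotheses again. $D_2$ is rational, but it meets $\tilde D$ in one point when $n$ is even and in two points when $n$ is odd, and it passes through the $A_{n-1}$ point. So after the next blow-up it meets the rest of the curve in three points. Since the new singular point lies in the smooth locus of the surface, the next wall crossing is the identical local computation. Induction therefore lowers $A_n$ to $A_{n-1}$ while adding exactly one component per step. Stopping at $A_1$, which is log canonical at $c=1$, gives $k+n-1$ components.

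Your picture of branch points separating one at a time is the right combinatorics: $D_2$ is exactly the configuration in which one branch point has split off and $n$ remain together. What makes each step a single verified weighted blow-up is iterating through special members on fixed surfaces, each followed by a fresh generic smoothing.
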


Applying this to the KSBA moduli space of pairs compactifying the locus of curves in $\bP^2$, we produce $\bQ$-Gorenstein degenerations of $\bP^2$ and nodal limits of plane curves with many components.  

\begin{theorem}\label{introthm:degensP2}
    In the KSBA moduli space compactifying the locus of pairs $(\bP^2, D)$ where $D \in |\calO(d)|$ is a degree $d \ge 6$ plane curve, there exists a surface with at least $\frac{d^2}{2} -1$ components if $d$ is even and at least $\frac{d(d-1)}{2} -1$ components if $d$ is odd. 
\end{theorem}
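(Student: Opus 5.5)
Apply Theorem~\ref{introthm:manycomps} with $X = \bP^2$, so $k=1$: it suffices to exhibit a degree $d$ plane curve $D$ with a single isolated $A_n$ singularity, nodal elsewhere, with $n$ as large as possible. Since $K_{\bP^2} + D = \calO(d-3)$ is ample for $d \ge 4$, and the pair $(\bP^2, cD)$ with $c$ slightly above $3/d$ lies in the KSBA moduli space whose coefficient $1$ member compactifies the locus in question, the hypotheses are met. The conclusion then produces a surface with at least $n$ components. So the whole proof reduces to the classical question of the maximal $A_n$ singularity on a plane curve of degree $d$ (with nodal remaining singularities), and we want $n \ge \frac{d^2}{2}-1$ for $d$ even and $n \ge \frac{d(d-1)}{2}-1$ for $d$ odd.

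For $d = 2m$ even, we take $D = \{ (yz^{m-1} - f(x,z))\,(yz^{m-1} - g(x,z)) + \ldots = 0\}$; more concretely we use the standard construction of two smooth degree $m$ curves $C_1, C_2$ meeting at a single point $p$ with intersection multiplicity $m^2$ (e.g. $C_1 = \{yz^{m-1} = x^m\}$ is singular for $m\ge 3$, so instead take $C_1$, $C_2$ as two members of a pencil spanned by a smooth curve $C$ and $mL$ where $L$ is a flex-type line of maximal tangency; then generic members of the pencil $\{C + t\,\ell^m\}$ are smooth near $p$ and pairwise meet only at $p$ with multiplicity $m^2$). Then $D = C_1 \cup C_2$ has at $p$ two smooth branches with contact order $m^2$, which is an $A_{2m^2-1}$ singularity, and we need $D$ nodal elsewhere, which holds since $C_1, C_2$ are smooth and meet only at $p$. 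This gives $n = 2m^2 - 1 = \frac{d^2}{2}-1$. I expect the main obstacle to be verifying the existence of such a pencil with smooth members totally tangent at one point; the cleanest route is to choose $C$ smooth and a line $\ell$ meeting $C$ in a single point (a total flex, e.g. $C=\{x^m + y^{m-1}z + z^m\ldots\}$ chosen so $\ell=\{z=0\}$ meets it only at one point), and apply Bertini to the pencil away from the base point, checking smoothness at $p$ directly.

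For $d = 2m+1$ odd, take $D = C_1 \cup C_2 \cup L$ where this would add extra singularities; instead use $D = C_1 \cup C_2'$ with $\deg C_1 = m$, $\deg C_2' = m+1$, where $C_2' = C_2 \cup$? That introduces non-nodal points, so better: take $C_2'$ a smooth degree $m+1$ member of a pencil $\{C_1\cdot \ell' + t\, \ell^{m+1}\}$ meeting $C_1$ only at $p$ with multiplicity $m(m+1)$. Two smooth branches with contact $m(m+1)$ give $A_{2m(m+1)-1} = A_{\frac{d(d-1)}{2}\cdot\ldots}$; computing, $2m(m+1) - 1 = \frac{(d-1)(d+1)}{2}-1 \ge \frac{d(d-1)}{2}-1$, which suffices. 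Again the crux is the existence/smoothness check and that the only other intersections are transverse, which is automatic since all intersection is concentrated at $p$.
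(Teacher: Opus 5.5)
Your proposal is correct. It follows the paper's overall strategy: apply Theorem \ref{thm:manycomps} and Corollary \ref{cor:manycomps} with $X=\bP^2$ (so $k=1$, $c=1$, $\ell=1$, and the count $k+n-1$ is just $n$), which reduces everything to producing a degree $d$ curve with one large $A_n$ point and no other bad singularities.

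\emph{Even $d=2m$.} Your curve is of the same type as the paper's. The paper writes down $\{xz^{m-1}-y^m\pm x^m=0\}$ and checks smoothness by hand. These two curves lie in the pencil spanned by the singular curve $xz^{m-1}=y^m$ and $m\{x=0\}$, so the singular generator you worried about is harmless. You instead use a smooth generator with a total flex line, together with Bertini and a gradient check at the base point. Both routes give two smooth degree $m$ curves meeting only at $p$ with multiplicity $m^2$, hence an $A_{d^2/2-1}$ point.

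\emph{Odd $d=2m+1$.} Here your construction genuinely differs. The paper uses the irreducible curve $(xz^{m-1}-y^m)^2z-x^d$, which has an $A_{d(d-1)/2-1}$ point. You use the reducible curve $C_1\cup C_2'$ with $C_2'$ in the pencil $\{F\ell'+t\ell^{m+1}\}$. Its two smooth branches have contact $m(m+1)$, giving an $A_{(d^2-3)/2}$ point. Since
\[
\frac{d^2-3}{2}-\Bigl(\frac{d(d-1)}{2}-1\Bigr)=\frac{d-1}{2},
\]
your route gives a slightly stronger bound. The paper's curve is irreducible, but the statement does not need that.

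A few things should be made explicit in a write-up.
\begin{itemize}
\item You must choose $\ell'$ not passing through $p$; otherwise $F\ell'+t\ell^{m+1}$ is singular at $p$. With $p\notin\ell'$, the gradient at $p$ is $\ell'(p)\nabla F(p)\neq 0$. At the other base point $q=\ell\cap\ell'$ it is $F(q)\nabla\ell'(q)\neq 0$, because $\ell$ meets $\{F=0\}$ only at $p$.
\item Every component of your curves has degree at least $3$, hence positive genus. This gives condition (3) of Theorem \ref{thm:manycomps}, which is omitted from the introductory version you cited.
\item The coefficient at which the theorem is applied is $c_0=\lct(D)=\tfrac12+\tfrac1{n+1}$, not one slightly above $3/d$. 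The hypotheses hold there because $K_{\bP^2}+cD$ is ample for all $c\ge c_0>\tfrac12\ge\tfrac3d$.
\end{itemize}
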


We also explore wall crossing for of quintic plane curves, providing an explicit description of the first KSBA wall crossing. 

\begin{theorem}\label{introthm:quintics}
    The first wall crossing in the KSBA moduli space compactifying the locus of pairs $(\bP^2, cD)$ where $D \in |\calO(5)|$ occurs at $c = \frac{11}{18}$, and there is an explicit description of objects parametrized by this component of $M^{\KSBA}$ for all $c \in (\frac{3}{5},\frac{5}{8})$.  
\end{theorem}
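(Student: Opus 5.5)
The plan starts from the threshold where KSBA stability begins. For $D\in|\calO(5)|$ on $\bP^2$ we have $K_{\bP^2}+cD\sim_\bQ(5c-3)H$, which is ample exactly when $c>\frac35 = r^{-1}$ with $r=\frac53$. So the first KSBA chamber is $c=\frac35+\epsilon$. By Theorem \ref{thm:introbpcywallcrossing}, the stack $\cM^{\KSBA}_{2,9,5/3,3/5+\epsilon}$ sits as an open substack of the Calabi–Yau stack. Its objects are therefore pairs $(X,D)$ such that $(X,\frac35D)$ is slc log Calabi–Yau and $K_X+(\frac35+\epsilon)D$ is ample. Since $5$ is not divisible by $3$, I would invoke Hacking's description of stable pairs at coefficient $\frac3d+\epsilon$. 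In it, the surfaces $X$ are $\bQ$-Gorenstein degenerations of $\bP^2$, namely Manetti surfaces $\bP(a^2,b^2,c^2)$ with $a^2+b^2+c^2=3abc$ and their further degenerations. This gives a description of the chamber $(\frac35,\frac35+\epsilon)$:
\begin{itemize}
\item plane quintics $D$ with $\lct(\bP^2,D)\ge\frac35$;
\item replacements of the remaining curves, obtained by weighted blowup and stable reduction, consisting of surfaces glued from $\bP^2$-type pieces and weighted projective pieces such as $\bP(1,1,4)$ and $\bP(1,4,25)$.
\end{itemize}

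Next I would locate the walls using log canonical thresholds of planar singularities. We have $\lct(A_k)=\frac12+\frac1{k+1}$, $\lct(D_k)=\frac{k}{2(k-1)}$, $\lct(E_6)=\frac7{12}$, $\lct(E_7)=\frac59$, $\lct(E_8)=\frac8{15}$, and for points of multiplicity $m\ge4$ the threshold is at most $\frac12$. The only thresholds in $(\frac35,\frac58]$ are:
\begin{itemize}
\item $\lct(A_8)=\frac{11}{18}$;
\item $\lct(A_7)=\lct(D_5)=\frac58$.
\end{itemize}
Hence for $c\in(\frac35,\frac{11}{18})$ the plane quintics allowed are exactly those with $\lct\ge c$, and this condition does not change across the interval. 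At $c=\frac{11}{18}$, quintics with an $A_8$ singularity become non-lc and must be replaced, so $\frac{11}{18}$ is a wall. In the same way, $\frac58$ is the next wall, coming from $A_7$ and $D_5$ singularities. For each singularity type I would carry out the explicit replacement. For an $A_k$ point $y^2=x^{k+1}$, I would take the weighted blowup with weights $(k+1,2)$, up to a common factor, in a degenerating family. I would then run the log canonical model, which glues a weighted projective piece onto the blown-up plane along the exceptional curve. This produces the explicit list of objects for all $c\in(\frac35,\frac58)$.

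Finally, I would have to show that no wall occurs in $(\frac35,\frac{11}{18})$ from the non-normal or non-$\bP^2$ strata. On each replacement component I would check two things:
\begin{itemize}
\item $K_X+cD$ stays ample, i.e. its degree on every component and on every double curve stays positive;
\item the pair $(X,cD)$ stays slc at the singular points of $X$, which I would check by comparing the log discrepancy $A(E)$ with $c\cdot\ord_E(D)$.
\end{itemize}
For example, at the $\frac14(1,1)$ point of $\bP(1,1,4)$ we have $A=\frac12$ and $\ord_E D=\frac12$, which gives the condition $c<1$ and so is harmless. These are linear conditions in $c$, and I would verify that the first one to fail does so at or beyond $\frac{11}{18}$. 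The main obstacle is this completeness step: I must ensure that the classification of surfaces appearing at $\frac35+\epsilon$ is complete and that none of the glued components, especially the $\bP(1,4,25)$-type pieces, loses ampleness earlier. My first attempt would use Hacking's finite list together with intersection computations on each weighted projective component.
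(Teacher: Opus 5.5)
\textbf{First wall.} Your argument for the first wall is the paper's argument: use Hacking's list of surfaces at $c=\frac35+\epsilon$, note that ampleness persists, and observe that the first threshold that can be crossed is $\lct(A_8)=\frac12+\frac19=\frac{11}{18}$. Your table of planar thresholds makes explicit what the paper reads off from \cite[Section 11.2]{hacking}.

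Two corrections on this part:
\begin{enumerate}
\item The step you call the main obstacle is not one. You already observed that $(X,\frac35D)$ is a boundary polarized CY pair. Hence $D$ is $\bQ$-Cartier and ample, and $K_X+cD\sim_\bQ(c-\frac35)D$ is ample for every $c>\frac35$. No component-by-component degree check is needed; the paper phrases the same point via each component having Picard rank one.
\item In the chamber $c=\frac35+\epsilon$, the allowed plane quintics are those with $\lct>\frac35$, not $\ge\frac35$. The $A_9$ and $D_6$ quintics are already replaced at the Calabi--Yau wall.
\end{enumerate}

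\textbf{The gap: the description for $c\in(\frac{11}{18},\frac58)$.} You construct the replacement of an $A_8$ curve in one generic degenerating family and then assert that this ``produces the explicit list''. Nothing in your plan shows that \emph{every} stable pair parametrized for such $c$ has this form. The missing argument is the properness/separatedness argument the paper uses:
\begin{enumerate}
\item Take an arbitrary $c$-stable pair and smooth it.
\item Take its $\frac{11}{18}$-stable limit, which lies in Hacking's list by the first part.
\item If that limit is not $(\frac{11}{18}+\epsilon)$-stable, it must carry an $A_8$ point. For an \emph{arbitrary} family $x^2-y^9=t^n$, a base change followed by the $(9k,2k,1)$ weighted blowup gives a $c$-stable family with central fiber $Bl_{(2,9)}X\cup\bP(1,2,9)$.
\item Uniqueness of stable limits then identifies this central fiber with the given pair.
\end{enumerate}
This argument also needs facts you never mention:
\begin{itemize}
\item An $A_8$ point of $D$ lies in the smooth locus of $X$.
\item There is at most one such point, since its $\delta$-invariant is $4$ and $p_a(D)=6$.
\item No $A_8$ point occurs on $\bP(1,1,2)\cup\bP(1,1,2)$.
\end{itemize}

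Your step ``run the log canonical model'' also does not by itself show that the output is the glued surface. $Bl_{(2,9)}X$ has Picard rank two, so ampleness of $K_Y+cD_Y$ on $(\frac{11}{18},\frac58)$ must be checked on all curves, not only on components and double curves. Compare the octic example, where the strict transform becomes $K+cD$-trivial at $c=\frac12$; the paper handles this with Lemma \ref{lem:ampleforc>c0}.

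Finally, ruling out a new wall before $\frac58$ on the new component requires controlling the curve on $\bP(1,2,9)$. This is Lemma \ref{lem:p129}: a degree $18$ curve meeting the $\calO(1)$ section transversally with $\lct\ge\frac{11}{18}$ has at worst $A_8$ singularities, hence at worst $A_7$ in the stable pairs for $c>\frac{11}{18}$. Without these ingredients you have constructed some objects of the moduli space, but not an explicit description of all of them.
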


Combined with the results of \cite{ADLpub, BABWILD, hacking}, this gives a complete description of the moduli space compactifying the locus of pairs $(\bP^2, cD)$ where $D$ is a quintic curve for each $c \in (0, \frac{5}{8})$.

We also briefly discuss the case of non-proportional wall crossing, when $D \notin |-rK_X|$ for any $r \in \bQ$, and prove by direct computation: 

\begin{theorem}\label{introthm:F1}
    The K-moduli space compactifying the locus of pairs $(\bF_1, c D)$, where $D \in |2s + 4f|$, is nonempty if and only if $c \in [ \frac{1}{10}(4 - \sqrt{6}), \frac{1}{2}).$  When $c = \frac{1}{10}(4 - \sqrt{6})$, the moduli space is a single point.  If $D$ is smooth and meets the negative section of $\bF_1$ transversely, $(\bF_1, cD)$ is K-stable for every $c \in ( \frac{1}{10}(4 - \sqrt{6}), \frac{1}{2})$.
\end{theorem}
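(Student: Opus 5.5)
The proof will be a direct computation of K-stability thresholds, in three steps. We use the valuative criterion (Fujita--Li), the toric structure of $\bF_1$, and the known minimizers for the complexity-one setting.

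\textbf{Step 1: the log Fano range and the upper endpoint.} Write $-K_{\bF_1} = 2s+3f$, where $s$ is the negative section with $s^2=-1$, and note that $s\cdot f = 1$. Then $-(K+cD) = (2-2c)s + (3-4c)f$. This class is ample exactly when its degree on $s$ and on $f$ is positive:
\[
-(K+cD)\cdot s = -(2-2c)+(3-4c) = 1-2c > 0, \qquad -(K+cD)\cdot f = 2-2c > 0 .
\]
So $(\bF_1,cD)$ is log Fano exactly for $c<\tfrac12$. This gives the upper endpoint, which is open. At $c=\tfrac12$ the polarization becomes $s+f$, which contracts $s$. So any limiting K-polystable object would live on $\bP^2$ with a non-proportional boundary, and I would argue it falls outside the moduli problem.

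\textbf{Step 2: the lower endpoint.} Since $\bF_1$ is toric, the candidate destabilizer is a toric valuation, and I would take $E=s$ or $E=$ a fiber. For a prime divisor $E$ the criterion requires
\[
\beta(E) = A_{(X,cD)}(E) - \frac{1}{\vol(L)}\int_0^\infty \vol(L-tE)\,dt \ \ge\ 0, \qquad L = -(K+cD).
\]
For $E=s$ we have $A = 1 - c\,\ord_s D$, which equals $1$ for general $D$. The volume $\vol(L-ts)$ is computed via Zariski decomposition on $\bF_1$, and the result is a rational function of $c$.

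The value $\tfrac{1}{10}(4-\sqrt6)$ is a root of $50c^2-40c+5=0$, that is, of $10c^2-8c+1=0$. I therefore expect $\beta(s)\ge 0$, or equivalently the vanishing of the Futaki invariant of the product test configuration along the $\bC^*$ acting on the fibers, to reduce to exactly $10c^2-8c+1\le 0$. Concretely, I would compute the barycenter of the moment polytope of $L$ with respect to the toric boundary weighted by $cD$.

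\textbf{Step 3: the special point at the wall.} At $c=c_0$ I would exhibit the unique K-polystable pair. The natural candidate is $D = 2s + (\text{4 fibers})$ or a $\bC^*$-invariant degeneration of it. I would verify its K-polystability by complexity-one or toric methods, showing that the Futaki invariant vanishes exactly at $c_0$. Every other pair degenerates to this one, which makes the moduli space a single point. For $c<c_0$, the divisor $E$ from Step 2 destabilizes every pair, since $A(E)$ can only drop for special $D$. So the moduli space is empty there.

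\textbf{Step 4: K-stability of a general $D$ on the interior.} For $D$ smooth and transverse to $s$, I would use interpolation. K-semistability at $c_0$ (via the degeneration from Step 3 and openness) together with the behaviour near $c=\tfrac12$ gives K-semistability across the whole interval, by convexity of the K-semistable range in $c$ for fixed $(X,D)$. I would upgrade this to K-stability using that $\mathrm{Aut}(\bF_1,D)$ is finite and that $\beta>0$ is strict away from the wall.

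\textbf{Main obstacle.} The main obstacle is Step 4 near $c\to\tfrac12$, where the polarization degenerates and interpolation needs an endpoint input. I would first try to bound $\delta$ directly, using Abban--Zhuang adjunction along $s$ and a fiber to show $\delta>1$ for transverse smooth $D$.
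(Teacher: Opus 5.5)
\textbf{Steps 1 and 2 are correct.} Step 2 matches the paper's Lemma \ref{lem:betaofsection}. For $\ord_D(s)=0$ one has $S_c(s)=\frac{(1-c)(7-10c)}{6-9c}$, so $\beta(s)\ge 0$ is exactly $10c^2-8c+1\le 0$, and $s\subset D$ only lowers $A(s)$. For the upper end, nothing beyond the failure of ampleness at $c\ge\frac12$ is needed.

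\textbf{Step 3 has a genuine error: your candidate at the wall is unstable.} The pair $2s+f_1+\dots+f_4$ contains the negative section with multiplicity two. At $c_0$ it therefore has $A(s)=1-2c_0$ and $S(s)=1$, so $\beta(s)=-2c_0<0$. It is destabilized by the very divisor of your Step 2, and by openness so is any degeneration of it. It is the limit along the wrong one-parameter subgroup. Write $D$ as the strict transform of a quartic $g_4(x,y)+g_3(x,y)z+g_2(x,y)z^2$ nodal at $p=[0:0:1]$, and set $s_\infty=\{z=0\}\in|s+f|$.
\begin{itemize}
\item The product test configuration induced by $\ord_{s_\infty}$ has initial form $g_4$ and limit $2s+\sum_{i=1}^4 f_i$. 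Its Futaki invariant is a positive multiple of $\beta_{(\bF_1,c_0D)}(s_\infty)=1-S_{c_0}(s_\infty)>0$. Here $S_c(s_\infty)=\frac{(1-c)(5-8c)}{6-9c}$; the expression printed in the paper's lemma on $s_\infty$ is actually $S_c(f)$.
\item The zero-Futaki direction at $c_0$ is the one induced by $\ord_s$, namely $\mathrm{diag}(t,t,t^{-1})$. It has initial form $g_2z^2$ and limit $(\bF_1,c_0(2s_\infty+f_{q_1}+f_{q_2}))$ with $\{q_1,q_2\}=D\cap s$. This is the paper's polystable point.
\end{itemize}
This pair is toric, so your toric idea does apply to it. $\beta$ is linear on the fan, and $\beta(s)=-\beta(s_\infty)=2\beta(f_i)=-\frac{10c^2-8c+1}{6-9c}$, which vanishes exactly at $c_0$. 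The paper instead uses Zhuang's equivariant criterion for semistability, and Lemmas \ref{lem:F1} and \ref{lem:transverse} for polystability.

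For the single-point claim you also need the content of Lemma \ref{lem:transverse}. If $D$ is tangent to $s$, the limit is $2s_\infty+2f$, which $f$ destabilizes since $\beta(f)=-c_0$. So every K-semistable pair at $c_0$ meets $s$ in two distinct points and is S-equivalent to the pair above.

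\textbf{The primary route of Step 4 also fails.} Proposition \ref{prop:k-interpolation}, and the convexity in $c$ you invoke, concerns boundaries proportional to $-K_X$. There $\beta_c(E)=A_X(E)-c\,\ord_D(E)-(1-cr)S_X(E)$ is affine in $c$. Here $2s+4f$ is not proportional to $-K_{\bF_1}\sim 2s+3f$, and $\beta_c(E)$ is not affine: already $S_c(s)$ above is not, which is exactly why $c_0$ is irrational. So that convexity is not available. In addition, $c=\tfrac12$ is not a log Fano endpoint to interpolate toward.

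The statement is reached only through your fallback, which is what the paper does. It applies Abban--Zhuang with flags starting from $s$ or a general $s_\infty$ not tangent to $D$, obtaining $\delta(\bF_1,cD)>1$ for all $c\in(c_0,\tfrac12)$. This gives K-stability directly, with no automorphism argument needed.
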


This shows the proportionality assumption is crucial for the walls to occur at rational numbers.  The necessity of the proportionality condition was already known (see, e.g. \cite{218}) but in this case the ambient space $X = \bF_1$ is K-unstable without the presence of a divisor.

\subsection*{Organization} The paper is organized as follows.  In \S \ref{sec:preliminaries}, we provide background on KSBA and K-stable pairs, defining the relevant moduli functors.  We also state versions of the KSBA moduli theorem and K-moduli theorem that will be used in this survey.  In \S \ref{sec:wallcrossing}, we introduce the theory of wall crossing and outline the proofs of the wall crossing theorems in the introduction.  In \S \ref{sec:quartics}, we study moduli spaces of plane quartic curves and the associated wall crossing.  The aim of this section is to introduce techniques relevant to explicit computation and classification in general while applying them in this example.  In \S \ref{sec:higherdegree}, we summarize wall crossing for moduli of plane quintics and compute the first KSBA wall crossing.  We conclude this section with several results on plane curves of degree $d \ge 6$, proving the results in the introduction.  Finally, in \S \ref{sec:nonprop}, we offer concluding remarks on generalizations beyond the proportional case and study moduli of curves on the Hirzebruch surface $\bF_1$. 

\subsection*{Acknowledgments} I am indebted to Jarod Alper, Victoria Hoskins, J\'anos Koll\'ar, S\'andor Kov\'acs, Radu Laza, James M\textsuperscript{c}Kernan, and Chenyang Xu, from whom I learned much of the theory of moduli of varieties in different contexts.  The results in this paper and my own perspective on the subject also owe a great deal of thanks to my collaborators cited within: Kenneth Ascher, Dori Bejleri, Harold Blum, Giovanni Inchiostro, Lena Ji, Patrick Kennedy-Hunt, Yuchen Liu, Ming Hao Quek, David Stapleton, and Xiaowei Wang. Finally, I gratefully acknowledge support from the National Science Foundation through NSF DMS Grant 2302163.

\tableofcontents

\section{Preliminaries}\label{sec:preliminaries}

\subsection{Singularities of the minimal model program, stable pairs, and K-stability}

The content in this section is taken primarily from \cite{KM98} and \cite{Chenyang}.

\subsubsection{Conventions}
Throughout, we work over $\bC$. A scheme  $X$ is \emph{demi-normal} if it is $S_2$ and its codimension 1 points are either regular or nodes \cite[Definition 5.1]{Kol13}.

\subsubsection{Singularities of pairs}
A \emph{pair} $(X,\Delta)$ consists of a demi-normal scheme $X$ that is essentially of finite type over a field and an effective $\bQ$-divisor $\Delta$ on $X$ such that  $\Supp(\Delta)$ does not contain a codimension 1 singular point of $X$ and
 $K_{X}+\Delta$ is $\bQ$-Cartier.
The pair $(X,\Delta)$ is  \emph{projective} if $X$ is projective.

\begin{definition}
    A log resolution of a pair $(X,\Delta)$ is a pair $(Y,\Delta_Y)$ such that $\pi: Y \to X$ is a resolution $Y$ of $X$ and, if $\{E_i\}$ are the exceptional divisors of $\pi$, $\mathrm{Supp} \ \pi^{-1}_* \Delta \cup \{E_i\}$ is simple normal crossing.
\end{definition}

\begin{definition}{\cite[Definition 2.28]{KM98}}
    If $(X,\Delta)$ is a pair with $X$ normal and $K_X +\Delta$ is $\bQ$-Cartier, write $\Delta = \sum b_j \Delta_j$ where $\{ \Delta_j\}$ are prime divisors and $b_j \in \bQ^{\ge 0}$.  Let $\pi: Y \to X$ be a log resolution of singularities and $\Delta_Y = \sum b_j \tilde{\Delta}_j$, where $\tilde{\Delta}_j$ is the strict transform of $\Delta_j$ on $Y$.  Let $\{ E_i \}$ be the components of the exceptional locus.  Then, for some rational numbers $a_{X,\Delta}(E_i)$, we can write
    \[ K_Y + \Delta_Y = \pi^* (K_X+\Delta) + \sum_{E_i} a_{X,\Delta}(E_i) E_i .\]

    We define the \textit{discrepancy} of the pair $(X,\Delta)$ to be \[\mathrm{discrep } (X,\Delta) = \inf_{E/X: E \text{ exceptional }} a_{X,\Delta}(E).\]  Note the infimum is taken over all exceptional divisors in any log resolution of $(X,\Delta)$.

    The pair $(X,\Delta)$ is said to have:
        \begin{itemize}
            \item \textit{terminal singularities} if $\mathrm{discrep } (X,\Delta) > 0 $,
            \item \textit{canonical singularities} if $\mathrm{discrep } (X,\Delta) \ge 0 $,
            \item \textit{Kawamata log terminal singularities} (klt) if $\mathrm{discrep } (X,\Delta) > -1 $ and $b_j < 1$ for each $j$, 
            \item \textit{purely log terminal singularities} (plt) if $\mathrm{discrep } (X,\Delta) > -1 $, and
            \item \textit{log canonical singularities} (lc) if $\mathrm{discrep } (X,\Delta) \ge -1 $.
        \end{itemize}
\end{definition}

The discrepancies can be computed on one log resolution, provided $\Delta_Y$ is smooth (i.e. any intersections among the $D_j$ are separated): 

\begin{theorem}{\cite[Corollary 2.32]{KM98}}\label{t:discrepofpair}
    Let $\pi: (Y,\Delta_Y) \to (X,\Delta)$ be a log resolution such that $\Delta_Y$ is smooth, where $\Delta_Y$ is the strict transform of $\Delta$ on $Y$.  If $a_{X,\Delta}(E_i) \ge -1$ for every exceptional divisor $E_i$ of $\pi$, then $\mathrm{ discrep } (X,\Delta) = \min \{ \min_i \{a_{X, \Delta}(E_i)\} , \min_j \{ 1- b_j \}, 1 \} $.  
\end{theorem}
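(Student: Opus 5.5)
The plan is the standard argument of \cite[Lemma 2.29, Corollary 2.31]{KM98}: compute discrepancies of all further blowups from the data on $Y$, and reduce everything to exceptional divisors appearing on iterated blowups of $Y$.

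\textbf{Step 1: reduction to $Y$.} Write
\[ K_Y + \Delta_Y - \sum_i a_i E_i = \pi^*(K_X+\Delta), \qquad a_i = a_{X,\Delta}(E_i). \]
Let $\Delta' = \Delta_Y - \sum_i a_i E_i$, a (possibly non-effective) snc divisor on $Y$ with coefficients $b_j$ on $\tilde\Delta_j$ and $-a_i$ on $E_i$. For any divisor $F$ exceptional over $X$, the two discrepancies agree: $a_{X,\Delta}(F) = a_{Y,\Delta'}(F)$. Such an $F$ is either some $E_i$ or exceptional over $Y$. Hence
\[ \mathrm{discrep}(X,\Delta) = \min\Big\{ \min_i a_i,\; \inf_{F \text{ exc}/Y} a_{Y,\Delta'}(F) \Big\}. \]
The first term is already in the claimed formula, so it remains to show that the infimum over $F$ exceptional over $Y$ equals $\min\{\min_i a_i, \min_j (1-b_j), 1\}$, or at least is no smaller than this minimum and attains it.

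\textbf{Step 2: the local computation on a smooth variety with snc boundary.} Let $(Y, \sum d_k D_k)$ be smooth with snc support and all $d_k \le 1$; this holds because each $-a_i \le 1$ by hypothesis and each $b_j \le 1$. We prove that every exceptional $F$ over $Y$ satisfies
\[ a(F) \ge \min\Big\{ 1,\; \min_{D_k\cap D_l\neq\emptyset} (1 - d_k - d_l),\; \min_k (1-d_k) \Big\}. \]

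\emph{Blowup of a smooth center.} Blowing up a smooth center $Z$ of codimension $c$ contained in exactly the components $D_{k_1}, \dots, D_{k_s}$ (those meeting $Z$ snc-ly) gives exceptional divisor $G$ with
\[ a(G) = c - 1 - \sum d_{k_m}. \]
\emph{Any $F$ by induction.} Every $F$ is reached by a finite sequence of such blowups, which one may take centered at the center of $F$ (Zariski's lemma, as in \cite[Lemma 2.45]{KM98}). Induct on the number of blowups, keeping the boundary snc. The inductive step is the key inequality: the new coefficient $-a(G)$ stays controlled, and the minimum over pairwise intersections does not drop below the original bound.

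\textbf{Step 3: the pairwise intersection terms.} This is where smoothness of $\Delta_Y$ enters. Pairs of strict transforms $\tilde\Delta_j$ are disjoint, so the pairwise terms come from only two kinds of pairs:
\begin{itemize}
\item $E_i \cap E_{i'}$, giving $1 + a_i + a_{i'} \ge \min(a_i, a_{i'})$, since each $a \ge -1$;
\item $E_i \cap \tilde\Delta_j$, giving $1 + a_i - b_j$. If $a_i \ge 0$ this is $\ge 1 - b_j$; if $a_i < 0$ it is $\ge a_i$, because $b_j \le 1$.
\end{itemize}
Self-terms $1 - d_k$ correspond to blowing up a general codimension-2 locus inside $D_k$. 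For $\tilde\Delta_j$ this gives $1 - b_j$. For $E_i$ it gives $1 + a_i \ge a_i$, which is redundant. Blowing up a codimension-2 locus meeting no component gives $1$.

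\textbf{Conclusion and main obstacle.} The infimum is therefore at least $\min\{\min_i a_i, \min_j(1-b_j), 1\}$. Each of these values is attained: by $E_i$ itself, by the general blowup inside $\tilde\Delta_j$, and by a general codimension-2 blowup, respectively. This gives equality.

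The main obstacle is Step 2: justifying that the lower bound is preserved under iterated blowups. To handle it, compare the discrepancy of a new divisor with the bound computed on the blowup, and check that adjoining $G$ with coefficient $-a(G)$ creates no new pairwise term below the bound.
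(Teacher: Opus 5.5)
Your argument is correct and is the proof behind the cited \cite[Corollary 2.32]{KM98}: apply \cite[Corollary 2.31]{KM98} to the snc pair $(Y,\Delta_Y-\sum_i a_iE_i)$, then use disjointness of the $\tilde\Delta_j$ and $a_i\ge -1$ to absorb the pairwise terms, as in your Steps 1--3. In Step 2 you should also localize at the generic point of each center from \cite[Lemma 2.45]{KM98}, so that the center is smooth and snc with the boundary.

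The one real fix is that you use $b_j\le 1$, which is not among the stated hypotheses. It is genuinely needed: for $\Delta=3L$ on $\mathbb{A}^2$ with $\pi=\mathrm{id}$ the formula gives $-2$. But blowing up a point of $L$, and then repeatedly the point where the strict transform of $L$ meets the newest exceptional curve, gives discrepancies $-2,-4,-6,\dots$. So state $b_j\le 1$ as an explicit assumption rather than asserting it.
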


Given a non-necessarily log canonical pair, we also need the following: 

\begin{definition}\label{defn:lc}
    If $\Delta \subset X$ is a nonzero $\bQ$-Cartier divisor on a normal, log canonical variety $X$ with $\bQ$-Cartier canonical divisor, the \textit{log canonical threshold} of $\Delta$ is the maximum nonnegative number $c$ such that $(X,c\Delta)$ is log canonical.  This is denoted $\lct(X,\Delta)$ or simply $\lct(\Delta)$ if $X$ is clear from context. By Theorem \ref{t:discrepofpair}, we see that $c \le 1$. 
\end{definition}

Next, suppose $X$ is non-normal.  The \emph{normalization} of a pair $(X,\Delta)$ is the possibly disconnected pair
$(\overline{X},\overline{\Delta}+\overline{G})$,
where 
$\pi:\overline{X}\to X$ is the normalization morphism, $\overline{G} \subset \overline{X}$ the conductor divisor on $\overline{X}$, and $\overline{\Delta}$  the divisorial part of $\pi^{-1}(\Delta)$, 
and satisfies
$K_{\overline{X}}+\overline{G}+\overline{\Delta}=\pi^*(K_X+\Delta)$ \cite[5.7]{Kol13}.
This construction induces a generically fixed point free involution $\tau:\overline{G}^n\to \overline{G}^n$  that fixes ${\rm Diff}_{\overline{G}^n}(\overline{\Delta})$; see  \cite[5.2 \& 5.11]{Kol13} for details, and allows us to define a non-normal version of the previous list of singularities.

\begin{definition}{\cite[Definition 5.10]{Kol13}}
    A pair $(X,\Delta)$ is \emph{semi log canonical} (slc) if its normalization $(\overline{X},\overline{\Delta}+\overline{G})$ is log canonical.
\end{definition}

\begin{defn}\label{d:FanoCYKSBA}
A projective slc\footnote{Note that we allow the pairs to be slc in this definition, while common conventions assume that a log Fano pair is klt and a CY pair is lc.} pair  $(X,\Delta)$ is called
\begin{enumerate}
	\item \emph{log Fano} if $-K_X-\Delta$ is ample,
	\item  \emph{CY} if $K_{X}+\Delta \sim_{\bQ} 0$, and 
	\item \emph{canonically polarized} if $K_{X}+\Delta$ is ample.
\end{enumerate}

Let $\mathbf{a} = (a_1, \dots, a_r) \in (\bQ\cap (0,1])^r$ be a fixed rational vector.  We say a pair $(X, \Delta)$ as above is \textit{marked} by $\mathbf{a}$ if $\Delta = \sum a_i \Delta_i$ where each $\Delta_i$ is an effective $\bZ$-divisor. 

The \emph{index} of a CY pair $(X,\Delta)$ is the smallest positive integer $N$ such that $N(K_X+\Delta)\sim0$.
\end{defn}

It will turn out that slc pairs give rise to a well-behaved moduli theory of canonically polarized pairs.  In the log Fano case, however, we need a finer stability notion to have a well-behaved moduli theory, called \textit{K-stability.}

\subsubsection{K-stability via the Futaki invariant}

Most of the material in this section comes from \cite{Chenyang}. 

\begin{defn}[\cite{Tian, Donaldson}]
Let $(X,\Delta=\sum_{i=1}^k b_i \Delta_i)$ be a projective log pair such that $X$ is normal. Let $L$ be an ample line bundle on $X$.
\begin{enumerate}[label=(\alph*)]
\item A \emph{test configuration} of index $r$ $(\cX;\cL_r)/\bA^1$ of $(X;L)$ consists of the following data:
\begin{itemize}
 \item a variety $\cX$ together with a flat projective morphism $\pi:\cX\to \bA^1$;
 \item a $\pi$-ample line bundle $\cL_r$ on $\cX$;
 \item a $\bG_m$-action on $(\cX;\cL_r)$ such that $\pi$ is $\bG_m$-equivariant with respect to the standard action of $\bG_m$ on $\bA^1$ via multiplication;
 \item $(\cX\setminus\cX_0;\cL_r|_{\cX\setminus\cX_0})$
 is $\bG_m$-equivariantly isomorphic to $(X;L^r)\times(\bA^1\setminus\{0\})$.
\end{itemize}
 
\item A \emph{test configuration} $(\cX,\mathcal{D};\cL_r)/\bA^1$ of $(X,\Delta;L)$ consists of the following data:
\begin{itemize}
\item a test configuration $(\cX;\cL_r)/\bA^1$ of $(X;L)$;
\item a formal sum $\mathcal{D}=\sum_{i=1}^k c_i \mathcal{D}_i$ of codimension one closed integral subschemes $\mathcal{D}_i$ of $\cX$ such that $\mathcal{D}_i$ is the Zariski closure of $\Delta_i\times(\bA^1\setminus\{0\})$ under the identification between $\cX\setminus\cX_0$ and $X\times(\bA^1\setminus\{0\})$.
\end{itemize}
\item A test configuration $(\cX,\mathcal{D};\cL_r)/\bA^1$ is called a \emph{normal} test configuration if $\cX$ is normal. 
A normal test configuration is called a \emph{product} test configuration if \[
(\cX,\mathcal{D};\cL_r)\cong(X\times\bA^1,\Delta\times\bA^1;pr_1^* L^r\otimes\cO_{\cX}(k\cX_0))
\] for some $k\in\bZ$. A product test configuration is called a \emph{trivial} test configuration if the above isomorphism is $\bG_m$-equivariant with respect to the trivial $\bG_m$-action on $X$ and the standard $\bG_m$-action on $\bA^1$ via multiplication. 
\item 
Let $(X,\Delta)$ be a klt log Fano pair. Let $L$ be an ample $\bQ$ line bundle on $X$ such that $L\sim_{\bQ}-(K_X+\Delta)$. A normal test configuration $(\cX,\mathcal{D};\cL_r)/\bA^1$ is called a \emph{special test configuration} if $\cL_r\sim_{\bQ}-r(K_{\cX/\bA^1}+\mathcal{D})$ and $(\cX,\mathcal{D}+\cX_0)$ is plt. In this case, we say that $(X,\Delta)$ \emph{specially degenerates to} $(\cX_0,\mathcal{D}_0)$ which is necessarily a klt log Fano pair.
\item Given a test configuration $(\cX,\mathcal{D};\cL_r)/\bA^1$ and the induced isomorphism \[(\cX\setminus\cX_0, \mathcal{D} \setminus \mathcal{D}_0;\cL_r|_{\cX\setminus\cX_0}) \cong (X,\Delta;L^r)\times(\bA^1\setminus\{0\}),\] we may $\bG_m$-equviariantly glue this along the trivial test configuration $(X,\Delta; L^r) \times \bA^1 \to \bA^1 $ along $\bA^{1} \setminus \{ 0\}$ to produce the \textit{$\infty$-trivial} compactification $\overline{\pi}: (\overline{\cX}, \overline{\mathcal{D}}; \overline{\cL}_r) \to\bP^1$.
\end{enumerate}
\end{defn}

Associated to any test configuration, we can define a \textit{Futaki invariant}, coming from the weight of the $\bG_m$-action.  We provide a definition only in the case of special test configurations due to \cite{LX14}.  

\begin{definition}
    For any log Fano pair $(X,\Delta)$ and special test configuration $(\calX, \mathcal{D}; \calL_r)$, the \textit{Futaki invariant} is 
    \[ \Fut(\calX, \mathcal{D}; \calL_r) = - \frac{1}{2(-K_X-\Delta)^n(n+1)} \left(-K_{\overline{\calX}/\bP^1} - \overline{\mathcal{D}}\right)^{n+1} ,\]
    where $(\overline{\calX},\overline{\mathcal{D}})$ is the $\infty$-trivial compactification.
\end{definition}

\begin{definition}
     Let $(X,\Delta)$ be a log Fano pair.  Then, $(X,\Delta)$ is said to be:
\begin{enumerate}[label=(\roman*)]
    \item \emph{K-semistable} if $\Fut(\cX,\mathcal{D};\cL_r)\geq 0$ for any special test configuration of any index $r$;
    
    \item  \emph{K-stable} if it is K-semistable and $\Fut(\cX,\mathcal{D};\cL_r)=0$ for a special test configuration $(\cX,\mathcal{D};\cL_r)/\bA^1$ if and only if it is a trivial test configuration; and
 
\item \emph{K-polystable} if it is K-semistable and $\Fut(\cX,\mathcal{D};\cL_r)=0$ for a special test configuration $(\cX,\mathcal{D};\cL_r)/\bA^1$ if and only if it is a product test configuration.
\end{enumerate}
\end{definition}

In the previous definition, we may assume that $(X, \Delta)$ is klt by the following result of Odaka \cite{Odaka13}: 

\begin{theorem}\label{thm:kssisklt}
    Suppose $(X, \Delta)$ is a K-semistable log Fano pair.  Then, $(X, \Delta)$ is klt. 
\end{theorem}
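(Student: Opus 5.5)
The plan is to argue by contradiction, following Odaka's strategy. Suppose $(X,\Delta)$ is a log Fano pair that is K-semistable but not klt. We will construct a normal test configuration whose Donaldson--Futaki invariant is negative.

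Since the definition of special test configuration presupposes klt, the first step is to work with the Donaldson--Futaki invariant of arbitrary normal test configurations. We take as input the intersection formula
\[
\Fut(\cX,\mathcal{D};\cL)=\frac{1}{(n+1)L^n}\Big(\tfrac{n\,\mu}{n+1}\,\overline{\cL}^{\,n+1}+\overline{\cL}^{\,n}\cdot(K_{\overline{\cX}/\bP^1}+\overline{\mathcal{D}})\Big),
\]
where $\mu$ is the slope $-(K_X+\Delta)\cdot L^{n-1}/L^n$, which equals $1$ for $L=-(K_X+\Delta)$. We also use the standard fact that K-semistability, defined via special test configurations, forces $\Fut\ge 0$ on all normal test configurations. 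This is the Li--Xu reduction: special degenerations decrease $\Fut$. Producing one normal test configuration with $\Fut<0$ therefore gives the contradiction.

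Since $(X,\Delta)$ is not klt, there is a prime divisor $E$ over $X$ with log discrepancy $A_{X,\Delta}(E)=1+a_{X,\Delta}(E)\le 0$. If instead the failure comes from a component of $\Delta$ with coefficient $\ge 1$, we take $E$ to be that component. Let $Z\subset X$ be the center of $E$, or more conveniently an irreducible component of the non-klt locus, with ideal $I_Z$.

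We build the deformation to the normal cone. Let $\cX$ be the blow-up of $X\times\bA^1$ along $I_Z+(t)$, or along a suitable flag ideal adapted to $E$. Equip it with $\cL_\epsilon=p^*L-\epsilon F$, where $F$ is the exceptional divisor and $0<\epsilon\ll1$. Pass to the normalization if necessary, which only lowers $\Fut$.

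Next we expand $\Fut(\cX_\epsilon)$ in $\epsilon$. Write
\[
K_{\overline{\cX}/\bP^1}+\overline{\mathcal{D}}=p^*(K_X+\Delta)+(A_{X\times\bA^1,\Delta\times\bA^1+X_0}(F)-1)F,
\]
where we used $\cX_0=(\text{strict transform})+F$ and subtracted the central fiber. Plugging into the formula, the terms not involving $F$ cancel because $L\sim_\bQ-(K_X+\Delta)$. What remains is
\[
\Fut \;\propto\; -c_1\,\epsilon^{\,n+1-\dim Z}\big(\text{positive volume term}\big)\;+\;A(F)\,\epsilon^{\,n-\dim Z}\,(\text{positive term})\;+\;O(\epsilon^{\,n+1-\dim Z}).
\]
The lowest-order coefficient is a positive multiple of the log discrepancy $A_{X\times\bA^1,\Delta\times\bA^1+X_0}(F)$. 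This equals the log discrepancy along $Z$ of $(X,\Delta)$ computed via $F$. Non-klt-ness along $Z$ forces it to be $\le 0$.

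The main obstacle is handling the boundary case where this log discrepancy is exactly $0$, i.e. $(X,\Delta)$ is lc but not klt. There the leading term vanishes, and we must show the next term, a volume term of the form $-\epsilon^{n+1-\dim Z}(F^{n+1}\text{-type intersection})$, is strictly negative. The first thing to try is to choose $Z$ to be a minimal lc center. Then adjunction and the normal-cone structure make $F\to Z$ a projective bundle, whose top self-intersection has a definite sign: $(-F)^{n+1}$ restricted appropriately is negative. This gives $\Fut<0$ for small $\epsilon$. If this sign computation is delicate, the fallback is the valuative form $\beta(E)=A(E)\,\vol(L)-\int_0^\infty\vol(L-tE)\,dt$. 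When $A(E)\le0$ it is manifestly negative, since the integral is positive. We would then realize $\beta(E)$ as a positive multiple of $\Fut$ of the test configuration induced by the filtration of $E$, approximated by finitely generated ones.
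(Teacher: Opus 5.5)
The paper gives no proof of its own; it cites Odaka. Your outline has the shape of Odaka's argument: a flag-ideal test configuration, with $\Fut$ split into a canonical-divisor part and a discrepancy term. The gap is the step that carries all the content, namely the choice of ideal.

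\textbf{The choice of ideal.} The exceptional divisors of the normalized blow-up of $I+(t)$ on $X\times\bA^1$ correspond to Gauss extensions of the Rees valuations $v_i$ of $I$. The leading coefficient of $\Fut$ is therefore a positive combination of the $A_{X,\Delta}(v_i)$. Nothing forces these to be $\le 0$ when $I=I_Z$, whether $Z$ is the center of $E$, a component of the non-klt locus, or a minimal lc center. So the sentence ``non-klt-ness along $Z$ forces it to be $\le 0$'' is false. For example, take $(\bP^2,cC)$ with $C$ a cuspidal cubic and $c\in[\tfrac56,1)$:
\begin{itemize}
\item The pair is log Fano and not klt. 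Its non-klt locus is the cusp $p$, which is also the minimal lc center when $c=\tfrac56$.
\item Blowing up $(p,0)$ in $\bP^2\times\bA^1$ gives $A_{X\times\bA^1,\,cC\times\bA^1+X_0}(F)=3-1-2c=2-2c>0$.
\item Hence the leading term is positive and $\Fut>0$ for $0<\epsilon\ll 1$. The non-klt place is the $(2,3)$-weighted blow-up of $p$, not $\ord_p$.
\end{itemize}
The same example kills your lc-case plan: at a minimal lc center the leading term does not even vanish, so no projective-bundle sign computation can help. What you actually need is an ideal $\mathfrak{a}\subset\cO_X$ all of whose Rees valuations satisfy $A_{X,\Delta}<0$ (non-lc case) or $A_{X,\Delta}=0$ (lc case). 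One source is a log canonical modification (Odaka--Xu). Another is the relative ample model of $-G$ on a dlt modification, with $G$ supported on the reduced boundary, whose finite generation comes from BCHM. Your ``suitable flag ideal adapted to $E$'' is exactly this step, and it needs this MMP input.

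\textbf{Further points.}
\begin{itemize}
\item Once such an ideal is chosen in the lc case, the entropy term vanishes identically. For $L=-(K_X+\Delta)$ one has $M^{\mathrm{NA}}=H^{\mathrm{NA}}-(I^{\mathrm{NA}}-J^{\mathrm{NA}})$, and $I^{\mathrm{NA}}-J^{\mathrm{NA}}\ge \tfrac1n J^{\mathrm{NA}}>0$ for nontrivial test configurations. This is Odaka's observation that the canonical-divisor part is negative in the Fano case, and it replaces your self-intersection argument.
\item Your formula assumes a reduced central fiber. For these ideals it is typically non-reduced, so you should work with $K^{\log}$, i.e.\ with $M^{\mathrm{NA}}$, and then base change.
\item You cannot invoke Li--Xu or define K-semistability through special test configurations. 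Special test configurations only exist for klt pairs, and the Li--Xu MMP assumes a klt generic fiber, so this is circular; with that definition the statement would even be vacuous. The theorem must be read with $\Fut\ge 0$ for all test configurations, as in Odaka.
\item The same circularity affects your $\beta$ fallback. The Fujita--Li implication from K-semistability to $\beta\ge 0$ goes through Ding-semistability, whose equivalence with K-semistability again uses Li--Xu. Only for dreamy $E$ is $\Fut(\cX_E)$ directly a positive multiple of $\beta(E)$, and that requires a finite generation result you have not supplied.
\end{itemize}
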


These conditions can be viewed as a sort of `asymptotic' GIT condition (and in some cases, is equivalent to GIT, see Theorem \ref{thm:k=git}). Indeed, a degeneration along a one-parameter subgroup in GIT will give rise to a test configuration, and the Futaki invariant (under suitable circumstances) will be related to the Hilbert-Mumford weight.  The key difference, however, is that in GIT, one-parameter subgroups degenerate $X$ inside a \textit{fixed} projective space, and in K-stability, the central fiber of the test configuration does not need to embed in the same ambient projective space as $X$.  For computational purposes, it is often convenient to have an alternative definition of K-stability, given below. 

\subsubsection{K-stability via the $\delta$ invariant}

\begin{definition}
Let $(X,\Delta)$ be a klt log Fano pair of dimension $n$ and $E$ a prime divisor over $X$.  Let $\mu: Y \to X$ be any morphism such that $E \subset Y$.

Let $A_{X,\Delta}(E)$ be the \textit{log discrepancy} of the divisor $E$, defined as \[ A_{X,\Delta}(E) = 1 + a_{X,\Delta}(E),\]
where $a_{X,\Delta}(E)$ is the discrepancy as defined above. 

Define $S_{X,\Delta}(E)$ to be 
\[ S_{X,\Delta}(E) = \frac{1}{(-K_X-\Delta)^n} \int_0^\infty \vol(\mu^*(-K_{X} -  \Delta) - tE) dt ,\] where the \textit{volume} of divisor $D$ on a normal variety $X$ of dimension $n$ is 
    \[ \vol(D) = \lim_{m\to \infty} \frac{h^0(X,mD)}{m^n/n!}.\] 
This does not depend on choice of $\mu$ and $Y$, so we often write \[ S_{X,\Delta}(E) = \frac{1}{(-K_X-\Delta)^n} \int_0^\infty \vol(-K_X - \Delta- tE) dt .\]

The $\delta$-invariant of $E$ is \[\delta_{X,\Delta}{E} = \frac{A_{X,\Delta}(E)}{S_{X,\Delta}(E)}. \]
\end{definition}

\begin{remark}
In the definition of $S_{X,\Delta}(E)$, we need to compute an improper integral.  However, $\vol(\mu^*(-K_X - \Delta) - tE) > 0$ if and only if $-\mu^*(-K_X - \Delta) - tE$ is big, and hence the volume is only non-zero if $t \in [0, \tau ]$ where $\tau$ is the \textit{pseudo-effective threshold}, which is finite. Therefore, we could instead write 
\[ S_{X,\Delta}(E) = \frac{1}{(-K_X- \Delta)^n} \int_0^\tau \vol(-K_X - \Delta- tE) dt .\]
\end{remark}

With this definition, we can relate the K-(semi/poly)stability of $X$ to the birational geometry of $X$.  The following theorem is known as the \textit{valuative criterion} for K-(semi/poly)stability. For the full strength of the theorem as stated, we need the equivalence of uniform K-stability and K-stability, see \cite{LXZ}.

\begin{theorem}[\cite{FuValuative,LiValuative}]\label{thm:valcriteria}
The log Fano pair $(X, \Delta)$ is K-semistable (resp. stable) if and only if $\delta_{X,\Delta}(E) \ge 1$ (resp. $> 1$) for all prime divisors $E$ over $X$.
\end{theorem}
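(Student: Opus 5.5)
The plan is to pass between special test configurations and divisorial valuations, using the invariant $\beta_{X,\Delta}(E) := A_{X,\Delta}(E) - S_{X,\Delta}(E)$. Note that $\delta_{X,\Delta}(E) \ge 1$ is equivalent to $\beta_{X,\Delta}(E) \ge 0$. By Theorem \ref{thm:kssisklt} we may assume throughout that $(X,\Delta)$ is klt.

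\emph{From valuations to stability.} Let $(\cX,\mathcal{D};\cL_r)$ be a non-trivial special test configuration. Since $(\cX,\mathcal{D}+\cX_0)$ is plt, the central fiber $\cX_0$ is integral. Hence $\ord_{\cX_0}$, restricted to $\bC(X)\subset \bC(\cX)=\bC(X)(t)$, is a divisorial valuation $c\cdot\ord_E$ for some prime divisor $E$ over $X$ and some $c\in\bZ_{>0}$. I would compute the intersection number in the definition of $\Fut$ on a common resolution of $\overline{\cX}$ and $X\times\bP^1$. Writing $-K_{\overline\cX/\bP^1}-\overline{\mathcal D}$ as the pullback of $-K_X-\Delta$ minus a multiple of the divisor corresponding to $E$, and using the filtration of $R(X,-r(K_X+\Delta))$ induced by $\ord_E$ (its jumping numbers are the weights of the $\bG_m$-action), the top self-intersection should become an integral of volumes. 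The goal is the identity
\[
\Fut(\cX,\mathcal{D};\cL_r) = \frac{c}{n+1}\cdot\bigl(A_{X,\Delta}(E)-S_{X,\Delta}(E)\bigr),
\]
up to a harmless positive normalization. The log discrepancy enters through the discrepancy of $K_{\cX}$ along $\cX_0$. Granting this identity, $\delta\ge 1$ for all $E$ gives $\Fut\ge 0$ for every special test configuration, so the pair is K-semistable. Likewise, $\delta>1$ gives $\Fut>0$ for every non-trivial special test configuration, so the pair is K-stable.

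\emph{From stability to valuations.} This is the main obstacle. An arbitrary $E$ need not be induced by a special test configuration, because its associated graded ring need not be finitely generated. I would follow Fujita's approach. Fix $E$ and take the filtration $\mathcal{F}_E$ on the section ring. Truncate it at level $m$ to get finitely generated Rees algebras, and hence (generally non-special) test configurations $\cX_m$. By the reduction of Li--Xu (via MMP), K-semistability checked on special test configurations implies $\Fut\ge0$, or equivalently Ding-semistability, on all test configurations. Then I would show $\mathrm{Ding}(\cX_m)\le A(E)-S_m(E)+o(1)$ with $S_m\to S$. The lct term in the Ding functional is bounded above by the log discrepancy of $E$, since $E$ computes a jumping number of the base ideals. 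Letting $m\to\infty$ gives $\beta(E)\ge0$.

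\emph{Strict inequality.} For the stable case, the argument above yields only $\beta\ge0$. Here I would invoke the equivalence of K-stability with uniform K-stability \cite{LXZ}. Uniform stability gives $\beta(E)\ge\epsilon\, S(E)$ by the same approximation, i.e.\ $\delta(E)\ge 1+\epsilon>1$. Conversely, if $\delta(E)>1$ for all $E$, I need $\inf_E\delta(E)>1$; since $\delta$ is computed by some divisor \cite{LXZ}, the infimum is attained. Such a minimizer, being lc-place-like, induces a special test configuration, which closes the loop with the first step.
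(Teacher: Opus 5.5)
The paper gives no proof of its own: it cites \cite{FuValuative,LiValuative} and remarks that the strict version needs the equivalence of K-stability and uniform K-stability from \cite{LXZ}. Your outline is correct and is essentially the argument behind those citations: the identity $\Fut = c\,\beta(E)$ for special test configurations, then Fujita's filtration/Ding approximation (via Li--Xu) for the converse, then \cite{LXZ} for strictness.

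Two small points. Your final ``Conversely'' step is redundant, because your first step already shows that $\delta(E)>1$ for all $E$ implies K-stability under the paper's special-test-configuration definition. Also, the lct term in the Ding invariant is bounded simply by evaluating it on the quasi-monomial valuation on $X\times\bA^1$ built from $\ord_E$ and $\ord_t$; this does not require $E$ to ``compute a jumping number''.
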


This theorem will prove essential in wall crossing computations.

\subsection{Families of stable pairs and moduli functors}

Most of the content in this section comes from \cite{KolNewBook}, with some background written in \cite{BABWILD}.  Defining the correct notion of a family of pairs, needed to construct moduli stacks, is a subtle and technical point.  We only present a definition over reduced bases.

To define a family of pairs, we must first define a family of divisors \cite[Definition 4.68]{KolNewBook}. 

\begin{defn}[Relative Mumford divisor]
Let $f:X\to B$ be a flat finite type morphism with $S_2$ fibers of pure dimension $n$. 
A subscheme $D\subset X$ is a \emph{relative Mumford divisor} if there is an open set $U\subset X$ such that 
\begin{enumerate}
\item ${\rm codim}_{X_b}(X_b\setminus U_b) \geq 2$ for each $s\in S$, 
\item $D\vert_U$ is a relative Cartier divisor,
\item $D$ is the closure of $D\vert_U$, and
\item $X_b$ is smooth at the generic points of $D_b$ for every $b\in B$.
\end{enumerate}
\end{defn}
	
If $D\subset X$ is a relative Mumford divisor for $f:X\to B$ and $B'\to B$ is a morphism, then the \emph{divisorial pullback} $D_{B'}$  on $X_{B'}: = X\times_B B'$ is the relative Mumford divisor defined to be the closure of the pullback of $D\vert_U$ to $U_{B'}$. Note that $D_b$ as in (4) always denotes the divisorial pullback.  A key property of Mumford divisors is that (a) it makes sense to `restrict' relative Mumford divisors to fibers of a family of varieties and that (b) different notions of `restriction' coincide (see \cite[Proposition 2.79]{KolNewBook}), making them well suited to the study of moduli of varieties.

\begin{defn}\label{def:mumford}	
A \emph{family of slc (resp. lc, klt) pairs} $(X,\Delta) \to B$  over a reduced Noetherian scheme $B$
is a flat finite type morphism $X\to B$ with $S_2$ fibers and a $\bQ$-divisor $\Delta$ on $X$   satisfying
\begin{enumerate}
\item each prime component of $\Delta$ is a relative Mumford divisor,
\item $K_{X/B} +\Delta$ is $\bQ$-Cartier, and
\item  $(X_b,\Delta_b)$ is an slc (resp. lc, klt) pair for all points $b\in B$.
\end{enumerate}

As in Definition \ref{d:FanoCYKSBA}, if $\mathbf{a} = (a_1, \dots, a_r) \in (\bQ\cap (0,1])^r$ is a fixed rational vector, we say $(X, \Delta)$ is marked by $\mathbf{a}$ if $\Delta = \sum a_i \Delta_i$ where each $\Delta_i$ is a $\bZ$-divisor. 
\end{defn}

Using this definition, we can define a family of the classes of pairs appearing in Definition \ref{d:FanoCYKSBA}.

\begin{defn}
A  family of slc pairs $(X,\Delta)\to B$ over a reduced Noetherian scheme $B$ with $X\to B$ projective is called 
\begin{enumerate}
 \item \emph{a family of KSBA stable pairs} if $K_{X/B} + \Delta$ is relatively ample,
    \item \emph{a family of CY pairs} if $K_{X/B} + \Delta \sim_{\bQ,B} 0$, and
    \item \emph{a family of K-semistable log Fano pairs} if $-(K_{X/B}+\Delta)$ is relatively ample and $(X_b, \Delta_b)$ is a K-semistable pair for all points $b \in B$.
\end{enumerate}
\end{defn}

The correct definition of family over an arbitrary base scheme requires great care and technical definitions.  We refer the interested reader to \cite[Sections 6 - 8]{KolNewBook} for details.  From this definition, fixing the numerical data of a marking $\mathbf{a} = (a_1, \dots, a_r) \in (\bQ\cap (0,1])^r$, a positive integer $n$, and a positive rational number $V$, we define a moduli stack $\calM_{n,V,\mathbf{a}}$ whose objects over a base scheme $B$ are families of stable pairs of relative dimension $n$ and volume $V$.  Two fundamental theorems from the past decade provide moduli stacks and projective good moduli spaces of KSBA and K-semistable pairs.  For the technical details of the proofs, we refer the reader to \cite{KolNewBook} and \cite{Chenyang}.

\begin{theorem}[K-Moduli Theorem]\label{kmodthm}
    For each fixed $\mathbf{a} = (a_1, \dots, a_r) \in (\bQ\cap (0,1))^r$, $n$, and $V$, there exists a proper Artin stack $\cM^{\K}_{n,V,\mathbf{a}}$ and projective good moduli space $M^{\K}_{n,V,\mathbf{a}}$ parametrizing, respectively, K-semistable and K-polystable pairs $(X, \Delta = \sum_{i=1}^r a_i D_i)$ such that $\dim X = n$ and $(-K_X - \Delta)^n = V$.
\end{theorem}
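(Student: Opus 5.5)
The K-Moduli Theorem (Theorem \ref{kmodthm}) is too large to prove from scratch in a survey, so the plan is to assemble it from the standard chain of results in \cite{Chenyang}. We verify the hypotheses of an existence criterion for good moduli spaces of Artin stacks, and then prove properness and projectivity separately. We first reduce to the case where the coefficients $a_i$ and the volume $V$ are fixed and rational. Then we use \emph{boundedness}: K-semistable log Fano pairs of dimension $n$ and volume $V$ with coefficients in a fixed finite set form a bounded family. This follows from Birkar's BAB theorem together with the lower bound $\delta \ge 1$ from Theorem \ref{thm:valcriteria}, which bounds the volume from below and the log discrepancies (via Jiang / Blum--Liu--Xu).

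With boundedness, there is a fixed $m$ such that $-m(K_X+\Delta)$ is very ample with fixed Hilbert polynomial for every pair in the family. We then realize the pairs as a locally closed subscheme $Z$ of a product of Hilbert schemes (for $X$ and each $D_i$), using Koll\'ar's theory of families of pairs and Mumford divisors from Definition \ref{def:mumford}. The stack is $[Z^{\rm kss}/\PGL_{N+1}]$. The key input here is \emph{openness} of K-semistability in families, which we would cite from Blum--Liu--Xu and Xu via constructibility and lower semicontinuity of $\min\{1,\delta\}$. This openness makes $Z^{\rm kss}$ locally closed, so $\cM^\K_{n,V,\mathbf{a}}$ is an Artin stack of finite type.

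Next, we apply the Alper--Halpern-Leistner--Heinloth criterion for existence of a separated good moduli space. This requires \emph{$\Theta$-reductivity} and \emph{S-completeness}, which are proven using the uniqueness of K-polystable degenerations (Li--Wang--Xu, Blum--Xu). Properness of $M^\K$ is the existence half of the valuative criterion: every K-semistable family over a punctured curve extends after finite base change. This step is the main obstacle. It amounts to \emph{K-stable degeneration / existence of optimal destabilization}, proven by Liu--Xu--Zhuang via finite generation of the associated graded ring of a $\delta$-minimizing valuation. Here I would simply invoke \cite{LXZ} rather than reprove it.

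Finally, we prove projectivity by showing the CM line bundle descends to $M^\K$ and is ample on it (Codogni--Patakfalvi, Xu--Zhuang). Positivity on subvarieties comes from K-polystability via reduced uniform K-stability, again from \cite{LXZ}. Assembling these pieces gives the stack and the projective good moduli space parametrizing K-polystable points.
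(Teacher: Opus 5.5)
Your outline is correct and follows the same route as the paper, which gives no argument of its own and simply defers to \cite{Chenyang}. That reference assembles exactly your chain: boundedness via Jiang and Birkar, openness of K-semistability, $\Theta$-reductivity and S-completeness for the criterion of \cite{AHLH18}, properness via the finite generation results of \cite{LXZ}, and projectivity via ampleness of the CM line bundle.

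One small slip: since $(-K_X-\Delta)^n=V$ is fixed, $\delta\ge 1$ is not needed to bound the volume. It is used, via $\alpha\ge\delta/(n+1)$ together with the fixed volume, to bound log discrepancies away from $0$ so that BAB applies.
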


\begin{theorem}[KSBA Moduli Theorem]\label{ksbamodthm}
    For each fixed $\mathbf{a} = (a_1, \dots, a_r) \in (\bQ\cap (0,1])^r$, $n$, and $V$, there exists a proper Deligne-Mumford stack $\cM^{\KSBA}_{n,V,\mathbf{a}}$ and projective coarse moduli space $M^{\KSBA}_{n,V,\mathbf{a}}$ parametrizing KSBA stable pairs $(X, \Delta = \sum_{i=1}^r a_i D_i)$ such that $\dim X = n$ and $(K_X + \Delta)^n = V$.
\end{theorem}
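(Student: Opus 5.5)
This statement (Theorem \ref{ksbamodthm}) is a foundational result that this survey takes from \cite{KolNewBook}. The plan is therefore to assemble the known ingredients rather than reprove them.

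\textbf{Step 1: Boundedness.} First reduce to the case where all the relevant pairs form a bounded family. By the boundedness theorem of Hacon--McKernan--Xu, the slc pairs $(X,\Delta)$ with coefficients in the fixed finite set $\{a_i\}$, dimension $n$, volume $(K_X+\Delta)^n = V$ and $K_X+\Delta$ ample are bounded. In particular there is a uniform $m$ such that $m(K_X+\Delta)$ is Cartier and very ample with vanishing higher cohomology. This gives a fixed Hilbert polynomial and a fixed ambient $\bP^N$.

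\textbf{Step 2: The stack as a locally closed substack.} Next, realize the stack $\cM^{\KSBA}_{n,V,\mathbf{a}}$ as a locally closed substack of a quotient $[H/\PGL_{N+1}]$. Here $H$ is a locally closed subscheme of a product of Hilbert schemes, or of Chow-type spaces of Mumford divisors, parametrizing embedded pairs. The delicate point is the correct notion of family over non-reduced bases, which requires Kollár's theory of K-flatness and the representability of the functor of $\bQ$-Cartier, hull-type conditions on $\omega^{[m]}(m\Delta)$ \cite[Sections 6--8]{KolNewBook}. Openness of the slc condition and local closedness of the remaining conditions then give local closedness.

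\textbf{Step 3: Deligne--Mumford and separatedness.} The automorphism groups of stable pairs are finite, because $K_X+\Delta$ is ample and the pair is slc. Hence the stabilizers are finite and reduced in characteristic zero, so the stack is Deligne--Mumford. Separatedness follows from uniqueness of lc models: two stable families over a punctured DVR that agree generically are both the relative log canonical model, so they coincide.

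\textbf{Step 4: Properness and projectivity.} For the existence part of the valuative criterion, take a family over a punctured curve. After a base change, semistable reduction and the relative MMP (existence of lc models for dlt pairs, plus Kollár's gluing theory for the non-normal case) produce an slc stable limit. This gives a proper DM stack. Keel--Mori then gives a coarse moduli space $M^{\KSBA}_{n,V,\mathbf{a}}$. Projectivity follows from Kollár's ampleness lemma, applied to $\det f_*\omega^{[m]}_{X/B}(m\Delta)$, together with the nefness results of Fujino and Kovács--Patakfalvi.

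\textbf{Main obstacle.} I expect the main obstacle to be Step 2: defining the correct moduli functor over arbitrary bases when some coefficients $a_i$ are not in $\bZ$ or are at most $1$, and making sure the divisor stays well behaved. In the survey I would simply cite \cite{KolNewBook} for this step. Boundedness and projectivity are deep, but they can be invoked as black boxes.
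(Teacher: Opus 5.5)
Your proposal is correct and takes the same route as the paper, which gives no argument of its own and simply defers to \cite{KolNewBook}, pointing to \cite[Sections 6--8]{KolNewBook} for families over arbitrary bases. Your outline is the standard architecture of that cited proof: HMX boundedness; a Hilbert-scheme quotient cut out by Koll\'ar's K-flatness and hull conditions; finite stabilizers and uniqueness of lc models for the Deligne--Mumford and separatedness properties; semistable reduction, lc models and gluing for properness; and Keel--Mori with the ampleness lemma and Fujino/Kov\'acs--Patakfalvi semipositivity for projectivity.
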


In this survey, we will be primarily interested in the following setting: suppose the general point of either moduli stack is a pair $(X,cD)$ such that $X$ is a $\bQ$-Gorenstein Fano variety and $D \in |- rK_X|$ for some $r \in \bQ$, $r > 0$, for $c \in \bQ \cap (0, 1]$.  Because $K_X + cD = K_X - cr K_X = (1- cr)K_X $, for $c  < r^{-1}$, the pair $(X,cD)$ is log Fano, and for $c > r^{-1}$, the pair $(X,cD)$ is canonically polarized.  In either case, the previous theorems provide stacks and moduli spaces compactifying the locus of such pairs.  Furthermore, if $n = \dim X$ and $v = \vol(X) = (-K_X)^n$, then $(K_X  + cD)^n = ((1-cr)K_X)^n = ((cr -1)(-K_X))^n = (cr-1)^n v$ and similarly $(-K_X -cD)^n = (1-cr)^n v$ and thus the volume $V$ of the pair $(X,cD)$ is determined by $c, r,$ and $v$.  Therefore, in the setting above where $\mathbf{a} = c$, for each $c \in \bQ \cap (0,1]$, there exists a proper Artin stack $\cM_{n,v,r,c}$ and good moduli space parametrizing either KSBA stable pairs (if $c > r^{-1}$) or K-semistable and K-polystable pairs (if $c < \min\{1, r^{-1}\}$) with invariants $(K_X + cD)^n = (cr-1)^n v$.  In the K-moduli stack, we can actually assume that every pair $(X,cD)$ satisfies $D \in |-rK_X|$ by the following remark.  (This does not hold in the KSBA setting; see e.g. Example \ref{ex:octic}).

\begin{remark}\label{rmk:fiberwiseDinKX}
    If $\pi: \calX \to B$ is a family of normal varieties over a smooth curve germ $0 \in B$ and $\calL$ is a line bundle on $\calX$ such that $\calL|_{B-\{0\}} \sim_{B - \{0\}} \calO_{B - \{ 0\}}$, then $\calL \sim_B \calO_B$ as $\calL$ can only differ from $\calO_B$ by a multiple of $\calO(X) = \pi^*\calO(1)$ (c.f. \cite[Proposition 6.5]{hartshorne}).  Therefore, if $(\calX, \calD)$ is a family of pairs with $\pi: \calX \to B$ a family of normal varieties and $rK_X + D \sim 0$ on the general fiber, i.e. $D \in |-rK_X|$, then $rK_{\calX} + \calD \sim_B 0$ so $rK_X + D \sim 0$ on each fiber.  Now supposing in addition that $K_{\calX} + c\calD$ is $\bQ$-Cartier for some $c \ne r^{-1}$ (which holds, in particular, if $K_{\calX} + c\calD$ is relatively ample or anti-ample), then each of $K_{\calX}$ and $\calD$ are a linear combination of two $\bQ$-Cartier divisors so are $\bQ$-Cartier.  As $rK_X + D \sim 0$ on each fiber, this implies $D \in |-rK_X|$ on each fiber. 
\end{remark}

The fundamental question in this paper is \textit{how do these moduli spaces vary with $c$}?

\section{Wall crossing for moduli of pairs}\label{sec:wallcrossing}

\subsection{K-moduli of log Fano pairs}

The first result in wall crossing is that the K-moduli stacks and spaces defined above admit a wall-crossing framework with respect to the parameter $c$. This was first studied in \cite{ADLpub} and follow-up work by \cite{Zhou}.

From the previous section, for choice of marking $c \in \bQ$, invariants $n \in\bZ^{> 0}$, $v \in \bQ^{>0}$, and $r \in \bQ^{>0}$, there exists a proper Artin stack $\calM^K_{n,v,r,c}$ and projective good moduli space $M^K_{n,v,r,c}$ parametrizing K-semistable and K-polystable log Fano pairs $(X,cD)$ such that $\dim X = n$ and $(1-cr)^nv = (-K_X - cD)^n$.  Restricting to the components of this stack parametrizing \textit{smoothable} log Fano pairs\footnote{This assumption is unnecessary but will be made for clarity of exposition.} such that $D \in |-rK_X|$ on the generic fiber, by Remark \ref{rmk:fiberwiseDinKX}, we may assume that this stack and space parametrizes pairs $(X,cD)$ such that $K_X$ and $D$ are both $\bQ$-Cartier and the relation $D \in |-rK_X|$ holds for every such pair.  We will therefore describe the K-moduli stacks and spaces in the following way: 

\textit{For choice of marking $c \in \bQ$, invariants $n \in\bZ^{> 0}$, $v \in \bQ^{>0}$, and $r \in \bQ^{>0}$, there exists a proper Artin stack $\calM^K_{n,v,r,c}$ and projective good moduli space $M^K_{n,v,r,c}$ parametrizing smoothable K-semistable and K-polystable log Fano pairs $(X,cD)$ such that $\dim X = n$ and $(-K_X)^n = v$, and $D \in |-rK_X|$.} These stacks and spaces depend on the coefficient $c$ and satisfy a wall crossing framework as given by the following theorem.

\begin{theorem}
Let $\cM^\K_{n,v,r,c}$ (respectively, $M^\K_{n,v,r,c}$) be the K-moduli stack (respectively, K-moduli space) of log Fano pairs $(X,cD)$ such that $X$ is a smoothable slc Fano variety of $\dim X = n$, $\vol(X) = v$, and $D \in |-rK_X|$.  Then, there exist rational numbers \[0=c_0<c_1<c_2<\cdots<c_k=\min\{1,r^{-1}\}\] such that $c$-K-(poly/semi)stability conditions do not change for $c\in (c_i,c_{i+1})$. For each $1\leq i\leq k-1$ and $0<\epsilon\ll 1$, we have open immersions
 \[
 \cM^\K_{n,v,r,c_i+\epsilon}\xhookrightarrow{}
 \cM^\K_{n,v,r,c_i}\xhookleftarrow{}\cM^\K_{n,v,r,c_i-\epsilon}
 \]
 which induce projective morphisms
 \[
 M^\K_{n,v,r,c_i+\epsilon}\xrightarrow{}
 M^\K_{n,v,r,c_i}\xleftarrow{}M^\K_{n,v,r,c_i-\epsilon}.
 \]
\end{theorem}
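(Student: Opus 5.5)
The plan follows the strategy of \cite{ADLpub}. There are three ingredients: boundedness of all pairs that appear for any $c \in (0, \min\{1,r^{-1}\})$, a linearity property of $\delta$-type invariants in $c$, and an interpolation lemma. We first establish boundedness: the family of smoothable klt Fano varieties $X$ of dimension $n$ and volume $v$ such that $(X,cD)$ is K-semistable for \emph{some} $c$ in this range, with $D\in|-rK_X|$, is bounded. To do this we use the bound on the local volume / $\delta$-invariant of $X$ coming from K-semistability of $(X,cD)$. Since $-K_X-cD\sim_{\bQ}(1-cr)(-K_X)$ and $A_{X,cD}(E)=A_X(E)-c\,\ord_E(D)$, K-semistability of $(X,cD)$ gives $\delta(X)\ge (1-cr)$-type lower bounds. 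For $c$ bounded away from $r^{-1}$ this yields boundedness via Jiang's theorem. Near $c=r^{-1}$ we instead use that $\vol(X)=v$ is fixed and invoke the smoothability / volume-based boundedness of \cite{ADLpub}. We take this step for granted in the form: there is a finite type family $\mathcal{Z}\to T$ of pairs $(X,D)$ containing every pair that appears.

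Next, we show the key ``interpolation'' property: for a fixed pair $(X,D)$ with $D\in|-rK_X|$, the set of $c$ for which $(X,cD)$ is K-semistable is an interval. Using Theorem \ref{thm:valcriteria}, $\delta_{X,cD}(E)\ge1$ is equivalent to
\[
A_X(E)-c\,\ord_E(D)\ \ge\ (1-cr)S_X(E),
\]
because $S_{X,cD}(E)=(1-cr)S_X(E)$ by homogeneity of volume under the proportionality $-K_X-cD\sim_{\bQ}(1-cr)(-K_X)$. This condition is affine-linear in $c$, so the K-semistable locus of $c$ is an intersection of half-lines, hence an interval. The same linearity applies to Futaki invariants of special test configurations, which are affine in $c$. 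This is exactly where proportionality is used.

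The walls and the open immersions then come from combining the two ingredients. Over the bounded family $T$, K-semistability of $(X_t,cD_t)$ is detected by finitely many ``destabilizing data'': we stratify $T$ constructibly so that on each stratum the minimizer of $\delta$ (or the optimal degeneration) varies nicely, using finite generation and constructibility of $\delta$ in families. Each stratum then contributes finitely many affine inequalities in $c$ with rational coefficients, because $A_X(E)$, $\ord_E(D)$ and $S_X(E)$ are rational for the relevant divisorial valuations, which are lc places of complements and therefore quasi-monomial and rational. The finitely many critical values $c_i$ are the union of the endpoints. For $c\in(c_i,c_{i+1})$ no inequality changes sign, so stability is constant. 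Openness of K-semistability together with the interval property shows that a pair K-semistable at $c_i\pm\epsilon$ is K-semistable at $c_i$: by linearity, semistability at $c_i\pm\epsilon$ and at some point on the other side of $c_i$ forces it at $c_i$, and closedness of the semistable locus in $c$ handles the remaining case. This gives the maps $\cM^\K_{c_i\pm\epsilon}\to\cM^\K_{c_i}$, which are open immersions because both sides are open substacks of the stack of all pairs in $\mathcal{Z}$. The induced maps on good moduli spaces exist by the universal property for good moduli spaces applied to these saturated open substacks. Saturation holds because a K-polystable degeneration at $c_i$ of a $c_i\pm\epsilon$-semistable pair remains $c_i\pm\epsilon$-semistable, by a local VGIT-type argument at the closed point using Luna étale slices. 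Properness of the source spaces then gives projectivity.

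I expect the main obstacle to be the finiteness of walls, specifically the constructibility of the stability threshold in families together with the rationality of each wall. I would attack it by considering the function $t\mapsto \sup\{c : (X_t,cD_t) \text{ K-semistable}\}$. Via $\delta$-minimizers this becomes a constructible function on $T$ taking rational values, and it suffices to show it is constructible and locally constant on strata, using lower semicontinuity of $\delta$ and the existence of rational minimizers.
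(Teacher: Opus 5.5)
The interval observation is correct and matches the paper: for fixed $(X,D)$, the K-semistable $c$ form an intersection of closed half-lines. So is the bound $\delta(X)\ge 1-cr$ giving boundedness for $c$ away from $r^{-1}$.

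\textbf{The main gap.} It is exactly the step you flag and then assert: that on each stratum of $T$, finitely many affine inequalities with rational coefficients detect K-semistability. For a single pair, $\kst_+(D)$ is a priori the infimum of infinitely many numbers $\frac{A_X(E)-S_X(E)}{\ord_E(D)-rS_X(E)}$, over divisors $E$ with $\ord_E(D)>rS_X(E)$. Nothing in your sketch shows this infimum is attained, let alone rational.
\begin{itemize}
\item A divisor computing $\delta(X,c^*D)=1$ at the threshold need not satisfy $\ord_E(D)>rS_X(E)$, so it need not determine $c^*$.
\item The $\delta$-minimizer moves with $c$ as well as with $t$, so a stratification of $T$ does not by itself control it.
\item Openness of the K-semistable locus at each fixed $c$ says nothing about how many distinct loci occur as $c$ varies.
\item Rationality of $S_X(E)$ does not follow from $E$ being an lc place of a complement. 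It needs finite generation of the associated graded ring \cite{BLZ}, which is available for divisors inducing special test configurations.
\end{itemize}

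The missing argument is the paper's. At a threshold $c$, take $E_n$ computing $\delta(X,(c+\frac1n)D)<1$ \cite{LXZ}; these induce special degenerations $(X,cD)\rightsquigarrow(X_n,cD_n)$. The estimate $\ord_F(D)\le aS_X(F)$ \cite{BJ} gives $\delta(X_n)\ge 1-cr-\frac an$, so the $X_n$ are bounded \cite{Jiang}. Hence $\delta(X_n,cD_n)$ takes finitely many values \cite{BLXOpenness}, which forces $\delta(X_n,cD_n)=1$, i.e.\ $A_{X,cD}(E_n)=S_{X,cD}(E_n)$, for $n\gg0$. Therefore $c=\frac{A_X(E_n)-S_X(E_n)}{\ord_{E_n}(D)-rS_X(E_n)}\in\bQ$. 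Finiteness of walls then follows from constructibility of $\kst_\pm$ over the bounded family (constructibility of $\delta$ plus ACC for lct \cite{HMX}), since a constructible function on a finite type base takes finitely many values.

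\textbf{Two secondary problems.}

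First, boundedness near $c=r^{-1}$ is only cited, and a volume-based argument is not the mechanism. Fixed volume, even with smoothability, does not bound klt Fanos: the weighted projective planes attached to Markov triples all smooth to $\bP^2$ and have $K^2=9$. The paper instead produces a uniform $\epsilon_0$ such that K-semistability for some $c<r^{-1}$ implies it at $r^{-1}-\epsilon_0$. This uses the uniform statement for pairs on smooth Fanos, properness of K-moduli, ACC for lct, and interpolation, and reduces to the bounded case.

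Second, your saturation claim is false: polystable pairs at a wall are typically unstable on both sides. In the quartic example, hyperelliptic pairs $(\bP(1,1,4),(\frac38+\epsilon)D)$ degenerate at $c=\frac38$ to $(\bP(1,1,4),\frac38(2Q'))$. For $(\bP(1,1,4),c(2Q'))$ one computes $\beta(Q')=\frac12-\frac{4c}{3}$ and $\beta(E_v)=\frac{4c}{3}-\frac12$, where $E_v$ is the exceptional curve over the vertex. So this pair is K-unstable for every $c\neq\frac38$.

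Saturation is not needed, however. The composite $\cM^{\K}_{n,v,r,c_i\pm\epsilon}\to\cM^{\K}_{n,v,r,c_i}\to M^{\K}_{n,v,r,c_i}$ factors through $M^{\K}_{n,v,r,c_i\pm\epsilon}$ by the universal property of good moduli spaces for maps to algebraic spaces. The induced morphism is projective because both spaces are projective.
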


The idea of the proof is as follows: for each choice of coefficient $c$, a moduli stack/space exists as claimed, so if suffices to understand the behavior of the walls $c_i$.  The critical values $c_i$ can only arise where a pair $(X,cD)$ is K-semistable for some $c$ but $(X,c'D)$ is K-unstable for some $c' = c \pm \epsilon$.  We wish to show such critical values lie in a finite set of rational numbers.  We approach this by defining the \textit{upper} and \textit{lower K-semistable thresholds}: 

\begin{definition}
    Let $(X,D)$ be a pair such that $X$ is a $\bQ$-Gorenstein Fano variety.  The \textit{upper K-semistable threshold of} $D$ is $\kst_+(D) = \sup \{ c \mid (X,cD) \text{ is K-semistable} \}$.  The \textit{lower K-semistable threshold of} $D$ is $\kst_-(D) = \inf \{ c \mid (X,cD) \text{ is K-semistable} \}$. 
\end{definition}

The idea of the proof is to prove that the set  
\[ \{(X,D) \mid (X,cD) \text{ is smoothable and K-semistable for some } c \in (0,\min\{1,r^{-1}\}) \}\]
is log bounded, so all such elements are contained in a universal family over a Hilbert scheme.  Then, one shows that the K-semistable thresholds are constructible and semicontinuous, and attained at rational numbers.  Applying this to the universal family over the given Hilbert scheme will provide the finitely many rational walls.  The original method of proof in \cite{ADLpub} used analysis and differential geometry, but with advances in the algebro-geometric theory of K-stability, we can simplify the proof in a similar fashion to \cite{Zhou}. 

\begin{proof}[Proof sketch]
    Recall that K-semistability of a pair $(X,cD)$ can be computed in terms of the $\delta$-invariant: 
    \[ \delta(X,cD) = \inf_{E/X}\delta_{X,cD}(E) = \inf_{E/X}\frac{A_{X,cD}(E)}{S_{X,cD}(E)}. \]
    By definition of $A_{X,cD}(E)$, and because $D \in |-rK_X|$  using the definition of $S_{X,cD}(E)$, we can compute 
    \[ A_{X,cD}(E) = A_{X}(E) - c \ord_D(E)\quad \text{  and  } \quad S_{X,cD}(E) = (1-cr) S_{X}(E).\] Assume there exists some $c$ such that $(X,c'D)$ is K-semistable for all $c' \in (c-\epsilon, c]$ but unstable for $c' > c$.  Necessarily, $(X,cD)$ must be K-semistable but not K-stable (see, e.g. \cite[Proposition 3.18]{ADLpub}).  As $(X,c'D)$ is K-unstable for each $c' > c$, by \cite[Theorem 1.2]{LXZ}, there exists a divisor $E_n$ such that  
    \[ \delta(X,(c+\tfrac{1}{n}D)) = \delta_{X,(c+\frac{1}{n})D}(E_n) = \frac{A_{X,(c+\frac{1}{n})D}(E_n)}{S_{X,(c+\frac{1}{n})D}(E_n)} < 1 \]  and furthermore, this divisor $E_n$ induces a special test configuration of $(X,cD)$ to some $(X_n, cD_n)$ with 
    \[ \delta(X_n,(c+\tfrac{1}{n}D_n)) = \delta(X,(c+\tfrac{1}{n}D)). \]
    As $(X,cD)$ was assumed to be K-semistable, we must have
    \[ \delta_{X,cD}(E_n) = \frac{ A_{X,cD}(E_n) }{S_{X,cD}(E_n) }\ge  1 .\]

    We claim that $\{X_n\}_{n \ge n_0}$ must be bounded for $n_0$ sufficiently large.  First, using \cite[Lemma 2.6]{BJ}, for any divisor $F$ over $X$, we have $\ord_D(F) \le a S_{X}(F)$ for some constant $a$ depending only on $r$ and the dimension of $X$, so 
    \[ \frac{A_{X,(c+\tfrac{1}{n})D}(F)}{S_{X,(c+\tfrac{1}{n})D}(F)} = \frac{A_{X,cD}(F) - \tfrac{1}{n}\ord_D(F)}{(1-(c+\frac{1}{n})r)S_{X}(F)} = \frac{A_{X,cD}(F) - \tfrac{1}{n}\ord_D(F)}{(1-(c+\frac{1}{n})r)\tfrac{(1-cr)}{(1-cr)}S_{X}(F)}  \ge \frac{1-cr}{(1-(c+\tfrac{1}{n})r)} - \frac{\tfrac{a}{n}}{1-(c+\tfrac{1}{n})r} \]
    where the inequality follows from the K-semistability of $(X,cD)$ and the inequality $\ord_D(F) \le a S_{X}(F)$. This implies \[ \delta(X, (c + \tfrac{1}{n})D) \ge \frac{1-cr}{(1-(c+\tfrac{1}{n})r)} - \frac{\tfrac{a}{n}}{1-(c+\tfrac{1}{n})r} .\]
    As 
    \[ \delta(X_n) \ge (1 - (c + \tfrac{1}{n})r) \delta(X_n, (c + \tfrac{1}{n})D_n) = (1 - (c + \tfrac{1}{n})r) \delta(X, (c + \tfrac{1}{n})D)\]
    we have 
    \[ \delta(X_n) \ge 1 - cr - \tfrac{a}{n}.\]
    Choosing a fixed $n_0 \gg 0$ proves $\{ \delta(X_n) \mid n \ge n_0\}$ is bounded away from $0$ and therefore by \cite{Jiang} form a bounded family.  As $D_n \in |-rK_{X_n}|$ is a fixed rational multiple of the anticanonical divisor, the set $\{ (X_n, D_n)\}$ is therefore log bounded and hence by \cite[Theorem 1.1]{BLXOpenness}, $\delta(X_n, cD_n)$ takes only finitely many values.  As $n \to \infty$, $\delta(X,(c+\tfrac{1}{n}D))$ must approach a value $\ge 1$, so we conclude  $\delta(X_n, cD_n) \ge 1$ for all $n \gg 1$ and in fact must equal $1$ as $(X,cD)$ was strictly semistable.  As $E_n$ induced a special test configuration $(X,cD) \rightsquigarrow (X_n, cD_n)$, we conclude (see, e.g. \cite[Lemma 3.4]{Zhou}) 
    \[ A_{X}(E_n) - c \ord_D(E_n) - (1-cr) S_X(E_n) = 0.\]  By \cite{BLZ}, $S_X(E_n)$ is rational, so $c = \frac{ A_X(E_n) - S_X(E_n)}{\ord_D(E_n) - r S_X(E_n)} $ is rational as all terms in this expression are rational. This argument will imply the rationality of $\kst_+(D)$ and a similar argument holds for $\kst_-(D)$.  Next, one can show that, for a family $(X, \Delta) \to B$, these thresholds are constructible functions on $B$.  This follows from the constructiblity of $\delta$ in \cite{BLXOpenness} and ACC for log canonical thresholds from \cite{HMX}. 

    Finally, we show boundedness. First, we have boundedness of the set
    \[ \{(X,D) \mid (X,cD) \text{ is K-semistable for some } c \in (0,\min\{1,r^{-1}\} - \epsilon) \}\] for any fixed $\epsilon > 0$, which follows from the main result in \cite{Birkar}.  Then, we prove there is a fixed $\epsilon_0 > 0$ such that a pair $(X,cD)$ is K-semistable for some $c \in (0, \min\{1,r^{-1}\})$ if and only if $(X,cD)$ is K-semistable for some $c \in (0, \min\{1,r^{-1}\} - \epsilon_0)$.  This will imply boundedness of the set  
    \[ \{(X,D) \mid (X,cD) \text{ is smoothable and K-semistable for some } c \in (0,\min\{1,r^{-1}\}) \}.\]
      
    For ease of notation, assume $r^{-1} \le 1$. First, by \cite[Lemma 3.4]{ADLpub} or \cite[Theorem 1.8]{LXZ}, the set of pairs $(X,D)$ such that $X$ is a smooth Fano variety satisfies this property for some fixed $\epsilon_1 > 0$.  Now, assume $(X, cD)$ is smoothable and K-semistable for some $c \in (0,1)$.  By the smoothability assumption, we may find a family $(\calX, \calD) \to B$ such that $(\calX_0, c\calD_0) = (X, cD)$ and $(\calX_b, c\calD_b)$ is smooth for $b \ne 0$.  For $c > r^{-1} - \epsilon_1$, we have the K-semistability of $(\calX_b, c' \calD_b)$  for any $c' \in (r^{-1} - \epsilon_1, r^{-1})$.  By properness of the K-moduli space, there exists a K-polystable limit $(X', c'D')$ of the family $(\calX_b, c' \calD_b)$. By ACC for log canonical thresholds, there exists some fixed $\epsilon_2$ such that for any such $(X',D')$, $(X',(r^{-1} - \epsilon_2)D')$ is klt if and only if $(X',r^{-1}D')$ is log canonical.  Therefore, assuming $\epsilon_3 \le \min(\epsilon_1, \epsilon_2)$, for any $c' \in (r^{-1} - \epsilon_3, r^{-1})$, we have K-polystability of $(X',c'D')$ by interpolation (see Proposition \ref{prop:k-interpolation}).  Let $\epsilon_0 = \frac{\epsilon_3}{2}$.  Then, for any $c \in [r^{-1} - \epsilon_0, r^{-1})$, the K-semistability of $(X,cD)$ and properness of the K-moduli space implies that $(X,cD)$ specially degenerates to $(X',cD')$.  By openness of K-semistability, this implies $(X,cD)$ is K-semistable for every $c \in [r^{-1} - \epsilon_0, 1)$ and in particular $(X,(r^{-1} - \epsilon_0)D)$ is K-semistable.  
    
    For the converse, note that the K-semistability of $(X,(r^{-1} - \epsilon_0)D)$ implies it is klt, and $0 \le \epsilon_0 \le \epsilon_2$ implies $(X, r^{-1} D)$ is log canonical.  Therefore, by interpolation, we conclude $(X,cD)$ is K-semistable for any $c \in[r^{-1}-\epsilon_0, r^{-1})$.

    Finally, applying the rationality and constructibility of $\kst_+$ and $\kst_-$ to to the universal family over the Hilbert scheme parametrizing this bounded set of pairs, we conclude there are finitely many rational walls.  The claimed morphisms of stacks then follow by construction on the universal family over each moduli stack. 
\end{proof}

\subsection{Wall crossing for canonically polarized pairs}

From the previous section, for each $n \in \bZ^{>0}$, $v \in \bQ^{>0}$, and $r, c \in \bQ^{>0}$ such that $c \in (r^{-1}, 1)$, there exists a proper Deligne-Mumford stack and projective moduli space of KSBA stable pairs $(X,cD)$ such that $\dim X = n$, $(K_X + cD)^n = (cr-1)^nv$.  We will restrict to the components of this moduli stack and space such that the general point $(X,cD)$ is a klt pair for all $c \in (r^{-1},1)$ and $D \in |-rK_X|$.  The aim of this section is to sketch the proof of the following theorem. 

\begin{theorem}
Let $\cM^{\KSBA}_{n,v,r,c}$ (respectively, $M^{\KSBA}_{n,v,r,c}$) be the KSBA moduli stack (respectively, KSBA moduli space) of stable pairs $(X,cD)$ such that $\dim X = n$, $(K_X+cD)^n = (cr-1)^nv$, and the general point is klt for all $c \in (r^{-1},1)$ and $D \in |-rK_X|$.  Then, there exist rational numbers \[r^{-1}<c_0<c_1<c_2<\cdots<c_k=1\] such that $c$-stability conditions do not change for $c\in (c_i,c_{i+1})$.  Let $(\cM^{\KSBA}_{n,v,r,c})^\nu$ (respectively, $(M^{\KSBA}_{n,v,r,c})^\nu$) denote the normalization.  For each $1\leq i\leq k-1$ and $0<\epsilon\ll 1$, we have morphisms
 \[
 (\cM^{\KSBA}_{n,v,r,c_i+\epsilon})^\nu\xrightarrow{}
 (\cM^{\KSBA}_{n,v,r,c_i})^\nu\xleftarrow{\cong}(\cM^{\KSBA}_{n,v,r,c_i-\epsilon})^\nu
 \]
and
 \[
 (M^{\KSBA}_{n,v,r,c_i+\epsilon})^\nu\xrightarrow{}
 (M^{\KSBA}_{n,v,r,c_i})^\nu\xleftarrow{\cong}(M^{\KSBA}_{n,v,r,c_i-\epsilon})^\nu.
 \]
\end{theorem}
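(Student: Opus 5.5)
The plan follows the strategy of \cite{ABIP}, in parallel with the K-moduli sketch above. First, boundedness: for $c \in [r^{-1}+\epsilon, 1]$ the volume $(cr-1)^n v$ is bounded below, and the coefficients lie in a finite set. By the boundedness results underlying Theorem \ref{ksbamodthm} (Hacking--McKernan--Xu), the union over $c$ in this interval of the pairs parametrized by $\cM^{\KSBA}_{n,v,r,c}$ is log bounded. Since every pair lies in the closure of the klt locus, the union is parametrized by a universal family over a finite type base. We still need a uniform $\epsilon$ near $r^{-1}$ so that nothing changes for $c\in(r^{-1}, r^{-1}+\epsilon)$. For this I would use ACC for log canonical thresholds \cite{HMX}, together with the fact that near $r^{-1}$ the KSBA stable model of a generic klt pair is its ample model relative to $K_X+cD = (cr-1)(-K_X)$ direction perturbations. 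I expect this is why the statement has $c_0>r^{-1}$.

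Second, locating the walls. Over the normalized base, take a generic smoothing family $(\calX,\calD)\to B$ over a curve germ with klt general fiber. For each $c$, the stable limit is obtained by running the relative log MMP for $K_{\calX}+c\calD+\calX_0$ on a dlt modification and passing to the ample model. The ample model changes only when $c$ crosses a value where some extremal ray becomes $(K+c\calD)$-trivial. By boundedness, the relevant dlt models and cones form a finite set of data, so these values are finitely many. They are rational because they solve linear equations $(K_{\calX}+c\calD)\cdot C=0$ with rational intersection numbers. An alternative route is to invoke finiteness of ample models in the log bounded setting (BCHM), applied on the universal family.

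Third, the morphisms. Fix a wall $c_i$. Take a stable pair $(X,(c_i+\epsilon)D)$ and its family; I would show that $K_X+c_iD$ is semiample on $X$ and that its ample model is the $c_i$-stable pair. This uses that $K+(c_i+\epsilon)D$ and $K+(c_i-\epsilon')D$ interpolate, and that the lc condition is closed in $c$. Doing this in families over the normalized (hence reduced, normal) base, where Koll\'ar's family condition can be checked via the relative ample model, gives the morphism $(\cM_{c_i+\epsilon})^\nu\to(\cM_{c_i})^\nu$. For the other side, one shows that $(X,(c_i-\epsilon)D)$ stable implies $(X,c_iD)$ stable with the same underlying variety. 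Ampleness persists for small perturbation toward $c_i$ because of the finite wall structure, and lc-ness persists because $c_i$ does not exceed the lc threshold on the stratum. Conversely, the reverse perturbation argument gives the inverse map, yielding the isomorphism. The morphisms of coarse spaces then follow from the universal property, and they are projective since the source is proper and the target is projective.

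The main obstacle is the family-level statement over a non-reduced or non-normal base. Koll\'ar's family condition for $c_i$ need not follow from that for $c_i\pm\epsilon$, and this is precisely why normalization is needed. I would handle it by working over normal bases, where the condition can be checked on curves and codimension one points.
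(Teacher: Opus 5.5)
\paragraph{The lower side of the wall.} The step that fails is your treatment of the lower side of a wall. You assert that if $(X,(c_i-\epsilon)D)$ is stable, then $(X,c_iD)$ is stable on the same underlying variety, because ``ampleness persists'' toward $c_i$. It does not. Ampleness of $K_X+cD$ for all $c\in(c_i-\epsilon,c_i)$ only gives nefness at $c_i$, and a positivity wall is exactly a value where $K_X+c_iD$ is nef but not ample.

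The octic example in the paper is an explicit counterexample. There $(X\cup Y,c(D_X+D_Y))$ is stable for $c\in(\frac{25}{66},\frac12)$, but $K+\frac12 D$ is trivial on $D_X$, so the $\frac12$-stable pair is the contraction $(\overline X\cup Y,\frac12 D_Y)$. On that pair $K$ and $D_Y$ are not separately $\bQ$-Cartier, so your ``reverse perturbation'' from $c_i$ back to $c_i-\epsilon$ also does not exist.

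Write $\cM_c$ for $\cM^{\KSBA}_{n,v,r,c}$. In the paper, the map $\cM_{c_i-\epsilon}\to\cM_{c_i}$ is itself a relative canonical model of the locally stable family $(\calX,c_i\calD)$. The missing idea is the argument that it becomes an isomorphism after normalization:
\begin{itemize}
\item The contracted curves lie in $\Supp\calD$, with $\calD$ negative on them. This negativity is used to show that each $c_i$-stable pair has only finitely many preimages, i.e.\ the map is quasi-finite.
\item The map is birational, because both stacks contain the dense klt locus.
\item Zariski's Main Theorem then gives an isomorphism on normalizations.
\end{itemize}
That is why normalization is needed on the right-hand arrow, not only because of the family condition over non-reduced bases that you mention.

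\paragraph{Finiteness of walls.} Your finiteness step also has a gap. The HMX boundedness theorem needs coefficients in a DCC set and a fixed volume. The union over $c\in[r^{-1}+\epsilon,1]\cap\bQ$ has infinitely many coefficient values, not forming a DCC set, and the volume varies with $c$. So log boundedness of the union does not follow as you state it, and the claim that ``the relevant dlt models and cones form a finite set of data'' is unsupported.

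The paper avoids this by using $c=1$ as a master space:
\begin{itemize}
\item Given any $c$-stable pair, take a partial smoothing with klt general fiber.
\item Extend the punctured family to a $1$-stable family, using properness of $\cM_1$.
\item Take the relative $c$-canonical model of that family. This is shown to restrict to the $c$-canonical model of the central fiber.
\end{itemize}
By uniqueness of canonical models, every $c$-stable pair is the $c$-canonical model of a $1$-stable pair, and conversely. Hence each $\cM_c$ is a canonical model of the single universal family over $M^{\KSBA}_{n,v,r,1}$. Finiteness and rationality of the walls then follow from BCHM finiteness of models applied to that one family, with no separate boundedness argument over varying $c$.

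Your construction of the left-hand arrow, taking the $c_i$-canonical model of the family with coefficient $c_i+\epsilon$, does match the paper.
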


Note that the assumption that a general pair $(X,D)$ of this form is klt for all $c \in (r^{-1,},1)$, which is implied if $(X,D)$ is dlt, occurs frequently in practice, e.g. if $\cM$ is a component of a moduli space compactifying log smooth pairs. To give an idea of the proof, suppose $(X,cD)$ is stable for some $c$ but $(X,c'D)$ is unstable for $c' = c\pm \epsilon$. Then, either $(X, c'D)$ has worse than slc singularities, so perturbing the coefficient made the singularities worse, or $K_X + c'D$ is not ample, so perturbing the coefficient changed the positivity of this divisor.  Assuming $X$ is $\bQ$-factorial (or at least $K_X$ is $\bQ$-Cartier), the first condition occurs when we cross the log canonical threshold of $D$, which is a rational number, and the second condition occurs when we cross the nef threshold of $(X,D)$, i.e. the coefficient for which $K_X + c'D$ is nef but not ample, which is also rational.  These rational critical values of $c$ will be the walls in the moduli problem.  We then use techniques in the MMP--taking canonical models--to produce the wall crossing.

\begin{proof}[Proof sketch.]
    Because the general point $(X,cD)$ is klt for all $c \in (r^{-1},1)$, for any such $c$ and any stable pair $(X,cD)$, it admits a partial smoothing $(\calX, c\calD) \to B$ with central fiber $(\calX_0, c\calD_0) = (X,cD)$ and general fiber $(\calX_b, c\calD_b)$ klt.  Because the general fiber is klt and $\calD_b \in |-rK_{\calX_b}|$, the family $(\calX^\circ, \calD^\circ) \to B^\circ$, where $B^\circ = B - \{ 0\}$, is a stable family in $\cM^{\KSBA}_{n,v,r,1}$.  By properness of the KSBA moduli space, (up to base change) this extends to a stable family $(\calX', \calD') \to B$.  We may take the $c$-canonical model of this family, defined as 
     $(\calX'^{can}, c  \calD'^{can}) \to B$, where 
    \[ \calX'^{can}  = \Proj_B(\oplus_mf_*\calO(m(K_{\calX'} + c\calD')))\]
    and $\calD'^{can}$ is the pushforward of $\calD'$.  This produces a KSBA stable family that agrees with $(\calX, c\calD)$ away from $0 \in B$.  The authors prove that this canonical model is in fact the family of canonical models of the fibers.  Uniqueness of canonical models implies that $(\calX, c\calD)$ must in fact be $(\calX'^{can}, c  \calD'^{can})$. 

    Conversely, starting with any family $(\calX, \calD) \to B$ in $\cM^{\KSBA}_{n,v,r,1}$, we may take the $c$-canonical model to produce a family in $\cM^{\KSBA}_{n,v,r,c}$.  This correspondence demonstrates that the moduli spaces $\cM^{\KSBA}_{n,v,r,c}$ are themselves canonical models of the universal family over the projective moduli space of pairs $M^{\KSBA}_{n,v,r,1}$. The statement that there are only finitely many critical values of $c_i \in \bQ$ at which the moduli stacks/spaces change then follows from \cite[Corollary 1.1.5]{BCHM} applied to this family.
    
    To construct the required morphisms, the fundamental observation is, given a family of stable pairs $f:(\calX, (c_i + \epsilon) \calD) \to B$ over a base $B$ for some $\epsilon > 0$, the family $(\calX, c_i  \calD) \to B$ is a \textit{locally} KSBA stable family.  Each fiber has slc singularities (as this property is preserved by decreasing the coefficient), however the relative canonical divisor $K_{\calX/B} + c_i \calD$ may no longer be relatively ample over $B$.  To find the associated stable family, we simply take the canonical model $(\calX^{can}, c_i  \calD^{can}) \to B$.  This is now a stable family and is in fact a family of canonical models of the fibers. This will induce a morphism $(\cM^{\KSBA}_{n,V,r,c_i+\epsilon})^\nu\xrightarrow{}
 (\cM^{\KSBA}_{n,V,r,c_i})^\nu$.  

 For the other direction, the main observation is again that given a family of stable pairs $(\calX, (c_i - \epsilon) \calD) \to B$ where $c_i$ is a wall and $\epsilon \ll 1$, by \cite{KM98} $(\calX, c_i \calD) \to B$ is a locally stable family.  If $K_{\calX/B} + c_i \calD$ is not relatively ample over $B$, we can take its canonical model as above.  In this case, as $K_{\calX/B} + (c_i-\epsilon) \calD$ was relatively ample, this implies there is some component of $\calD$ on which $K_{\calX/B} + c_i \calD$ must be negative.  Using this negativity, the authors prove that, for any canonical model $(\calX^{can}, c_i  \calD^{can})$, there are only finitely many possible preimages $(\calX, c_i \calD)$ (i.e. the morphism $\cM^{\KSBA}_{n,V,r,c_i}\xleftarrow{}\cM^{\KSBA}_{n,V,r,c_i-\epsilon}$ is quasi-finite).  Furthermore, from the assumption that the general point $(X,cD)$ is klt and stable for all $c \in (r^{-1},1)$, it follows that these spaces are birational. By taking normalization and using Zariski's Main Theorem, we obtain the desired isomorphism $(\cM^{\KSBA}_{n,V,r,c_i})^\nu\xleftarrow{\cong}(\cM^{\KSBA}_{n,V,r,c_i-\epsilon})^\nu$.  Finally, the statement on the coarse moduli spaces follows from the morphisms on the moduli stacks and the Keel-Mori theorem. 
\end{proof}

\begin{remark}
    We cannot require that $D \in|-rK_X|$ on every fiber in this moduli space in general; see the following example for a construction of a pair where this is not satisfied.  By Remark \ref{rmk:fiberwiseDinKX}, we may assume this condition if $X$ is normal, and in fact, if $c = r^{-1} + \epsilon$ for $\epsilon \ll 1$ we may also assume this condition.  In \cite{hacking, devleming,KX19}, these `minimally' canonically polarized spaces are studied and it is shown that in fact the condition $r K_X + D \sim 0$ and $K_X, D$ are both $\bQ$-Cartier hold for all pairs $(X,cD)$ parametrized by this moduli stack and space. 
\end{remark}

This next example comes from \cite{DeVSingh}.

\begin{example}\label{ex:octic}
    Let $P_{8,c} \subset M^{\KSBA}_{2,9,\frac{8}{3},c}$ be the component of the moduli space whose general point represents $(\bP^2, c C)$ where $C \in |\calO(8)|$ is an octic plane curve and $c \in (\frac{3}{8},1)$.  By \cite{Orevkov}, there exists an octic plane curve $C_0$ with a single cuspidal singularity of the form $x^3 = y^{22}$.  The log canonical threshold of this curve is $\frac{1}{3}+\frac{1}{22} = \frac{25}{66} > \frac{3}{8}$.  Therefore, for any $c \in (\frac{3}{8}, \frac{25}{66}]$, $(\bP^2, c C_0)$ is a KSBA stable pair and a closed point in the moduli space $P_{8,c}$.  However, when $c > \frac{25}{66}$, this is unstable.  Taking a generic smoothing of the pair $(\bP^2, C_0)$ over $\bA^1$, i.e. a family $(\bP^2 \times \bA^1_t,\calC) \to \bA^1_t $, after base change we perform a $(22,3,1)$  blow up in the coordinates $(x,y,t)$ at the singular point of the central fiber.  The new central fiber of the family is the union of two surfaces: the $(22,3)$ weighted blow up $X$ of $\bP^2$, and the weighted projective exceptional divisor $Y =\bP(1,3,22)$.  These are glued along $E_X \subset X$, the exceptional divisor of the $(22,3)$ weighted blow up of $\bP^2$, which passes through two singular points of types $\frac{1}{3}(1,2)$ and $\frac{1}{22}(1,19)$, and $L \subset Y$ a section of $\calO_{Y}(1)$ which passes through the two singular points of $Y$.  The canonical divisor $K_{X \cup Y}$ restricts to $K_X + E_X$ on $X$ and $K_{Y} + L$, which has weighted degree $-25$ on $Y = \bP(1,3,22)$.
    
    The strict transform of the family of curves $\calC$ on the central fiber is a nodal union of two smooth curves $D_X + D_Y$, one on each component of the central fiber.  In $X$, by direct computation, $D_X$ is the strict transform of the singular octic curve and is a rational curve with self-intersection $-2$, and on $Y = \bP(1,3,22)$, $D_Y$ is a smooth curve of weighted degree 66. Because $\rho(X) = 2$ and $X$ has two rational curves with negative self-intersection ($E_X$ and $D_X$), these curves generate the Mori cone.  Therefore, we can compute directly using that $E_X^2 = - \frac{1}{66}$, $D_Y^2 = -2$, and $E_Y \cdot D_Y = 1$ that $K_{X} + E_X + c D_X$ is ample if and only if $c \in (\frac{25}{66}, \frac{1}{2})$, and $K_Y + L + cD_Y$ is ample if and only if $c > \frac{25}{66}$.  Letting $D = D_X + D_Y \subset X\cup Y$, this implies $(X\cup Y, c(D_X + D_Y))$ is a stable pair if and only if $c \in (\frac{25}{66},\frac{1}{2})$.  This therefore appears in the KSBA moduli space `replacing'\footnote{By `replacing', we mean that the log canonical model when $c < \frac{25}{66}$ of the pair $(X\cup Y, c(D_X + D_Y))$ is the pair $(\bP^2, cC_0)$.} the pair $(\bP^2, cC_0)$ when $c \in (\frac{25}{66}, \frac{1}{2})$.  
    
    At $c = \frac{1}{2}$, the divisor $D_X$ becomes $K_X+E_X + \frac{1}{2}D_X$ trivial, and taking the log canonical model when $c = \frac{1}{2}$ contracts $D_X$, resulting in the pair $(\overline{X} \cup Y, \frac{1}{2}D_Y)$ where $X \to \overline{X}$ is the contraction of the $-2$ curve $D_X$.  Because $K_{X \cup Y} \cdot D_X = (K_X + E_X) \cdot D_X \ne 0$, the canonical divisor on the pair $(\overline{X} \cup Y, \frac{1}{2}D_Y)$ is \textit{not} $\bQ$-Cartier (and neither is $D_Y$), although $K_{\overline{X} \cup Y} + \frac{1}{2} D_Y$ is $\bQ$-Cartier.  The pair $(\overline{X} \cup Y, \frac{1}{2}D_Y)$ is slc and $K_{\overline{X} \cup Y} + \frac{1}{2} D_Y$ is ample, and therefore this is a KSBA stable pair.  However, as the divisors $K_{\overline{X} \cup Y}$ and $D_Y$ are not individually $\bQ$-Cartier, $(\overline{X} \cup Y, cD_Y)$ is not slc for any $c \ne \frac{1}{2}$.  

    To find the associated KSBA stable pair when $c > \frac{1}{2}$, we must perform the flip of the curve $D_X$ in the family constructed when  $c \in (\frac{25}{66}, \frac{1}{2})$.  By direct computation, the new family has central fiber equal to $\overline{X} \cup \tilde{Y}$, where $\tilde{Y}$ is the $(1,2)$-weighted blow-up of the intersection point $L \cap D_Y$ on $Y$.  The strict transform of the given family of curves on $\overline{X} \cup \tilde{Y}$ is just $\tilde{D}_Y$, the strict transform of $D_Y$ in $Y$.  We claim that $(\overline{X} \cup \tilde{Y}, c \tilde{D}_Y)$ is a KSBA stable family for any $c \in (\frac{1}{2},1]$.  It is clear that this has slc singularities, so it suffices to check the ampleness of the log canonical divisor.  The relevant computation is to prove that $K_{\overline{X} \cup \tilde{Y}}$ is positive on $\overline{E}_X$, the image of $E_X$ in the contraction $X \to \overline{X}$.  Because $\overline{E}_X$ passes through three singularities of types $\frac{1}{3}(1,2)$, $\frac{1}{22}(1,19)$, and $\frac{1}{2}(1,1)$, and therefore we compute 
    \[ K_{\overline{X} \cup \tilde{Y}} \cdot \overline{E}_X = (K_{\overline{X}} + \overline{E}_X) \cdot \overline{E}_X = -2 + 1 - \tfrac{1}{3} + 1 - \tfrac{1}{22} + 1 - \tfrac{1}{2} = \tfrac{4}{33} > 0.\]

\begin{figure}[h]
\begin{tikzcd}[row sep=huge]
\hspace{1in} {\resizebox{.4\textwidth}{!}{\begin{tabular}{c}
\begin{tikzpicture}[gren0/.style = {draw, circle,fill=greener!80,scale=.7},gren/.style ={draw, circle, fill=greener!80,scale=.4},blk/.style ={draw, circle, fill=black!,scale=.2},plc/.style ={draw, circle, color=white!100,fill=white!100,scale=0.02},smt/.style ={draw, circle, color=gray!100,fill=gray!100,scale=0.02},lbl/.style ={scale=.2}] 

\node[smt] at (-2.5+0,0) (1){};
\node[smt] at (-2.5+0,2) (2){};
\node[smt] at (-2.5+1.2,2.7) (3){};
\node[smt] at (-2.5+1.2,0.7) (4){};
\draw [black] plot [smooth cycle, tension=.1] coordinates { (1) (2) (3) (4) };

\draw [thick,blue] plot [smooth, tension=1] coordinates {(-2.5+.4,2) (-2.5+.2,1.4) (-2.5+.8,2) (-2.5+.3,0.8) (-2.5+1.1,1.3) (-2.5+.65,0.5)};
\node[below right,node font=\tiny,text=blue] at (-2.5+0.7,0.8) {smooth octic};

\node[smt] at (0,0) (5){};
\node[smt] at (0,2) (6){};
\node[smt] at (1.2,2.7) (7){};
\node[smt] at (1.2,0.7) (8){};
\draw [black] plot [smooth, tension=.1] coordinates { (8) (5) (6) (7) };

\node[smt] at (2.5,2.5) (11){};
\node[smt] at (2.5,1) (12){};
\draw [thick,violet] plot [smooth cycle, tension=.1] coordinates { (8) (12) (11) (7) };
\node[node font=\tiny,color=violet] at (3,2.4) {exceptional $Y \cong \bP(1,3,22)$};
\filldraw[color=teal, fill=teal](1.18,2.2) circle (0.05);
\filldraw[color=teal, fill=teal](1.18,1.2) circle (0.05);
\draw[teal,->] (2,.55) -- (1.25,2.1);
\draw[teal,->] (2,.55) -- (1.25,1.1);
\node[below right,node font=\tiny,color=teal] at (2,.55) {singular points from};
\node[below right,node font=\tiny,color=teal] at (2,.25) {weighted blow-up};

\draw [thick,blue] plot [smooth, tension=1] coordinates { (0.2,1.6)  (.8,1.9) (1.18,1.7) };
\draw [thick,blue] plot [smooth, tension=1] coordinates {  (1.18,1.7) (1.8,1.3) (2.3,1.6) };
\draw[blue,->] (.6,1.5) -- (.8,1.7);
\node[node font=\tiny,color=blue] at (.4,1.4) {$D_X$};
\node[node font=\tiny,color=blue] at (.3,1.1) {$(D_X)^2= -2$};
\draw[blue,->] (2.6,1.4) -- (2.35,1.5);
\node[right,node font=\tiny,color=blue] at (2.6,1.4) {$D_Y$};

\draw[thick,color=gray] (-2.5-0.5,-.5) to (1.8+0.5,-.5);
\node[blk] at (-2.5+0.6,-.5) (9){};
\node[below right, node font=\tiny] at (9) {$t$};
\node[blk] at (1,-.5) (10){};
\node[below right, node font=\tiny] at (10) {$0$};
\node[right, node font=\tiny] at (1.8+0.5,-.5) {$\bA^1_t$};

\node[below right,node font=\tiny] at (-2.5,0.1) {$\bP^2_t$};
\node[above right, node font=\tiny] at (0,0.2) {$X$};
\node[node font=\huge, text=gray] at (-2.5+1.8,1.5) {$\rightsquigarrow$};
\end{tikzpicture}
\end{tabular}}} \arrow[d,swap,start anchor={[xshift=-2ex]},end anchor={[xshift=-8ex]},"\text{weighted blow up}"] \arrow[rd, end anchor={[xshift=-10ex,yshift=2ex]}, "\text{small contraction}"] \arrow[rr, dashed, start anchor={[xshift=-5ex]}, end anchor={[xshift=-23ex]}, "\text{flip } D_X"]  && \hspace{-1.5in} {\resizebox{.4\textwidth}{!}{\begin{tabular}{c}
\begin{tikzpicture}[gren0/.style = {draw, circle,fill=greener!80,scale=.7},gren/.style ={draw, circle, fill=greener!80,scale=.4},blk/.style ={draw, circle, fill=black!,scale=.2},plc/.style ={draw, circle, color=white!100,fill=white!100,scale=0.02},smt/.style ={draw, circle, color=gray!100,fill=gray!100,scale=0.02},lbl/.style ={scale=.2}] 

\node[smt] at (-2.5+0,0) (1){};
\node[smt] at (-2.5+0,2) (2){};
\node[smt] at (-2.5+1.2,2.7) (3){};
\node[smt] at (-2.5+1.2,0.7) (4){};
\draw [black] plot [smooth cycle, tension=.1] coordinates { (1) (2) (3) (4) };

\draw [thick,blue] plot [smooth, tension=1] coordinates {(-2.5+.4,2) (-2.5+.2,1.4) (-2.5+.8,2) (-2.5+.3,0.8) (-2.5+1.1,1.3) (-2.5+.65,0.5)};
\node[below right,node font=\tiny,text=blue] at (-2.5+0.7,0.8) {smooth octic};

\node[smt] at (0,0) (5){};
\node[smt] at (0,2) (6){};
\node[smt] at (1.2,2.7) (7){};
\node[smt] at (1.2,0.7) (8){};
\draw [black] plot [smooth, tension=.1] coordinates { (8) (5) (6) (7) };

\node[smt] at (2.5,2.5) (11){};
\node[smt] at (2.5,1) (12){};
\draw [thick,violet] plot [smooth cycle, tension=.1] coordinates { (8) (12) (11) (7) };
\node[node font=\tiny,color=violet] at (2.8,2.6) { $\tilde{Y}$};
\filldraw[color=teal, fill=teal](1.18,2.2) circle (0.05);
\filldraw[color=teal, fill=teal](1.18,1.2) circle (0.05);
\filldraw[color=blue, fill=blue](1.18,1.7) circle (0.05);

\draw [thick,dotted,blue] plot [smooth, tension=1] coordinates {  (1.18,1.7) (1.8,1.8) (2.3,1.7) };
\draw [thick,blue] plot [smooth, tension=1] coordinates {  (1.7,2.2) (1.9,1.9) (1.9,1.3) (2.1,1) };

\draw[blue,->] (2.6,1.9) -- (2.35,1.75);
\node[right,node font=\tiny,color=blue] at (2.6,2.1) {$D_X^+$};
\node[right,node font=\tiny,color=blue] at (2.6,1.8) {flip of $D_X$};

\draw[blue,->] (2.6,1.1) -- (2.15,1.2);
\node[right,node font=\tiny,color=blue] at (2.6,1.1) {$\tilde{D}_Y$};
\node[right,node font=\tiny,color=blue] at (2,0.8) {strict transform of $D$};

\draw[thick,color=gray] (-2.5-0.5,-.5) to (1.8+0.5,-.5);
\node[blk] at (-2.5+0.6,-.5) (9){};
\node[below right, node font=\tiny] at (9) {$t$};
\node[blk] at (1,-.5) (10){};
\node[below right, node font=\tiny] at (10) {$0$};
\node[right, node font=\tiny] at (1.8+0.5,-.5) {$\bA^1_t$};


\node[below right,node font=\tiny] at (-2.5,0.1) {$\bP^2_t$};
\node[above right, node font=\tiny] at (0,0.2) {$\overline{X}$};
\node[node font=\huge, text=gray] at (-2.5+1.8,1.5) {$\rightsquigarrow$};
\end{tikzpicture}
\end{tabular}}} \arrow[ld,end anchor={[xshift=-2ex,yshift=-2ex]}, "\text{small contraction}"]  \\
\hspace{-1in} {\resizebox{.4\textwidth}{!}{\begin{tabular}{c}
\begin{tikzpicture}[gren0/.style = {draw, circle,fill=greener!80,scale=.7},gren/.style ={draw, circle, fill=greener!80,scale=.4},blk/.style ={draw, circle, fill=black!,scale=.2},plc/.style ={draw, circle, color=white!100,fill=white!100,scale=0.02},smt/.style ={draw, circle, color=gray!100,fill=gray!100,scale=0.02},lbl/.style ={scale=.2}] 

\node[smt] at (-2.5+0,0) (1){};
\node[smt] at (-2.5+0,2) (2){};
\node[smt] at (-2.5+1.2,2.7) (3){};
\node[smt] at (-2.5+1.2,0.7) (4){};
\draw [black] plot [smooth cycle, tension=.1] coordinates { (1) (2) (3) (4) };

\draw [thick,blue] plot [smooth, tension=1] coordinates {(-2.5+.4,2) (-2.5+.2,1.4) (-2.5+.8,2) (-2.5+.3,0.8) (-2.5+1.1,1.3) (-2.5+.65,0.5)};
\node[below right,node font=\tiny,text=blue] at (-2.5+0.7,0.8) {smooth octic};

\node[smt] at (0,0) (5){};
\node[smt] at (0,2) (6){};
\node[smt] at (1.2,2.7) (7){};
\node[smt] at (1.2,0.7) (8){};
\draw [black] plot [smooth cycle, tension=.1] coordinates { (5) (6) (7) (8) };

\draw [thick,blue] plot [smooth, tension=1] coordinates { (0.2,1.6) (0.5,1.6) (0.4,.8) };
\draw [thick,blue] plot [smooth, tension=1] coordinates {  (0.4,.8) (.7,1.1) (1,1) };
\draw[blue,->] (.9,.3) -- (.45,.75);
\filldraw[color=violet, fill=violet](0.4,0.8) circle (0.02);
\node[right,color=blue,node font=\tiny] at (.9,.3) {$C_0$: octic curve with};
\node[right,color=blue,node font=\tiny] at (.9,0) {cusp singularity $y^3 +x^{22}$};

\draw[thick,color=gray] (-2.5-0.5,-.5) to (1.2+0.5,-.5);
\node[blk] at (-2.5+0.6,-.5) (9){};
\node[below right, node font=\tiny] at (9) {$t$};
\node[blk] at (0.6,-.5) (10){};
\node[below right, node font=\tiny] at (10) {$0$};
\node[right, node font=\tiny] at (1.2+0.5,-.5) {$\bA^1_t$};

\node[below right,node font=\tiny] at (-2.5,0.1) {$\bP^2_t$};
\node[below right, node font=\tiny] at (7) {$X = \bP^2_0$};
\node[node font=\huge, text=gray] at (-2.5+1.8,1.5) {$\rightsquigarrow$};
\end{tikzpicture}
\end{tabular}}} & [-2em] \hspace{-1in} {\resizebox{.35\textwidth}{!}{\begin{tabular}{c}
\begin{tikzpicture}[gren0/.style = {draw, circle,fill=greener!80,scale=.7},gren/.style ={draw, circle, fill=greener!80,scale=.4},blk/.style ={draw, circle, fill=black!,scale=.2},plc/.style ={draw, circle, color=white!100,fill=white!100,scale=0.02},smt/.style ={draw, circle, color=gray!100,fill=gray!100,scale=0.02},lbl/.style ={scale=.2}] 

\node[smt] at (-2.5+0,0) (1){};
\node[smt] at (-2.5+0,2) (2){};
\node[smt] at (-2.5+1.2,2.7) (3){};
\node[smt] at (-2.5+1.2,0.7) (4){};
\draw [black] plot [smooth cycle, tension=.1] coordinates { (1) (2) (3) (4) };

\draw [thick,blue] plot [smooth, tension=1] coordinates {(-2.5+.4,2) (-2.5+.2,1.4) (-2.5+.8,2) (-2.5+.3,0.8) (-2.5+1.1,1.3) (-2.5+.65,0.5)};
\node[below right,node font=\tiny,text=blue] at (-2.5+0.7,0.8) {smooth octic};

\node[smt] at (0,0) (5){};
\node[smt] at (0,2) (6){};
\node[smt] at (1.2,2.7) (7){};
\node[smt] at (1.2,0.7) (8){};
\draw [black] plot [smooth, tension=.1] coordinates { (8) (5) (6) (7) };

\node[smt] at (2.5,2.5) (11){};
\node[smt] at (2.5,1) (12){};
\draw [thick,violet] plot [smooth cycle, tension=.1] coordinates { (8) (12) (11) (7) };
\filldraw[color=teal, fill=teal](1.18,2.2) circle (0.05);
\filldraw[color=teal, fill=teal](1.18,1.2) circle (0.05);

\draw [thick,blue] plot [smooth, tension=1] coordinates {  (1.18,1.7) (1.8,1.3) (2.3,1.6) };
\draw[blue,->] (.6,1.9) -- (1,1.7);
\node[node font=\tiny,color=blue] at (.15,2.4) {singular point};
\node[node font=\tiny,color=blue] at (.2,2.1) {image of $D_X$};
\draw[blue,->] (2.6,1.4) -- (2.35,1.5);
\node[right,node font=\tiny,color=blue] at (2.6,1.4) {$D_Y$};
\filldraw[color=blue, fill=blue](1.18,1.7) circle (0.05);


\draw[thick,color=gray] (-2.5-0.5,-.5) to (1.8+0.5,-.5);
\node[blk] at (-2.5+0.6,-.5) (9){};
\node[below right, node font=\tiny] at (9) {$t$};
\node[blk] at (1,-.5) (10){};
\node[below right, node font=\tiny] at (10) {$0$};
\node[right, node font=\tiny] at (1.8+0.5,-.5) {$\bA^1_t$};


\node[below right,node font=\tiny] at (-2.5,0.1) {$\bP^2_t$};
\node[above right, node font=\tiny] at (0,0.2) {$\overline{X}$};
\node[node font=\huge, text=gray] at (-2.5+1.8,1.5) {$\rightsquigarrow$};
\end{tikzpicture}
\end{tabular}}} &  \\
\end{tikzcd}
\vspace{-.5in}
\caption{Replacement of the rational octic curve with cusp $y^3 = x^{22}$.}
\label{f:octics}
\end{figure}
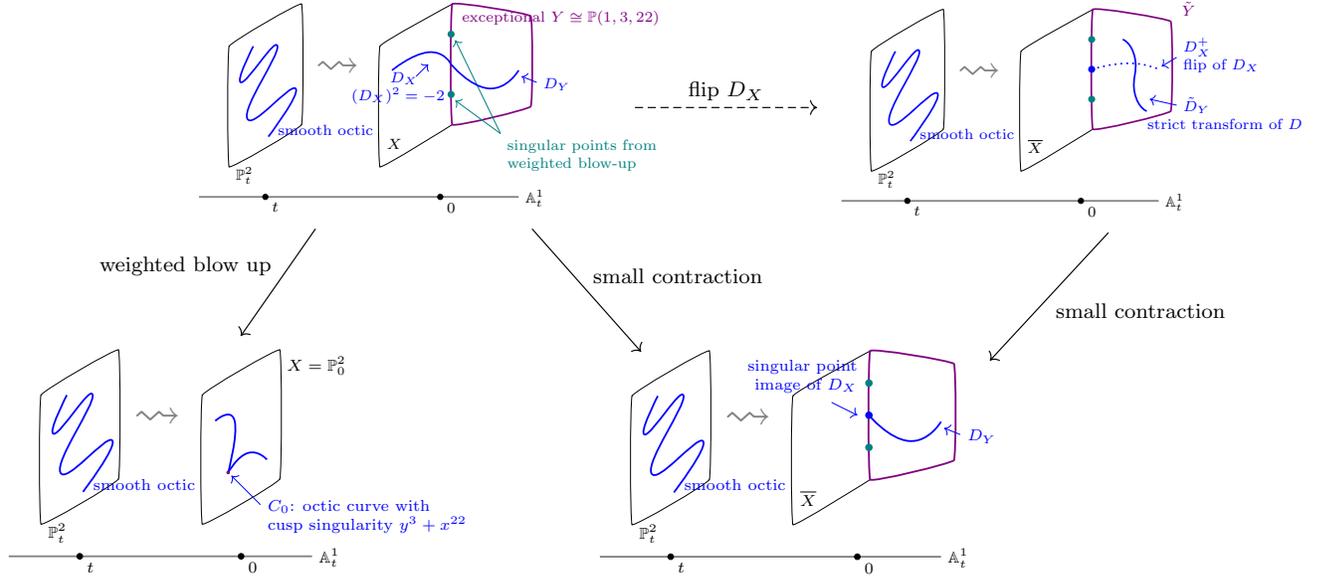

        The wall crossing on this family is summarized in the Figure \ref{f:octics}. In summary, we have produced stable limits $(X, cD)$ of pairs $(\bP^2, cC)$ where $C$ is an octic curve such that, for $c = \frac{1}{2}$, $K_X$ and $D$ are not individually $\bQ$-Cartier, and for $c \ge \frac{1}{2}$, $X$ has two components but the curve $D$ only has one component and is fully contained in only one piece of the surface.  In particular, $rK_X + D \not \sim 0$ and $D \notin |-rK_X|$ despite this relationship being satisfied for general fibers.  
\end{example}

\subsection{Moduli of boundary polarized Calabi Yau pairs}\label{ss:bpcy}

The previous sections develop the theory of wall crossing for the moduli space compactifying the locus of klt pairs $(X,cD)$ with $D \in |- rK_X|$ when $c < \min\{1, r^{-1}\}$ or $c > r^{-1}$.  It is then natural to ask for a moduli space of pairs $(X,cD)$ when $c = r^{-1}$.  In this case, the given pair is log Calabi Yau.  However, there exists a natural polarization as either the divisor $-K_X$ or the divisor $D$ is ample on each pair.  Using ideas from both K-stability and KSBA stability, in \cite{BABWILD}, a moduli stack of \textit{boundary polarized Calabi Yau} pairs is constructed.  Furthermore, this stack fits naturally between the KSBA- and K-moduli stacks.  The stack is not of finite type in general and thus cannot admit a good moduli space, but in \cite{BABWILD, BL}, it is shown that if $\dim X = 2$, this stack admits an \textit{asymptotically good moduli space} (essentially, a stabilization of good moduli spaces) completing the diagram: 

 \[
 \cM^\KSBA_{n,v,r,r^{-1}+\epsilon}\xhookrightarrow{}
 \cM^{\rm CY}_{n,v,r}\xhookleftarrow{}\cM^\K_{n,v,r,r^{-1}-\epsilon}
 \]
 with induced projective morphisms of asymptotically good moduli spaces
 \[
 M^\KSBA_{n,v,r,r^{-1}+\epsilon}\xrightarrow{}
 M^{\rm CY}_{n,v,r}\xleftarrow{}M^\K_{n,v,r,r^{-1}-\epsilon}.
 \]

Precisely, the objects parametrized by this stack are \textit{boundary polarized Calabi Yau pairs}. In \cite{BABWILD}, the coefficient $c$ below was absorbed into the divisor $D$, but for compatibility with the framework introduced in the previous section, we consider pairs $(X,cD)$.

\begin{defn}\label{d:bpcy}
For $c \in \bQ^{\ge 0}$, a \emph{boundary polarized CY pair} is a projective slc pair $(X,cD)$ such that 
\begin{enumerate}
	\item $D$ is an effective $\bQ$-divisor,
	\item $K_{X}+cD\sim 0$, and
	\item $D$ is $\mathbb{Q}$-Cartier and ample.
\end{enumerate}
Note that (3) is equivalent to the condition that $X$ is Fano. 
\end{defn}

In \cite{BABWILD}, it is shown, for fixed rational $c = r^{-1}$, $n = \dim X$, and $v = (-K_X)^n$, there exists an Artin stack $\cM^{\rm CY}_{n,v,r}$, locally of finite type, parametrizing all boundary polarized Calabi Yau pairs with these invariants.  Note that, if $(X,cD)$ is a klt boundary polarized Calabi Yau pair, then $(X,(c+\epsilon)D)$ is KSBA stable for $\epsilon \ll 1$ because it is log canonical and $K_X + (c+\epsilon )D \sim e D$ is ample.  Similarly, $(X,(c-\epsilon)D)$ is K-semistable for $\epsilon \ll 1$ by \cite[Lemma 5.3]{Zhou}.  Conversely, if $(X, (c\pm\epsilon)D)$ is KSBA or K-semistable for all $\epsilon \ll 1$, then $(X,cD)$ is a boundary polarized Calabi Yau pair.  This gives open immersions of stacks 
\[ \cM^K_{n,v,r,r^{-1}-\epsilon} \hookrightarrow \cM^{\rm CY}_{n,v,r} \hookleftarrow \cM^\KSBA_{n,v,r,r^{-1}+\epsilon}.\]

As part of the K-moduli and KSBA moduli theorems, the stacks on the left and right admit good and coarse moduli spaces, respectively.  It is natural to ask if there exists a good moduli space for $\cM^{\rm CY}_{n,v,r}$ completing the diagram.  Ultimately, the answer is no: 

\begin{example}
    Let $n = 2, v = 9, r = 1$ and consider the moduli spaces compactifying the locus of pairs $(\bP^2, cC)$ where $C$ is a cubic plane curve.  The stack $\cM^{\rm CY}_{2,9,1}$ parametrizes all smoothable slc pairs $(X,D)$ with $(-K_X)^2 = 9$ such that $K_X + D \sim 0$ and $D, -K_X$ are both $\bQ$-Cartier and ample.  There are infinitely many toric pairs satisfying this condition: $(\bP(a^2, b^2,c^2), (xyz = 0))$ (c.f. Theorem \ref{t:kltdegens}).  The pair $(\bP^2, (xyz=0))$ isotrivially specializes to all of these, so all such pairs are contained in the closure of the point $(\bP^2, (xyz = 0))$.  Furthermore, there is no closed point $[(X,D)] \in \cM^{\rm CY}_{2,9,1}$ that all such pairs specialize to: the Cartier index of $\bP(a^2,b^2,c^2)$ is unbounded.  A good moduli space must identify all of these points as their closures intersect, but as there is no closed point in this equivalence class, this does not lead to a good moduli space.
\end{example}

To remedy this situation, in dimension $2$, we define an \textit{asymptotically good moduli space}.  Stratifying the moduli space $\cM^{\rm CY}_{n,v,r}$ by substacks 
\[ \calU_1 \subset \calU_2 \subset \calU_3 \dots \]
where each $\calU_n$ represents the locus of pairs $(X,cD)$ for which $nK_X$ is Cartier, we prove that for $n \gg 0 $, each $\calU_n$ admits a good moduli space $U_n$ and these spaces $U_n$ are isomorphic.  The asymptotically good moduli space $M^{\rm CY}_{2,v,r}$ is then defined to be this stabilization.  With this definition, there is a diagram 

\begin{center}
\begin{tikzcd}
    \cM^K_{2,v,r,r^{-1}-\epsilon}  \arrow[r] \arrow[d] & \cM^{\rm CY}_{2,v,r}   \arrow[d] & \cM^\KSBA_{2,v,r,r^{-1}+\epsilon} \arrow[d] \arrow[l] \\
    M^K_{2,v,r,r^{-1}-\epsilon}  \arrow[r]  & M^{\rm CY}_{2,v,r}  & M^\KSBA_{2,v,r,r^{-1}+\epsilon} \arrow[l]
\end{tikzcd}
\end{center}
where the upper arrows are open immersions of stacks and the lower arrows are projective morphisms of varieties.  An essential technical piece needed is that the stacks $\cM^{\rm CY}_{2,v,r}$ and $\cU_n$ are $S$-complete and $\Theta$-reductive, which are valuative criterion over punctured surfaces.  By \cite{AHLH18}, these are necessary and sufficient for finite type stacks with affine diagonal to have a separated good moduli space.

This provides a complete picture of wall crossing when $\dim X = 2$.

\section{Wall crossing for quartic plane curves}\label{sec:quartics}

The aim of the next few sections is to study wall crossing in the specific case of degree $d$ plane curves.  We use the quartic case to describe a \textit{complete} example, highlighting essential techniques throughout the example. 

\noindent \textbf{Notation.}  Fix $n = 2$, $v = 9$ and $r = \frac{d}{3}$ for some integer $d > 3$\footnote{When $d \le 3$, these spaces are described in \cite{ADLpub, BABWILD}.}.  For $c \in (0, \frac{3}{d})$, define $P^K_{d,c} \subset M^K_{2,9, \frac{d}{3},c}$ to be the irreducible component of the K-moduli space such that the general point is $(\bP^2, cC)$ where $C$ is a degree $d$ plane curve.  When $c = \frac{3}{d}$, define $P^{\rm CY}_{d} \subset M^{\rm CY}_{2,9, \frac{d}{3}}$ to be the irreducible component of the boundary polarized Calabi Yau moduli space such that the general point is $(\bP^2, cC)$ where $C$ is a degree $d$ plane curve.  Finally, when $c \in (\frac{3}{d}, 1]$, define $P^\KSBA_{d,c} \subset M^\KSBA_{2,9, \frac{d}{3},c}$ to be the irreducible component of the KSBA space such that the general point is $(\bP^2, cC)$ where $C$ is a degree $d$ plane curve.  We will study the wall crossing on these components as $c$ varies and may omit the superscript $\K, \KSBA, \rm CY$ as this is clear from choice of $c$.  

\subsection{Toolbox}

We first present key results that are used repeatedly in explicit computations. 

When constructing moduli spaces of pairs $(X,D)$, it is useful to have an idea of potential degenerations of $X$ that may appear in the moduli problem.  If $X = \bP^2$, we have a full understanding of the normal, log canonical degenerations, stated here in the klt case: 

\begin{theorem}[\cite{hp}]\label{t:kltdegens}
    Let $X$ be a klt degeneration of $\bP^2$, Then $X$ is isomorphic to one of the following surfaces: 
        \begin{enumerate}
            \item a weighted projective space $\bP(a^2,b^2,c^2)$ such that $a^2 + b^2 + c^2 = 3abc$, or 
            \item a partial smoothing of a surface in (1). 
        \end{enumerate}
\end{theorem}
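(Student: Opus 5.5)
The plan is to follow Hacking--Prokhorov. Let $X$ be a klt degeneration of $\bP^2$, so there is a $\bQ$-Gorenstein family $\calX \to B$ over a smooth curve germ with $\calX_b \cong \bP^2$ for $b \ne 0$ and $\calX_0 = X$ klt. The central fiber inherits the following:
\begin{itemize}
\item $X$ is a normal projective surface with quotient singularities and $-K_X$ ample;
\item $K_X^2 = 9$;
\item $\rho(X) = 1$, since $H^2$ is constant in a $\bQ$-Gorenstein smoothing with rational singularities and $h^1 = h^2 = 0$ for $\cO$;
\item every singularity of $X$ admits a $\bQ$-Gorenstein smoothing, so it is a T-singularity, and in fact of class T, i.e. $\frac{1}{dn^2}(1, dna-1)$.
\end{itemize}
The topological constraints come from comparing the Milnor fibers. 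The smoothing replaces each singular point by its Milnor fiber $M_P$, and $\chi(\bP^2) = 3$ gives
\[\chi(X) - \#\mathrm{Sing}(X) + \textstyle\sum_P \chi(M_P) = 3.\]
Since $b_2(X) = 1$, we have $\chi(X) = 3$, so $\sum_P \mu_P = 0$, where $\mu_P$ is the Milnor number. Hence every singularity is a Wahl singularity $\frac{1}{n^2}(1, na-1)$, which is rational with a rational-homology-ball Milnor fiber. Moreover, $H_1$ and torsion constraints, via $\pi_1$ of the smooth locus and $H^2(X,\bZ)$ compared with $H^2(\bP^2,\bZ)$, will bound the number of singular points by three.

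The main step is to show that when $X$ has exactly three singular points $\frac{1}{a^2}(1,\cdot)$, $\frac{1}{b^2}(1,\cdot)$, $\frac{1}{c^2}(1,\cdot)$, it is toric and equals $\bP(a^2,b^2,c^2)$. Two arguments are available:
\begin{itemize}
\item Orbifold Bogomolov--Miyaoka--Yau / Noether-type equality: the relation $K^2 = 9$, together with the local Noether correction terms for Wahl singularities, forces the Markov equation $a^2+b^2+c^2 = 3abc$.
\item A more direct route, following Manetti: take the minimal resolution $\tilde X$, which is a rational surface, and run the MMP or find a $\bP^1$-fibration showing $X$ is a toric surface with three T-singularities. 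A toric surface with $\rho = 1$ is a fake weighted projective plane; the condition $\pi_1^{\mathrm{orb}} = 1$, inherited from simple connectivity of $\bP^2$ and the rational ball fillings, makes it a genuine weighted projective plane $\bP(a^2,b^2,c^2)$. Then $K^2 = (a^2+b^2+c^2)^2/(a^2b^2c^2) = 9$ gives the Markov equation.
\end{itemize}

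If $X$ has fewer than three singular points, it should be a partial smoothing of one of these toric surfaces. The approach is to reverse the process: show the $\bQ$-Gorenstein deformation space is unobstructed ($H^2(T_X) = 0$ for log del Pezzo surfaces), so local smoothings of individual singularities globalize, and classify such $X$ via Markov triples with some entries equal to $1$. Concretely, one shows that $X$ with singularities of indices $a, b$ deforms to or from $\bP(1,a^2,b^2)$ or $\bP(a^2,b^2,c^2)$ by locally smoothing the remaining point. This is done through the same Milnor-fiber and $K^2$ bookkeeping together with the uniqueness of the surface given its singularities, which follows from rigidity of the weighted projective plane and the mutation structure of Markov triples.

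I expect the main obstacle to be the toric recognition step: proving that a $\rho = 1$ log del Pezzo surface with $K^2 = 9$ and only Wahl singularities is toric, rather than some other surface with the same numerics. This requires a genuinely geometric argument, for instance constructing a $\bC^*$-action or an explicit anticanonical cycle of rational curves (a degenerate cubic $xyz = 0$ persists as a limit of triangles of lines in $\bP^2$). I would first try the triangle-of-lines limit, since its lc-ness on $X$ would give a toric boundary directly.
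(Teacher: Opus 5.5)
The survey gives no proof of this statement; it only cites Hacking--Prokhorov. So I am measuring your outline against what a complete argument has to supply. Your preliminary reductions are essentially right, with one fix. The fact $\rho(X)=1$ should come from the identity $b_2(X)+\sum_P\mu_P=b_2(\bP^2)=1$, which gives $b_2(X)=1$ and all $\mu_P=0$ at once, or from injectivity of the specialization map on Picard groups. It should not come from constancy of $H^2$, which fails for $\bQ$-Gorenstein smoothings of T-singularities with $\mu_P>0$.

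The real problem is that the steps carrying the content of the theorem are not proved. In the three-point case nothing in your outline actually recognizes $X$. Noether/BMY-type bookkeeping only constrains numerical invariants and cannot distinguish $X$ from another rank one log del Pezzo surface with the same singularities and $K^2=9$. The bound ``at most three singular points'' is asserted with no mechanism. Your fallback, the flat limit of a family of triangles of lines, does not help as stated: nothing forces the limit to be reduced, to have three components, or to be log canonical on $X$, and choosing the family so that it is amounts to the conclusion.

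What is needed is an honest lc anticanonical boundary on $X$. A natural source is $|-K_X|$ itself, and this is only a suggested route, not the cited paper's argument. The dimension $h^0(-K_X)=10$ is preserved in the $\bQ$-Gorenstein family. At a point of type $\frac{1}{n^2}(1,na-1)$, the toric boundary $uv$ is a minimal generator of the stalk of $\cO_X(-K_X)$. So, once one checks that global sections realize this generator and that the general member is lc elsewhere, a general $C\in|-K_X|$ has a node at every singular point and $(X,C)$ is lc with $K_X+C\sim 0$. Adjunction then makes $C$ a cycle of rational curves with as many components as nodes. Such a boundary on a surface with $\rho=1$ has at most $\rho+2=3$ components (complexity $\geq 0$). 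This bounds the number of singular points by three, and when there are exactly three it forces complexity zero and hence toricity.

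The more serious gap is the case of fewer than three singular points, where your argument runs in the wrong direction. Unobstructedness ($H^2(T_X)=0$) shows that $\bP(a^2,b^2,c^2)$ has $\bQ$-Gorenstein partial smoothings with any prescribed subset of its singular points smoothed, i.e.\ that surfaces of type (2) exist. It says nothing about whether a \emph{given} $X$ with one or two Wahl points is one of them. Even the fact that the indices of such an $X$ extend to a Markov triple is part of what must be proved.

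To conclude, you must either construct a $\bQ$-Gorenstein degeneration of the given $X$ to a toric surface, or prove that surfaces with these singularities form a single deformation class. Your ``uniqueness of the surface given its singularities'' is exactly this statement. It does not follow from rigidity of weighted projective planes, since your $X$ is not toric, nor from the combinatorics of Markov mutations. It requires a genuine geometric construction on $X$ itself, for instance an analysis of its minimal resolution together with an anticanonical cycle as above, and that is the missing piece of your proposal.
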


It is also useful, if possible, to control the allowed singularities of pairs $(X, cD)$ appearing in the moduli problem in terms of the coefficient $c$. In K-moduli, this can be done using another invariant called the normalized volume. 

\begin{defn}
Let $(X,D)$ be a klt log pair of dimension $n$. Let $x\in X$ be a closed point. A \emph{valuation $v$ on $X$ centered at $x$} is a valuation of $\bC(X)$ such that $v\geq 0$ on $\cO_{X,x}$ and $v>0$ on $\fm_x$. The set of such valuations is denoted by $\Val_{X,x}$.
The \emph{volume} is a function $\vol_{X,x}:\Val_{X,x}\to \bR_{\geq 0}$ defined as 
\[
\vol_{X,x}(v):=\lim_{k\to\infty}\frac{\dim_{\bC}\cO_{X,x}/\{f\in\cO_{X,x}\mid v(f)\geq k\}}{k^n/n!}.
\]

The \emph{log discrepancy}  is a function $A_{(X,D)}:\Val_{X,x}\to \bR_{>0}\cup\{+\infty\}$ defined in \cite{JM}. Note that if $v=a\cdot\ord_E$ where $a\in\bR_{>0}$ and $E$ is a prime divisor over $X$ centered at $x$, then 
\[
A_{(X,D)}(v)=a(1+\ord_E(K_{Y}-\pi^*(K_X+D))),
\]
where $\pi:Y\to X$ is a birational model of $X$ containing $E$ as a divisor.

The \emph{normalized volume} defined in \cite{Li18} is a function $\hvol_{(X,D),x}:\Val_{X,x}\to \bR_{>0}\cup\{+\infty\}$ defined as
\[
\hvol_{(X,D),x}(v):=\begin{cases}
A_{(X,D)}(v)^n\cdot\vol_{X,x}(v) & \textrm{ if } A_{(X,D)}(v)<+\infty\\
+\infty & \textrm{ if }A_{(X,D)}(v)=+\infty
\end{cases}
\]

The \emph{local volume} of a klt singularity $x\in (X,D)$ is defined as 
\[
\hvol(x,X,D):=\min_{v\in\Val_{X,x}}\hvol_{(X,D),x}(v).
\]
Note that the existence of a $\hvol$-minimizer is proven in \cite{Blu18}.
\end{defn}

The following theorem from \cite{LL19} generalizing \cite[Theorem 1.1]{Fuj15} and \cite[Theorem 1.2]{Liu18}, known as the \textit{Local-to-Global principal} can be a key tool in explicit computations. 

\begin{thm}[{\cite[Proposition 4.6]{LL19}}]\label{thm:local-vol-global}
Let $(X,D)$ be a K-semistable log Fano pair of dimension $n$. Then for any closed point $x\in X$, we have 
\[
(-K_X-D)^n\leq \left(1+\frac{1}{n}\right)^n\hvol(x,X,D).
\]
\end{thm}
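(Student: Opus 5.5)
The plan is to combine the valuative criterion (Theorem \ref{thm:valcriteria}) with an elementary lower bound on the expected vanishing order $S_{X,D}(E)$ in terms of the local volume of $\ord_E$. Set $L = -K_X - D$, which is ample, and fix a closed point $x \in X$. The first step is a reduction: it suffices to show that for every prime divisor $E$ over $X$ with center $x$,
\[ A_{X,D}(E)^n \cdot \vol_{X,x}(\ord_E) \;\ge\; \left(\tfrac{n}{n+1}\right)^n L^n . \]
Both sides of this inequality scale correctly under $v \mapsto a v$, since $\hvol$ is homogeneous of degree $0$. It is also known (Li, Liu) that $\hvol(x,X,D)$ equals the infimum of $\hvol_{(X,D),x}$ over divisorial valuations centered at $x$, so the displayed inequality gives the theorem.

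The key step is the volume estimate. For $t \ge 0$ and $m$ divisible enough, let $\mathfrak a_{mt} = \{ f \in \cO_{X,x} \mid \ord_E(f) \ge mt\}$. Sections of $mL$ vanishing to order $\ge mt$ along $E$ are exactly the kernel of $H^0(X,mL) \to mL \otimes \cO_{X,x}/\mathfrak a_{mt}$. Hence
\[ h^0(Y, m(\mu^*L - tE)) \ge h^0(X,mL) - \dim_{\bC} \cO_{X,x}/\mathfrak a_{mt} . \]
Dividing by $m^n/n!$ and letting $m \to \infty$ gives
\[ \vol(L - tE) \ge L^n - \vol_{X,x}(\ord_E)\, t^n . \]
Write $V = \vol_{X,x}(\ord_E)$ and $T = (L^n/V)^{1/n}$. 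Integrating the bound only over $[0,T]$, where the right side is nonnegative, and using $\vol \ge 0$ elsewhere, we get
\[ S_{X,D}(E) \ge \frac{1}{L^n}\int_0^{T} (L^n - V t^n)\, dt = \frac{n}{n+1}\left(\frac{L^n}{V}\right)^{1/n} . \]

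Next we apply K-semistability. By Theorem \ref{thm:valcriteria}, $A_{X,D}(E) \ge S_{X,D}(E)$. Combining this with the bound above and raising to the $n$-th power gives $A_{X,D}(E)^n V \ge (\frac{n}{n+1})^n L^n$, which is the required inequality. Note that $(X,D)$ is klt by Theorem \ref{thm:kssisklt}, so the local volume is defined.

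The main obstacle is the reduction to divisorial valuations. The minimizer of $\hvol$ provided by \cite{Blu18} need not be divisorial; it is quasi-monomial. I would handle this in one of two ways. The first is to quote that $\hvol$ is approximated by divisorial valuations, using the continuity of $A$ and $\vol$ on quasi-monomial simplices. The second is to run the same argument directly for a quasi-monomial valuation $v$, using the filtration by the valuation ideals $\mathfrak a_{mt}(v)$ and the valuative criterion in its form for valuations with $A(v)<\infty$ (i.e. $\delta \ge 1$ holds for all such $v$ by approximation). The volume computation above works verbatim for any valuation centered at $x$, so only the inequality $A \ge S$ needs this extension.
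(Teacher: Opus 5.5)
Your argument is correct. It is essentially the standard Fujita--Liu proof behind the cited result; the paper itself gives no argument beyond quoting \cite[Proposition 4.6]{LL19}. That proof runs as yours does: the colength estimate $\vol(\mu^*L-tE)\ge L^n-\vol_{X,x}(\ord_E)\,t^n$ integrates to $S_{X,D}(E)\ge\frac{n}{n+1}\bigl(L^n/\vol_{X,x}(\ord_E)\bigr)^{1/n}$, and combining this with $A_{X,D}(E)\ge S_{X,D}(E)$ and taking the infimum over divisorial valuations gives the bound.

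The ``main obstacle'' you flag is already settled by the fact you quote at the start. That fact is that $\hvol(x,X,D)$ equals the infimum of $\hvol_{(X,D),x}(\ord_E)$ over divisors $E$ centered at $x$, which follows from Liu's formula $\hvol=\inf_{\mathfrak a}\lct(X,D;\mathfrak a)^n e(\mathfrak a)$. So no minimizer is needed. Note also that quasi-monomiality of the minimizer is a later theorem of Xu, not part of \cite{Blu18}.
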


\begin{property}\label{propsofvol}
We list several important properties of the normalized volume.
\begin{enumerate}
    \item Assuming $K_X$ is $\bQ$-Cartier (which holds in the K-moduli spaces of pairs in this survey by Remark \ref{rmk:fiberwiseDinKX}), by definition we have $A_X(v) \ge A_{X,D}(v)$ so $\widehat{\vol}(x,X) \ge \widehat{\vol}(x,X,D)$.
    \item \cite{dFEM,Li18} If $X$ has dimension $n$ and $x \in X$ is smooth, then 
    \[ \widehat{\vol}(x,X) = n^n \]
    \item \cite{LXcubics} If $X$ has dimension $n$, then for any $x \in X$, \[ \widehat{\vol}(x,X) \le n^n \] and equality holds if and only if $x$ is smooth.  
    \item \cite{Liu18} If $x \in X = (0 \in \mathbb{A}^n/G)$ is a quotient singularity where $G \subset \GL_n(\bC)$ acts freely in codimension 1, then \[\widehat{\vol}(x,X) = \frac{n^n}{|G|}.\] If $x \in X$ is a quotient of an arbitrary variety by $G \subset \GL_n(\bC)$, then \[\widehat{\vol}(x,X) \le \frac{n^n}{|G|}.\] Combining this with Theorem \ref{thm:local-vol-global} and (1) gives: if $X$ is a K-semistable log Fano pair, then for any quotient singularity $x \in X = (0 \in \mathbb{A}^n/G)$, 
    \[ (-K_X)^n \le \frac{(n+1)^n}{|G|}. \]
    \item Combining the previous points, the normalized volume gives an \textit{index bound} on the canonical divisor in a K-semistable pair $(X,cD)$ assuming $K_X$ is $\bQ$-Cartier.  Let $m$ be the minimal positive integer such that $mK_X$ is Cartier at $x \in X$.  Let $Y$ be the canonical cyclic cover, so $Y \to X$ is a quotient by $\mu_m$.  Then, 
    \[ (-K_X-cD)^n \le \left( 1 + \frac{1}{n}\right)^n \widehat{\vol}(x,X,D) \le  \left( 1 + \frac{1}{n}\right)^n \widehat{\vol}(x,X) \le   \left( 1 + \frac{1}{n}\right)^n \frac{n^n}{m} = \frac{(n+1)^n}{m}. \]
    For example, if $(-K_X - cD)^n > \frac{(n+1)^n}{2}$, this implies $-K_X$ must be Cartier.  In particular, if $n = 2$ and $D = 0$, this says any K-semistable Fano surface with $(-K_X)^2 > \frac{9}{2}$ must be Gorenstein and hence have only canonical singularities. 
    \item \cite{LXcubics} \textit{Gap Conjecture:} if $x\in X$ is \textit{not} a smooth point, then 
    \[ \widehat{\vol}(x,X) \le 2(n-1)^n \] and equality holds if and only if $x$ is an ordinary double point. This is known if $X$ has dimension $n = 2 $ or $n = 3$.
    \item Suppose $\dim X = 2$ or $3$ and $(X,cD)$ is K-semistable.  Then, $(X,cD)$ is klt.  Assume $K_X$ is $\bQ$-Cartier (which holds if $\dim X = 2$, see \cite[Chapter 4]{KM98}).  If $x \in X$ is not a smooth point, then (6) implies
    \[ (-K_X-cD)^n \le \left( 1 + \frac{1}{n}\right)^n \widehat{\vol}(x,X,D) \le  \left( 1 + \frac{1}{n}\right)^n \widehat{\vol}(x,X) \le   \left( 1 + \frac{1}{n}\right)^n 2(n-1)^n = 2 \left(n - \frac{1}{n} \right)^n. \]
    When $n = 2$, this says $(-K_X - cD)^2 \le \frac{9}{2}$ and when $n = 3$ this says $(-K_X - cD)^3 \le \frac{1024}{27} < 38$.  In particular, if $X$ is a K-semistable Fano surface or threefold with $\vol(X) \ge 5$ or $\vol(X) \ge 38$, it must be smooth.
\end{enumerate}
\end{property}

In the proportional case, we also have a key result known as \textit{interpolation} allowing us to describe K-stability of pairs $(X,cD)$ for a range of coefficients $c$. 

\begin{prop}{\cite[Proposition 2.13]{ADLpub}}\label{prop:k-interpolation}
Let $X$ be a $\bQ$-Fano variety. Let $D_1$ and $D_2$ be effective $\bQ$-divisors on $X$ satisfying the following properties:
\begin{itemize}
    \item Both $D_1$ and $D_2$ are rational multiples of $-K_X$ under $\bQ$-linear equivalence.
    \item $-K_X-D_1$ is ample, and $-K_X-D_2$ is nef.
    \item The log pairs $(X,D_1)$ and $(X,D_2)$ are K-(poly/semi)stable and K-semistable, respectively.
\end{itemize}
Then we have
\begin{enumerate}
    \item If $D_1\neq 0$, then $(X,tD_1+(1-t)D_2)$ is K-(poly/semi)stable for any $t\in (0,1]$.
    \item If $D_1=0$, then $(X,(1-t)D_2)$ is K-semistable
    for any $t\in (0,1]$.
\end{enumerate}
\end{prop}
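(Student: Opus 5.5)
The plan is to work directly with the valuative criterion, Theorem \ref{thm:valcriteria}, and to use the linearity of $A$ and $S$ in the boundary. Write $D_1 \sim_{\bQ} -a_1K_X$ and $D_2 \sim_{\bQ} -a_2K_X$ with $a_1 < 1$ and $a_2 \le 1$. Put $D_t = tD_1 + (1-t)D_2$; then $D_t \sim_{\bQ} -a_tK_X$ with $a_t = ta_1 + (1-t)a_2 < 1$ for $t \in (0,1]$, so $(X,D_t)$ is log Fano. For any prime divisor $E$ over $X$ we have
\[ A_{X,D_t}(E) = tA_{X,D_1}(E) + (1-t)A_{X,D_2}(E), \]
since $\ord_E$ is linear in the boundary. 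Because the boundaries are proportional to $-K_X$, we also have $S_{X,D}(E) = (1-a)S_X(E)$, which gives
\[ S_{X,D_t}(E) = tS_{X,D_1}(E) + (1-t)S_{X,D_2}(E). \]
When $a_2 = 1$, the second term is $0$. I will interpret $K$-semistability of $(X,D_2)$ in that case as lc-ness, i.e. $A_{X,D_2} \ge 0$, which is the natural convention.

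For semistability: from $A_{X,D_1}(E) \ge S_{X,D_1}(E)$ and $A_{X,D_2}(E) \ge S_{X,D_2}(E)$, summing the two identities gives $A_{X,D_t}(E) \ge S_{X,D_t}(E)$. Hence $\delta(X,D_t)\ge 1$. Before applying the criterion I must also confirm klt-ness, so that the valuative criterion makes sense. This holds because $A_{X,D_t} \ge tA_{X,D_1} > 0$, using that $(X,D_1)$ is klt by Theorem \ref{thm:kssisklt}. This settles the semistable parts of both (1) and (2). For (2), $D_1 = 0$ means $X$ itself is K-semistable, and the same averaging applies.

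For stability in (1): if $(X,D_1)$ is K-stable, then $A_{X,D_1}(E) > S_{X,D_1}(E)$ for every $E$. Since $t > 0$, the averaged inequality becomes strict, so $(X,D_t)$ is K-stable.

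Polystability is the main obstacle, because it is not purely valuative. My approach is as follows. Suppose $(X,D_t)$ is K-semistable and has a special test configuration with $\Fut = 0$, induced by a divisor $E$ with $\delta_{X,D_t}(E) = 1$; this correspondence is the one between special test configurations and divisors used in the proof sketch of the wall-crossing theorem, e.g. \cite[Lemma 3.4]{Zhou}. The averaged equality then forces $A_{X,D_1}(E) = S_{X,D_1}(E)$ and $A_{X,D_2}(E) = S_{X,D_2}(E)$. Hence $\beta_{X,D_1}(E) = 0$, where $\beta = A - S$, is the Futaki invariant of the corresponding test configuration of $(X,D_1)$. Since $\cX$ and $\cX_0$ depend only on $E$ and $X$, this is the same degeneration of $X$, now with boundary $D_1$. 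By polystability of $(X,D_1)$, it must be a product test configuration. I expect the delicate points to be the following: checking that the test configuration of $(X,D_t)$ induced by $E$ is also special (or at least normal with the same Futaki sign) for $(X,D_1)$; and that a product configuration for $D_1$ gives a product configuration for $D_t$. The latter requires that the $\bG_m$-action preserve $D_2$ as well. I would deduce this from $\Fut_{D_2} = 0$ together with semistability of $(X,D_2)$, or else replace the test-configuration argument by the uniqueness of polystable degenerations via $\Theta$-reductivity. This is the step where \cite{ADLpub} requires genuine work beyond the linear algebra above.
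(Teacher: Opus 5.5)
Your treatment of semistability and stability is correct, and it is the standard argument. The identities $A_{X,D_t}=tA_{X,D_1}+(1-t)A_{X,D_2}$ and $S_{X,D_t}=tS_{X,D_1}+(1-t)S_{X,D_2}$ give $\beta_{X,D_t}(E)=t\beta_{X,D_1}(E)+(1-t)\beta_{X,D_2}(E)$ for every $E$, and klt-ness of $(X,D_t)$ follows as you say. (The paper itself gives no proof; it quotes \cite{ADLpub}.)

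The first delicate point you raise for polystability is also fine. Since $(X,D_1)$ is K-semistable and $\beta_{X,D_1}(E)=0$, the degeneration induced by $E$ is a special test configuration of $(X,D_1)$ with vanishing Futaki invariant; this is a standard fact, cf.\ \cite{LX14}. By polystability it is therefore induced by a one-parameter subgroup $\sigma\colon\bG_m\to\mathrm{Aut}(X,D_1)$.

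\textbf{The gap is the last step.} Neither route you suggest can close it, because for arbitrary $D_1,D_2$ the polystable half of (1) is false. Here is a counterexample.
\begin{itemize}
\item Take $X=\bP^1$ and $D_1=a([0]+[\infty])$ with $0<a<1$; this pair is K-polystable.
\item Take $D_2=[0]+[1]$. It is lc with $K_X+D_2\sim 0$, hence K-semistable by \cite{Odaka13}.
\item For $t\in(0,1)$, $D_t$ has coefficients $ta+1-t$, $ta$, $1-t$ at $0,\infty,1$. Then $A_{X,D_t}(\ord_0)=t(1-a)=\tfrac12\deg(-K_X-D_t)=S_{X,D_t}(\ord_0)$.
\item The test configuration induced by $\ord_0$ comes from the $\bG_m$-action fixing $0$ and $\infty$. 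It is special, has Futaki invariant $0$, and degenerates $(X,D_t)$ to $(\bP^1,(ta+1-t)([0]+[\infty]))$.
\item It is not a product, since the boundary loses a point.
\end{itemize}
So $(X,D_t)$ is strictly semistable, not polystable. In this example $\Fut_{D_2}=0$ and $(X,D_2)$ is semistable, yet $\sigma$ moves $D_2$. That is exactly the implication you hoped to derive, and no $\Theta$-reductivity argument can rescue a false conclusion.

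What is missing is an extra hypothesis, for instance $\Supp D_2\subseteq\Supp D_1$. This holds in every application in the paper, where $D_1=c_0D$ and $D_2=r^{-1}D$ with $c_0>0$. Under this hypothesis your argument closes at once:
\begin{itemize}
\item The connected group $\sigma(\bG_m)$ preserves $D_1$, so it maps each of the finitely many components of $D_1$ to itself.
\item Hence it preserves $D_2$ and $D_t$.
\item So the test configuration of $(X,D_t)$ is a product.
\end{itemize}
This is also why the statement treats $D_1=0$ separately in (2) and claims only semistability there: nothing forces $\sigma$ to preserve $D_2$. The same failure occurs whenever $D_2$ has a component outside $\Supp D_1$.
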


In the case that $D \in |-rK_X|$ explored in this survey, we will use this to determine K-stability of $(X,r^{-1}D)$.  While not mentioned in here, one can define the notion of K-stability more generally for any polarized pair, and for log Calabi-Yau pairs, the K-stability is straightforward to determine.  By \cite{Odaka13}, a pair $(X,cD)$ with $-K_X - cD \sim 0$ is K-semistable if and only if it is slc.  Therefore, if a pair $(X, r^{-1}D)$ in our setting is slc, it is K-semistable.  As $-K_X - r^{-1} D \sim 0$ is nef, if for example $X$ is K-semistable, we may conclude by (2) above that $(X, cD)$ is K-semistable for any $c \in [0, r^{-1})$.  We state a precise version below.

\begin{cor}
    Suppose $(X,cD)$ is a log Fano pair such that $D \sim -rK_X$. If $(X,c_0D)$ is K-(poly/semi)stable for some $c_0 \le r^{-1}$ and the log canonical threshold $\lct(X,D) \ge r^{-1}$, then $(X,cD)$ is K-(poly/semi)stable for any $c \in (c_0, r^{-1})$.  
\end{cor}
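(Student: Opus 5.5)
The plan is to apply the interpolation result, Proposition \ref{prop:k-interpolation}, with $D_1 = c_0 D$ and $D_2 = r^{-1}D$. Any $c \in (c_0, r^{-1})$ can then be written as $c = t c_0 + (1-t) r^{-1}$ for a unique $t \in (0,1)$. This gives $cD = tD_1 + (1-t)D_2$, and the conclusion will follow once the hypotheses of the proposition are verified.

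First, the linear-algebraic hypotheses. Since $D \sim -rK_X$, both $D_1 = c_0 D \sim_{\bQ} -c_0 r K_X$ and $D_2 = r^{-1}D \sim_{\bQ} -K_X$ are rational multiples of $-K_X$. For ampleness we use $-K_X - D_1 \sim_{\bQ} (1 - c_0 r)(-K_X)$. Because $(X,c_0D)$ is assumed log Fano, $-K_X - c_0 D$ is ample; in particular $c_0 < r^{-1}$, since at $c_0 = r^{-1}$ the interval $(c_0, r^{-1})$ is empty and there is nothing to prove. Also $-K_X - D_2 \sim_{\bQ} 0$, which is nef. We also need $X$ to be a $\bQ$-Fano variety: $-K_X$ is ample because $(1-c_0r)(-K_X)$ is ample with $1 - c_0 r > 0$, and $X$ is klt because $(X, c_0 D)$ is K-semistable, hence klt by Theorem \ref{thm:kssisklt}, and removing an effective boundary preserves klt.

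Second, the stability hypotheses. $(X, D_1)$ is K-(poly/semi)stable by assumption. The more delicate point is that $(X, D_2) = (X, r^{-1}D)$ is K-semistable. This pair is log Calabi–Yau, since $K_X + r^{-1}D \sim_{\bQ} 0$. Because $\lct(X,D) \ge r^{-1}$, it is log canonical, and as $X$ is normal this is the same as slc. By the result of Odaka quoted just before the corollary, a log Calabi–Yau pair is K-semistable exactly when it is slc, so $(X, r^{-1}D)$ is K-semistable. This is the step I expect to need the most care, since K-semistability of a Calabi–Yau pair lies outside the Fano framework set up earlier. I would rely on the cited equivalence rather than reprove it, and check only that it is the notion of K-semistability that Proposition \ref{prop:k-interpolation} uses for its nef (but not ample) input $D_2$.

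Finally, we split into cases. If $c_0 > 0$, then $D_1 \neq 0$ (assuming $D \neq 0$), and part (1) of Proposition \ref{prop:k-interpolation} gives K-(poly/semi)stability of $(X, cD)$ for all $t \in (0,1]$. If $c_0 = 0$, then $D_1 = 0$, and part (2) yields only K-semistability of $(X,(1-t)r^{-1}D)$. To obtain polystability in this case, I would run the argument a second time: pick some $c_0' \in (0, c)$ at which $(X, c_0'D)$ is known to be K-polystable, then apply part (1) with $D_1 = c_0'D$. Otherwise the statement should be read as semistability when $c_0 = 0$.
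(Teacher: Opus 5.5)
Your proposal is correct and follows the paper's own proof: apply Proposition \ref{prop:k-interpolation} with $D_1 = c_0D$ and $D_2 = r^{-1}D$, and use Odaka's criterion (a log canonical log Calabi--Yau pair is K-semistable) for $(X,D_2)$; your hypothesis checks fill in what the paper leaves implicit. The one thing to drop is the rerun via some K-polystable $(X,c_0'D)$ when $c_0=0$, because no such $c_0'$ need exist: for $X=\bP^2$, $r=1$, $D$ a nodal cubic, $(\bP^2,cD)$ degenerates to the K-polystable $(\bP^2,c\{xyz=0\})$ with vanishing Futaki invariant and is strictly K-semistable for all $c\in(0,1)$, so for $c_0=0$ only the semistable conclusion holds, as you suspected.
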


\begin{proof}
    Apply Proposition \ref{prop:k-interpolation} for $D_1 = c_0 D$ and $D_2 = r^{-1}D$, using \cite{Odaka13} to conclude $(X,D_2)$ is K-semistable.  
\end{proof}

To produce pairs $(X,cD)$ that are K-semistable for small coefficient, we relate K-stability to Geometric Invariant Theory. We present the case just for $X = \bP^n$, but this holds more generally, see \cite[Theorem 2.22]{ADLpub} or \cite[Theorem 1.11]{LZwalls}. 

\begin{theorem}[\cite{ADLpub}]\label{thm:k=git}
    For $c \ll 1$, the K-moduli stack (space) parameterizing K-semi(poly)stable limits of pairs $(\bP^n, cD)$, where $D$ is a degree $d$ hypersurface, is isomorphic to the GIT moduli stack (space).  
\end{theorem}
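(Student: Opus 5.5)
We want to show that for $0 < c \ll 1$ the K-moduli stack of limits of $(\bP^n, cD)$, with $D$ a degree $d$ hypersurface, coincides with the GIT quotient stack $[\bP(H^0(\bP^n,\calO(d)))^{ss}/\PGL_{n+1}]$, and likewise for the good moduli spaces. The argument has two parts. First, we show that every K-semistable pair in the relevant component has underlying variety $X \cong \bP^n$. Second, for such pairs, we identify K-(semi/poly)stability with GIT (semi/poly)stability of $D$.

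For the first part, the volume $(-K_X - cD)^n = (1 - c\tfrac{d}{n+1})^n (n+1)^n$ tends to $(n+1)^n$ as $c \to 0$. By the local-to-global principle (Theorem \ref{thm:local-vol-global}) and Property \ref{propsofvol}(1), every closed point $x \in X$ satisfies
\[ \widehat{\vol}(x,X) \ge \left(\tfrac{n}{n+1}\right)^n (-K_X-cD)^n = n^n\left(1 - \tfrac{cd}{n+1}\right)^n. \]
When $n = 2$ or $3$, the gap statement in Property \ref{propsofvol}(6) shows this forces $X$ to be smooth once $c$ is small. In general, I would argue by boundedness and openness. The set of K-semistable $X$ arising here is bounded, and the smooth K-polystable $\bP^n$ lies in it. For small $c$, the K-polystable degeneration of $(X,cD)$ as $c \to 0$ degenerates $X$ to a K-semistable Fano with $\widehat{\vol}$ at least $n^n - O(c)$. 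Combining this with a uniform gap for bounded families, using constructibility of local volumes, forces smoothness; the only smooth Fano with volume $(n+1)^n$ in this family is $\bP^n$. Making this uniform in arbitrary dimension is the main obstacle. I would cite the general statement \cite[Theorem 2.22]{ADLpub} or \cite{LZwalls} if a self-contained gap is unavailable, and do the elementary argument only for $n \le 3$.

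For the second part, with $X = \bP^n$ fixed, a special test configuration of $(\bP^n, cD)$ for small $c$ must have central fiber $\bP^n$ by the first part. It is therefore induced by a one-parameter subgroup $\lambda$ of $\PGL_{n+1}$. The Futaki invariant is affine-linear in $c$:
\[ \Fut_c(\lambda) = (1 - \tfrac{cd}{n+1})\Fut_0(\lambda) + c\, \kappa\, \mu(D,\lambda), \]
where $\Fut_0(\lambda)$ vanishes for product configurations of $\bP^n$ (it is K-polystable with $\Fut = 0$ on its automorphisms), and $\mu$ is, up to a positive constant $\kappa$, the Hilbert–Mumford weight of $D$. I would verify this formula by computing $(-K_{\overline{\calX}/\bP^1} - c\overline{\calD})^{n+1}$ on the $\bP^n$-bundle over $\bP^1$ associated to $\lambda$. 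Given the formula, K-semistability for small $c$ is equivalent to $\mu(D,\lambda) \ge 0$ for all $\lambda$, which is exactly GIT semistability. The polystable versions match in the same way.

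Finally, the identification of stacks is the open immersion of the GIT quotient stack into the K-moduli stack given by the family over the semistable locus, which is surjective by the first part. It induces an isomorphism of good moduli spaces by uniqueness of good moduli spaces.
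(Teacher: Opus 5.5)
Your first part and the one-parameter-subgroup computation give K-semistable $\Rightarrow$ GIT-semistable. Here $\Fut_c(\lambda)$ is a positive multiple of $c\,\mu(D,\lambda)$, because the Futaki character of $\bP^n$ vanishes. This matches the first half of the paper's sketch.

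\textbf{The gap.} The converse rests on your claim that every special test configuration of $(\bP^n,cD)$ has central fiber $\bP^n$ ``by the first part.'' That does not follow. Your first part only constrains K-semistable pairs. To prove $(\bP^n,cD)$ is K-semistable, you must show $\Fut\ge 0$ for all special test configurations, including destabilizing ones whose central fibers are not K-semistable.

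The claim is in fact false. For the double conic, $\ord_Q$ induces the degeneration to the normal cone, $(\bP^2,c\cdot 2Q)\rightsquigarrow(\bP(1,1,4),c\cdot 2Q')$. This is a special test configuration for every $c<\tfrac12$, and it is not induced by any one-parameter subgroup of $\PGL_3$. Its Futaki invariant is a positive multiple of $\beta_{\bP^2,c\cdot 2Q}(\ord_Q)=\tfrac12-\tfrac43c$, which changes sign at $c=\tfrac38$.

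So the real content of GIT $\Rightarrow$ K is showing $\Fut\ge0$ on such non-product degenerations, uniformly in $D$. Your affine-in-$c$ formula does not address this. For the same reason, your last paragraph has the open immersion the wrong way around. What you have actually established, using openness of K-semistability, is that the K-moduli stack is an open substack of $[\bP(H^0(\calO(d)))^{ss}/\PGL_{n+1}]$. Surjectivity is exactly what remains to be proved.

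\textbf{How the paper closes it.} The paper uses a global argument via properness and separatedness. Take a GIT-polystable $D$ and degenerate general pairs $(\bP^n,cD_t)$ to it; these general pairs are K-semistable by interpolation. Properness of K-moduli gives, after base change, a K-polystable limit. By your first part and the forward direction, this limit is $(\bP^n,cD')$ with $D'$ GIT-semistable. Separatedness of the GIT quotient then gives a one-parameter subgroup degenerating $D'$ to $D$ with $\mu=0$, hence $\Fut=0$. It follows that $(\bP^n,cD)$ is K-semistable, and in fact isomorphic to the K-polystable $(\bP^n,cD')$. Openness of K-semistability then extends this to GIT-semistable $D$.

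\textbf{A smaller issue in your first part.} Your local-volume argument is complete only for $n\le 3$; in general it would also need constructibility of $\widehat{\mathrm{vol}}$ on a bounded family. Falling back on \cite[Theorem 2.22]{ADLpub} is circular, since that is the result being proved. The simpler route uses $\delta(X)\ge(1-\tfrac{cd}{n+1})\,\delta(X,cD)\ge 1-\tfrac{cd}{n+1}$. Together with boundedness and finiteness of the values of $\delta$ on a bounded family, this forces $X$ to be K-semistable for $c\ll1$. Then K-polystability and rigidity of $\bP^n$ give $X\cong\bP^n$, which is the paper's ``no nontrivial K-semistable degenerations.''
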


The idea of the proof comes in two parts: first, one-parameter subgroups in GIT induce test configurations in K-stability, and in this setting the Futaki invariant is proportional to the Hilbert Mumford weight.  This will prove that the K-stability implies the GIT stability.  The converse relies on K-stability of the generic pair of this form, properness of the K- and GIT-moduli spaces, and the fact that $\bP^n$ itself is K-polystable so admits no nontrivial K-semistable degenerations.

\subsection{K-moduli wall crossing for quartic curves}

We now apply the tools from the previous toolbox to consider moduli of pairs $(\bP^2, cD)$ where $0 < c < 3/d$, $D$ is a degree $d$ curve, and try to understand all K-semistable pairs of this form and their K-semistable degenerations.  

We know that if $(X,cD)$ is a K-semistable object in this moduli space, it is klt by Theorem \ref{thm:kssisklt}, so $X$ is log terminal.  Additionally, by Theorem \ref{t:kltdegens}, the possible singularities on $X$ are all of the form $\frac{1}{n^2}(1,nl-1)$ where $\gcd(l,n) = 1$, and $n$ is one of the elements of a Markov triple $(a,b,c)$ satisfying $a^2 + b^2 + c^2 = 3abc$.  The Cartier index of the singularity $\frac{1}{n^2}(1,na-1)$ is $n$.

In this case, using a more refined estimate comparing $\widehat{\vol}(x,X,D)$ and $\widehat{\vol}(x,D)$ in the computation in Property \ref{propsofvol}(5), in \cite{ADLpub} it is shown that for $d$ is not divisible by 3, 
\[ \ind (x) \le \min \left\{ \lfloor \frac{3}{3-dc} \rfloor, d \right\} .\]  

For $c \ll 1$, this implies $\ind(x) = 1$, so $X$ is Gorenstein.  The only Gorenstein surface in Theorem \ref{t:kltdegens} is $X = \bP^2$.  Therefore, for $c \ll 1$, all K-semistable pairs must be of the form $(\bP^2, cD)$ for some degree $d$ plane curve.  We can in fact strengthen the result in Theorem \ref{thm:k=git} to conclude in that in this case, the K-moduli space coincides with the GIT moduli space.  We summarize the consequences of the index bound for $d = 4$.

\begin{prop}
        When $d = 4$ and $c < \frac{3}{4}$, the bound above implies $\ind (x) \le 4$.  By the description of the Markov triples in Theorem \ref{t:kltdegens}, the only possible surfaces appearing in any K-moduli space $P_{4,c}$ are $\bP^2$ and $\bP(1,1,4)$.  When $d = 4$ and $c < \frac{3}{8}$, the bound above implies $\ind(x) < 2$, so the only possible surface appearing has index $1$ and must be $\bP^2$.  For $c < \frac{3}{8}$, the K-moduli stack and space must coincide with the GIT moduli stack and space. 
\end{prop}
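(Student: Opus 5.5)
The plan has three parts: plug $d=4$ into the index bound, read off the admissible surfaces from the Markov classification, and then use Theorem \ref{thm:k=git} together with Gorensteinness to identify the K-moduli with GIT.

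\textbf{Step 1: the index bound.} For $d=4$ (not divisible by $3$) the bound from \cite{ADLpub} reads
\[ \ind(x) \le \min\left\{ \left\lfloor \tfrac{3}{3-4c} \right\rfloor, 4\right\}. \]
For every $c<\frac{3}{4}$ this is at most $4$. For $c<\frac{3}{8}$ we have $3-4c>\frac{3}{2}$, so $\frac{3}{3-4c}<2$ and the floor equals $1$.

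\textbf{Step 2: which surfaces can occur.} By Theorem \ref{thm:kssisklt} a K-semistable $(X,cD)$ is klt, hence $X$ is klt. Theorem \ref{t:kltdegens} then says $X$ is $\bP(a^2,b^2,c^2)$ for a Markov triple, or a partial smoothing of one. A singular point of such a surface is of type $\frac{1}{n^2}(1,nl-1)$ with $n$ a Markov number, and its index is $n$.

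The Markov numbers are $1,2,5,13,\dots$, so index at most $4$ forces $n\in\{1,2\}$. The triples with all entries in $\{1,2\}$ are $(1,1,1)$ and $(1,1,2)$, giving $\bP^2$ and $\bP(1,1,4)$. Any other triple contains an entry $\ge 5$.

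I also need to rule out partial smoothings that keep only small-index points, since a partial smoothing of a surface with a large-index point might smooth that point away. I would argue that the smoothing must still come from a surface whose singular points all have index at most $4$. The cleanest route I would try is the Hacking--Prokhorov statement itself: a degeneration whose singularities are all of type $\frac{1}{n^2}(1,nl-1)$ with $n$ in a given set is a partial smoothing of $\bP(a^2,b^2,c^2)$ with $a,b,c$ in that set. Partial smoothings of $\bP(1,1,4)$ are $\bP(1,1,4)$ itself and $\bP^2$, so the list is unchanged. I expect this bookkeeping to be the only delicate point.

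For $c<\frac{3}{8}$ the index is $1$, so $X$ is Gorenstein. In the list, only $\bP^2$ is Gorenstein, since $\bP(1,1,4)$ has index $2$ at its singular point.

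\textbf{Step 3: identification with GIT.} Theorem \ref{thm:k=git} gives the identification only for $c\ll 1$, so I would extend it across the whole range $c<\frac{3}{8}$.

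By Step 2, every pair in the K-moduli stack for $c<\frac{3}{8}$ has the form $(\bP^2,cD)$ with $D$ a plane quartic. Because $\bP^2$ is the only surface that occurs, the wall-crossing morphisms of Theorem \ref{thm:introKwallcrossing} at any wall in $(0,\frac{3}{8})$ would have to be changes of stability among quartics on a fixed $\bP^2$.

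I then compare the K-stability condition with the GIT one. A K-destabilizing special test configuration has central fiber $\bP^2$, and since $\bP^2$ is rigid it is induced by a one-parameter subgroup of $\PGL_3$. For such test configurations the Futaki invariant is a positive multiple of the Hilbert--Mumford weight, and this holds for every $c$, because both are linear in the curve's weights with $S_{X,cD}=(1-cr)S_X$. Hence K-(semi/poly)stability coincides with GIT (semi/poly)stability for all $c<\frac{3}{8}$, and the stacks and spaces agree.
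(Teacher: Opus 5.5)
Your Steps 1 and 2 follow the paper's reasoning. Step 3, however, has a genuine gap. The sentence ``a K-destabilizing special test configuration has central fiber $\bP^2$'' does not follow from anything you proved. The index bound of Step 2 applies only to K-semistable pairs, and the central fiber of a destabilizing test configuration need not be K-semistable: for the one induced by a $\delta$-minimizer, the central fiber has the same $\delta<1$ as $(\bP^2,cD)$.

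Rigidity of $\bP^2$ does not help either, because special degenerations of $(\bP^2,cD)$ to other surfaces exist. For example, the double conic degenerates to $(\bP(1,1,4),c(2Q'))$. So what you wrote only gives the direction K-semistable $\Rightarrow$ GIT-semistable, via test configurations from one-parameter subgroups. The converse, which is the real content of the last claim, is unproved.

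There are two ways to close this.

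\emph{The paper's route} (its ``strengthening'' of Theorem \ref{thm:k=git}, from \cite{ADLpub}) uses the index bound only on K-semistable \emph{limits}. For $c<\frac38$ every object of the proper stack $\cM^{\K}_c$ lies on $\bP^2$, so $\cM^{\K}_c=[U^{\K}/\PGL_3]$ is an open substack of $[\bP H^0(\calO_{\bP^2}(4))/\PGL_3]$. The Futaki/Hilbert--Mumford comparison gives $U^{\K}\subset U^{\rm GIT}$, and also that K-polystable implies GIT-polystable. Properness of both good moduli spaces then forces $U^{\K}=U^{\rm GIT}$.

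\emph{A route closer to your idea} applies the index bound at a threshold. If $D$ is GIT-semistable, then $(\bP^2,c'D)$ is K-semistable for $c'\ll 1$ by Theorem \ref{thm:k=git}. Suppose $c^*=\kst_+(D)<\frac38$.
\begin{itemize}
\item The proof of the K-wall-crossing theorem produces, from divisors $E_n$ destabilizing at $c^*+\frac1n$, special test configurations of $(\bP^2,c^*D)$ with \emph{K-semistable} central fiber.
\item By the index bound at $c^*$, that central fiber is $\bP^2$, so the test configuration comes from some $\lambda\subset\PGL_3$.
\item Since $\Fut_{\bP^2}(\lambda)=0$, we have $\Fut_c=c\,\kappa\,\mu(D,\lambda)$ with $\kappa>0$, i.e.\ it is linear in $c$ and vanishes at $c=0$.
\item It vanishes at $c^*>0$, hence for all $c$. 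This contradicts negativity at $c^*+\frac1n$.
\end{itemize}
So there are no walls in $(0,\frac38)$.

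A smaller point in Step 2: the general statement you attribute to Hacking--Prokhorov is false. For instance, $X_{26}$ has a single singular point, of index $5$, yet no Markov triple has all entries in $\{1,5\}$. For the set $\{1,2\}$ your conclusion is still correct by a direct argument, which the paper also passes over:
\begin{itemize}
\item No Markov triple contains $2$ twice, since $8+x^2=12x$ has no integer root. So $X$ has at most one $\frac14(1,1)$ point.
\item If it has one, the minimal resolution has $K^2=8$, hence $\rho=2$, and contains a $(-4)$-curve.
\item It is therefore $\bF_4$, and $X\cong\bP(1,1,4)$.
\end{itemize}
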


For low degree curves in $\bP^2$, the GIT moduli space is well described.  Therefore, to completely understand the K-moduli spaces, we can start with the GIT moduli space, increase the coefficient $c$ until something ``destabilizes'' (which will give a wall crossing), find the K-semistable replacement, and continue.  

\begin{example}
We will work this out completely for degree 4 curves, following \cite{ADLpub}.  From above, for  $c < \frac{3}{8}$, we have $\ind(x) < 2$, so only $\bP^2$ appears and the K-moduli space of pairs $(\bP^2,cD)$ must be isomorphic to the GIT moduli space.  In other words, for $c < \frac{3}{8}$ we start with the GIT moduli space of quartic plane curves.  

In the GIT moduli space, there is a unique polystable point $[2Q]$ corresponding to a doubled smooth conic, which is the unique non-reduced curve that is GIT semistable.  For quartic curves, part (2) of Proposition \ref{prop:k-interpolation} and the given corollary says that as long as the log canonical threshold of the pair $(\bP^2,D)$ is at least $\frac{3}{4}$, then $(\bP^2, cD)$ is K-polystable for all $c\in(0,\frac{3}{4})$.  Every polystable curve $D$ in the GIT moduli space other than the double conic has this property, so for $D$ other than the double conic, the pair $(\bP^2, cD)$ is K-polystable for all $c$.  However, the pair $(\bP^2,c(2Q))$ is not K-polystable for all $c$: we compute $\delta_{\bP^2,c(2Q)} (Q) \ge 1$ if and only if $c \le \frac{3}{8}$.  This pair is therefore unstable for $c> \frac{3}{8}$.

When $c = \frac{3}{8}$, we do the following.  Consider a family $D$ of smooth quartic curves degenerating to the double conic, inside $X = \bP^2 \times \bA^1$.  In $X$, blow up the conic.  This produces a threefold $Y$ with exceptional divisor $E \cong \mathbb{F}_4$.  Let $D_Y$ be the strict transform of $D$ in $Y$.  Now, the surface that was the original central fiber of $X$ is contractible, and we can contract it to produce a family $Z$ of $\bP^2$ degenerating to $\bP(1,1,4)$.  As we cross the wall at $c = \frac{3}{8}$, we can verify that the new central fiber $(\bP(1,1,4), (c+ \epsilon)D')$ is K-semistable by direct computation.  This is illustrated in Figure \ref{f:doubleconic}.

\begin{figure}[h]
\begin{tikzcd}[row sep=huge]
&[-10em] \begin{tabular}{c}
\begin{tikzpicture}[gren0/.style = {draw, circle,fill=greener!80,scale=.7},gren/.style ={draw, circle, fill=greener!80,scale=.4},blk/.style ={draw, circle, fill=black!,scale=.2},plc/.style ={draw, circle, color=white!100,fill=white!100,scale=0.02},smt/.style ={draw, circle, color=gray!100,fill=gray!100,scale=0.02},lbl/.style ={scale=.2}] 

\node[smt] at (-2.5+0,0) (1){};
\node[smt] at (-2.5+0,2) (2){};
\node[smt] at (-2.5+1.2,2.7) (3){};
\node[smt] at (-2.5+1.2,0.7) (4){};
\draw [black] plot [smooth cycle, tension=.1] coordinates { (1) (2) (3) (4) };

\draw [thick,blue] plot [smooth, tension=1] coordinates { (-2.5+.2,1.4) (-2.5+1,2.1) (-2.5+.3,0.8) (-2.5+.8,1.1)};
\node[below right,node font=\tiny,text=blue] at (-2.5+0.3,0.8) {smooth quartic};

\node[smt] at (0,0) (5){};
\node[smt] at (0,2) (6){};
\node[smt] at (1.2,2.7) (7){};
\node[smt] at (1.2,0.7) (8){};
\draw [black] plot [smooth cycle, tension=.1] coordinates { (5) (6) (7) (8) };

\draw [thick,violet] plot [smooth cycle, tension=1] coordinates { (0.35,1) (0.35,1.4) (0.85,1.6) (0.85,1.2) };
\draw [thick,violet] plot [smooth cycle, tension=1] coordinates { (0.2,1+1.5) (0.2,1.4+1.5) (1,1.7+1.5) (1,1.3+1.5) };
\draw [thick,violet] plot [smooth cycle, tension=1] coordinates { (0.2,1-1.5) (0.2,1.4-1.5) (1,1.7-1.5) (1,1.3-1.5) };
\draw [thick,violet] plot [smooth, tension=1] coordinates { (0.05,2.54) (0.18,2.2) (0.25,1.1)};
\draw [thick,densely dotted,violet] plot [smooth, tension=1] coordinates { (0.25,1.1) (0.21,0.3) (0.2,0.1)};
\draw [thick,violet] plot [smooth, tension=1] coordinates { (0.2,0.1) (0.15,-0.05) (0.03,-0.35)};
\draw [thick,densely dotted,violet] plot [smooth, tension=1] coordinates { (0.96,1.44) (1,0.58)};
\draw [thick,violet] plot [smooth, tension=1] coordinates { (1,0.58) (1.02,0.35) (1.1,0.2) (1.17,0.1)};
\draw [thick,violet] plot [smooth, tension=1] coordinates { (1.15,2.96) (1.02,2.5) (0.96,1.44)};


\node[smt] at (0.2,1.85) (A){};
\node[smt] at (1,2.35) (B){};
\node[smt] at (0.8,2.45) (C){};
\node[smt] at (0.18,2.2) (D){};
\node[smt] at (0.22,2.15) (E){};
\node[smt] at (1.05,2.65) (F){};
\draw [thick,blue,-] (A) to [bend right=30] (B) to [bend right=30] (C) to [bend right=30] (D) to [bend right=30] (E) to [bend right=30] (F) ;
\node[below right, node font=\tiny, text=blue] at (1.05,2.6) {strict transform of family of curves};
\node[below right, node font=\tiny, text=violet] at (1.05,0) {exceptional divisor $E \cong \mathbb{F}_4$};

\draw[thick,color=gray] (-2.5-0.5,-.7) to (1.2+0.5,-.7);
\node[blk] at (-2.5+0.6,-.7) (9){};
\node[below right, node font=\tiny] at (9) {$t$};
\node[blk] at (0.6,-.7) (10){};
\node[below right, node font=\tiny] at (10) {$0$};
\node[right, node font=\tiny] at (1.2+0.5,-.7) {$\bA^1_t$};

\node[above left, node font=\tiny] at (2) {$Y_t= \bP^2_t$};
\node[node font=\huge, text=gray] at (-2.5+1.8,1.5) {$\rightsquigarrow$};
\end{tikzpicture}
\end{tabular} \arrow[ld, xshift=-2ex, swap, "\text{blow up conic in central fiber}"] \arrow[rd, xshift=-5ex, "\text{contract } \bP^2 \text{ in central fiber}"] &[-17em] \\
\begin{tabular}{c}
\begin{tikzpicture}[gren0/.style = {draw, circle,fill=greener!80,scale=.7},gren/.style ={draw, circle, fill=greener!80,scale=.4},blk/.style ={draw, circle, fill=black!,scale=.2},plc/.style ={draw, circle, color=white!100,fill=white!100,scale=0.02},smt/.style ={draw, circle, color=gray!100,fill=gray!100,scale=0.02},lbl/.style ={scale=.2}] 

\node[smt] at (-2.5+0,0) (1){};
\node[smt] at (-2.5+0,2) (2){};
\node[smt] at (-2.5+1.2,2.7) (3){};
\node[smt] at (-2.5+1.2,0.7) (4){};
\draw [black] plot [smooth cycle, tension=.1] coordinates { (1) (2) (3) (4) };

\draw [thick,blue] plot [smooth, tension=1] coordinates { (-2.5+.2,1.4) (-2.5+1,2.1) (-2.5+.3,0.8) (-2.5+.8,1.1)};
\node[below right,node font=\tiny,text=blue] at (-2.5+0.3,0.8) {smooth quartic};

\node[smt] at (0,0) (5){};
\node[smt] at (0,2) (6){};
\node[smt] at (1.2,2.7) (7){};
\node[smt] at (1.2,0.7) (8){};
\draw [black] plot [smooth cycle, tension=.1] coordinates { (5) (6) (7) (8) };

\draw [very thick,blue] plot [smooth cycle, tension=1] coordinates { (0.35,1) (0.35,1.4) (0.85,1.6) (0.85,1.2) };
\node[below right,node font=\tiny,text=blue] at (0.6,1.1) {2(smooth conic)};

\draw[thick,color=gray] (-2.5-0.5,-.5) to (1.2+0.5,-.5);
\node[blk] at (-2.5+0.6,-.5) (9){};
\node[below right, node font=\tiny] at (9) {$t$};
\node[blk] at (0.6,-.5) (10){};
\node[below right, node font=\tiny] at (10) {$0$};
\node[right, node font=\tiny] at (1.2+0.5,-.5) {$\bA^1_t$};

\node[above left, node font=\tiny] at (2) {$X_t = \bP^2_t$};
\node[below right, node font=\tiny] at (7) {$X_0 = \bP^2_0$};
\node[node font=\huge, text=gray] at (-2.5+1.8,1.5) {$\rightsquigarrow$};
\end{tikzpicture}
\end{tabular} & &\begin{tabular}{c}
\begin{tikzpicture}[gren0/.style = {draw, circle,fill=greener!80,scale=.7},gren/.style ={draw, circle, fill=greener!80,scale=.4},blk/.style ={draw, circle, fill=black!,scale=.2},plc/.style ={draw, circle, color=white!100,fill=white!100,scale=0.02},smt/.style ={draw, circle, color=gray!100,fill=gray!100,scale=0.02},lbl/.style ={scale=.2}] 

\node[smt] at (-2.5+0,0) (1){};
\node[smt] at (-2.5+0,2) (2){};
\node[smt] at (-2.5+1.2,2.7) (3){};
\node[smt] at (-2.5+1.2,0.7) (4){};
\draw [black] plot [smooth cycle, tension=.1] coordinates { (1) (2) (3) (4) };

\draw [thick,blue] plot [smooth, tension=1] coordinates { (-2.5+.2,1.4) (-2.5+1,2.1) (-2.5+.3,0.8) (-2.5+.8,1.1)};
\node[below right,node font=\tiny,text=blue] at (-2.5+0.3,0.8) {smooth quartic};

\draw [thick,violet] plot [smooth cycle, tension=1] coordinates { (0.2,1+1) (0.2,1.4+1) (1,1.7+1) (1,1.3+1) };
\draw [thick,violet] plot [smooth cycle, tension=1] coordinates { (0.2,1-1) (0.2,1.4-1) (1,1.7-1) (1,1.3-1) };
\draw[thick,violet] (0.05,2.07) -- (1.15,0.62);
\draw[thick,violet] (1.17,2.53) -- (0.03,0.17);
\node[below right,node font=\tiny,text=violet] at (1.1,0.6) {new central fiber $Z_0\cong \bP(1,1,4)$};

\node[smt] at (0.4,1.6) (A){};
\node[smt] at (.78,1.75) (B){};
\node[smt] at (.76,1.9) (C){};
\node[smt] at (0.24,1.85) (D){};
\node[smt] at (0.28,1.8) (E){};
\node[smt] at (0.94,2.05) (F){};
\draw [thick,blue,-] (A) to [bend right=30] (B) to [bend right=20] (C) to [bend right=30] (D) to [bend right=30] (E) to [bend right=30] (F) ;
\node[below right, node font=\tiny, text=blue] at (0.9,1.9) {hyperelliptic curve};

\draw[thick,color=gray] (-2.5-0.5,-.5) to (1.2+0.5,-.5);
\node[blk] at (-2.5+0.6,-.5) (9){};
\node[below right, node font=\tiny] at (9) {$t$};
\node[blk] at (0.6,-.5) (10){};
\node[below right, node font=\tiny] at (10) {$0$};
\node[right, node font=\tiny] at (1.2+0.5,-.5) {$\bA^1_t$};

\node[above left, node font=\tiny] at (2) {$Z_t= \bP^2_t$};
\node[node font=\huge, text=gray] at (-2.5+1.8,1.5) {$\rightsquigarrow$};
\end{tikzpicture}
\end{tabular} \\
\end{tikzcd}
\vspace{-.5in}
\caption{Replacement of the double conic.}
\label{f:doubleconic}
\end{figure}
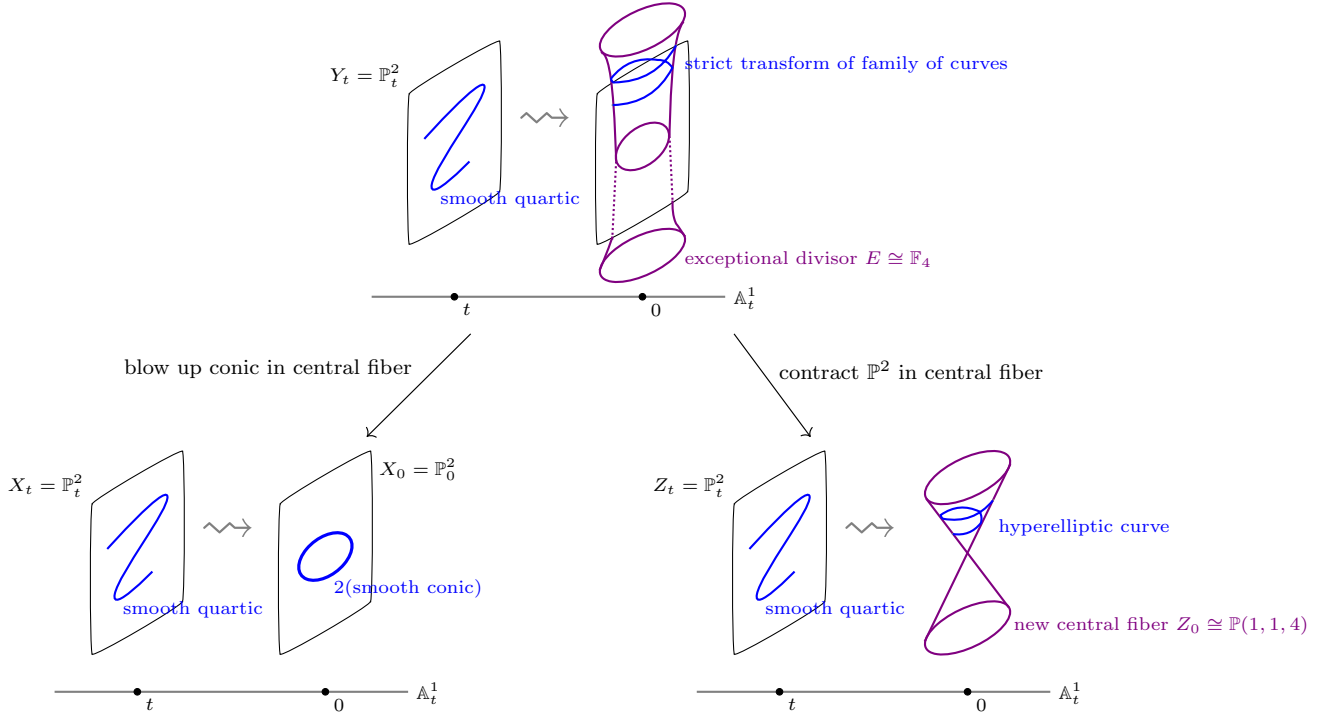

This previous discussion gives a wall crossing at $c = \frac{3}{8}$, where it suggests that the doubled conic curve should be replaced by hyperelliptic curves on $\bP(1,1,4)$.  Indeed, pairs $(\bP(1,1,4),cD)$ where $D$ is smooth and hyperelliptic are K-semistable for $c > \frac{3}{8}$.  To verify that this is the structure of the wall crossing, we prove that $(\bP(1,1,4),\frac{3}{8}(2Q'))$ is K-polystable where $Q'$ is a section of the cone $\bP(1,1,4)$.  By openness of K-semistability, for any hyperelliptic curve $D \subset \bP(1,1,4)$, the pair $(\bP(1,1,4),cD)$ admits a special degeneration to $(\bP(1,1,4), c(2Q'))$, so we conclude $(\bP(1,1,4),cD)$ are K-semistable for $c = \frac{3}{8}$.  It is computationally straightforward to show for a K-polystable pair $(\bP(1,1,4), c D)$ and $c> \frac{3}{8}$, $D$ has log canonical threshold at least $\frac{3}{4}$, so again by interpolation these are K-polystable for all $c\in(\frac{3}{8},\frac{3}{4})$.  Therefore, we know there are no other wall crossings in this range of coefficient.

We can furthermore understand the structure of the wall crossing.  In \cite{ADLpub}, we prove that there is a morphism $P_{4,c} \to P_{4,\frac{3}{8} - \epsilon}$ which is the blow up of the point corresponding to the double conic, and this is a Kirwan partial desingularization. 

\end{example}

All wall crossings for quartics will be summarized in Figure \ref{f:quarticwalls}.

\subsection{Boundary polarized Calabi Yau wall crossing}

For the wall crossing when $c \ge \frac{3}{4}$ in the log Calabi Yau and general type region; we use the theory of the boundary polarized Calabi Yau wall crossing and the wall crossings for KSBA stable pairs.  In this section, we consider the wall at $c = \frac{3}{4}$.  Suppose $d$ is not a multiple of $3$ and let $P_d^{\rm klt} $ be the locus of pairs $(X,D)$ such that $X$ is a $\bQ$-Gorenstein smoothable Fano surface, $K_X^2 = 9$, $D \in |-\frac{d}{3}K_X|$, and $(X,\frac{3}{d})$ is klt.  Then, by the discussion in Section \ref{ss:bpcy}, for $\epsilon \ll1$, $P_d^{\rm klt} \subset P^K_{d, \frac{3}{d} - \epsilon}$ and $P_d^{\rm klt} \subset P^{\rm CY}_{d}$ and $P_d^{\rm klt} \subset P^{\rm KSBA}_{d, \frac{3}{d} + \epsilon}$.  Furthermore, if $[(X,(\frac{3}{d}-\epsilon)D)] \in P^K_{d, \frac{3}{d} - \epsilon} $ is such that $(X,\frac{3}{d}D)$ is not klt, it is necessarily log canonical and $(X,(\frac{3}{d} + \epsilon)D)$ is not log canonical, so $\frac{3}{d} = \lct (D)$. If $[(X,(\frac{3}{d}+\epsilon)D)] \in P^{\rm KSBA}_{d, \frac{3}{d} + \epsilon} $ is such that $(X,\frac{3}{d}D)$ is not klt (using that $K_X$ and $D$ are both $\bQ$-Cartier divisors and $X$ has plt normalization by \cite[Theorem 7.1]{hacking}), $X$ is necessarily not normal and therefore $(X,(\frac{3}{d} - \epsilon)D)$ is not K-semistable.  

In particular, this says the wall crossing at $c=\frac{3}{d}$ must interchange the K-semistable pairs $(X,(c-\epsilon)D)$ such that $\frac{3}{d} = \lct(D)$ with the slc pairs $(X,(c+\epsilon)D)$ such that $X$ is non-normal.  By the classification in the previous section, the curves in $P^K_{4,c -\epsilon}$ with log canonical threshold $c$ are precisely the curves on $\bP^2$ or $\bP(1,1,4)$ with tacnodes.  By \cite[Section 11]{hacking}, the pairs in $P^{\KSBA}_{4,c+\epsilon}$ such that the surface is non-normal are those with $X = \bP(1,1,2)  \cup \bP(1,1,2)$.

In \cite{BABWILD}, we prove the following: 

\begin{thm}
	The polystable pairs parametrized by $P_4^{\rm CY}$ are the following:
	\begin{enumerate}
		\item $(\bP^2,\frac{3}{4}C)$ where $C$ is a plane quartic curve with at worst cuspidal singularities;
		\item $(\bP(1,1,4), \frac{3}{4}C)$ where $C$ is a degree $8$ curve not passing through the cone point with at worst cuspidal singularities;
		\item $(\bP(1,1,2)\cup \bP(1,1,2), \frac{3}{4}C)$ where $C$ is of degree $4$ on each component, and has a tacnodal singularity on each component, illustrated in Figure \ref{f:polystablequartics}. 
	\end{enumerate}
    The pairs in (1) and (2) are the points of $P_4^{\klt}$.  Every pair $(X,cD)$ in $P^{\K}_{4,c-\epsilon}$  or $P^{\KSBA}_{4,c+\epsilon}$ that is not klt when $c = \frac{3}{4}$ admits an isotrivial specialization to the pair in (3).
\end{thm}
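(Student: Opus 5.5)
The plan is to classify the closed points of $P_4^{\rm CY}$ by splitting into the klt and non-klt loci. The klt locus is described in the discussion above. If $(X,\frac34 C)$ is klt, then $(X,(\frac34-\epsilon)C)$ lies in $P^{\K}_{4,\frac34-\epsilon}$, so by the K-moduli classification of the previous subsection $X\in\{\bP^2,\bP(1,1,4)\}$. Moreover $\lct(C)>\frac34$ on a plane quartic (or on a degree $8$ curve in $\bP(1,1,4)$ avoiding the cone point) means exactly that the singularities are at worst $A_1$ or $A_2$. Tacnodes, with $\lct=\frac34$, and anything worse are excluded. A klt boundary polarized CY pair has no nontrivial isotrivial degeneration to another bpCY pair, because such a degeneration would produce a non-product special test configuration with vanishing Futaki invariant at $c=\frac34-\epsilon$ for a pair that is K-polystable there. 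Hence these pairs are closed points, which gives (1) and (2).

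For the non-klt pairs I would argue on each side of the wall separately. On the K-side, a pair in $P^{\K}_{4,\frac34-\epsilon}$ that is not klt at $c=\frac34$ has a tacnode, or an $A_k$ singularity with $k\ge 3$, at a smooth point. Working locally as $y^2=x^4+\dots$, I would write down an explicit $\bG_m$-equivariant degeneration: a weighted blow-up with weights $(1,2)$ on $(x,y)$, or equivalently a $(2,1,1)$-weighted blow-up of the total space $X\times\bA^1$ at the tacnode. This degenerates $X$ to a surface with a node along a curve, and the curve to a tacnodal quartic on each component. I expect this to produce $\bP(1,1,2)\cup\bP(1,1,2)$ glued along a line, with $(X\cup\cdots,\frac34 C)$ slc and $K+\frac34C\sim 0$.

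I would check the gluing data directly. The double locus on each $\bP(1,1,2)$ is a line through the $\frac12(1,1)$ point. The different on it is $\frac12$ at the singular point, so the involution matches correctly. The curve on each component is degree $4$ in $\bP(1,1,2)$ and has the tacnode coming from the degeneration. On the KSBA side, \cite[Section 11]{hacking} says the non-normal surfaces in $P^{\KSBA}_{4,\frac34+\epsilon}$ are of the form $\bP(1,1,2)\cup\bP(1,1,2)$ with curves meeting the double locus appropriately. A $\bG_m$-action on each component, scaling the coordinate transverse to the double curve, then specializes any such curve to the tacnodal model in (3). By $S$-completeness and $\Theta$-reductivity of $\cM^{\rm CY}$, any two such specializations agree up to isomorphism, so the polystable representative is unique.

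Finally, I would show that the pair in (3) is actually closed, meaning it admits no further isotrivial degeneration. I would do this using its large automorphism group: a $\bG_m^2$ acting on each component, compatible with the gluing. Any further degeneration would have to be toric, and I would rule that out by checking that the tacnodal curve is the unique $\bG_m$-fixed configuration of that shape.

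The main obstacle is the K-side degeneration. One has to verify that the central fiber of the weighted blow-up family really is $\bP(1,1,2)\cup\bP(1,1,2)$ with the stated curves, and in particular handle the $\bP(1,1,4)$ case and higher $A_k$ uniformly. I would try first to reduce every non-klt case to the pair ($\bP^2$, union of two conics tangent at two points). This pair is $\bG_m$-invariant, and its degeneration can be computed torically.
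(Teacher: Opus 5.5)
Your decomposition is the one the paper sets up before deferring the proof to \cite{BABWILD}. On the K-side, the non-klt pairs at $c=\frac34$ are the tacnodal pairs on $\bP^2$ and $\bP(1,1,4)$. On the KSBA side, they are the pairs on $\bP(1,1,2)\cup\bP(1,1,2)$ from \cite[Section 11]{hacking}. Your KSBA-side degeneration, which scales the coordinate transverse to the double curve $\Delta$ (i.e.\ degenerates along the lc place $\ord_\Delta$), is correct.

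\textbf{The K-side step fails as stated.} Take a quartic with a single tacnode $p$, which is the general case. The weighted blow-up of $X\times\bA^1$ at $(p,0)$ with weights $(1,2,1)$ on $(x,y,t)$ is a test configuration with two central components. They correspond to the trivial valuation and to the lc place $E_p$.

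First, the central fiber is not yet boundary polarized. On $\tilde X=\mathrm{Bl}_{(1,2)}X$ we have $\tilde C\cdot\tilde L=4-\frac{4\cdot 2}{2}=0$, where $L$ is the tangent line at $p$ (on $\bP(1,1,4)$, use the ruling through $p$). So you must pass to the relative ample model. It contracts $\tilde L$ and turns $\tilde X$ into $\bP(1,1,2)$.

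More seriously, the tacnode survives only on the exceptional component $\bP(1,2,1)$, as the weighted initial form $y^2+ax^2y+bx^4$. On the other component the curve is the strict transform, in which the tacnode has been resolved. So the output has a tacnode on one side only and is not (3). A single step gives (3) only when $C$ has a second tacnode, as the cateye and ox below do.

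\textbf{How to fix it.} You need a second degeneration. One option is to apply your transverse $\bG_m$ to the non-exceptional component. A cleaner option is to first degenerate $C$ to its weighted initial form at $p$, using the one-parameter subgroup with weights $(1,2)$ on the coordinates at $p$.
\begin{itemize}
\item On $\bP^2$, with $p=[0:0:1]$ and tangent line $y=0$, this gives the cateye $(yz-\alpha x^2)(yz-\beta x^2)$, or the ox $yz(yz-\alpha x^2)$ if $b=0$.
\item On $\bP(1,1,4)$ it gives $z^2=x^4y^4$.
\end{itemize}
Each of these has two tacnodes and a $\bG_m$-symmetry whose two ends are the tacnodal lc places $E_1,E_2$. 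The filtration $\min(\ord_{E_1},\ord_{E_2})$ then lands directly on (3). For the cateye with weights $(0,1,-1)$ on $(x,y,z)$, this filtration is $m-|w|$ on weight-$w$ sections of $\calO(m)$. The central fiber is two copies of $\bP(1,1,2)$, the Proj of the weight $\ge 0$ and weight $\le 0$ parts, glued along $\mathrm{Proj}\,\bC[x,yz]$. The limit curve is $(u-\alpha x^2)(u-\beta x^2)$ with $u=yz$ on each side. Reducing only to the two-conic pair, as you suggest at the end, would miss the ox and the $\bP(1,1,4)$ cases.

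\textbf{Secondary steps that also need repair.}
\begin{itemize}
\item \emph{Closedness of klt pairs.} Your Futaki argument does not work, because the limit of a degeneration in $\cM^{\rm CY}$ need not be K-semistable, or even normal, at $\frac34-\epsilon$. The correct reason is that $(\calX,\frac34\calC+\calX_0)$ is lc for a degeneration through bpCY pairs. Hence each central component induces a valuation $v$ with $A_{X,\frac34C}(v)=0$. For a klt pair this forces $v$, and hence the degeneration, to be trivial.
\item \emph{Uniqueness of the limit.} $S$-completeness concerns specializations of a single point, so it cannot show that different pairs reach the same limit. This is instead a direct normalization. The automorphisms $[y:u]\mapsto[\alpha y:\beta u+\gamma y^2]$ of the double curve $\bP(1,2)$ extend to both components. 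They act transitively on pairs of distinct non-orbifold points, and the limit curve on each side is determined by its restriction to $\Delta$.
\item \emph{Completeness of the list.} You only treat non-klt pairs lying in $\cM^{\K}_{4,\frac34-\epsilon}$ or $\cM^{\KSBA}_{4,\frac34+\epsilon}$, but $\cM^{\rm CY}$ contains many others. You need that $P^{\K}_{4,\frac34-\epsilon}\to P^{\rm CY}_4$ is proper with dense image, hence surjective. Then every polystable point is the unique closed point, in a finite-type $\calU_n$, of the closure of a K-side pair.
\item \emph{Closedness of (3).} This is only asserted. To make your toric claim precise, note that on each normalized component $(\bP(1,1,2),\Delta+\frac34\bar C)$ the only lc places are $\ord_\Delta$ and the tacnodal divisor. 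These are the two ends of the transverse $\bG_m$.
\item On the K-side only tacnodes occur, since $A_k$ with $k\ge 4$ has $\lct<\frac34$.
\end{itemize}
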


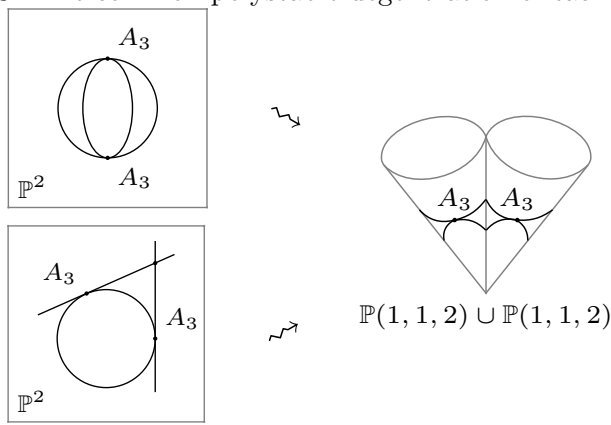
\begin{figure}[h]
\caption{The common polystable degeneration of tacnodal curves.}
\label{f:polystablequartics}
\begin{tabular}{ccc}
{\resizebox{.2\textwidth}{!}{\begin{tabular}{c}
\begin{tikzpicture}[gren0/.style = {draw, circle,fill=greener!80,scale=.7},gren/.style ={draw, circle, fill=greener!80,scale=.4},blk/.style ={draw, circle, fill=black!,scale=.08},plc/.style ={draw, circle, color=white!100,fill=white!100,scale=0.02},smt/.style ={draw, circle, color=gray!100,fill=gray!100,scale=0.02},lbl/.style ={scale=.2}] 
\node[smt] at (0,0) (1){};
\node[smt] at (0,2) (2){};
\node[smt] at (2,2) (3){};
\node[smt] at (2,0) (4){};
\draw [-,color=gray] (1) to (2) to (3) to (4) to (1);
\draw (1,1) circle (0.5);
\draw (1,1) ellipse (0.25 and 0.5);
\node[blk] at (1,0.5) {};
\node[blk] at (1,1.5) {};

\node[above right, node font=\tiny] at (0,0) {$\bP^2$};
\node[below right, node font=\tiny] at (1,0.5) {$A_3$};
\node[above right, node font=\tiny] at (1,1.5) {$A_3$};

\end{tikzpicture}
\end{tabular}}} & \rotatebox[origin=c]{-30}{\Large{$\rightsquigarrow$}} 
& \multirow{2}{*}{\resizebox{.26\textwidth}{!}{\begin{tabular}{c}
\begin{tikzpicture}[gren0/.style = {draw, circle,fill=greener!80,scale=.7},gren/.style ={draw, circle, fill=greener!80,scale=.4},blk/.style ={draw, circle, fill=black!,scale=.08},plc/.style ={draw, circle, color=white!100,fill=white!100,scale=0.02},smt/.style ={draw, circle, color=gray!100,fill=gray!100,scale=0.02},lbl/.style ={scale=.2}] 
\node[smt] at (0,0) (1){};
\node[smt] at (0,1.5) (2){};
\node[plc] at (1,1.25) (3){};
\node[plc] at (0.98,1.23) (3r){};
\node[plc] at (-1,1.25) (4){};
\node[plc] at (-0.98,1.23) (4r){};
\node[blk] at (0.3,0.7) (5){};
\node[plc] at (0,0.9) (6){};
\node[plc] at (0,0.6) (7){};
\node[plc] at (0.64,0.8) (8){};
\node[plc] at (0.4,0.5) (9){};
\node[blk] at (-0.3,0.7) (10){};
\node[plc] at (-0.64,0.8) (11){};
\node[plc] at (-0.4,0.5) (12){};
\draw [-,color=gray] (1) to (2);
\draw [-,color=gray] (1) to (3r); 
\draw [-,color=gray] (1) to (4r);
\draw [-,color=gray,rounded corners=1pt] (2) to [bend left=80] (3) to [bend left = 60] (2);
\draw [-,color=gray,rounded corners=1pt] (2) to [bend left=60] (4) to [bend left = 80] (2);
\draw [-] (6) to [bend right=30] (5) to [bend right=20] (8);
\draw [-] (7) to [bend left=30] (5) to [bend left=30] (9);
\draw [-] (11) to [bend right=30] (10) to [bend right=20] (6);
\draw [-] (12) to [bend left=37] (10) to [bend left=30] (7);

\node[above, node font=\tiny] at (5) {$A_3$};
\node[above, node font=\tiny] at (10) {$A_3$};
\node[below, node font=\tiny] at (1) {$\bP(1,1,2) \cup \bP(1,1,2)$};

\end{tikzpicture}
\end{tabular}}} \\ {\resizebox{.2\textwidth}{!}{\begin{tabular}{c}
\begin{tikzpicture}[gren0/.style = {draw, circle,fill=greener!80,scale=.7},gren/.style ={draw, circle, fill=greener!80,scale=.4},blk/.style ={draw, circle, fill=black!,scale=.08},plc/.style ={draw, circle, color=white!100,fill=white!100,scale=0.02},smt/.style ={draw, circle, color=gray!100,fill=gray!100,scale=0.02},lbl/.style ={scale=.2}] 
\node[smt] at (-.3,-.15) (1){};
\node[smt] at (-.3,1.85) (2){};
\node[smt] at (1.7,1.85) (3){};
\node[smt] at (1.7,-.15) (4){};
\node[blk] at (1.2,.7) (5){};
\node[blk] at (0.5,1.16) (6){};
\node[blk] at (1.2,1.47) (7){};
\node[plc] at (1.2, 1.7) (8){};
\node[plc] at (1.4,1.56) (9){};
\node[plc] at (1.2, 0.15) (10){};
\node[plc] at (0,0.94) (11){};

\draw [-,color=gray] (1) to (2) to (3) to (4) to (1);
\draw (0.7,0.7) circle (0.5);
\draw [-] (10) to (8);
\draw [-] (11) to (9);

\node[above right, node font=\tiny] at (5) {$A_3$};
\node[above left, node font=\tiny] at (6) {$A_3$};
\node[above right, node font=\tiny] at (1) {$\bP^2$};

\end{tikzpicture}
\end{tabular}}} & \rotatebox[origin=c]{30}{\Large{$\rightsquigarrow$}}  &
\end{tabular}
\end{figure}

On the level of the moduli spaces themselves, the wall crossing at $c = \frac{3}{4} $ is a flip of the locus of curves with tacnodes, replacing it with the locus of curves on $\bP(1,1,2) \cup \bP(1,1,2)$.  This is illustrated in Figure \ref{f:quarticwalls}.  

We can also identify the moduli space $P^{\rm CY}_4$ as a moduli space of K3 surfaces. In \cite{kondok3}, Kond\={o}  uses the observation 
that a degree 4 cyclic cover of $\bP^2$ along a smooth quartic curve is a degree K3 surface to construct a compactification $P_4\subset P_4^*$, which is the Baily-Borel compactification of a period domain parametrizing 
degree 4 K3 surfaces with a $\mathbb{Z}/4\mathbb{Z}$ symmetry.  It is shown in \cite{BABWILD} that this space is isomorphic to $P^{\rm CY}_4$.

\begin{prop}\label{prop:quartics}
The birational map $P_4^{\rm CY} \dashrightarrow P_4^{*}$ is an isomorphism.
\end{prop}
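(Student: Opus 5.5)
The plan is to construct a morphism $P_4^{\rm CY} \to P_4^*$ extending the period map, and then to show that this morphism is finite and birational onto a normal target. Zariski's Main Theorem will then make it an isomorphism.

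First, I define the map on the klt locus $P_4^{\rm klt}$. For a pair $(X,\frac{3}{4}C)$ with $(X,\frac34 C)$ klt, $4K_X + 3C \sim 0$. Hence one can take the $\mu_4$ cyclic cover $S \to X$ branched along $C$, using the description of polystable objects in the theorem above: $X = \bP^2$ or $\bP(1,1,4)$, with $C$ at worst cuspidal and avoiding the cone point. The klt condition on $(X,\frac34 C)$ is equivalent to $S$ having at worst ADE singularities, since $K_S = \pi^*(K_X + \frac34 C) \sim 0$ and log discrepancies pull back. Thus $S$ is a K3 surface with a $\bZ/4$ action, and the period point lands in the interior of the period domain $\mathcal{D}/\Gamma \subset P_4^*$. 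This extends Kond\={o}'s period map to all of $P_4^{\rm klt}$. For $(X,\frac34 C)$ lc but not klt, which by the theorem means the single polystable point of type (3), the cover is a Type II (or III) degenerate K3. I would send it to the corresponding boundary point of the Baily--Borel compactification; that boundary component should be a single point, matching the single polystable point. To get an actual morphism rather than a pointwise assignment, I would use that $P_4^{\rm CY}$ is normal, or pass to normalization. Then I would argue via the Hodge bundle: the CY pairs carry a natural line bundle $\lambda = \pi_* \omega$ of the family of covers, and the Baily--Borel compactification is $\Proj$ of the ring of automorphic forms, i.e. sections of powers of $\lambda$. Alternatively, since Baily--Borel is the minimal compactification, Borel's extension theorem extends the period map from the klt locus across codimension $\ge 2$ and to the boundary after resolving.

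Second, I show quasi-finiteness and birationality. By the global Torelli theorem for lattice polarized K3 surfaces with the $\bZ/4$ symmetry (Kond\={o}), the period map is injective on the locus of smooth quartics and, more generally, on $P_4^{\rm klt}$. Indeed, from $S$ with its $\mu_4$-action one recovers $X = S/\mu_4$ and $C$ as the branch divisor, so isomorphic periods give isomorphic pairs. The map is therefore injective on the interior. The boundary of $P_4^{\rm CY}$ is the single point (3), mapped to boundary, and the interior maps to the interior. Hence the map is bijective provided it is surjective onto the interior.

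Third, I prove surjectivity. Since $P_4^{\rm CY}$ is proper, being projective by Theorem \ref{thm:introbpcywallcrossing}, the image is closed. It is also irreducible of the same dimension $6$, so the map is surjective. With $P_4^*$ normal and the map finite and birational, Zariski's Main Theorem gives the isomorphism.

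The main obstacle is the first step: producing a genuine morphism at the non-klt point and along the $\bP(1,1,4)$ locus, and checking that the boundary of $P_4^*$ really consists of a single point. I would handle this by showing that $\lambda$ on $P_4^{\rm CY}$ is ample, via the projective morphism from $P^{\KSBA}$ and K-moduli and the fact that the CY space is $\Proj$ of sections of the Hodge line bundle. Its section ring agrees with automorphic forms on the interior. The complement in both spaces has codimension $\ge 2$, so Hartogs gives equal section rings.
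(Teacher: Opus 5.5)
Your endgame is sound. Suppose you already had a morphism $f\colon P_4^{\rm CY}\to P_4^*$ that maps the klt locus injectively into the interior. (Beyond smooth quartics, that injectivity is Artebani's extension of Kond\={o}'s theorem, not a one-line Torelli argument, because the Hodge isometry must be made effective and $\mu_4$-equivariant.) Then properness and dimension give surjectivity. Surjectivity forces the point of type (3) onto the boundary and shows the boundary is a single cusp. Zariski's Main Theorem then needs only $P_4^*$ normal and $P_4^{\rm CY}$ integral.

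The gap is that $f$ is never constructed at the point of type (3), and that is the whole content of the statement. Borel's theorem gives a morphism only on a log resolution $\widetilde P\to P_4^{\rm CY}$. To descend it you need two things. First, the fibre over (3) must be contracted, which requires its image to lie in the finite boundary; you assert this (``I would send it to the boundary'') but do not prove it. Second, you need $\mathcal{O}_{P_4^{\rm CY}}$ to be the pushforward of $\mathcal{O}_{\widetilde P}$, i.e.\ normality of $P_4^{\rm CY}$ at that point, which you simply assume. Passing to the normalization proves only $(P_4^{\rm CY})^\nu\cong P_4^*$; the normalization map is then finite, bijective and birational, and nothing in your argument shows it is an isomorphism. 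Since $P_4^*$ is normal, normality of $P_4^{\rm CY}$ at (3) is a necessary consequence of the proposition, not a technicality you may assume.

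The Hodge-bundle fallback does not close this gap. That $P_4^{\rm CY}$ is $\Proj$ of the Hodge section ring, i.e.\ that $\lambda$ is ample on it, is essentially the claim to be proved. The projective morphisms $P^{\K}_{4,\frac34-\epsilon}\to P_4^{\rm CY}\leftarrow P^{\KSBA}_{4,\frac34+\epsilon}$ give no positivity on the target, because ampleness does not descend along contractions. The Hartogs comparison of section rings at (3) again needs $P_4^{\rm CY}$ to be normal (or at least $S_2$) there. What is missing is a genuine local input at the pair (3). For example, you could show its $\bQ$-Gorenstein deformations are unobstructed, so the stack is smooth there; then Borel extension on a smooth atlas descends to the stack and to its good moduli space.

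The paper itself only cites \cite{BABWILD}, but Figure~\ref{f:quarticwalls} suggests a cleaner way to get the map, avoiding Borel extension. The space $P^{\K}_{4,\frac34-\epsilon}$ is normal and projective (the blow-up of the GIT quotient at the double conic). The extended period map (Artebani) is a morphism from it to $P_4^*$ contracting exactly the tacnodal curve to the cusp. The wall-crossing morphism to $P_4^{\rm CY}$ contracts the same curve to (3) and is an isomorphism on the klt locus. The rigidity lemma over the normal target $P_4^*$ then yields a finite, bijective, birational morphism $P_4^*\to P_4^{\rm CY}$. On this route too, the proof is finished precisely by normality of $P_4^{\rm CY}$ at (3).
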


This gives the Baily-Borel compactification the structure of a good moduli space of a moduli functor.  This result is strengthened in \cite{BL} to moduli spaces of K3 surfaces that are branched covers of other del Pezzo surfaces.

\subsection{KSBA wall crossing}

To complete the wall crossing for quartics, we must now consider the case of coefficient $c \in (\frac{3}{4}, 1]$. We can explicitly describe pairs parametrized by the moduli space $P^\KSBA_{4, \frac{3}{4} +\epsilon}$ due to Hacking in \cite{hacking} (alternatively, using the description of the boundary polarized Calabi Yau wall crossing above): 

\begin{prop}
    If $[(X,cD)] \in P^\KSBA_{4, c}$ where $c = \frac{3}{4} + \epsilon$ for $\epsilon \ll 1$, then $X = \bP^2, \bP(1,1,4)$, or $\bP(1,1,2) \cup \bP(1,1,2)$ and $D \in |-\frac{4}{3}K_X|$ such that $D$ has at worst $A_2$ (cuspidal) singularities. 
\end{prop}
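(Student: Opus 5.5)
The plan is to read off the objects of $P^\KSBA_{4,\frac34+\epsilon}$ from the boundary polarized Calabi Yau description via the open immersion $\cM^\KSBA_{2,9,\frac43,\frac34+\epsilon}\hookrightarrow \cM^{\rm CY}_{2,9,\frac43}$ of Section \ref{ss:bpcy}. Concretely, if $(X,(\frac34+\epsilon)D)$ is stable, then $(X,\frac34 D)$ is slc with $K_X+\frac34D\sim_{\bQ}0$ by Remark \ref{rmk:fiberwiseDinKX} and the remark following Example \ref{ex:octic}. Here $K_X,D$ are $\bQ$-Cartier and $D\in|-\frac43K_X|$, since $c=r^{-1}+\epsilon$ is the minimally canonically polarized range studied in \cite{hacking}. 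So $(X,\frac34D)$ is a boundary polarized CY pair in $P^{\rm CY}_4$, and the problem reduces to listing which such pairs survive perturbation to $\frac34+\epsilon$.

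The argument splits according to whether $X$ is normal. If $X$ is normal, the discussion preceding the theorem shows that $(X,\frac34D)$ must be klt. Indeed, if it were only lc, then $(X,(\frac34+\epsilon)D)$ would fail to be lc, since $\frac34$ would equal $\lct(D)$. Thus $(X,D)\in P_4^{\klt}$, and by the theorem on polystable pairs in $P_4^{\rm CY}$, $X$ is $\bP^2$ or $\bP(1,1,4)$. Independently, Theorem \ref{t:kltdegens} together with the index bound restricts $X$ to $\bP^2$ or $\bP(1,1,4)$. If $X$ is non-normal, \cite[Section 11]{hacking}, quoted above, gives $X=\bP(1,1,2)\cup\bP(1,1,2)$. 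In each case the relation $D\in|-\frac43K_X|$ is the one noted above.

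It remains to bound the singularities of $D$. On a normal $X$, at a smooth point $p$ of $X$ we need $\lct_p(D)>\frac34$. For reduced plane curve germs, the only singularities with $\lct>\frac34$ are nodes ($\lct=1$) and cusps ($\lct=\frac56$); a tacnode already has $\lct=\frac34$. A non-reduced component of multiplicity $2$ would give $\lct\le\frac12$, and a triple point gives $\lct\le\frac23$. Hence $D$ has at worst $A_2$ singularities away from the singular locus of $X$.

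The main obstacle is the singular points of $X$: the cone point of $\bP(1,1,4)$, and on $\bP(1,1,2)\cup\bP(1,1,2)$ the double curve and the $A_1$ points.
\begin{itemize}
\item On $\bP(1,1,4)$, I would show $D$ avoids the cone point: $D\in|\calO(8)|$, and a curve through the $\frac14(1,1)$ point is not Cartier there. A direct discrepancy computation on the $(-4)$-curve of the minimal resolution shows that $(X,(\frac34+\epsilon)D)$ fails to be lc, because the discrepancy of the $(-4)$-curve for $X$ alone is already $-\frac12$.
\item On the non-normal surface, I would pass to the normalization $(\bar X,\bar G+c\bar D)$. Each component is $\bP(1,1,2)$ with $\bar G$ a conic section avoiding the vertex. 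For the normalization to be lc with coefficient above $\frac34$, $\bar D$ must meet $\bar G$ transversally, and away from $\bar G$ the same plane curve analysis applies.
\end{itemize}
The final step is to interpret ``at worst $A_2$'' on this surface in the slc sense: transversality to the double curve means $D$ is locally a Cartier divisor through a normal crossing point.

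Finally, I would check the converse, that every listed pair is stable, which is required for the statement to be sharp. Such a pair is slc with coefficient $\frac34+\epsilon$ by the $\lct$ values above. It is also canonically polarized, since $K_X+(\frac34+\epsilon)D\sim_{\bQ}\frac{4\epsilon}{3}\cdot\frac34 D$ is ample.
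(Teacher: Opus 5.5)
Your overall reduction is essentially the paper's: the paper simply imports Hacking's classification \cite[Section 11]{hacking}, or alternatively the list of pairs in $P_4^{\rm CY}$. In the normal case the theorem you quote already gives that $D$ misses the cone point and has at worst cusps, so that case is fine.

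\textbf{The gap.} It lies in your own local analysis of $D$ on the non-normal surface, which uses the wrong geometry for the double curve. In $X=(xy=0)\subset\bP(1,1,1,2)$ the double curve is $(x=y=0)$. On the component $\bP(1,1,2)_{[y:z:w]}$ it is the section $(y=0)$ of $\calO(1)$, which passes through the vertex. So the point $[0:0:0:1]$, where $X$ is locally $(xy=0)\subset\frac12(1,1,1)$, lies on $\bar G$. Your transversality argument only works at points of $\bar G$ where $\bar X$ is smooth, and the plane-curve analysis does not apply at a singular point. Nothing in the proposal shows that $D$ avoids this point, so ``at worst $A_2$'' is not established.

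\textbf{How to close it.} Write the quartic as $aw^2+wg_2+g_4$ and suppose $a=0$. On the normalization the germ is $(\bA^2/\mu_2,\bar G+c\bar D)$. Pass to the index one cover $\bA^2_{y,z}$, which is \'{e}tale in codimension one. There $\bar G$ becomes $H=(y=0)$ and $\bar D$ becomes $D'=(g_2(0,y,z)+g_4(0,y,z)=0)$. This is an even function not vanishing identically on $H$, since $D$ cannot contain the double curve, so $(D'\cdot H)_0\ge 2$. By inversion of adjunction, $(\bA^2,H+cD')$ is lc at the origin only if $c\,(D'\cdot H)_0\le 1$, that is, only if $c\le\frac12$. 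Hence $a\neq 0$ and $D$ misses the vertex. Alternatively, cite Hacking's description of $D$ as well as of $X$, as the paper does.

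\textbf{False claim on $\bP(1,1,4)$.} You say a curve through the cone point is not Cartier there, but the local class group of $\frac14(1,1)$ is $\bZ/4$, generated by $\calO(1)$. So every member of $|\calO(8)|$ is Cartier there. Your discrepancy computation in fact needs this. The local equation $g_4+g_8$ has order at least $4$ upstairs, and $\ord_E=\frac14\ord_0$ on invariant functions, so $\ord_E D\ge 1$ and $A_{X,cD}(E)\le\frac12-c<0$. For a non-Cartier curve such as a ruling, $\ord_E=\frac14$ and the argument would fail.

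\textbf{Minor points.} Your remark that the index bound independently restricts $X$ needs you first to pass to the K-semistable pair $(X,(\frac34-\epsilon)D)$ via \cite[Lemma 5.3]{Zhou}, since that bound was only proved in the K-moduli range. The converse you sketch is not part of the statement.
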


An explicit description of the configuration of the allowed cusps can be found in \cite[Lemma 4.2]{hassett}.  By definition in this moduli space, $K_X + cD$ is ample and $K_X + \frac{3}{4} D \sim 0$, we conclude that $D$ is ample and hence $K_X + c'D$ is ample for all $c' \in (\frac{3}{4},1]$.  Because curves with cusps have log canonical threshold equal to $\frac{5}{6}$, this implies every such pair $(X,cD)$ is KSBA stable for $c \in (\frac{3}{4}, \frac{5}{6}]$.  The first wall crossing in the canonically polarized region is therefore at $\frac{5}{6}$ when cuspidal curves must undergo a replacement.  To determine which surfaces and curves appear on the other side of the wall, we must at least partially resolve these singularities and compute canonical models.  This wall crossing was worked out by Hassett in \cite{hassett}.  Let $[(X, cD)] \in P^{\KSBA}_{4, \frac{3}{4}+\epsilon}$ be a stable pair such that $D$ has a cusp.  Following \cite{hassett}, define a surface $Y$ as follows: 
\begin{itemize}
    \item for each cusp $p \in D$ with local equation $x^2  = y^3$, perform the $(3,2)$ weighted blow up of $X$ at $p$, resulting in a surface $\tilde{X} \to X$ such that $\tilde{X}$ has a $\frac{1}{2}(1,1)$ and a $\frac{1}{3}(1,1)$ singularity along the exceptional divisor $E$.
    \item for each weighted blow-up, glue a copy of $\bP(1,2,3)$ to $\tilde{X}$ along the exceptional divisor $E$ in $\tilde{X}$ and a section of $\calO(1)$ in $\bP(1,2,3)$ so the total space has singularities $(xy = 0) \subset \frac{1}{2}(1,1,1)$ and $(xy = 0) \subset \frac{1}{3}(1,2,1)$. 
    \item define $Y$ as the union of $\tilde{X}$ and the surfaces $\bP(1,2,3)$.
\end{itemize}
Note that $Y$ could be alternatively constructed in the following way.  First, observe that each such $X$ is a degree 2 hypersurface in $\bP(1,1,1,2)$.  Denoting the coordinates on $\bP(1,1,1,2)$ by $[x:y:z:w]$, $\bP^2 $ is given by $ (w = f(x,y,z))$ for any degree 2 polynomial in $x,y,z$; $\bP(1,1,4) $ is given by $ (xy = z^2)$; and $\bP(1,1,2) \cup \bP(1,1,2) $ is given by $ (xy = 0)$. Taking an appropriate pencil of these hypersurfaces gives a smoothing of $X$ to $\bP^2$ over $\bA^1_t$.  Performing the $(2,3,1)$ weighted blow-up of this family at any point corresponding to a cusp in $X$ with local coordinates $(x,y,t) = (0,0,0)$ yields the same surface $Y$ and demonstrates that $Y$ is smoothable to $\bP^2$. 

Hassett proves that, for any such $Y$ and curve $D_Y \in |-\frac{4}{3}K_Y|$ with at worst nodal singularities, $K_Y + cD_Y$ is ample and $(Y,cD_Y)$ is slc for $c>\frac{5}{6}$ (c.f. \cite[Lemma 4.3]{hassett}) and these give all elements of $P^\KSBA_{4, c}$ for $c\in(\frac{5}{6},1]$.  Furthermore, every $D_Y \subset Y$ has an \textit{elliptic tail} in each component $\bP(1,2,3)$, i.e. an arithmetic genus one curve meeting the rest of the curve at one point.  For $c\in(\frac{5}{6},1]$, the map $P^\KSBA_{4, c} \to P^{\KSBA}_{4,\frac{5}{6}}$ contracts each $\bP(1,2,3)$ component of $Y$, and on the curve $D_Y$, contracts the elliptic tail. 

There is a natural forgetful map $P^{\KSBA}_{4,c} \to \overline{M}_3$, the moduli space of Deligne-Mumford stable genus 3 curves, given by $[(X,D)] \mapsto [D]$.  The main theorem in \cite{hassett} is that this is an isomorphism\footnote{These types of forgetful maps are not expected to be isomorphisms in general.  For example, if $D$ is a quintic plane curve, the associated forgetful morphism is not surjective to $\oM_6$ (not every genus 6 curve is planar) and it is also not injective (it is finite of degree $14$ on the locus of smooth hyperelliptic curves, see e.g. \cite[Corollary 7.8]{ADLpub}).}.  

In summary, we have the following description of the wall crossing for moduli of quartic plane curves on the level of moduli spaces.  There is a similar description on the level of moduli stacks, see \cite{ADLpub} for more details.  We suppress the superscript on $P_{4,c}$. 

\begin{theorem}
    Let $P_{4,c}$ be the moduli space compactifying the locus of pairs $(\bP^2, cC)$ where $C$ is a smooth quartic plane curve.
        \begin{itemize}
            \item When $c \in (0, \frac{3}{8})$, $P_{4,c}$ is isomorphic to the GIT quotient of quartic plane curves, and parametrizes pairs $(X,cD)$ where $X = \bP^2$ and $D$ is either a doubled conic curve or a reduced quartic curve with at worst $A_3$ (tacnodal) singularities. 
            \item When $c = \frac{3}{8}$, $P_{4,c}$ is again isomorphic to the GIT quotient, but the polystable point representing the double conic curve corresponds to the pair $(\bP(1,1,4), \frac{3}{8} (2Q))$ where $Q = (z =0)$. 
            \item When $c \in (\frac{3}{8}, \frac{3}{4})$, $P_{4,c}$ is a blow up of the GIT quotient at the point corresponding to the doubled conic curve.  It parametrizes pairs $(X,cD)$ such that $X = \bP^2$ or $= \bP(1,1,4)$ and $D$ has at worst $A_3$ (tacnodal) singularities. 
            \item When $c = \frac{3}{4}$, the moduli space $P_{4, \frac{3}{4}}$ is the contraction of the locus of tacnodal curves in $P_{4, \frac{3}{4} - \epsilon}$ to the point described in Figure \ref{f:polystablequartics}.
            \item When $c \in (\frac{3}{4}, \frac{5}{6}]$, $P_{4,c}$ parametrizes genus 3 curves with at worst cusps on the surfaces $\bP^2, \bP(1,1,4)$, or $\bP(1,1,2) \cup \bP(1,1,2)$. 
            \item When $c \in (\frac{5}{6},1]$, $P_{4,c}$ parametrizes genus 3 curves with at worst nodes on the surfaces $\bP^2, \bP(1,1,4)$, $\bP(1,1,2) \cup \bP(1,1,2)$, or a surface $Y$ as in the construction above obtained by blowing up and gluing to $\bP(1,2,3)$.  Furthermore, $P_{4,c} \cong \overline{M}_3$.
        \end{itemize}
\end{theorem}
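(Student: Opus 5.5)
The theorem assembles the wall crossings computed above into one statement. I would prove it interval by interval in $c$, moving from left to right and using the general wall-crossing theorems of Section \ref{sec:wallcrossing} to glue the pieces together.

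\emph{The range $c \le \frac{3}{8}$.} For $c < \frac{3}{8}$, the index bound $\ind(x) \le \lfloor \frac{3}{3-4c}\rfloor$ forces $\ind(x)=1$, so every surface is $\bP^2$. Theorem \ref{thm:k=git} then identifies $P_{4,c}$ with the GIT quotient of plane quartics. The classical GIT analysis gives the stated description of polystable points: reduced curves with at worst tacnodes, together with the double conic. At $c=\frac{3}{8}$, the computation $\delta_{\bP^2,c(2Q)}(Q)=1$ shows that the degeneration of $(\bP^2, 2Q)$ to $(\bP(1,1,4),2Q')$ has vanishing Futaki invariant. So the polystable representative of that point becomes $(\bP(1,1,4),\frac{3}{8}(2Q'))$, and the space is still the GIT quotient, via the open immersion $\cM^\K_{c-\epsilon}\hookrightarrow\cM^\K_{c}$ and the fact that it induces an isomorphism on good moduli spaces.

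\emph{The range $\frac{3}{8} < c < \frac{3}{4}$.} Here I use the threefold construction in Figure \ref{f:doubleconic} together with interpolation (Proposition \ref{prop:k-interpolation} and its corollary). Every polystable curve has $\lct \ge \frac{3}{4}$, so there are no further walls in this range. The identification of $P_{4,c}\to P_{4,\frac38}$ as the blow-up of the double-conic point is the Kirwan statement of \cite{ADLpub}, which I would cite.

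\emph{The wall $c=\frac{3}{4}$.} I apply Theorem \ref{thm:introbpcywallcrossing} with the classification theorem of \cite{BABWILD} stated above. The non-klt pairs on either side are exactly the tacnodal curves on the left and the pairs on $\bP(1,1,2)\cup\bP(1,1,2)$ on the right, and all of them isotrivially specialize to the single pair in Figure \ref{f:polystablequartics}. Hence $P_{4,\frac34}$ contracts the tacnodal locus to a point.

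\emph{The range $\frac{3}{4} < c \le 1$.} For $c\in(\frac34,\frac56]$, Hacking's proposition together with ampleness of $D$ and $\lct(\text{cusp})=\frac56$ shows that the pairs are stable throughout this interval. The next wall is at $\frac56$. Across it I use Hassett's construction of $Y$ (\cite[Lemma 4.3]{hassett}), and Theorem \ref{thm:introKSBAwallcrossing} gives the morphism contracting the $\bP(1,2,3)$ components. For $c\in(\frac56,1]$, no further walls occur: nodal curves have $\lct=1$, and $K_Y+cD_Y$ stays ample because $D_Y$ is ample. The isomorphism with $\overline{M}_3$ is Hassett's main theorem.

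The main obstacle is the completeness of the list in each range, that is, showing that no other surfaces or curves appear. For $c<\frac34$ this rests on the index bound, which leaves only $\bP^2$ and $\bP(1,1,4)$. For the KSBA range it rests on Hacking's classification of the surfaces with $c=\frac34+\epsilon$, where I would rely on \cite{hacking}. Everything else is bookkeeping with the cited results.
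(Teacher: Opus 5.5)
Your interval-by-interval assembly follows the paper's route, and the paper's own argument is likewise a compilation of \cite{ADLpub}, \cite{BABWILD}, \cite{hacking} and \cite{hassett}. However, the one step you justify yourself, in the last range, is false.

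\textbf{The gap.} For $c\in(\frac56,1]$ you argue that $K_Y+cD_Y$ stays ample ``because $D_Y$ is ample.'' On $\bP^2$, $\bP(1,1,4)$ and $\bP(1,1,2)\cup\bP(1,1,2)$, ampleness of $D$ came from $K_X+\frac34D\sim_{\bQ}0$. That relation does not survive the replacement. For the $(3,2,1)$ weighted blow-up of the family, $a(E)=5$ and $\ord_E(\calD)=6$, so $K_Y+\frac34D_Y\sim_{\bQ}\frac12E|_Y$. Consistently, $(K_Y+cD_Y)|_{\bP(1,2,3)}$ has degree $6c-5$, which equals $-\frac12$ at $c=\frac34$.

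In fact $D_Y$ need not even be nef. Take the tricuspidal quartic: it has only cusps, so $(\bP^2,cD)$ is stable for $c\in(\frac34,\frac56]$. Each $(3,2)$ blow-up lowers $D^2$ by $36\cdot\frac16=6$, so on $\tilde X$ the strict transform satisfies $\tilde D^2=16-18=-2$, and hence $D_Y\cdot\tilde D<0$. Similarly, for a cuspidal curve on $\bP(1,1,4)$, the strict transform of the ruling through the cusp has $\tilde D\cdot\tilde F=2-2=0$. So ``ample at $c_0$ plus $(c-c_0)D_Y$'' does not give ampleness.

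\textbf{The fix.} The paper takes ampleness for all $c>\frac56$ directly from \cite[Lemma 4.3]{hassett}. To prove it yourself, argue as in Lemma \ref{lem:ampleforc>c0}. On $\tilde X$,
\[
(K_Y+cD_Y)|_{\tilde X}=K_{\tilde X}+\textstyle\sum E_i+c\tilde D=\pi^*(K_X+\tfrac56D)+(c-\tfrac56)\tilde D .
\]
This is positive on each $E_i$ and on every curve not contained in $\tilde D$, because $K_X+\frac56D\sim_{\bQ}\frac1{12}D$ is ample. Components $C\subset\tilde D$ with $C^2<0$ are handled by adjunction as in that lemma. For the tricuspidal quartic this gives $(K_{\tilde X}+\sum E_i+c\tilde D)\cdot\tilde D=3-2c>0$. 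On each $\bP(1,2,3)$ component the degree is $6c-5>0$.

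\textbf{A smaller omission at $c=\frac38$.} Vanishing of the Futaki invariant only shows that $(\bP(1,1,4),\frac38(2Q'))$ is K-semistable. Calling it the polystable representative also requires its K-polystability, which the paper takes from \cite{ADLpub}. Your claim that the open immersion induces an isomorphism on good moduli spaces also needs a reason. The paper supplies it: every new pair $(\bP(1,1,4),\frac38D)$ with $D=(z^2+zf_4(x,y)+f_8(x,y)=0)$ isotrivially degenerates to $2Q'$ under $(x,y,z)\mapsto(tx,ty,z)$, so no new closed points appear.
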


This is summarized in Figure \ref{f:quarticwalls}.

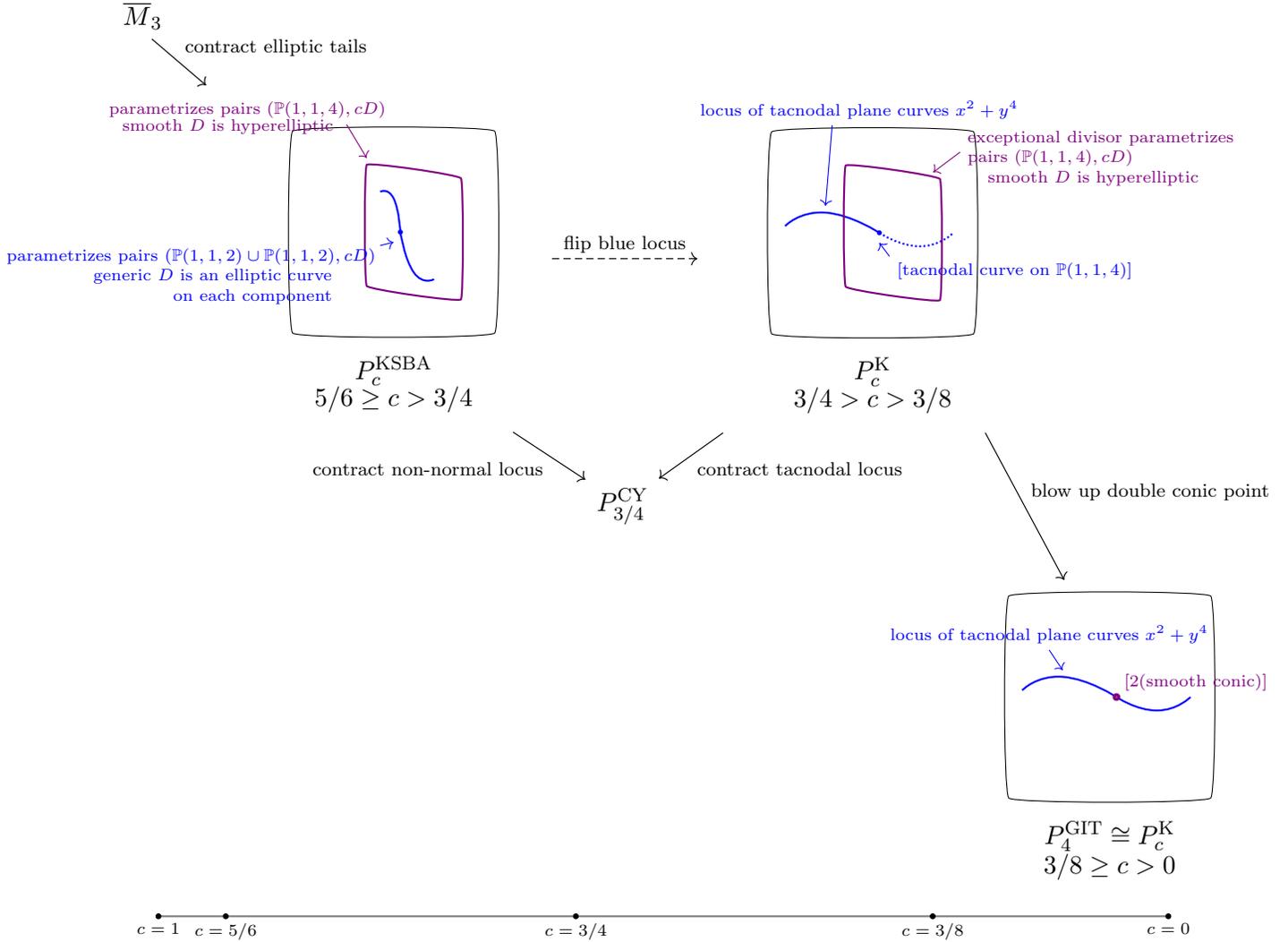
\begin{figure}[h]
\begin{tikzcd}[row sep=normal]
\overline{M}_3 \arrow[rd, end anchor={[xshift=3ex]}, "\text{contract elliptic tails}"]  &[-10em] & &[-2em] &[-18em] \\
& \begin{tabular}{c}
\begin{tikzpicture}[gren0/.style = {draw, circle,fill=greener!80,scale=.7},gren/.style ={draw, circle, fill=greener!80,scale=.4},blk/.style ={draw, circle, fill=black!,scale=.2},plc/.style ={draw, circle, color=white!100,fill=white!100,scale=0.02},smt/.style ={draw, circle, color=gray!100,fill=gray!100,scale=0.02},lbl/.style ={scale=.2}] 

\draw [black] plot [smooth cycle, tension=.1] coordinates { (0,0) (0,3) (3,3) (3,0) };

\draw [thick,blue] plot [smooth, tension=1] coordinates { (1.3,2.1) (1.5,2) (1.6,1.5) };
\draw [thick,blue] plot [smooth, tension=1] coordinates {  (1.6,1.5) (1.8,.9) (2.1,.8) };

\draw [thick,violet] plot [smooth cycle, tension=.1] coordinates { (1.1,.7) (1.1,2.5) (2.5,2.3) (2.5,.5) };


\filldraw[color=blue, fill=blue] (1.6,1.5) circle (0.03);

\draw[color=blue, ->] (1.3,1.3) -- (1.5,1.35);
\node[below left, node font=\tiny, color=blue] at (1.35,1.35) {parametrizes pairs $(\bP(1,1,2) \cup \bP(1,1,2), cD)$};
\node[below left, node font=\tiny, color=blue] at (0.7,1.05) {generic $D$ is an elliptic curve};
\node[below left, node font=\tiny, color=blue] at (0.7,0.75) {on each component};
\draw[color=violet, ->] (.8,3.1) -- (1.1,2.6);
\node[above left, node font=\tiny, color=violet] at (1.5,3.1) {parametrizes pairs $(\bP(1,1,4),cD)$};
\node[left, node font=\tiny, color=violet] at (.72,3.1) {smooth $D$ is hyperelliptic};
\node[below] at (1.5,-.2) {$P_c^\mathrm{\KSBA}$};
\node[below] at (1.5,-.65) { $5/6 \ge c > 3/4$};

\end{tikzpicture}
\end{tabular} \arrow[rd, swap, "\text{contract non-normal locus}"] \arrow[rr,dashed,xshift=2.5ex,"\text{flip blue locus}"] & & \begin{tabular}{c}
\begin{tikzpicture}[gren0/.style = {draw, circle,fill=greener!80,scale=.7},gren/.style ={draw, circle, fill=greener!80,scale=.4},blk/.style ={draw, circle, fill=black!,scale=.2},plc/.style ={draw, circle, color=white!100,fill=white!100,scale=0.02},smt/.style ={draw, circle, color=gray!100,fill=gray!100,scale=0.02},lbl/.style ={scale=.2}] 

\draw [black] plot [smooth cycle, tension=.1] coordinates { (0,0) (0,3) (3,3) (3,0) };

\draw [thick,blue] plot [smooth, tension=1] coordinates { (.2,1.6) (.8,1.8) (1.6,1.5) };
\draw [thick,densely dotted,blue] plot [smooth, tension=1] coordinates {  (1.6,1.5) (2.2,1.3) (2.7,1.5) };

\draw [thick,violet] plot [smooth cycle, tension=.1] coordinates { (1.1,.7) (1.1,2.5) (2.5,2.3) (2.5,.5) };


\filldraw[color=blue, fill=blue] (1.6,1.5) circle (0.03);

\draw[color=blue, ->] (.9,3.1) -- (.8,1.9);
\node[above, node font=\tiny, color=blue] at (1.3,3.1) {locus of tacnodal plane curves $x^2 + y^4$};
\draw[color=blue, ->] (1.75,1.15) -- (1.6,1.35);
\node[below right, node font=\tiny, color=blue] at (1.75,1.15) {[tacnodal curve on $\bP(1,1,4)$]};
\draw[color=violet, ->] (2.8,2.7) -- (2.4,2.4);
\node[above right, node font=\tiny, color=violet] at (2.8,2.7) {exceptional divisor parametrizes};
\node[above right, node font=\tiny, color=violet] at (2.8,2.4) {pairs $(\bP(1,1,4),cD)$};
\node[above right, node font=\tiny, color=violet] at (3.1,2.1) {smooth $D$ is hyperelliptic};
\node[below] at (1.5,-.2) {$P_c^\mathrm{K}$};
\node[below] at (1.5,-.65) { $3/4 > c > 3/8$};

\end{tikzpicture}
\end{tabular} \arrow[ld,"\text{contract tacnodal locus}"] \arrow[rdd, start anchor={[xshift=-2ex]}, end anchor={[xshift=2ex]}, "\text{blow up double conic point}"] & \\
& & P_{3/4}^{\rm CY}  & & \\
& & & & \begin{tabular}{c}
\begin{tikzpicture}[gren0/.style = {draw, circle,fill=greener!80,scale=.7},gren/.style ={draw, circle, fill=greener!80,scale=.4},blk/.style ={draw, circle, fill=black!,scale=.2},plc/.style ={draw, circle, color=white!100,fill=white!100,scale=0.02},smt/.style ={draw, circle, color=gray!100,fill=gray!100,scale=0.02},lbl/.style ={scale=.2}] 

\draw [black] plot [smooth cycle, tension=.1] coordinates { (0,0) (0,3) (3,3) (3,0) };

\draw [thick,blue] plot [smooth, tension=1] coordinates { (.2,1.6) (.8,1.8) (1.6,1.5) };
\draw [thick,blue] plot [smooth, tension=1] coordinates {  (1.6,1.5) (2.2,1.3) (2.7,1.5) };


\filldraw[color=violet, fill=violet](1.6,1.5) circle (0.05);

\node[smt] at (1.6,1.5) (A){};
\node[above right, node font=\tiny, color=violet] at (A) {[2(smooth conic)]};
\draw[color=blue, ->] (.6,2.2) -- (.8,1.9);
\node[above, node font=\tiny, color=blue] at (.6,2.2) {locus of tacnodal plane curves $x^2 + y^4$};
\node[below] at (1.5,-.2) {$P_4^{\mathrm{GIT}} \cong P_c^\mathrm{K}$};
\node[below] at (1.5,-.68) {$3/8 \ge c > 0$};

\end{tikzpicture}
\end{tabular} \\
\end{tikzcd}

\vspace{-.2in}
\begin{tabular}{c}
\begin{tikzpicture}[gren0/.style = {draw, circle,fill=greener!80,scale=.7},gren/.style ={draw, circle, fill=greener!80,scale=.4},blk/.style ={draw, circle, fill=black!,scale=.2},plc/.style ={draw, circle, color=white!100,fill=white!100,scale=0.02},smt/.style ={draw, circle, color=gray!100,fill=gray!100,scale=0.02},lbl/.style ={scale=.2}] 

\draw[thick,color=gray] (0,0) to (15,0);
\node[blk] at (0,0) (1){};
\node[below, node font=\tiny] at (1) {$c=1$};
\node[blk] at (15,0) (2){};
\node[below, node font=\tiny] at (2) {$c=0$};
\node[blk] at (11.5,0) (3){};
\node[below, node font=\tiny] at (3) {$c=3/8$};
\node[blk] at (6.2,0) (4){};
\node[below, node font=\tiny] at (4) {$c=3/4$};
\node[blk] at (1,0) (5){};
\node[below, node font=\tiny] at (5) {$c=5/6$};
\end{tikzpicture}
\end{tabular}
\caption{Wall crossings for moduli of quartic curves.}
\label{f:quarticwalls}
\end{figure}

\section{Wall crossing for quintic plane curves and applications to higher degree}\label{sec:higherdegree}

\subsection{Quintic plane curves}

For quintics, the situation is much more complicated as there exist many walls.  We describe them briefly here.  Full details for the wall crossings for $c \in (0, \frac{3}{5}]$ can be found in \cite{ADLpub, BABWILD}, and the description when $c = \frac{3}{5} + \epsilon$ for $\epsilon \ll 1$ can be found in \cite{hacking}.

\begin{theorem}
For $c \le \frac{3}{7}$, the moduli space $P_{5,c}$ is isomorphic to the GIT moduli space of plane quintics.  For $c \in (0, \frac{3}{5})$, there are five wall crossings for K-moduli spaces of plane quintics. Among them, the first two are weighted blow-ups while the last three are flips. 

After the log Calabi Yau wall crossing, the moduli space $P_{5,c}$ has an explicit description given in \cite[Section 11]{hacking}.
 \end{theorem}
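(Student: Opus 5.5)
This is largely a summary theorem, so the plan is to assemble it from the general machinery of Section \ref{sec:wallcrossing} and the Toolbox, in the same way as was done for quartics.

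\textbf{Step 1: agreement with GIT for $c\le \frac{3}{7}$.} Use the index bound $\ind(x)\le \min\{\lfloor \frac{3}{3-5c}\rfloor,5\}$ from \cite{ADLpub}, valid since $5$ is not divisible by $3$. For $c<\frac{3}{10}$ this gives $\ind(x)=1$, so $X$ is Gorenstein. By Theorem \ref{t:kltdegens}, $X=\bP^2$, and Theorem \ref{thm:k=git} identifies $P_{5,c}$ with the GIT quotient.

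For $c\in[\frac{3}{10},\frac{3}{7}]$ a non-Gorenstein surface is allowed in principle, namely $\bP(1,1,4)$ with index $2$. This must be excluded by a sharper argument:
\begin{itemize}
\item apply the local-to-global estimate (Theorem \ref{thm:local-vol-global}) at the $\frac14(1,1)$ point;
\item use the refined comparison of $\hvol(x,X,cD)$ with $\hvol(x,X)$, which accounts for how a quintic $D\in|{-\frac53 K_X}|$ must pass through the cone point (odd degree forces $D$ through the singularity);
\item combine these to show $\bP(1,1,4)$ cannot carry a K-semistable pair for $c<\frac{3}{7}$.
\end{itemize}
At $c=\frac37$ it enters only as a polystable replacement whose image in the moduli space is a single point, so the moduli spaces still agree with GIT there.

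\textbf{Step 2: locating the five K-moduli walls in $(0,\frac35)$.} List the GIT polystable quintics whose log canonical threshold is below $\frac35$. For each one $C$, compute $\kst_+(C)$ via the valuative criterion (Theorem \ref{thm:valcriteria}), using the linear formulas $A_{X,cD}(E)=A_X(E)-c\ord_D(E)$ and $S_{X,cD}(E)=(1-cr)S_X(E)$. The destabilizing divisor $E$ should be a weighted blowup adapted to the singularity of $C$. By the proof sketch of Theorem \ref{thm:introKwallcrossing}, the wall sits at
\[ c=\frac{A_X(E)-S_X(E)}{\ord_D(E)-\frac53 S_X(E)}. \]

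For each destabilized curve, construct the replacement exactly as for the double conic:
\begin{itemize}
\item take a smoothing family over $\bA^1$;
\item perform the weighted blowup of the central fiber along the center of $E$;
\item contract the strict transform of $\bP^2_0$.
\end{itemize}
This produces degenerations of $\bP^2$ to Manin–Markov surfaces of index at most $5$, such as $\bP(1,1,4)$ and $\bP(1,4,25)$ or its partial smoothings. Their K-semistability on the far side of the wall is checked by $\delta$-computations, and for wide ranges of $c$ by interpolation (Proposition \ref{prop:k-interpolation}), once one verifies that $\lct\ge\frac35$ for curves that do not destabilize further.

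\textbf{Step 3: type of each wall.} Determine each wall's type by comparing dimensions of the loci exchanged:
\begin{itemize}
\item if the locus on the $c_i-\epsilon$ side is a point (for example the worst GIT curves, such as the double conic plus a line), the wall is a divisorial contraction, namely a weighted blowup as in the Kirwan-type description for quartics;
\item if both sides have positive codimension, the wall is a flip.
\end{itemize}
One should find that the first two walls are of the first kind and the last three are flips. The final statement, the description after the CY wall, is quoted from \cite[Section 11]{hacking} via the open immersions of Section \ref{ss:bpcy}.

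\textbf{Main obstacle.} The hardest part is the completeness of the list of walls: showing that no other curve or surface destabilizes in $(0,\frac35)$. This requires bounding the surfaces through the index bound and then classifying all K-semistable curves on each candidate surface. I would attempt this by stratifying the quintics by singularity type, using that $\lct<\frac35$ forces specific singularities (non-reduced components, or singularities worse than $A_4$ and ordinary triple points), and bounding $\delta$ on each stratum.
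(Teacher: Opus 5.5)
\paragraph{Verdict.} The paper does not prove this statement. It is a summary quoted from \cite{ADLpub} (the walls in $(0,\frac35)$), \cite{BABWILD} (the wall at $\frac35$) and \cite[Section 11]{hacking}, so your proposal has to stand on its own. Step~1 does stand, though it needs one extra justification at the end. On $\bP(1,1,4)$ one has $D\in|\cO(10)|$, which contains no pure power of the weight-$4$ variable. Hence on the $\mu_4$-cover of the cone point, $D$ has order at least $2$. For the exceptional curve $E$ of the minimal resolution one has $A_X(E)=\frac12$, $\vol(\ord_E)=4$ and $\ord_E(D)\ge\frac12$, so $\hvol(x,X,cD)\le(1-c)^2$. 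Theorem~\ref{thm:local-vol-global} then gives $(3-5c)^2\le\frac94(1-c)^2$, i.e.\ $c\ge\frac37$, which is exactly the bound you need. For the identification with GIT, however, you must use the strengthened statement from \cite{ADLpub}: if all K-semistable pairs live on $\bP^2$, then K-moduli equals GIT. Theorem~\ref{thm:k=git} is stated only for $c\ll 1$ and does not suffice. The rest of the theorem is its real content: exactly five walls in $(0,\frac35)$, two weighted blow-ups and three flips. Steps~2--3 only say that ``one should find'' this, so there is a genuine gap.

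\paragraph{Where the gap lies.} Concretely, there are four problems.

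(a) Evaluating $\delta$ on a single weighted blow-up $E$ only shows $\kst_+(C)\le c_E$. To place the wall you also need $(\bP^2,c_EC)$ to be K-semistable, and nothing in your plan gives this reverse inequality. The available route is to prove K-polystability of the $\bG_m$-invariant pair at the wall, as the paper does for $(\bP(1,1,4),\frac38(2Q'))$ in the quartic case. One then uses openness, and interpolation (Proposition~\ref{prop:k-interpolation}) from K-stability at small $c$.

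(b) This is where your ``main obstacle'' really bites: the index and volume bounds do not determine where the index-$5$ surfaces enter. At the $\frac1{25}(1,4)$ point of $\bP(1,4,25)$ or $X_{26}$, the divisor $D$ (of degree $50$, respectively $25$) is Cartier and need not pass through the point. So local-to-global only excludes these surfaces for $c<\frac{12}{25}$, whereas neither appears for $c<\frac{8}{15}$. Pairs on $\bP(1,1,4)$, $X_{26}$ and $\bP(1,4,25)$ therefore need their own analysis. A list of GIT-polystable quintics on $\bP^2$ cannot rule out walls caused by these pairs.

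(c) The replacement recipe ``exactly as for the double conic'' fails when the center of $E$ is a point. That is the case for the weighted blow-ups you propose, adapted to the isolated $A_{10}$--$A_{12}$ singularities. The strict transform of $\bP^2\times\{0\}$ contains lines missing the blown-up point. These are numerically equivalent to lines in the general fibre, so that surface cannot be contracted to a point. The degeneration must instead be built from the Rees algebra of $\ord_E$, i.e.\ a relative MMP rather than a single contraction.

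(d) Deciding between blow-up and flip requires the local VGIT description of the wall crossings from \cite{ADLpub}. The wall is a weighted blow-up, meaning $P_{5,c_i-\epsilon}\to P_{5,c_i}$ is an isomorphism and $P_{5,c_i+\epsilon}\to P_{5,c_i}$ is a weighted blow-up, exactly when $E_i^+$ is a divisor. It is a flip exactly when both $E_i^\pm$ have codimension at least $2$. Your criterion ``both sides have positive codimension'' holds at every wall, so it cannot distinguish the two cases.
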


   In Table \ref{table:quintic}, we summarize the behavior of all wall crossings for plane quintics.  The $E_i^-$ column describes the general points that appear in $P_{5, c_i - \epsilon}$ but destabilize for $c_i + \epsilon$ and the $E_i^+$ column describes the general points that appear in $P_{5, c_i + \epsilon}$ but destabilize for $c_i - \epsilon$.  More detailed descriptions of curves in the $E_i^+$ column can be found in \cite{ADLpub}; we only include here those with a simple geometric description.  The justification for the seventh wall can be found in the following section.

\begin{table}
 \begin{tabular}{| p{.05\textwidth} | p{.05\textwidth}| p{.3\textwidth} | p{.5\textwidth}|}\hline
       $i$ & $c_i$ & $E_i^-$ & $E_i^+$ \\ \hline
       1 & $\frac{3}{7}$ & $(\bP^2, Q_5)$ & $(\bP(1,1,4), D)$\\
       2 & $\frac{8}{15}$ & $(\bP^2, A_{12}\textrm{-quintic})$ & $(X_{26}, D): \ D \text{ hyperelliptic}$ \\
       3 & $\frac{6}{11}$ &$ (\bP^2, A_{11}\textrm{-reducible quintics})$ &       $(\bP(1,1,4), D)$ \\
       4 & $\frac{63}{115}$ & $(\bP^2, A_{11}\textrm{-irreducible quintics})$ &
        $(\bP(1,4,25), D)$\\
       5 & $\frac{54}{95}$ & $(\bP^2, A_{10}\textrm{-quintics})$ & $(\bP(1,4,25), D)$\\
       6 & $\frac{3}{5}$ & $(\bP^2, A_{9}\textrm{-quintics})$ & $(\bP(1,1,5) \cup X_6, D):\ D $ union of genus 4 and genus 1 component meeting at 2 points 
       \\
       6 & $\frac{3}{5}$ & $(\bP^2, D_{6}\textrm{-quintics})$ &  $(\bP(1,1,2) \cup \bP(1,1,2), D): \ D $ union of 2 genus 2 components meeting at 3 points \\
       7 & $\frac{11}{18}$ & $(\bP^2, A_{8}\textrm{-quintics})$ &  
       $(Bl_{2,9}\bP^2 \cup \bP(1,2,9), D): \ D $ union of a genus 2 and a genus 4 component meeting at 1 point\\
       \hline
    \end{tabular}
    
    \caption{Wall crossings for K-moduli spaces of plane quintics}
    \label{table:quintic}
\end{table}

\subsection{KSBA wall crossing toward moduli of surfaces of general type}

Consider the KSBA moduli space $P^\KSBA_{5, \frac{4}{5}}$ of pairs compactifying the locus of pairs $(\bP^2, \frac{4}{5} C_5)$ where $C$ is a quintic plane curve.  By taking a cyclic $\bZ_5$ cover of $\bP^2$ branched over the curve $C_5$, we obtain a surface $S$ of general type (in fact, a quintic surface in $\bP^3$). Indeed, if $\pi: S \to \bP^2$ is the cyclic cover, then
\[ K_S = \pi^*(K_{\bP^2} + \tfrac{4}{5} C_5)\]
where $K_{\bP^2} + \frac{4}{5} C_5 \sim L$ where $L$ is a line in $\bP^2$, so $K_S$ is ample. Furthermore, $(K_S)^2 = 5$.  From the study above, we have a complete understanding of the moduli space compactifying the of pairs $(\bP^2, (\frac{3}{5} + \epsilon)C_5)$, and to understand the moduli space of canonically polarized surfaces arising as cyclic covers branched over a quintic in $\bP^2$, we wish to increase the coefficient of $c$ up to $\frac{4}{5}$. Increasing the coefficient and sequentially computing the walls is a promising technique to explicitly understand moduli spaces of canonically polarized surfaces.  We illustrate this by computing the first wall in the KSBA region below, and will explore the full range of coefficients in forthcoming work. 

\begin{theorem}
    The first wall crossing in the KSBA moduli $P_{5,c}^{\KSBA}$ occurs at $c = \frac{11}{18}$, and resolves the locus of curves with an $A_8$ singularity, replacing a general such curve with a curve on the surface $Bl_{(2,9)} \bP^2 \cup \bP(1,2,9)$.
\end{theorem}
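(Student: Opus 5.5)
The argument has three parts. First, describe the objects of $P^{\KSBA}_{5,\frac{3}{5}+\epsilon}$ using \cite[Section 11]{hacking} together with the boundary polarized Calabi Yau wall crossing of \S\ref{ss:bpcy}. Second, determine the smallest coefficient $c>\frac{3}{5}$ at which one of these pairs stops being stable. Third, construct the replacement pair explicitly and check that it is stable.

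Just above the Calabi Yau wall, the parametrized pairs $(X,cD)$ satisfy $D\in|-\frac53 K_X|$ with $K_X,D$ both $\bQ$-Cartier, so $K_X+cD\sim_{\bQ}(c-\frac35)D$ with $D$ ample. Hence $K_X+cD$ stays ample for every $c>\frac35$ as long as the pair stays slc. This is the same mechanism as the quartic case, where cusps first fail at $\frac56$. So the first wall is
\[ \min_{(X,D)}\ \lct(X,D) \]
taken over pairs in $P^{\KSBA}_{5,\frac35+\epsilon}$ with $(X,\frac35 D)$ lc. For the normal pairs, the curve singularities that survive to $c=\frac35+\epsilon$ are those with $\lct>\frac35$. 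From Table~\ref{table:quintic}, the $A_9$ and $D_6$ curves have been replaced at $c=\frac35$ by pairs on non-normal surfaces, so the worst remaining planar singularity is $A_8$, with $\lct(A_8)=\frac12+\frac19=\frac{11}{18}$. I would then check that every other singularity allowed there has $\lct>\frac{11}{18}$:
\begin{itemize}[label={}]
\item $A_k$ with $k\le 7$, and $D_k$ with $k\le5$, on $\bP^2$ or on the allowed klt degenerations;
\item singularities of $D$ at the index-$>1$ points of the weighted projective planes;
\item on the non-normal surfaces $\bP(1,1,5)\cup X_6$ and $\bP(1,1,2)\cup\bP(1,1,2)$, the double-curve contribution, by computing the log canonical threshold on the normalization with the conductor included.
\end{itemize}
This list check is the step I expect to be the main obstacle. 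It needs the full list from \cite{hacking}, and in particular singular points of $D$ at quotient singularities of $X$ (e.g.\ on $\bP(1,4,25)$ or $X_{26}$). There I would compute lct via weighted blowups in orbifold coordinates and use the constraint $D\in|-\frac53K_X|$ to bound the possible multiplicities.

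For the replacement, mimic Example~\ref{ex:octic} and Hassett's cusp construction. Take a generic smoothing $(\bP^2\times\bA^1,\calC)$ of a quintic $C_0$ with an $A_8$ singularity $y^2=x^9$. After base change, perform the $(2,9,1)$ weighted blowup at the singular point in coordinates $(y,x,t)$. The new central fiber is $\tilde X\cup Y$ with:
\begin{itemize}[label={}]
\item $\tilde X=Bl_{(2,9)}\bP^2$ and $Y=\bP(1,2,9)$, glued along the exceptional curve $E$ and a section $L\in|\calO_Y(1)|$;
\item $E$ passing through the $\frac12(1,1)$ and $\frac19(1,\ast)$ points;
\item $D_Y$ a curve of weighted degree $18$ on $Y$. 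It has arithmetic genus $4$, since the $A_8$ singularity has $\delta=4$. It meets $L$ in one point.
\end{itemize}
The strict transform $D_X$ has genus $2$ ($=6-4$) and also meets $E$ once. This matches the entry in Table~\ref{table:quintic}.

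Stability of the replacement is checked in three pieces:
\begin{itemize}[label={}]
\item slc: the union is slc, being glued along $E\cong L$ with matching different, and $D_X+D_Y$ is nodal at the gluing point.
\item Ampleness on $Y$: $(K_Y+L+cD_Y)$ has weighted degree $-12+1+18c-12$, i.e.\ $18c-11$ after using $K_Y+L\sim\calO(-11)$. This is positive exactly when $c>\frac{11}{18}$.
\item Ampleness on $\tilde X$: $K_{\tilde X}+E+cD_X$ must be ample. I would compute this on the generators of $\overline{NE}(\tilde X)$, namely $E$ and whatever negative curve appears, using $E^2=-\frac1{18}$. Positivity holds for $c$ slightly above $\frac{11}{18}$, at least up to $\frac58$ where the next wall is expected.
\end{itemize}

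Finally, the log canonical model at $c=\frac{11}{18}$ contracts $Y$, recovering $(\bP^2,\frac{11}{18}C_0)$. The wall morphisms of Theorem~\ref{thm:introKSBAwallcrossing} then identify this locus as the replacement of the general $A_8$ curve.
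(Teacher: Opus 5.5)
Your proposal is correct and follows essentially the paper's route: ampleness of $K_X+cD$ persists past $\frac35$, so the first wall is the smallest log canonical threshold in Hacking's list, namely $\lct(A_8)=\frac{11}{18}$. The replacement then comes from a base change $t\mapsto t^{18}$ and a weighted blowup; you get the ampleness via $K_X+cD\sim_{\bQ}(c-\frac35)D$ where the paper uses Picard rank one of each component, and on $\tilde X$ the quickest check is the paper's Lemma~\ref{lem:ampleforc>c0} decomposition $K_{\tilde X}+E+cD_X=\pi^*(K_{\bP^2}+\frac{11}{18}C_0)+(c-\frac{11}{18})D_X$, rather than hunting for the second extremal ray. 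Two slips to fix: for $y^2=x^9$ the weights must be $9$ on $y$ and $2$ on $x$ (so both monomials have weight $18$), not $2$ on $y$ and $9$ on $x$; and the stray $-12$ in your degree computation on $\bP(1,2,9)$ should be dropped, giving $-12+1+18c=18c-11$.
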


\begin{proof}
    In the moduli space compactifying the locus of pairs $(\bP^2, (\frac{3}{5} + \epsilon)C_5)$, the surfaces appearing are: $\bP^2, \bP(1,1,4), X_{26}, \bP(1,4,25), \bP(1,1,2) \cup \bP(1,1,2), \bP(1,1,5) \cup X_6, \bP(1,1,5) \cup \bP(1,4,5)$ by \cite[Section 11.2]{hacking}. Here, $X_{26}$ is the partial smoothing of $\bP(1,4,25)$ given as the weighted hypersurface $ (xw = y^{13} + z^2) \subset \bP(1,2,13,25)$ with weighted coordinates $[x:y:z:w]$, and $X_6$ is the partial smoothing of $\bP(1,4,5)$ given as the weighted hypersurface $ (ut = v^3 + w^2) \subset \bP(1,2,3,5)$ with weighted coordinates $[u:v:w:t]$.
    
    Each component of each of these surfaces has Picard rank 1, and thus for any $(X,c D)$ appearing in this moduli space, as $D$ is effective, it is ample.  Therefore, ampleness of $K_X + (\frac{3}{5} + \epsilon) D$ implies ampleness of $K_X +cD$ for any $c > \frac{3}{5}$.  Therefore, the only way the moduli space can undergo a wall crossing is if a pair $(X, cD)$ becomes unstable because $c > \lct ( D)$.  The problem then becomes a question of existence of curve singularities.  In \cite[Section 11.2]{hacking}, the possible curve singularities are enumerated and, by direct computation, the smallest log canonical threshold occurring is that of an $A_8$ singularity: $\frac{1}{2} + \frac{1}{9} = \frac{11}{18}$. So, for any $c \in (\frac{3}{5},\frac{11}{18}]$, stability of the pair $(X, (\frac{3}{5}+ \epsilon)D)$ implies stability of the pair $(X,c D)$.  However, any $(X,cD)$ such that $D$ has an $A_8$ singularity is unstable for $c> \frac{11}{18}$ so we must compute the replacement. 

    The description of the generic replacements follows from similar arguments to the quartic case: suppose $(\bP^2, cC)$ is a pair such that $C$ has an $A_8$ singularity at one point and no other singularities.  Taking a generic smoothing of $C$ over $\bA^1_t$ so that the $A_8$ singularity is (in local coordinates) given by $t = x^2 - y^9 = 0$, we compute the replacement in $P^{\KSBA}_{5,\frac{11}{18}+\epsilon}$ by performing a $t \to t^{18}$ base change and then a $(9,2,1)$ blow up in these coordinates.  The new central fiber of this family is $Bl_{(2,9)} \bP^2 \cup \bP(1,2,9)$, and it is straightforward to compute that the strict transform of the family of curves on this central fiber satisfies $K_X + cD$ is ample for $c > \frac{11}{18}$ (see Lemma \ref{lem:ampleforc>c0} for a more general version of this computation).  Therefore, this is a stable pair. 
\end{proof}

While the previous result describes the generic behavior, the moduli space $P^{\KSBA}_{5, \frac{11}{18}+\epsilon}$ will parametrize replacements of curves with $A_8$ singularities appearing on any of the surfaces in \cite[Section 11.2]{hacking}.  A full analysis of the possible replacements yields the moduli space compactifying the locus of pairs $(\bP^2, (\frac{11}{18} + \epsilon)C_5)$ and provides the explicit description in Theorem \ref{introthm:quintics}.  In what follows, the notation $Bl_{(2,9)}X$ indicates a blowing up of the surface $X$ at a smooth point with weights $(2,9)$, and the resulting surface is glued to the copy of $\bP(1,2,9)$ along the exceptional divisor so the singularities of index 2 and 9 coincide.

\begin{theorem}
    For $c \in (\frac{11}{18}, \frac{5}{8})$, the surfaces appearing in pairs parametrized by the moduli space $P_{5,c}$ are 
    \[ \bP^2, \bP(1,1,4), X_{26}, \bP(1,4,25), \bP(1,1,2) \cup \bP(1,1,2), \bP(1,1,5) \cup X_6, \bP(1,1,5) \cup \bP(1,4,5) \]
    \[Bl_{(2,9)} \bP^2 \cup \bP(1,2,9), Bl_{(2,9)} X_{26} \cup \bP(1,2,9), Bl_{(2,9)} \bP(1,4,25) \cup \bP(1,2,9), Bl_{(2,9)} \bP(1,1,4) \cup \bP(1,2,9) \]
    \[ \bP(1,2,9) \cup Bl_{(2,9)} \bP(1,1,5) \cup X_6  , \bP(1,2,9) \cup Bl_{(2,9)} \bP(1,1,5) \cup \bP(1,4,5)).\]
    For any such surface $X$, the pairs $(X,c D)$ appearing are those for which $D \in |-\frac{5}{3} K_X|$ such that $D$ has $A_n$, $2 \le n \le 7$, or $D_n$, $4 \le n \le 5$, singularities. 
\end{theorem}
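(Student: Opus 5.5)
The plan is to describe $P_{5,c}$ for $c\in(\frac{11}{18},\frac58)$ by starting from Hacking's list for $c=\frac35+\epsilon$ (\cite[Section 11.2]{hacking}) and tracking what changes. The previous theorem shows that for $c\le \frac{11}{18}$ the only instability is $c>\lct(D)$. Among the allowed singularities, the smallest log canonical threshold is that of $A_8$. The next candidates are the thresholds of $A_7$ (namely $\frac12+\frac18=\frac58$) and of $D_n$, $E_n$, and higher $A_n$. I will check that every singularity in Hacking's list other than $A_8$ has $\lct\ge\frac58$; the $D_4$ and $D_5$ thresholds, $\frac23$ and $\frac58$, meet this directly. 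This identifies $\frac58$ as the next possible lct wall.

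There are two parts. \emph{Old surfaces.} On each surface of the first line, every component has Picard rank one, so $D$ is ample on each component. Hence $K_X+cD$ stays ample as $c$ increases. Thus $(X,cD)$ is stable for $c<\frac58$ exactly when $D\in|-\frac53K_X|$ has only the singularities allowed at $\frac35+\epsilon$ other than $A_8$, i.e.\ $A_n$ with $2\le n\le7$ or $D_4,D_5$. Nodes are of course also allowed; I will check against Hacking's enumeration that no other types survive there.

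\emph{New surfaces.} For each old surface $X_0$ and each point $p$ where $D_0$ can acquire an $A_8$ singularity, I will repeat the computation in the proof of the previous theorem. At $p$, take a generic smoothing with local form $t=x^2-y^9$, base change $t\mapsto t^{18}$, and perform the $(9,2,1)$ weighted blowup. This yields $Bl_{(2,9)}X_0\cup\bP(1,2,9)$, glued along the exceptional curve through the $\frac12$ and $\frac19$ points.

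First I must verify that $p$ is a smooth point of $X_0$. I will argue that an $A_8$ singularity of $D$ cannot occur at the singular points of these surfaces, because their local indices force the curve germ there to have a different form. Applying this to each component containing such a point gives the second and third lines of the list. In the non-normal cases the $A_8$ point lies on the $\bP(1,1,5)$ component, away from the double curve.

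Next comes ampleness. By the general computation in Lemma \ref{lem:ampleforc>c0}, $K+cD$ is ample on both pieces for $c>\frac{11}{18}$. On the blown-up piece, $\rho=2$, so the cone is spanned by the exceptional curve and the curve class through the singularities. I will compute the intersection numbers and check positivity for $c<\frac58$; on $\bP(1,2,9)$ positivity is automatic. The slc condition along the double curve follows from the index-matching gluing. Properness and the structure of the wall-crossing morphisms (Theorem \ref{thm:introKSBAwallcrossing}, via canonical models) then show that these are all the objects. Every family either avoids $A_8$ points, or its stable limit is the canonical model just computed, which is unique. The singularities allowed on $D$ are again those with $\lct\ge c$, away from nodes along the double curve.

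The main obstacle is completeness. I need every pair with an $A_8$ curve on every Hacking surface, including degenerate positions, to be accounted for. In particular, two $A_8$ points cannot occur on a quintic, by a genus and degree count. I also need to rule out further walls below $\frac58$ on the new surfaces, by checking that no nef-threshold wall occurs on $Bl_{(2,9)}X_0$ before $\frac58$. I would settle this by explicitly computing the nef threshold on each blown-up surface. I would try the $\bP^2$ case first and then adapt it to the others.
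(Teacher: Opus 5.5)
Your outline follows the same plan as the paper's proof:
\begin{itemize}
\item the $A_8$ singularity gives the only lct wall below $\frac58$;
\item Picard rank one keeps $K_X+cD$ ample on the old surfaces;
\item the replacement is a base change followed by a $(9,2,1)$ weighted blowup;
\item ampleness on the new surfaces is Lemma~\ref{lem:ampleforc>c0}, which already disposes of the nef-threshold wall you worry about;
\item completeness comes from properness plus uniqueness of canonical models.
\end{itemize}
The gap is in the completeness step. Uniqueness says only that a given one-parameter family has a unique stable limit. It does not say that an arbitrary family $(\calX',\frac{11}{18}\calD')\to T$ whose central curve has an $A_8$ point has ``the canonical model just computed'' as its limit. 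You computed that model only for the generic pencil $x^2-y^9=t$, and your recipe does not give stable limits for other families. For instance, if the family is locally $x^2-y^9=t^2$, then after $t\mapsto s^{18}$ and the $(9,2,1)$ blowup the initial form is still $x^2-y^9$. The curve on the new $\bP(1,2,9)$ is then $x^2=y^9$, which has an $A_8$ point at a smooth point of $\bP(1,2,9)$ and is not lc for any $c>\frac{11}{18}$. So you have not shown that every pair lying over an $A_8$ curve lives on $Bl_{(2,9)}X_0\cup\bP(1,2,9)$. Nor have you determined which curves occur on the new component.

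The paper supplies two ingredients that your plan lacks.

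\emph{A family-adapted construction.} The paper writes the family in suitable local coordinates as $x^2-y^9=t^n$. It then base changes by $t\mapsto t^{18/\gcd(18,n)}$ and performs the $(9k,2k,1)$ weighted blowup with $k=n/\gcd(18,n)$, so that the $t$-term enters the initial form. For $n=2$ this means $t\mapsto s^9$, giving the smooth curve $x^2=y^9+s^{18}$. Since $\bP(9k,2k,1)\cong\bP(1,2,9)$ and the strict transform of $X_0$ is still the $(9,2)$ blowup, the central fiber is always one of the listed surfaces.

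\emph{Control of the new curve (Lemma~\ref{lem:p129}).} A degree-$18$ curve on $\bP(1,2,9)$ meeting the $\calO(1)$-section transversely at one point with $\lct\ge\frac{11}{18}$ can be written as $z^2=y^9+xg_{17}(x,y)$. It therefore has only $A_k$ singularities with $k\le 8$, and $k=8$ occurs exactly when $\lct=\frac{11}{18}$. Since the construction removes the $A_8$, only $A_k$ with $k\le 7$ remain on that component.

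A smaller omission: you assert without argument that the $A_8$ point lies on the $\bP(1,1,5)$ component, and you say nothing about $\bP(1,1,2)\cup\bP(1,1,2)$. The paper gets both from the fact that an $A_8$ point is unibranch with $\delta$-invariant $4$, so it must lie on a component of arithmetic genus at least $4$. On $\bP(1,1,2)\cup\bP(1,1,2)$ the curve $D$ splits as genus $2+2$, so no $A_8$ can occur. On the other two non-normal surfaces $D$ splits as genus $4+1$, so an $A_8$ can only lie on the $\bP(1,1,5)$ side.
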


\begin{proof}
    An $A_8$ singularity is analytically irreducible and the $\delta$-invariant of such a singularity (the local contribution to the difference between the arithmetic and geometric genus of the curve) is $4$.  All curves $D$ in pairs $[(X,cD)] \in P_{5,c}$ have arithmetic genus $6$, so there can be at most one $A_8$ singularity on any curve $D$, and it must occur on a component of arithmetic genus at least 4.  If $X = \bP(1,1,2) \cup \bP(1,1,2)$, the curves $D \in |-\frac{5}{3}K_X|$ are the unions of two curves of genus 2 meeting at 3 points, so there are no singularities of type $A_8$ in this linear system of curves.  For $X = \bP(1,1,5) \cup X_6$ or $\bP(1,1,5) \cup \bP(1,4,5)$, curves $D \in |-\frac{5}{3}K_X|$ are unions of arithmetic genus 4 curves on $\bP(1,1,5)$ and arithmetic genus 1 curves on $\bP(1,4,5)$ or $X_6$ meeting at two points, so an $A_8$ singularity can only occur on the $\bP(1,1,5)$ component. Finally, from the analysis in \cite[Section 11]{hacking}, for any $c\in (\frac{3}{5}, \frac{11}{18})$, if a pair $[(X,cD)] \in P_{5,c}$ is such that $D$ has an $A_8$ singularity at a point $p$, $p$ must be in the smooth locus of $X$.  Writing the $A_8$ singularity in local coordinates as $x^2 = y^9$, the $(9,2)$ weighted blow up of $X$ of the point $p$ resolves the singular point $p$ and Lemma \ref{lem:ampleforc>c0}, so after blowing up, the resulting surface $Y \to X$ with exceptional divisor $E$ satisfies $(Y, E_Y+cD_Y)$ is stable for all $c \in (\frac{11}{18}, \lct(D_Y))$. 

    Let $X$ denote one of the surfaces $\bP^2, \bP(1,1,4), X_{26}, \bP(1,4,25), \bP(1,1,5) \cup X_6,$ or $\bP(1,1,5) \cup \bP(1,4,5)$ and $D = (f = 0)$ be a curve on $X$ with an $A_8$ singularity.  Here, the notation $(f = 0)$ refers to the weighted equation of $D$ as described in the following Lemma \ref{lem:classificationdeg5}.  Let $(g = 0)$ be a general curve on $X$ which has at worst nodal singularities. Let $(X \times \bA^1_s,\calD) \to \bA^1_s$ be the pencil of divisors on $X$ given by $f = s g$.  Perform the base change $s \to s^{18}$ and consider the pencil $f = s^{18} g$.  Consider the local coordinates $(x,y,s)$ such that $D$ has an $A_8$ singularity $x^2 = y^9$ at $(0,0,0)$, so $\calD$ has local equation $x^2 - y^9 = s^{18} g$.

    Take the $(9,2,1)$ weighted blow-up of $(X \times \bA^1_s,\calD) \to \bA^1_s$ to form a new family $(\calY, \calD_{\calY}) \to \bA^1_s$.  The strict transform of the family of curves $\calD$ in the exceptional divisor $\bP(1,2,9)$ has at worst nodal singularities by the genericity assumption on $g$.  From Lemma \ref{lem:ampleforc>c0}, we conclude the central fiber $(\calY_0, c{\calD_{\calY}}_0)$ is a stable pair for $c = \frac{11}{18} + \epsilon$ and $\epsilon \ll 1$.  This proves that all such surfaces appear in this moduli space, and the statement on curve singularities follows from the fact that the log canonical threshold $> \frac{11}{18}$. 

    Now, we prove that every stable pair $[(X,cD)] \in P_{5,c}$ is one of these pairs for $c \in (\frac{11}{18},\frac{11}{18}+\epsilon)$.  Let $[(X,cD)]$ be a stable pair and consider a smoothing $(\calX, c \calD) \to T$ over a DVR with closed point $0 \in T$ such that $(\calX^\circ, c\calD^\circ) \to T^\circ$ is a family of smooth plane quintics and the fiber $(\calX_0, c\calD_0) = (X,cD)$.  By hypothesis, $(\calX^\circ, \frac{11}{18}\calD^\circ) \to T^\circ$ is a stable family, so induces a map $T^\circ \to P_{5,\frac{11}{18}}$.  By properness of KSBA moduli, this extends to a map $T \to P_{5,\frac{11}{18}}$.  Up to finite base change, this corresponds to a stable family of stable pairs $(\calX', \frac{11}{18} \calD') \to T'$ with same generic fiber as $(\calX, c \calD) \to T$, where $\calX'_0 = \bP^2, \bP(1,1,4), X_{26}, \bP(1,4,25), \bP(1,1,2) \cup \bP(1,1,2), \bP(1,1,5) \cup X_6,$ or $\bP(1,1,5) \cup \bP(1,4,5)$.  If $(\calX', (\frac{11}{18}+ \epsilon) \calD') \to T'$ is stable for $\epsilon \ll 1$, by uniqueness of canonical models and properness of $P_{5,c}$, we must have $(X',cD')= (\calX'_0, c\calD'_0) \cong (X,cD)$. Assuming instead $(\calX', (\frac{11}{18}+ \epsilon) \calD') \to T'$ is not stable for $\epsilon \ll 1$, as $K_Y+(\frac{11}{18}+\epsilon)D_Y$ is ample for all $[(Y, \frac{11}{18}D_Y)] \in P_{5,\frac{11}{18}}$, we must have $\lct(D') = \frac{11}{18}$.  By the classification in \cite[Section 11]{hacking}, we must have $D'$ is a curve with an $A_8$ singularity.  If $t$ is a uniformizing parameter for $T'$, in suitable coordinates $\calD'$ is given by $x^2 - y^9 = t^n$ for some $n > 0$.  Let $d = \frac{18}{\gcd(18,n)}$ and perform a base change $t \to t^{d}$. In these coordinates, consider the $(9k,2k,1)$ weighted blow-up where $k = \frac{n}{\gcd(18,n)}$ $(\calY, \frac{11}{18} \calD_{\calY}) \to T'$. The resulting central fiber is one of the surfaces listed in the statement and, by construction, the family is a KSBA stable family.  The central fiber $(\calY_0, \frac{11}{18}{\calD_\calY}_0)$ has slc singularities and ${\calD_\calY}_0$ no longer has $A_8$ singularities by construction.  The $A_8$ singularity on the strict transform of $X$ has been resolved, and by Lemma \ref{lem:p129}, curves on the exceptional divisor $\cong \bP(1,2,9)$ that do not have $A_8$ singularities have at worst $A_7$ singularities.  Therefore, ${\calD_\calY}_0$ has singularities as claimed: at worst $A_7$ on $\bP(1,2,9)$, and whatever remaining singularities exist on the strict transform of $X$.  Because $(\calY_0, c{\calD_\calY}_0)$ is then KSBA stable for all $c \in (\frac{11}{18}, \frac{5}{8})$, by uniqueness of canonical models and properness of the moduli space, we conclude $(\calY_0, c{\calD_\calY}_0) \cong (X,cD)$.  
\end{proof}

The previous proof uses the following description of pairs $[(X,cD)] \in P_{5,c}$ for $c \in(\frac{3}{5}, \frac{11}{18})$ from Hacking's classification in \cite{hacking}.

\begin{lemma}\label{lem:classificationdeg5}
    Let $X$ denote one of the surfaces $\bP^2, \bP(1,1,4), X_{26}, \bP(1,4,25), \bP(1,1,2) \cup \bP(1,1,2),$ $ \bP(1,1,5) \cup X_6,$ or $\bP(1,1,5) \cup \bP(1,4,5)$.  For an integral divisor $D \in |-\frac{5}{3}K_X|$, $D$ can be given by a single (weighted) equation $f_d$ on $X$ of degree $d$ of the following forms: 
        \begin{enumerate}
            \item $X = \bP^2$ with coordinates $[x:y:z]$: $D = (f_5(x,y,z) = 0)$.  The general curve in this linear system is smooth.
            \item $X = \bP(1,1,4)$ with weighted coordinates $[x:y:z]$: $D = (f_{10}(x,y,z) = 0)$.  The general curve in this linear system has only nodes as singularities.
            \item $X = X_{26} \subset \bP(1,2,13,25)$ with weighted coordinates $[x:y:z:w]$, given by $ xw = y^{13} + z^2 $: $D = (f_{25}(x,y,z,w) = 0)$.  The general curve in this linear system is smooth.
            \item $X = \bP(1,4,25)$ with weighted coordinates $[x:y:z]$: $D = (f_{50}(x,y,z) = 0)$. The general curve in this linear system has only nodes as singularities.
            \item $X = \bP(1,1,2) \cup \bP(1,1,2) \subset \bP(1,1,1,2)$ with weighted coordinates $[x:y:z:w]$ given by $xy = 0$: $D = (f_5(x,y,z,w) = 0)$.   The general curve in this linear system has only nodes as singularities.
            \item $X = \bP(1,1,5) \cup \bP(1,4,5) \subset \bP(1,1,4,5)$ with weighted coordinates $[x:y:z:w]$ given by $yz = 0$: $D = (f_{10}(x,y,z,w) = 0)$. The general curve in this linear system has only nodes as singularities.
            \item $\bP(1,1,5) \cup X_6$, where $\bP(1,1,5)$ has weighted coordinates $[x:y:z]$ and $X_6 \subset \bP(1,2,3,5)$ with weighted coordinates $[u:v:w:t]$ given by $ut = v^3 + w^2$, glued along $(y = 0) \subset \bP(1,1,5)$ and $(v = 0) \subset X_6$.  On $\bP(1,1,5)$, $D$ is given by $(f_{10}(x,y,z) = 0)$ and on $X_6$, $D$ is given by $(f_5(u,v,w,t) = 0)$ such that the intersection points of $f_{10}$ with $(y = 0)$ and the intersection points of $f_5$ with $(v = 0)$ agree via the gluing. The general curve in this linear system has only nodes as singularities.
        \end{enumerate}
\end{lemma}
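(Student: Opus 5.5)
The plan is to treat each surface in turn. In each case I first describe the linear system $|-\tfrac{5}{3}K_X|$ by computing $K_X$ from the weighted-projective or weighted-hypersurface presentation. Then I use that each normal surface (or each component) has class group of rank one, so an integral divisor is cut out by a single weighted form of the appropriate degree.

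For weighted projective planes $\bP(a_0,a_1,a_2)$ we have $K_X = \calO(-a_0-a_1-a_2)$. This gives:
\begin{itemize}
\item $\bP^2$: $-\tfrac{5}{3}K = \calO(5)$.
\item $\bP(1,1,4)$: $-K = \calO(6)$, so $-\tfrac{5}{3}K = \calO(10)$.
\item $\bP(1,4,25)$: $-K = \calO(30)$, so $-\tfrac{5}{3}K = \calO(50)$.
\end{itemize}
For the quasismooth hypersurfaces, adjunction gives $K_X = \calO(d - \sum a_i)$:
\begin{itemize}
\item $X_{26} \subset \bP(1,2,13,25)$: $K = \calO(26-41) = \calO(-15)$, so $-\tfrac{5}{3}K = \calO(25)$.
\item $X_6 \subset \bP(1,2,3,5)$: $K = \calO(6-11) = \calO(-5)$, so $-\tfrac{5}{3}K = \calO(\ldots)$, to be evaluated together with the gluing below.
\item $(xy=0) \subset \bP(1,1,1,2)$: $K = \calO(2-5) = \calO(-3)$, giving degree $5$.
\item $(yz=0) \subset \bP(1,1,4,5)$: $K = \calO(5-11) = \calO(-6)$, giving degree $10$.
\end{itemize}
For the non-normal surfaces I use that $K_X$ restricts to $K_{X_i}+G_i$ on each component, where $G_i$ is the double curve. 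This is consistent with the stated degrees. For example, on $\bP(1,1,5)$, $K+(y=0) = \calO(-6)$, giving $f_{10}$; on $X_6$, $K+(v=0) = \calO(-3)$, giving $-\tfrac{5}{3}(K+G) = \calO(5)$, i.e.\ $f_5$.

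The key input I will assume is the statement from \cite[Section 11]{hacking}, recalled in Remark~\ref{rmk:fiberwiseDinKX} and the subsequent remark: for pairs in this minimally canonically polarized range, $K_X$ and $D$ are $\bQ$-Cartier and $D \sim -\tfrac{5}{3}K_X$ on every fiber. On each normal component, $\mathrm{Cl}(X)$ is $\bZ$, generated by $\calO(1)$. Indeed, for $X_{26}$ and $X_6$, which are partial smoothings, the weighted hypersurface is well-formed and its class group is still generated by $\calO(1)$ by a Grothendieck–Lefschetz type argument. I will check this via the affine cone being factorial away from the vertex, or accept it from Hacking's description.

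Hence every Weil divisor in the class is the zero locus of a section of the corresponding reflexive sheaf $\calO_X(d)$, and $H^0(\calO_X(d))$ is the degree $d$ part of the coordinate ring. This gives the single-equation description. On the reducible surfaces I will restrict $D$ to each component and require compatibility along the double curve, since $D$ is Mumford and does not contain the double locus. For case (7), which is not a hypersurface, I will impose the gluing by matching intersection points with $(y=0)$ and $(v=0)$. This step is the most delicate, and I expect the class-group and gluing bookkeeping there to be the main obstacle.

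Finally, the genericity claims come from Bertini on the base-point-free part together with a local check at the singular points of $X$. For $\bP^2$ and $X_{26}$ I show the system is base point free at the singularities, so the general member is smooth. For $X_{26}$ the $\tfrac{1}{25}$ and $\tfrac{1}{13}$ points are avoided because $w$ and the monomial $xz\cdot(\ldots)$ appear in degree $25$. For the others I check that the general member either avoids the quotient singularities or passes through them with at worst nodes in the orbifold chart. I also check that it meets the double curves transversally, which yields only nodes.
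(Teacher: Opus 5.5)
Your proposal is correct in substance, but it takes a different route from the paper. The paper gives no argument for this lemma; it reads the description off Hacking's explicit classification of the surfaces and divisors in \cite[Section 11]{hacking}. You instead verify it directly, using:
adjunction on well-formed weighted hypersurfaces;
the class group of each normal component;
the identification of $H^0(X,\calO_X(d))$ with the degree-$d$ piece of the coordinate ring;
restriction to components with matching along the double curve in case (7);
Bertini plus local orbifold checks for the genericity claims.
Your route makes the degrees and the genericity assertions self-contained, which matters because the paper states them without justification. The citation is shorter, and it also carries what matters for how the lemma is used later: these are all the surfaces that occur, and $D\sim-\frac53K_X$ holds on them. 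Inside the lemma that relation is a hypothesis, so you do not need to import it from Hacking or Remark~\ref{rmk:fiberwiseDinKX}.

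Several sub-steps should be corrected, though none is fatal.

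(i) Grothendieck--Lefschetz fails for surfaces: a smooth quadric in $\bP^3$ has Picard group $\bZ^2$. Local factoriality of the cone away from the vertex is not enough either. What you need is $\mathrm{Cl}=\bZ\cdot\calO(1)$, in particular no $3$-torsion, because $D\in|-\frac53K_X|$ only says $3D\sim-5K_X$. This is easy here. $(x=0)\cap X_{26}=(y^{13}+z^2=0)$ is a prime divisor in $|\calO(1)|$, and $X_{26}\cap(x\neq0)=\{w=y^{13}+z^2\}\cong\bA^2_{y,z}$, so $\mathrm{Cl}(X_{26})=\bZ\cdot\calO(1)$. Likewise $(u=0)\cap X_6$ is prime with complement $\{t=v^3+w^2\}\cong\bA^2_{v,w}$. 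Equivalently, Nagata's criterion shows the graded coordinate rings are UFDs.

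(ii) Your reason for the general $f_{25}$ missing a $\frac1{13}$ point is wrong. Since $13\nmid 25$, every degree-$25$ form vanishes at $[0:0:1:0]$, in particular any multiple of $xz$. This is harmless only because neither $[0:0:1:0]$ nor $[0:1:0:0]$ lies on $X_{26}$. So the unique singular point of $X_{26}$ is $[0:0:0:1]$, of type $\frac1{25}(2,13)$, where $w\neq0$. The forms $x^{25}$, $w$, $y^6z$ show that $|\calO_{X_{26}}(25)|$ is base point free.

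(iii) The reducible surfaces $(xy=0)$ and $(yz=0)$ are not quasismooth. Adjunction still gives $\omega_X\cong\calO_X(\deg F-\sum a_i)$ because they are well-formed hypersurfaces.

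(iv) Every member of the relevant systems passes through the $\frac14(1,1)$ points of $\bP(1,1,4)$, $\bP(1,4,25)$, $\bP(1,4,5)$ and the $\frac12(1,1,1)$ point in (5), so the orbifold-node check there is essential. Also note that the group acts by scalars on the tangent space and hence preserves both branches; this is why the quotient is again a node of $D$.
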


We also include a short lemma regarding curves on $\bP(1,2,9)$.

\begin{lemma}\label{lem:p129}
    Let $C \subset \bP(1,2,9)$ be a curve of degree 18 meeting the section of $\calO(1)$ transversely at one point.  If $\lct(C) \ge \frac{11}{18}$, then $C$ has at worst $A_8$ singularities, and has an $A_8$ singularity if and only if equality holds.
\end{lemma}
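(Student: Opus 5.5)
The plan is to classify the possible singularities of $C$ using log canonical thresholds, and then to rule out the few non-$A_n$ types that survive the numerical bound by an intersection-theoretic argument on $\bP(1,2,9)$. I use weighted coordinates $[x:y:z]$ of degrees $1,2,9$. The section of $\calO(1)$ is $(x=0)$, and intersection numbers are computed by $\calO(a)\cdot\calO(b)=\frac{ab}{18}$. In particular $C\cdot(x=0)=1$.

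\textbf{Step 1: reducedness and multiplicity.} $C$ is reduced, since a non-reduced component would force $\lct(C)\le\frac12<\frac{11}{18}$. At a smooth point $p\in\bP(1,2,9)$, the bound $\lct(C)\le 2/\mathrm{mult}_p C$ gives $\mathrm{mult}_p C\le 3$.

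\textbf{Step 2: excluding triple points.} The triple-point singularities with $\lct\ge\frac{11}{18}$ are only $D_4$ and $D_5$: $D_n$ has $\lct=\frac{n}{2n-2}$, which is $\ge\frac{11}{18}$ iff $n\le5$, and $E_6,E_7,E_8$ have thresholds below $\frac{11}{18}$. So I must show $C$ has no point of multiplicity $3$.

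\begin{itemize}
\item \emph{Points off $(x=0)$.} Such a point lies on a unique member $\Gamma=(y=\lambda x^2)$ of the degree-$2$ pencil, which is smooth and irreducible away from the cone points. If $\mathrm{mult}_pC\ge3$, then $\Gamma\cdot C=\frac{2\cdot18}{18}=2<3$. Hence $\Gamma$ is a component of $C$, and we write $C=\Gamma+C'$ with $\deg C'=16$. Then $\mathrm{mult}_pC'\ge2$, but $\Gamma\cdot C'=\frac{32}{18}<2$, which is a contradiction.
\item \emph{Points on $(x=0)$.} Here the transversality hypothesis gives $\mathrm{mult}_pC=1$ directly.
\end{itemize}

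Hence every singular point of $C$ at a smooth point of the surface is a double point. A double point is analytically $A_n$, with $\lct=\frac12+\frac1{n+1}$; this is $\ge\frac{11}{18}$ iff $n\le 8$, with equality iff $n=8$.

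\textbf{Step 3: the cone points.} It remains to handle the two cone points $\frac12(1,1)=[0:1:0]$ and $\frac19(1,2)=[0:0:1]$. Both lie on $(x=0)$. Restricting $C=(f_{18}=0)$ to $x=0$ gives $a y^9+bz^2$. If $C$ passed through a cone point, then $a=0$ or $b=0$, and $C\cap(x=0)$ would be that cone point with local intersection multiplicity not equal to a single transverse smooth point. This contradicts the hypothesis, so $C$ lies in the smooth locus away from the interior of $(x=0)$.

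\textbf{Main obstacle.} I expect Step 3 to be the delicate step, namely interpreting "transversely at one point" so that it forces $a,b\ne0$. If instead the hypothesis allows passage through a cone point, the fallback is a local computation there. One would write $C$ locally as the quotient of a plane curve germ by $\mu_2$ or $\mu_9$ and compute the orbifold log canonical threshold via the cover. Since $\lct$ is preserved under the quasi-étale cover, $\lct(C)\ge\frac{11}{18}$ again forces an $A_n$-type germ upstairs, with $n\le 8$.
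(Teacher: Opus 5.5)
Your proof is correct, but it reaches the key structural fact by a different route from the paper's.

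The paper works with the equation:
\begin{enumerate}
\item the multiplicity bound at the $\frac19(1,2)$ point forces a nonzero $z^2$ coefficient, and completing the square gives $z^2=f_{18}(x,y)$;
\item transversality forces a nonzero $y^9$ coefficient;
\item in the chart $x\neq0$, a copy of $\bA^2_{y,z}$, every singular point is visibly $z^2=(y-y_0)^k\cdot(\text{unit})$, i.e.\ $A_{k-1}$ with $k\le 9=\deg_y f_{18}(1,y)$.
\end{enumerate}

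You argue without a normal form. The lct bound gives reducedness and multiplicity at most $3$. B\'ezout against the member $\Gamma=(y=\lambda x^2)$ of the degree-$2$ pencil through the point then excludes all triple points. This is exactly the step that removes $D_4$ and $D_5$, which the lct bound alone permits. Finally the threshold formula for $A_n$ caps $n$ at $8$.

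The two arguments encode the same geometry: $\Gamma\cdot C=2$ says $C$ is a double cover of $\bP(1,2)\cong\bP^1$ via $[x:y]$, which is what $z^2=f_{18}$ expresses. The paper's version gives slightly more. Once the normal form is in place, $n\le 8$ comes from the degree of $f_{18}(1,y)$ rather than from the threshold, and one sees explicitly which curves carry an $A_8$. Yours needs the ADE threshold values but no coordinate normalization, and it would adapt to other weighted planes with a low-degree pencil.

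Two small points on Step 2. Your classification of triple points is superfluous once B\'ezout excludes them all. You should also state that $\Gamma\not\subset C'$, which holds because $C$ is reduced.

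On Step 3, the paper excludes the $\frac19(1,2)$ point using the lct hypothesis rather than transversality. If the $z^2$ coefficient vanishes, the local equation $g_9(x,y)+h_{18}(x,y)$ in the orbifold chart has multiplicity at least $5$, so $\lct(C)\le\frac25$. The paper invokes transversality only at the $\frac12(1,1)$ point, reading it exactly as you do. That reading is genuinely needed there. The lct bound alone allows $C$ through $[0:1:0]$: for example $z^2+x^2y^8=0$ has $\lct=\frac58$ and a nodal lift at that point. So your fallback would produce only a $\mu_2$-quotient of an $A_n$ germ there, not an $A_n$ point on a smooth surface.
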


\begin{proof}
    First, observe $\lct(C) > \frac{11}{18}$ implies $C$ is reduced and $C$ can have multiplicity at most 3 at any point.  Up to a change of coordinates, assume $C$ has a singularity at $[1:0:0]$. 
    
    Writing the coordinates of $\bP(1,2,9)$ as $[x:y:z]$ and $C$ as a monomial in $x,y,z$, multiplicity at most 3 at every point implies the coefficient of $z^2$ is nonzero.  By completing the square, we may assume $C$ is given by an equation of the form 
    \[ z^2 = f_{18}(x,y).\]
    By the transverse assumption, we must have the coefficient of $y^9$ also nonzero, so there exists some $g(x,y)$ such that $C$ is given by
    \[ z^2 = y^9 + xg_{17}(x,y).\]

    Let $x^{18-2k}y^{k}$ be a monomial in $xg_{17}(x,y)$ with nonzero coefficient and $k$ minimal.  Because $C$ is singular at $[1:0:0]$, $k \ge 1$.  Then, on the coordinate chart $x \ne 0$, $C$ is given by 
    \[ z^2 = \sum_{i = k}^9 b_i y^i. \]
    The singularity at $(0,0)$ is therefore an $A_{k-1}$ singularity with $k-1 \le 8$.  By direct computation, the log canonical threshold of $C$ is $\frac{11}{18}$ if and only if $k = 9$.
\end{proof}

Now, we have a complete description of $P_{5,c}$ for $c \in (\frac{11}{18},\frac{5}{8})$, and at $\frac{5}{8}$, must replace curves with log canonical threshold equal to $\frac{5}{8}$.  Algorithmically, we can sequentially increase the coefficient in this manner and compute explicit wall crossings to determine the moduli space compactifying the locus of pairs  $(\bP^2, cC_5)$ for any $c \in (\frac{3}{5},1]$.  This is a massive computational undertaking, but forthcoming work will study the case $c \le \frac{4}{5}$ and construct moduli of quintic surfaces arising as cyclic covers of $\bP^2$.

\subsection{Using wall crossing to construct degenerations}

In this section, we outline some additional applications of wall crossing.  For example, in moduli spaces of certain surface pairs, we can use the theory of wall crossing to produce surfaces with many components that admit smoothings. 

\begin{theorem}\label{thm:manycomps}
    Let $X$ be a $\bQ$-Gorenstein slc surface and $D \subset X$ a curve such that 
        \begin{enumerate}
            \item $D$ has an $A_n$ singularity at a point $p \in X$ for some $n \ge 1$;
            \item away from $p$, $D$ has at worst nodes as singularities; and
            \item if some component $C$ of $D$ has rational normalization, it either meets $\overline{D \setminus C}$ in at least three points or meets $\overline{D \setminus C}$ in at least one point (respectively, two points) if $n$ is odd (respectively, even) and passes through $p$.
        \end{enumerate} 
    Assume $K_X + c D$ is ample for $c \ge \frac{1}{2} + \frac{1}{n+1}$.  Assume that $[(X,cD)]$ is a point of an irreducible component of a KSBA moduli space whose general points $(X',cD')$ are stable for all $c \in [\frac{1}{2}+\frac{1}{n+1},1]$.

    Let $Y$ be the surface constructed from $X$ by performing the $(a,b)$ weighted blow up of $p$, where $(a,b) =  (1,\frac{n+1}{2})$ if $n$ is odd or $(a,b) = (2, n+1)$ if $n$ is even, and gluing $Y$ along the exceptional divisor $E$ to the weighted projective space $\bP(1,a,b)$ along a section of $\calO(1)$.  Then, $Y$ is a $\bQ$-Gorenstein degeneration of $X$, and for a divisor $D_Y$ in the linear system corresponding to the specialization of $D$, the divisor $K_Y + c D_Y$ is ample for $c > \frac{1}{2} + \frac{1}{n+1}$.  A general choice of $D_Y$ gives $[(Y, cD_Y)]$ is a point of the same irreducible component of the KSBA moduli space as $(X',cD')$ for all $c \in (\frac{1}{2} + \frac{1}{n+1},1]$.  Furthermore, there exists a particular divisor $D_{Y_0}$ in this linear system on $Y$ with an $A_{n-1}$ singularity at a smooth point $p \in \bP(1,a,b)$ such that $D_{Y_0}$ satisfies conditions (1), (2), and (3) of the theorem statement.
\end{theorem}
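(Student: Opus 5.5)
The plan is to construct $Y$ as the central fiber of an explicit one-parameter degeneration, mirroring the $A_8$ computation for quintics and Hassett's cusp replacement. The $A_n$ singularity $x^2 = y^{n+1}$ has log canonical threshold $c_0 = \frac{1}{2}+\frac{1}{n+1}$, computed by the $(\frac{n+1}{2},1)$ weighted blow-up in $(x,y)$ if $n$ is odd and by the $(n+1,2)$ weighted blow-up if $n$ is even. These are exactly the weights $(a,b)$ in the statement, up to ordering.

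\textbf{Constructing the degeneration.} Take a general smoothing $(X\times \bA^1_s, \calD)$ of $D$ given by $f = s g$, with $g$ general; if $X$ itself is only a degeneration, work inside its $\bQ$-Gorenstein smoothing family instead. After the base change $s\mapsto s^{N}$, where $N$ is the weighted degree $2b$ (odd case $n+1$, even case $2(n+1)$), the local equation becomes $x^2 - y^{n+1} = s^N g$. This equation is weighted homogeneous for weights $(b',a',1)$ on $(x,y,s)$. Perform the corresponding weighted blow-up of the threefold at $p$.
\begin{itemize}
\item The exceptional divisor is $\bP(1,a,b)$.
\item The strict transform of $X\times\{0\}$ is the weighted blow-up $\tilde X$ of $X$ at $p$.
\item These meet along $E \subset \tilde X$, which is identified with a section of $\calO(1)$ in $\bP(1,a,b)$.
\end{itemize}
The new central fiber is $Y = \tilde X \cup \bP(1,a,b)$. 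Its double curve is locally of the form $(uv=0)$ in a cyclic quotient, which is slc. Since the total space is $\bQ$-Gorenstein and $Y$ is a Cartier fiber, $Y$ is a $\bQ$-Gorenstein degeneration of $X$. The divisor $D_Y$ is the specialization of $\calD$. On $\tilde X$ it is the strict transform $\tilde D$, in which the $A_n$ singularity is resolved: $\tilde D$ meets $E$ in one or two points according to the parity of $n$. On $\bP(1,a,b)$ it is a curve $(x^2 = y^{n+1} + g(p) s^N)$ of degree $2b$, which is smooth for $g(p)\neq 0$.

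\textbf{Ampleness.} This is the key computation, generalizing the octic example and the $A_8$ case. I would prove that $K_Y + cD_Y$ is ample for $c > c_0$ by checking each component with its conductor.

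On $\bP(1,a,b)$, the restriction is $K + L + cD_Y$, which has degree $-(1+a+b) + 1 + 2bc$. This is positive exactly when $c > \frac{a+b}{2b} = c_0$.

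On $\tilde X$, write $\pi^*D = \tilde D + \ord_E(D)\,E$. Then
\[ K_{\tilde X} + E + c\tilde D = \pi^*(K_X + cD) + \big(A_X(E) - c\,\ord_E(D)\big)E. \]
The coefficient $A_X(E) - c\,\ord_E(D)$ vanishes at $c = c_0$ and is negative for $c > c_0$. Since $-E$ is $\pi$-ample and $K_X + cD$ is ample for $c \ge c_0$, ampleness holds for $c$ slightly above $c_0$. To extend this to all $c \in (c_0,1]$, I would use Nakai–Moishezon on curves: the divisor is positive on $E$, and on curves not in $E$ it is at least the value of $\pi^*(K_X + c_0D) + (c - c_0)\tilde D$. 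The worry is components $C$ of $\tilde D$ on which $\tilde D$ might be negative. Hypothesis (3) is designed to handle these: because $C$ has enough intersection points with the rest of $D_Y$ and with $E$, the contribution of $K + E + c\tilde D$ on $C$ stays positive via adjunction. I expect this to be the main obstacle, requiring a careful adjunction count on rational components (the case analysis producing ``three points'' versus ``one or two points through $p$'').

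\textbf{Slc, moduli component, and the special divisor.} Slc-ness of $(Y, cD_Y)$ for $c \le 1$ follows since $D_Y$ is nodal and avoids the singular points of $Y$ for general $g$. The central fiber is then KSBA stable for all $c \in (c_0,1]$, and it is a limit of the general pairs $(X', cD')$ in the family. By properness and uniqueness of canonical models, as in the proof of the $A_8$ theorem, $(Y,cD_Y)$ lies in the same irreducible component.

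For the special divisor $D_{Y_0}$, choose the curve on $\bP(1,a,b)$ to be $x^2 = y^n\ell$ with $\ell$ a linear form, in the style of Lemma \ref{lem:p129}, keeping it transverse to $E$ at the prescribed points. This gives an $A_{n-1}$ singularity at a smooth point of $\bP(1,a,b)$. Conditions (2) and (3) then follow: the component on $\bP(1,a,b)$ is rational and passes through the $A_{n-1}$ point, and it meets $\tilde D$ in $1$ or $2$ points on $E$ according to the parity of $n$.
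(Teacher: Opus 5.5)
Your proposal is correct and follows the paper's proof essentially step for step: a generic smoothing; the $(b,a,1)$ weighted blow-up, where you make explicit the base change $s\mapsto s^{2b}$ that the paper's proof of this theorem leaves implicit; ampleness on $\bP(1,a,b)$ from the degree count $-(a+b)+2bc>0$; ampleness on $\tilde X$ by exactly the argument of Lemma \ref{lem:ampleforc>c0}, with hypothesis (3) controlling the rational components of $\tilde D$; and the special curve $x^2=y^n\ell$, which is the paper's $w^2=v^nq_2(u,v)$ (for $n$ even) or $w^2=v^nu$ (for $n$ odd). One small correction: for $n$ even, $\ell$ must have weighted degree $2$, so $\ell=y+\beta s^2$ with $\beta\neq 0$ (normalized so the curve meets $E$ where $\tilde D$ does), not a form linear in $y$ and $s$.
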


From this, we derive the following Corollary (which proves Theorem \ref{introthm:manycomps}).

\begin{corollary}\label{cor:manycomps}
    Let $X$ be $\bQ$-Gorenstein slc surface with $k$ components and $D \subset X$ a curve with $j$ components and an $A_n$ singularity satisfying the conditions in the previous theorem.  Then, for any $c \in (\frac{1}{2} + \frac{1}{n+1}, 1]$ and $\ell$ such that $c <\frac{1}{2} + \frac{1}{\ell+1}$, there exists a specialization $Y$ of $X$ with $k+n -\ell$ components containing a divisor $D_Y$ with $j + n - \ell$ components such that $D_Y$ has an $A_\ell$ singularity and $(Y, c D_Y)$ is a stable pair.

    If $c = 1$, and $\ell = 1$, this produces a stable pair $(Y,D_Y)$ with $k+n -1$ components. 
\end{corollary}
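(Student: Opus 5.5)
The plan is to induct on $n-\ell$, applying Theorem \ref{thm:manycomps} once per step. Each application raises the number of components of the surface by one, raises the number of components of the curve by one, and lowers the index of the $A$-singularity by one. The walls $\frac{1}{2}+\frac{1}{m+1}$ decrease as $m$ increases, and $\frac{1}{2}+\frac{1}{m+1}$ is exactly the log canonical threshold of an $A_m$ singularity. So the hypothesis $\frac{1}{2}+\frac{1}{n+1}<c<\frac{1}{2}+\frac{1}{\ell+1}$ says that $c$ lies beyond the walls for $A_n,A_{n-1},\dots,A_{\ell+1}$ but before the wall for $A_\ell$.

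Set $(Y_0,D_{Y_0})=(X,D)$. Suppose $(Y_i,D_{Y_i})$ has $k+i$ surface components, $j+i$ curve components, an $A_{n-i}$ singularity, and satisfies (1)--(3). Suppose also that $K_{Y_i}+cD_{Y_i}$ is ample for $c\ge \frac12+\frac1{n-i+1}$, and that it lies on the fixed irreducible component for all such $c\le 1$.

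Apply Theorem \ref{thm:manycomps} with $n$ replaced by $n-i$. It yields a $\bQ$-Gorenstein degeneration $Y_{i+1}$ of $Y_i$ in which one weighted projective plane $\bP(1,a,b)$ is glued on, so $Y_{i+1}$ has $k+i+1$ components. It also yields the special divisor $D_{Y_{i+1}}$ with an $A_{n-i-1}$ singularity at a smooth point of the new $\bP(1,a,b)$, again satisfying (1)--(3). Finally, $K_{Y_{i+1}}+cD_{Y_{i+1}}$ is ample for $c>\frac12+\frac1{n-i+1}$, and the pair lies on the same irreducible component for those $c$.

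For the curve count, the strict transform of $D_{Y_i}$ on the blown-up component keeps the same number of components. An $A_m$ singularity is unibranch for $m$ even and has two branches for $m$ odd, and the weighted blow-up separates the branches in either case. On top of this, the exceptional $\bP(1,a,b)$ acquires a new component $D\cap\bP(1,a,b)$ (the tail carrying the $A_{n-i-1}$ singularity). This gives $j+i+1$ curve components.

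To feed the next step, I need the hypotheses of Theorem \ref{thm:manycomps} for $(Y_{i+1},D_{Y_{i+1}})$ with $n-i-1$ in place of $n$.
\begin{enumerate}
\item Ampleness of $K_{Y_{i+1}}+cD_{Y_{i+1}}$ for $c\ge \frac12+\frac1{n-i}$ holds because $\frac12+\frac1{n-i}>\frac12+\frac1{n-i+1}$.
\item The pair lies on the irreducible component whose general points are stable on the required range. This follows from the general-point statement of the theorem together with the fact that the special divisor is a specialization within the same linear system.
\item Slc-ness of $(Y_{i+1},cD_{Y_{i+1}})$ holds for $c$ up to $\frac12+\frac1{n-i}$, since the only non-nodal singularity is $A_{n-i-1}$.
\end{enumerate}

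After $n-\ell$ steps I obtain $Y=Y_{n-\ell}$ with $k+n-\ell$ components and $D_Y$ with $j+n-\ell$ components and an $A_\ell$ singularity. For the given $c$ with $\frac12+\frac1{n+1}<c<\frac12+\frac1{\ell+1}$, the divisor $K_Y+cD_Y$ is ample, because $c$ exceeds every wall $\frac12+\frac1{m+1}$ with $m\ge \ell+1$ used along the way. The pair is slc, because $c<\lct(A_\ell)$ and nodes are slc for $c\le 1$. Hence $(Y,cD_Y)$ is stable.

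For $c=1$ and $\ell=1$: an $A_1$ singularity is a node, so it is slc at coefficient $1$, and the boundary inequality $c<\frac12+\frac12=1$ relaxes to $c\le 1$. This is the one place where the strict inequality must be relaxed, so I check it separately.

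The main obstacle is the bookkeeping in step (2): ensuring that the special divisor $D_{Y_{i+1}}$, rather than a general $D_Y$, still sits on the same irreducible component with general points stable on the needed range. I would handle this by noting that the new component is the limit of the family over the same component, and that the special divisor lies in the same linear system.
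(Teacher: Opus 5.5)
Your strategy is the paper's own: its proof of Corollary \ref{cor:manycomps} is just ``apply Theorem \ref{thm:manycomps} to trade the $A_n$ singularity for an $A_{n-1}$ one, gaining one surface component and one curve component, and induct.'' However, one step of your bookkeeping is false.

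You claim that $\frac12+\frac1{n+1}<c<\frac12+\frac1{\ell+1}$ places $c$ beyond the walls for $A_{n-1},\dots,A_{\ell+1}$. It only places $c$ beyond the $A_n$ wall. The walls $\frac12+\frac1{m+1}$ for $\ell+1\le m\le n-1$ all lie inside that interval. By your own induction, the final $Y=Y_{n-\ell}$ has $K_Y+cD_Y$ ample only for $c>\frac12+\frac1{\ell+2}$, and this bound is sharp. Consider the weighted projective plane glued in the last step, the one starting from the $A_m$ point with $m=\ell+1$. On it, $K+\Delta+cD_Y$ has degree $-(m+3)+c(2m+2)$ on $\bP(1,2,m+1)$ if $m$ is even, and degree $-\frac{m+3}{2}+c(m+1)$ on $\bP(1,1,\frac{m+1}{2})$ if $m$ is odd. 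Both are negative for $c<\frac12+\frac1{m+1}$. For instance, $n=10$, $c=0.6$, $\ell=1$ satisfies your hypotheses, but the last glued $\bP(1,2,3)$ has degree $-5+6\cdot 0.6<0$, so $(Y,cD_Y)$ is not stable.

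The corollary as literally stated permits such $\ell$, and the paper's one-line ``by induction'' leaves the right choice implicit. The repair is to let $c$ determine $\ell$: take the integer with $\frac12+\frac1{\ell+2}<c\le\frac12+\frac1{\ell+1}$. This means performing exactly one replacement for each wall $\frac12+\frac1{m+1}$, $m\le n$, lying strictly below $c$. With this choice your induction goes through as you wrote it. Your separate treatment of $c=1$, $\ell=1$ also becomes an instance of the general statement rather than an exception, since $\frac12+\frac13<1\le\frac12+\frac12$. The non-strict upper bound just says $c$ does not exceed the lct of the remaining $A_\ell$ point.

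A smaller point on the curve count: your remark about branches is beside the point. What you need is that strict transforms preserve the number of irreducible components, and that the new tail ($w^2=v^m q_2(u,v)$ or $w^2=v^m u$) is irreducible. It is, because the right-hand side is not a square.
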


For the idea of the proof, suppose $(X,cD)$ as above is a stable pair for $c_0 = \frac{1}{2} + \frac{1}{n+1}$.  As $c_0 = \lct(D)$, for any larger coefficient of $D$ this pair does not have log canonical singularities.  As it is the limit of a stable pair for any $c \ge c_0$, it must undergo a replacement in the wall crossing.  Successively replacing $A_n$ singularities with $A_{n-1}$ singularities will produce the surface. 

We begin with a preliminary lemma. 

\begin{lemma}\label{lem:ampleforc>c0}
    Let $(X,\Delta +c_0D)$ be a stable surface pair with $K_X + \Delta + cD$ ample for all $c \in [c_0,1]$ and assume the following: 
        \begin{enumerate}
            \item $X$ is normal and $D$ has exactly one singular point $p \in X$ and $p \notin \Delta$;
            \item $c_0 = \lct((X, \Delta), D)$ is the log canonical threshold of $D$;
            \item there exists a birational morphism $\pi: Y \to X$ with one exceptional divisor $E$ over $p$ such that
            \[K_Y + \Delta_Y + c_0 D_Y + E = \pi^*(K_X + \Delta + cD) \] and
            $(Y,\Delta_Y + cD_Y+E)$ is dlt for all $c \in [c_0,1]$, where $D_Y$ is the strict transform of $D$ and $\Delta_Y$ is the strict transform of $\Delta$;
            \item $D_Y$ is a union of divisors with positive genus or, if a component has genus $0$, then its intersection with $E + \Delta$ is at least three smooth points.
        \end{enumerate}
    Then, $K_Y + \Delta_Y+ cD_Y + E$ is ample for all $c \in [c_0,1]$ and $(Y,\Delta_Y + cD_Y + E)$ is a stable pair. 
\end{lemma}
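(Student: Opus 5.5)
The plan is to prove ampleness by the Nakai--Moishezon criterion on the surface $Y$, and then to read off stability from the dlt hypothesis. I first note that the statement contains an evident typo: in (3) the right-hand side should be $\pi^*(K_X+\Delta+c_0D)$. This is what crepancy at the log canonical threshold gives, since $E$ is an lc place of $(X,\Delta+c_0D)$ with log discrepancy $0$. For $c \in [c_0,1]$ I then write
\[ L_c := K_Y+\Delta_Y+cD_Y+E = \pi^*(K_X+\Delta+c_0D) + (c-c_0)D_Y. \]
Writing $D_Y = \pi^*D - mE$ with $m=\ord_E(D)>0$, this becomes
\[ L_c = \pi^*(K_X+\Delta+cD) - (c-c_0)mE. \]
Because $(Y,\Delta_Y+cD_Y+E)$ is dlt by (3), it is lc. Stability therefore reduces to the ampleness of $L_c$. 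At $c=c_0$ the class $L_{c_0}$ is the pullback of an ample class, so it is nef and big and is zero only on $E$. Since ampleness is an open condition and $L_c$ is affine in $c$, it suffices to check $L_c^2>0$ and $L_c\cdot \Gamma>0$ for every irreducible curve $\Gamma \subset Y$ and every $c\in[c_0,1]$. In fact it is enough to treat $c\in(c_0,1]$ carefully and to observe that at $c=c_0$ one gets only nefness. This means the claim at $c_0$ itself is really about $c_0+\epsilon$, or one reads the statement as $c \in (c_0,1]$.

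The curves split into three cases.

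\emph{Curves not contained in $E\cup D_Y$.} For such $\Gamma$, $L_c\cdot\Gamma = \pi^*(K_X+\Delta+c_0D)\cdot\Gamma + (c-c_0)D_Y\cdot\Gamma$. The first term is positive because $\pi(\Gamma)$ is a curve and $K_X+\Delta+c_0D$ is ample, and the second term is nonnegative.

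\emph{The curve $E$.} Here $L_c\cdot E = (c-c_0)D_Y\cdot E = -(c-c_0)mE^2 > 0$ for $c>c_0$, using $E^2<0$.

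\emph{A component $C$ of $D_Y$.} This is the main case. I use adjunction on the dlt pair: with $\mathrm{Diff}$ including the contributions from $E$, $\Delta_Y$, the other components of $D_Y$ and the singular points of $Y$ along $C$,
\[ (K_Y+\Delta_Y+D_Y+E)\cdot C = 2p_a(C)-2 + \deg(\text{Diff}). \]
Combined with the previous formula, this gives $L_c\cdot C$ as a convex combination (in $c$) of $L_{c_0}\cdot C \ge 0$ and $L_1\cdot C$. I also expect $L_{c_0}\cdot C>0$, since $\pi(C)$ is a curve. Hence it suffices to show $L_1\cdot C \ge 0$.

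To bound $L_1\cdot C$ I use hypothesis (4). If $C$ has positive genus, the adjunction expression is already $\ge 0$: either $p_a\ge1$, or the geometric genus is positive and the nodes and intersection points contribute nonnegatively. If $C$ is rational, it meets $E+\Delta$ in at least three smooth points. Since the pair is dlt, $C$ is smooth where it meets these boundary components, and each such point adds at least $1$ to $\deg\mathrm{Diff}$, so $L_1\cdot C \ge -2+3>0$. Together with $L_{c_0}\cdot C>0$, this gives $L_c\cdot C>0$ for all $c$ in the range.

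Finally, $L_c^2>0$ follows from $L_c^2 = (K_X+\Delta+c_0D)^2 + (c-c_0)\bigl(2L_{c_0}\cdot D_Y + (c-c_0)D_Y^2\bigr)$. Alternatively, and more simply, I would expand the square of the second expression for $L_c$ and use the fact that $K_X+\Delta+cD$ is ample with $(c-c_0)$ small. Cleanest of all, I would note that Kleiman--Nakai needs only positivity on curves plus positive self-intersection, and that $L_c$ is a positive combination of the nef and big class $L_{c_0}$ and the class $L_1$, which is positive on all curves.

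I expect the main obstacle to be the adjunction bookkeeping for components of $D_Y$ through singular points of $Y$ on $E$ (the cyclic quotient points of the weighted blow-up). There one must check that the different contributes correctly, giving coefficients $1-\frac1r$ rather than $1$, and that these are covered by hypothesis (4). My first approach would be to apply dlt-ness to conclude that $D_Y$ meets $E$ only at smooth points of $Y$, as is forced by dlt at points where two boundary components with coefficient $1$ meet when $c=1$.
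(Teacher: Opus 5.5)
Your proposal is correct and is essentially the paper's argument: the same decomposition $L_c=\pi^*(K_X+\Delta+c_0D)+(c-c_0)D_Y$, $E^2<0$ for the curve $E$, and adjunction plus (4) for components of $D_Y$, with your linear interpolation between $c_0$ and $1$ replacing the paper's separate treatment of components with $D_Y\cdot C<0$ via $C^2<0$; you are also right that (3) should read $\pi^*(K_X+\Delta+c_0D)$, and that ampleness holds only for $c>c_0$ since $L_{c_0}\cdot E=0$. The one loose end, which the paper shares, is strict positivity at $c=1$ for positive-genus components: either $E\cdot C>0$ and this adds to your adjunction count, or $E\cdot C=0$ and then $L_1\cdot C=(K_X+\Delta+D)\cdot\pi(C)>0$ by hypothesis.
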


\begin{proof}
    Let $\pi: Y \to X$ be the birational morphism extracting $E$, so \[K_Y + \Delta_Y+ c_0D_Y + E = \pi^*(K_X + \Delta+ c_0D).\]  Let $c \in [c_0,1]$.  Then, $K_Y + \Delta_Y+ cD_Y + E = \pi^*(K_X + \Delta+ cD) - (c-c_0)d E$ for $d = \ord_{\pi^{-1}(D)}(E)$.  Because $E$ is exceptional, $E^2 < 0$, so $(K_Y + \Delta_Y + cD_Y + E) \cdot E > 0$ for any $c > c_0$. 

    Now, writing $K_Y + \Delta_Y+ cD_Y + E = K_Y + \Delta_Y + c_0D_Y + E +(c-c_0)D_Y = \pi^*(K_X + \Delta + c_0D) + (c-c_0)D_Y$ and using that $K_X + \Delta+ c_0D$ is ample, we see that for any $C\ne E \in \overline{NE(Y)}$, \[(K_Y + \Delta_Y + cD_Y + E) \cdot C = (\pi^*(K_X + \Delta+ c_0D) + (c-c_0)D_Y) \cdot C = (K_X + \Delta+ c_0D)\cdot \pi(C) + (c-c_0)D_Y \cdot C.\]  As long as $C \not \subset \Supp \ D_Y$ or $D_Y \cdot C \ge 0$, we conclude $(K_Y + \Delta_Y+ cD_Y + E) \cdot C > 0$.  Finally, if $C \subset \Supp \ D_Y$ and $D_Y \cdot C < 0$, which necessarily implies $C^2 < 0$, then write $D_Y = C + C'$, where $C'$ are the other components of $D_Y$.  Then,
    \[ K_Y + \Delta + cD_Y + E  = K_Y + \Delta+ c(C+C') + E = (K_Y + C) + \Delta + E - (1-c) C + cC'  \]
    and intersecting with $C$, as $(K_Y + \Delta + C+E) \cdot C  \ge 0$ and $C^2 < 0$ by assumption, we conclude $(K_Y + \Delta_Y + cD_Y + E) \cdot C > 0$. 

    This proves that $K_Y + \Delta_Y + cD_Y + E$ is ample for any $c \in [c_0,1]$. The stability of the pair then follows by assumption (3). 
\end{proof}

Now, we prove Theorem \ref{thm:manycomps}. 

\begin{proof}
Let $(\calX, \calD) \to T$ be a generic smoothing of the stable pair $(X,D)$ over the germ of a curve, so $(X,D) = (\calX_0, \calD_0)$ is the closed fiber over $0 \in T$ and choose a local parameter $t$ for the coordinates of $T$.   By assumption, $D$ has a singularity of type $x^2 +y^{n+1}$ at a point $p \in X$ in some local coordinate system $(x,y)$ on $X$.  

Suppose first $n$ is even, and consider the $(n+1,2,1)$ weighted blow up in the coordinates $(x,y,t)$ on $\calX$.  This produces a family $\calY \to T$ where the central fiber is $\calY_0 = \tilde{X} \cup \bP(1,2,n+1)$, where $\tilde{X}$ is the $(n+1,2)$ blow up of $X$ and these surfaces are glued along the rational curve $\Delta \in |\calO_{\bP(1,2,n+1)}(1)|$.  The strict transform of $\calD$ in $\calY$, denoted $\calD_\calY$, has central fiber $(\calD_\calY)_0 = \tilde{D} \cup D_1$ the union of two curves meeting at a single smooth point $p \in \Delta$, where $\tilde{D}$ is the strict transform of $D$ in $\tilde{X}$ and $D_1$ is a curve in $\bP(1,2, n+1)$ of degree $2(n+1)$.  In the component $\tilde{X}$, $\tilde{D}$ is a smooth curve of genus $g > 0$ by assumption, and using genericity of the smoothing, $D_1$ may be assumed to be smooth.  By Lemma \ref{lem:ampleforc>c0}, the pair $(\tilde{X}, c\tilde{D} + \Delta)$ is a stable pair for all $c \in [c_0,1]$.

Observe that there exists a curve $D_2$ of degree $2(n+1)$ on $\bP(1,2,n+1)$ meeting $\Delta$ only at $p$, with an $A_{n-1}$ singularity.  If the coordinates on $\bP(1,2,n+1)$ are $[u:v:w]$, one such curve is given by $w^2 = v^{n}q_2(u,v)$ where $q_2$ is a generic polynomial of degree 2 in $u$ and $v$.  Because $D_2$ is Cartier, the pair $(\bP(1,2,n+1), D_2)$ admits a smoothing to $(\bP(1,2,n+1), D_1)$ for generic $D_1$ as above. Because $\tilde{X} \cup \bP(1,2,n+1)$ is smoothable and $D_2$ is Cartier, this implies $(\tilde{X}\cup \bP(1,2,n+1), \tilde{D} + D_2)$ is smoothable.

Finally, we note that the geometric genus of $D_2$ is equal to $0$, and $D_2$ meets $\tilde{D}$ at one point and has an $A_{n-1}$ singularity, where $n-1$ is odd. For any curve $D' \in |\calO_{\bP(1,2,n+1)}(2(n+1))|$, we have $K_{\bP(1,2,{n+1})} + cD' + \Delta$ has degree $-n-4 + c(2n+2) + 1$, which for $c \ge \frac{1}{2}+ \frac{1}{n}$ and $n \ge 2$, is positive.  Therefore, the pair $(\tilde{X}\cup \bP(1,2,n+1), \tilde{D} + D_2)$ satisfies the assumptions of Theorem \ref{thm:manycomps} where $\tilde{D} + D_2$ has an $A_{n-1}$ singularity. 

If $n$ instead was odd, so the singular point of $D$ has local equation $x^2 + y^{n+1}$, we perform the $(\frac{n+1}{2}, 1, 1)$ weighted blow up in the family resulting in a pair $(\tilde{X}\cup \bP(1,1,\frac{n+1}{2}), \tilde{D} + D_1)$ the intersection of $\tilde{X}$ and $\bP(1,1,\frac{n+1}{2})$ is a divisor $\Delta \in |\calO_{\bP(1,1,\frac{n+1}{2})}(1)|$ and $D_1$ is a divisor of degree $n+1$ on $\bP(1,1, \frac{n+1}{2})$ meeting $\Delta$ in two points (which are the intersection points with $\tilde{D}$).  Applying Lemma \ref{lem:ampleforc>c0} to $\tilde{X} \to X$ and denoting by $E$ the exceptional divisor, we conclude as above $K_{\tilde{X}} + c\tilde{D} + E$ is ample for all $c \in [c_0,1]$. Again as above, we observe that there exists a curve $D_2$ of degree $n+1$ on $\bP(1,1,\frac{n+1}{2})_{[u:v:w]}$ with an $A_{n-1}$ singularity given by $w^2 = v^nu$.  This curve has geometric genus zero and meets $\tilde{D}$ in two points and has an $A_{n-1}$ singularity, where $n-1$ is even.  It also satisfies $K_{\bP(1,1,\frac{n+1}{2})} + cD_2 + \Delta$ is ample for $c \ge \frac{1}{2}+ \frac{1}{n}$. Therefore, it satisfies the assumptions of Theorem \ref{thm:manycomps}.  
\end{proof}

To prove Corollary \ref{cor:manycomps}, we simply use induction.

\begin{proof}
    Let $(X,D)$ be as in the statement such that $D$ has an $A_n$ singularity.  By Theorem \ref{thm:manycomps}, we may produce a surface $(Y,D_Y)$ satisfying the hypotheses of Theorem \ref{thm:manycomps} such that both $Y$ and $D_Y$ have one more component than $X$ and $D_X$, and $D_Y$ has an $A_{n-1}$ singularity.  We are done by induction.
\end{proof}

We apply this to moduli of plane curves for $d \ge 6$ and prove Theorem \ref{introthm:degensP2}.

\begin{corollary}
    In the KSBA moduli space compactifying the locus pairs $(\bP^2, D)$ where $D \in |\calO(d)|$ and $d \ge 6$, there exists a surface with at least $d^2/2 - 1$ components if $d$ is even and $d(d-1)/2  - 1$ components if $d$ is odd.  
\end{corollary}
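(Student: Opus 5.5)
The plan is to apply Corollary \ref{cor:manycomps} with $c=1$ and $\ell=1$ to a single pair $(\bP^2, C)$, where $C$ is a degree $d$ plane curve with one isolated $A_n$ singularity, $n$ as large as possible, and only nodes elsewhere. Here $k=1$, so the corollary produces a stable pair with $1+n-1=n$ components. It therefore suffices to find such a curve with $n=\frac{d^2}{2}-1$ when $d$ is even and $n=\frac{d(d-1)}{2}-1$ when $d$ is odd. The count in the statement, ``at least $\ldots -1$'', suggests the intended curves have $n$ of roughly this size, so any smaller existence result would only change the constant.

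\textbf{Constructing the curve.} For $d=2m$ even, take $C=(f=0)$ with $f=(y z^{m-1}-x^m)^2 - x^{a}g$, where $g$ is a general form of the complementary degree. If $y z^{m-1}=x^m$ is a smooth branch, an $A_n$ singularity is two smooth branches with contact order $\frac{n+1}{2}$. By B\'ezout, two degree-$m$ curves meet with total intersection $m^2=\frac{d^2}{4}$, so this gives $A_{2m^2-1}=A_{d^2/2-1}$. Concretely, I would take two general members of a pencil $\{q_1+\lambda q_2\}$ of degree-$m$ curves, where $q_1,q_2$ have a single base point of multiplicity $m^2$ (for example $q_1=yz^{m-1}-x^m$ and $q_2=z^m$, which meet only at $[0:1:0]$). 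Then $C=(q_1+\lambda_1 q_2)(q_1+\lambda_2 q_2)$ has exactly an $A_{d^2/2-1}$ singularity at $[0:1:0]$ and is smooth elsewhere, since each branch is a smooth curve. For $d$ odd, write $d=(m)+(m+1)$ and use a degree-$m$ and a degree-$(m+1)$ curve meeting at a single point with maximal contact $m(m+1)$. This gives $A_{2m(m+1)-1}$, and $2m(m+1)=\frac{(d-1)(d+1)}{2}$ is at least $\frac{d(d-1)}{2}$, matching the claimed bound with some room to spare.

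\textbf{Checking the hypotheses of Theorem \ref{thm:manycomps}.} I would verify the following in order.
\begin{enumerate}
\item $D$ has its $A_n$ point at a smooth point of $X$, and only nodes elsewhere.
\item Condition (3): each component is rational only when $m\le 2$. In that case it passes through $p$ and meets the other component there. One intersection point is allowed for odd $n$; in the even case I would check the required two points, or choose components of positive genus. For the odd-$d$ construction, $n$ is odd.
\item $K_{\bP^2}+cD$ is ample for $c\ge \frac12+\frac1{n+1}$. This holds because $cd>3$ whenever $d\ge 6$.
\item The component containing $(\bP^2,cC)$ is the one whose general point is a smooth plane curve, which is stable for all $c\in(3/d,1]$.
\end{enumerate}

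\textbf{Main obstacle.} The real work is existence: producing a pencil of plane curves with a single base point of maximal multiplicity, smooth members, and a branch that is genuinely smooth, so that $C$ has an $A_n$ singularity with exactly the claimed $n$ and nothing worse. I would first try the explicit monomial pencil above and check smoothness of its general member by a Jacobian computation in the chart $y\neq 0$. If that fails, I would fall back on Orevkov-type constructions of maximal contact, and accept the slightly weaker constants stated in the theorem.
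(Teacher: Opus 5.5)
\textbf{There is a gap in the existence step.} Your overall strategy matches the paper's: exhibit a degree-$d$ curve with a single $A_n$ point, $n=\frac{d^2}{2}-1$ or $\frac{d(d-1)}{2}-1$, smooth elsewhere, and apply Corollary~\ref{cor:manycomps} with $k=1$. Your even-degree mechanism, two smooth members of a pencil whose only base point has multiplicity $m^2$, is also what underlies the paper's curve $(xz^{m-1}-y^m)^2-x^{2m}=(xz^{m-1}-y^m+x^m)(xz^{m-1}-y^m-x^m)$.

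The problem is that your concrete pencil fails, and existence is the step carrying all the content. For $q_1=yz^{m-1}-x^m$ and $q_2=z^m$, the base point $[0:1:0]$ is the singular point of $q_1$. In the chart $y=1$ every member is $z^{m-1}-x^m+\lambda z^m$, which has no linear term once $m\ge 3$, i.e.\ for every $d\ge 6$. So $C$ has two singular branches at $[0:1:0]$, each analytically $z^{m-1}=x^m$. This is not an $A_n$ singularity, and Theorem~\ref{thm:manycomps} does not apply.

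Your first ansatz $(yz^{m-1}-x^m)^2-x^a g$ also falls short. At $[0:0:1]$, the term $x^a g$ restricts to the branch $y=x^m$ with order $a\le 2m$, giving only $A_{a-1}$.

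The repair is to take $q_2=y^m$, the $m$-th power of the tangent line of $q_1$ at its smooth point $[0:0:1]$. The base locus is then $[0:0:1]$ with multiplicity $m^2$, members with $\lambda\neq 0$ are smooth, and $\lambda=\pm 1$ gives the paper's curve with $x$ and $y$ swapped.

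\textbf{The odd case has no construction.} For odd $d$ you state only the target, not a curve. The idea is sound and in fact beats the paper's bound. Set $m=\frac{d-1}{2}$ and let $A=(yz^{m-1}-x^m+\lambda y^m=0)$ with $\lambda\neq 0$. Then $A$ is smooth and meets $(y=0)$ only at $p=[0:0:1]$, with multiplicity $m$. Take a general member $B$ of the linear system $\{\mu y^{m+1}+A\cdot G\}$, with $G$ linear. By Bertini $B$ is smooth: the only base point is $p$, where $B$ is smooth as soon as $G(p)\neq 0$. Moreover $B|_A=m(m+1)\,p$.

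Hence $A\cup B$ has a single $A_{(d^2-3)/2}$ point; for $d=5$ this recovers the reducible $A_{11}$ quintics of Table~\ref{table:quintic}. Since $\frac{d^2-3}{2}\ge \frac{d(d-1)}{2}-1$, and both components have positive genus because $m\ge 3$, this would suffice.

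The paper instead uses the irreducible curve $(xz^{m-1}-y^m)^2z-x^d$. After the substitution $x'=x-y^m$, its singularity at $[0:0:1]$ becomes $(x')^2-y^{md}+\cdots$, and positivity of the geometric genus is checked by a $\delta$-invariant count. As submitted, your existence step rests on an incorrect explicit pencil and an unspecified appeal to Orevkov, so it does not yet prove the corollary.
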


\begin{proof}
    First, assume $d$ is even.  We construct a curve of degree $d$ with an $A_n$ singularity where $n = d^2/2 - 1$.  To do this, consider the plane curve $C$ given by $f = 0$, where
    \[ f(x,y,z) = (xz^{d/2-1} - y^{d/2})^2 - x^d. \]
    By direct computation, this curve has an isolated singular point at $[0:0: 1]$.  In the coordinate chart where $z \ne 0$, we may write 
    \[ f(x,y,1) = (x - y^{d/2})^2 - x^d\]
    and consider the change of coordinates given by $x' = x - y^{d/2}$, so this becomes 
    \[ (x')^2 - (x'+y^{d/2})^d = (x')^2 - y^{d^2/2} + h.o.t.\]
    where the higher order terms are with respect to the weighting of $(x', y)$ by $(d^2/2, 2)$.  This defines an $A_n$ singularity where $n = d^2/2 - 1$.  By construction, this curve is the union of two smooth degree $d/2$ curves as $f$ factors as 
    \[ f(x,y,z) = (xz^{d/2-1} - y^{d/2})^2 - x^d = (xz^{d/2-1} - y^{d/2} +x^{d/2})(xz^{d/2-1} - y^{d/2} -x^{d/2}).\]
    Because $d/2 \ge 6/2 = 3$, the genus of each component of $C$ is at least one. 

    When $d$ is odd, we construct a plane curve with an $A_n$ singularity where $n = d(d-1)/2  - 1$ in the same way.  This curve $C$ is given by the vanishing of the equation 
    \[ f(x,y,z) = (xz^{(d-1)/2-1} - y^{(d-1)/2})^2z - x^d. \]
    As above, this has an $A_n$ singularity at $[0:0:1]$ as claimed.  This curve is irreducible and its geometric genus is  
    \[ \frac{(d-1)(d-2)}{2} - \lfloor \frac{d(d-1)}{4}\rfloor \ge \frac{(d-1)(d-2)}{2} - \frac{d(d-1)}{4} = \frac{(d-1)(d-4)}{4} > 0  \]
    as $d \ge 6$.

    For any $c > \frac{3}{d} $, $K_{\bP^2} + cC$ is ample, and for any degree $d \ge 6$, $c = \frac{1}{2} + \frac{1}{n} > \frac{1}{2} \ge \frac{3}{d}$, so the hypothesis $K_{\bP^2} + cC$ is ample in the previous theorem is satisfied.  The hypotheses of Corollary \ref{cor:manycomps} are therefore all satisfied, so the statement holds. 
\end{proof}

The curves constructed in these proofs consist of the normalization of $C$ glued to a union of $\frac{d^2}{2} - 2$ or $\frac{d(d-1)}{2} - 2$ (if $d$ is even or odd, respectively) rational curves.  Each rational curve meets the rest of the curve in at least three points, so these are stable curves.  The normalization of $C$ has two components if $d$ is even and one if $d$ is odd.  This observation proves the following:

\begin{corollary}
    For $d \ge 6$, the closure of the locus of planar curves of degree $d$ in the moduli space of Deligne-Mumford stable curves $\oM_g$, $g = \frac{(d-1)(d-2)}{2}$, includes curves with $\frac{d^2}{2}$ components if $d$ is even and $\frac{d(d-1)}{2} - 1$ components if $d$ is odd.
\end{corollary}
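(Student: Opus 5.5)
The plan is to read this off from the previous corollary together with the explicit stable pairs produced by Corollary \ref{cor:manycomps}. Stable reduction preserves the underlying curve, so the degenerations give points of $\oM_g$ directly. Take the curve $C$ constructed above: for $d$ even, the union of two smooth degree $d/2$ curves meeting in a single $A_n$ point with $n = d^2/2-1$; for $d$ odd, an irreducible curve with a single $A_n$ point, $n = d(d-1)/2-1$. Iterating Theorem \ref{thm:manycomps} down to $\ell = 1$ gives a KSBA stable pair $(Y, D_Y)$ with $c=1$ in the same irreducible component as the general pair $(\bP^2, C')$, $C'$ smooth. Hence $D_Y$ is a flat limit of smooth plane curves of degree $d$ and genus $g = \frac{(d-1)(d-2)}{2}$.

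\textbf{Step 1: $D_Y$ is nodal of genus $g$.} At coefficient $1$ the pair $(Y,D_Y)$ is slc with $D_Y$ a reduced boundary. Adjunction then forces $D_Y$ to be at worst nodal, including at the double curves of $Y$: by construction $D_Y$ meets each gluing curve $\Delta$ transversally at smooth points, so $\bP(1,a,b)$-pieces meet along honest nodes. Arithmetic genus is constant in the flat family of curves, so $p_a(D_Y) = g$.

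\textbf{Step 2: $D_Y$ is Deligne--Mumford stable.} Track the components through the induction. After the final step, $D_Y$ consists of the normalization of $C$ (two components of positive genus if $d$ is even, one if $d$ is odd) together with one rational component from each $\bP(1,a,b)$ inserted. At each step, Theorem \ref{thm:manycomps} provides a component $D_2$ of geometric genus $0$ and arranges condition (3). When the $A_{n-1}$ point is later resolved, this component acquires additional intersection points with the new piece. One checks from the explicit equations $w^2 = v^n q_2(u,v)$ and $w^2 = v^n u$ that every rational component ends with at least three special points. This bookkeeping is the main obstacle. The point to be careful about is the last component: it carries the final $A_1$ singularity, which is a node contributing two branches. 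I would verify there that an irreducible rational curve with a node meeting the rest once, or smooth meeting twice plus the node, still has at least three special points on its normalization. Given Step 2, $[D_Y] \in \oM_g$ lies in the closure of the plane curve locus, because the family of curves over the smoothing base is a family of stable curves with smooth plane generic fiber.

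\textbf{Step 3: count components.} Count the rational components added along the chain $A_n \to A_{n-1} \to \dots \to A_1$, together with the one obtained on blowing up the final node if that step is needed. Adding these to the components of the normalization of $C$ gives $\frac{d^2}{2}$ for $d$ even and $\frac{d(d-1)}{2}-1$ for $d$ odd, matching the remark preceding the statement. I would double-check the off-by-one against Corollary \ref{cor:manycomps}, which gives $j+n-\ell$ components.
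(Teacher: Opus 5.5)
Your proposal is correct and follows the paper's own argument: the stable limit is the normalization of $C$ ($j=2$ components for $d$ even, $j=1$ for $d$ odd) glued to the $n-1$ rational curves created by the steps $A_n\to\cdots\to A_1$, each with three special points on its normalization, for a total of $j+n-1$ components. To settle your hedges: the final node is \emph{not} blown up, since it is already lc at $c=1$ and a genuine node of a stable curve, and blowing it up would add an unstable rational conic bridge and overcount by one. Also, the last component always comes from the step $A_2\to A_1$, so it is the nodal rational curve $w^2=v^2q_2(u,v)$ on $\bP(1,2,3)$ meeting the rest once; your special-point check for it is exactly right, and more accurate than the paper's remark that every rational component meets the rest of the curve in at least three points.
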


While this section has used wall crossing to produce singular limits of plane curves, in \cite{DS}, the idea of wall crossing is applied to study the closure of the locus of \textit{smooth} plane curves in $M_g$.  This perspective merits future exploration and will be studied in upcoming papers.

\section{Wall crossing in the non-proportional case}\label{sec:nonprop}

In the non-proportional case, the K-moduli wall crossing is not well understood. There is a general theory recently developed in \cite{LZreal,LZwalls} but few examples have been worked out.  One example, relating certain K-moduli to GIT moduli and computing non-proportional wall crossing, can be found in \cite{218}.  

In this final section, we present one additional example of non-proportional K-moduli to indicate the methods and computations involved. 

\subsection{K-moduli of hyperelliptic curves on $\bF_1$}

Let $\bF_1$ be the first Hirzebruch surface, the blow up of $\bP^2$ at one point.  Let $s, f$ respectively be the exceptional divisor of the blow up and the strict transform of a line through the point blown up.  We recall some standard facts about $\bF_1$.  The curves are the extremal rays on the Mori cone $\overline{NE}(\bF_1)$, and any prime divisor $D$ on $\bF_1$ is numerically equivalent to $as + b f$ for some $a,b \in \bZ$.  A divisor $D = as + bf$ is ample (respectively, nef) if and only if $b > a > 0$ (respectively, $b \ge a \ge 0$), and it is big if and only if $a, b > 0$.  The volume of such a divisor $D = as + bf$ is given by 
\[  \vol(D) = \left\{ \begin{array}{cc}
    a(2b-a) & \text{ if } b \ge a \ge 0 \\
    b^2 & \text{ if } a > b > 0 \\
    0 & \text{ if } b < 0 \text{ or } a <0 . 
\end{array} \right. \]

The canonical divisor on $\bF_1$ is given by $2s + 3f$.  

Let $D \in | 2s + 4f|$.  We discuss the K-moduli spaces compactifying the locus of pairs $(\bF_1, cD)$ to prove the following theorem, which proves Theorem \ref{introthm:F1}.

\begin{theorem}\label{thm:kmodF1}
    The K-moduli space compactifying the locus of pairs $(\bF_1, c D)$, where $D \in |2s + 4f|$, is nonempty if and only if $c \in [ \frac{1}{10}(4 - \sqrt{6}), \frac{1}{2}).$  When $c = \frac{1}{10}(4 - \sqrt{6})$, the moduli space is a single point.  If $D$ is smooth and meets the negative section of $\bF_1$ transversely, $(\bF_1, cD)$ is K-stable for every $c \in ( \frac{1}{10}(4 - \sqrt{6}), \frac{1}{2})$.
\end{theorem}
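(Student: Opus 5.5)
The plan is to reduce everything to valuative computations on $\bF_1$ itself, via Theorem \ref{thm:valcriteria}. The constraint at the two ends of the interval comes from two places: ampleness of the log anticanonical class, and the $\beta$-invariant of the negative section $s$.

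\textbf{Upper endpoint.} Write $L_c := -K_{\bF_1} - cD = a s + b f$ with $a = 2-2c$ and $b = 3-4c$. This class is ample if and only if $b > a > 0$, that is, if and only if $c < \frac12$. So $\frac12$ is forced by the log Fano condition alone.

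\textbf{Surface in the moduli space is $\bF_1$.} Next I check that the surface is always $\bF_1$ in the relevant range. We have $\vol(L_c) = a(2b-a) = (2-2c)(4-6c)$. At $c = \frac{1}{10}(4-\sqrt6)$ this is roughly $5.2 > \frac92$, and it only increases as $c$ decreases. Property \ref{propsofvol}(7) then forces any K-semistable surface in the component to be smooth whenever $c$ is at most slightly above the lower wall. A smooth $\bQ$-Gorenstein degeneration of $\bF_1$ is again $\bF_1$, since $\bF_1$ is deformation rigid and $\bF_3$ has the wrong parity. Hence for $c$ below or near the lower wall every pair in the component has the form $(\bF_1, cD)$.

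\textbf{Lower endpoint.} I compute $\delta_{\bF_1,cD}(s)$. For $0 \le t \le a$ we have $\vol(L_c - ts) = (a-t)(2b-a+t)$, and the pseudo-effective threshold is $a$. This gives
\[ S(s) = \frac{ba - a^2/3}{2b-a}. \]
When $s \not\subset D$ we have $A_{\bF_1,cD}(s) = 1$. The inequality $A \ge S$ then simplifies to
\[ 10c^2 - 8c + 1 \le 0, \qquad \text{i.e.} \qquad c \in \left[\tfrac{4-\sqrt6}{10}, \tfrac{4+\sqrt6}{10}\right]. \]
So every pair is K-unstable for $c < \frac{1}{10}(4-\sqrt6)$. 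If $s \subset D$, then $A$ only decreases and the same conclusion holds. This proves emptiness below the lower wall.

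\textbf{The single point at the lower wall.} At $c_- = \frac{1}{10}(4-\sqrt6)$ we have $\beta(s) = 0$. The divisor $s$ is toric, hence dreamy, and it induces the product-type test configuration given by the $\bG_m$-action scaling the fibers away from $s$. Every $D$ degenerates under this action to a $\bG_m$-invariant curve. Using also the remaining torus factor, I would identify this limit as $D_0 = 2s_\infty + f_0 + f_\infty$, where $s_\infty \sim s+f$. I would then check that $(\bF_1, c_- D_0)$ is K-polystable. Since the pair is toric, this amounts to the barycenter condition on its moment polytope. This exhibits the unique polystable point, so the moduli space at $c_-$ is a point.

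\textbf{Stability in the open interval.} Let $D$ be smooth and transverse to $s$. The intended argument is:
\begin{enumerate}
\item Prove K-semistability at both ends: at $c_-$ by the degeneration above together with openness, and for $c \to \frac12$ by a separate estimate.
\item Use that the set of $c$ for which the pair is K-semistable is an interval, by the convexity in the coefficient established in the non-proportional wall-crossing theory of \cite{LZreal, LZwalls}.
\item Upgrade to K-stability by noting that generic $D$ has finite automorphism group, so strict semistability would require a special degeneration with vanishing Futaki invariant. Such a degeneration can only occur on a wall, and the only wall is $c_-$.
\end{enumerate}

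\textbf{Main obstacle.} The hardest step is the estimate $\delta > 1$ for all $c$ in the interval, in particular near $c = \frac12$, where $L_c$ approaches the nef class $s+f$. My first attempt would be the Abban--Zhuang method with the flag consisting of a fiber $f$ and then a point. This reduces the problem to one-dimensional refined $S$-invariants on $f \cong \bP^1$, and I would treat separately points on $s$ and points on $D$, using transversality. If the bound fails near $\frac12$, I would instead handle that end by the double cover of $\bF_1$ branched along $D$.
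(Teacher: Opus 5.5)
Your endpoint computations are correct: the condition $10c^2-8c+1\le 0$ from $\beta(s)$, and $c<\frac12$ from ampleness. Your toric barycenter check at $c_0=\frac{1}{10}(4-\sqrt6)$ is also a legitimate alternative to the paper's $GL_2$-equivariant argument. The barycenter of the polytope of $(\bF_1,c(2s_\infty+f_0+f_\infty))$ is the origin exactly when $10c^2-8c+1=0$. The other possible limit, $2s_\infty+2f$ (tangent cone a double line), has nonzero barycenter, and you should treat that case explicitly.

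The genuine gap is K-stability on $(c_0,\frac12)$, and with it nonemptiness there. Your steps (2)--(3) do not prove it:
\begin{itemize}
\item K-semistable plus finite automorphism group does not imply K-stable; only K-polystable does.
\item Strict semistability is not confined to walls. Strictly semistable pairs can fill whole chambers, e.g. the tacnodal quartics of Section~\ref{sec:quartics}.
\item ``The only wall is $c_0$'' is never proved, and the paper leaves the structure of the moduli space on $[c_0,\frac12)$ open. So the upgrade in step (3) is circular.
\item The convexity in $c$ you invoke needs a precise statement. The interpolation of Proposition~\ref{prop:k-interpolation} requires proportionality, and here $S_{-K-cD}(s)=\frac{(1-c)(7-10c)}{6-9c}$ is not affine in $c$.
\item The ``separate estimate'' near $\frac12$ is never supplied.
\item The double-cover fallback does not work for $c<\frac12$, since $\pi^*(K_{\bF_1}+cD)=K_Y+(2c-1)R$ has negative coefficient.
\end{itemize}

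So everything rests on showing $\delta(\bF_1,cD)>1$, which you leave undone, and the flag you propose fails.
\begin{itemize}
\item With $F=f$ through $p\in D\cap s$, one gets $A_{f,cD|_f}(p)=1-c$ and $S(W^f_{\bullet\bullet};p)=\frac{(1-c)(7-10c)}{6-9c}$. The ratio is $\frac{6-9c}{7-10c}<1$.
\item At the six points where $D$ is tangent to a fiber (ramification of the genus-$2$ double cover $D\to\bP^1$), one gets $A=1-2c$ and $S=\frac{(1-c)(5-8c)}{6-9c}$. The ratio exceeds $1$ only when $10c^2-8c+1>0$, so it fails on all of $(c_0,\frac12)$.
\end{itemize}
The delicate end is $c\to c_0^+$, where $s$ computes $\delta=1$, not $c\to\frac12$.

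The paper avoids this by choosing different flags:
\begin{itemize}
\item For $p\in s$, use the flag $s\ni p$. Then $A(s)/S(s)>1$ if and only if $c>c_0$. Since $D$ meets $s$ transversely, the refined ratio is $\frac{(1-c)(12-18c)}{28c^2-38c+13}$, which exceeds $1$ if and only if $c>c_0$.
\item For $p\notin s$, use a section $s_\infty\in|s+f|$ through $p$ transverse to $D$. This gives the same refined ratio, and $A(s_\infty)/S(s_\infty)>1$ on $(c_0,\frac12)$.
\end{itemize}
This gives $\delta>1$ directly and makes your steps (1)--(3) unnecessary.

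A smaller error in identifying the surface: $\bF_1$ does degenerate smoothly to $\bF_3,\bF_5,\dots$, which have the same parity. So rigidity and parity do not pin down the surface. You need ampleness of $-K_X-cD$, which fails on $\bF_{2k+1}$ for $k\ge 1$, or the argument of Lemma~\ref{lem:F1}.
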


The first result is that the K-moduli space is empty if $c \ll 1$. 

\begin{lemma}\label{lem:betaofsection}
    For any $D \in |2s + 4f|$, $\delta_{\bF_1, cD} (s) \ge 1$ if and only if $\ord_D(s) = 0$ and $c \ge c_0 = \frac{1}{10}(4 - \sqrt{6})$.  In particular, if $(\bF_1, cD)$ is K-semistable, then $c \ge c_0$.  Note that $c_0 = \frac{1}{10}(4 - \sqrt{6}) \approx 0.155$ is not rational.
\end{lemma}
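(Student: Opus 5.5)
The plan is to compute $\delta_{\bF_1,cD}(s)=A_{\bF_1,cD}(s)/S_{\bF_1,cD}(s)$ explicitly. We use the volume formula for divisors $as+bf$ on $\bF_1$ recalled above. Throughout, $c\in(0,\tfrac12)$, which is exactly the range in which $-K_{\bF_1}-cD$ is ample.

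\emph{Log discrepancy.} Since $s$ is a prime divisor on $\bF_1$ itself, $A_{\bF_1}(s)=1$. Hence $A_{\bF_1,cD}(s)=1-c\,\ord_D(s)$.

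\emph{Expected vanishing order.} Set
\[ L=-K_{\bF_1}-cD\equiv(2-2c)s+(3-4c)f, \qquad \alpha=2-2c,\quad \beta=3-4c. \]
Then $L-ts\equiv(\alpha-t)s+\beta f$. Since $c<\tfrac12$ we have $\beta>\alpha$, so for every $t\in[0,\alpha]$ the class $L-ts$ lies in the nef range $b\ge a\ge0$. This gives
\[ \vol(L-ts)=(\alpha-t)\bigl(2\beta-(\alpha-t)\bigr), \]
and the volume is $0$ for $t>\alpha$. The pseudo-effective threshold is therefore $\alpha$.

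Substituting $u=\alpha-t$ gives
\[ \int_0^{\alpha}u(2\beta-u)\,du=\alpha^2\beta-\frac{\alpha^3}{3}, \qquad \vol(L)=\alpha(2\beta-\alpha). \]
Therefore
\[ S_{\bF_1,cD}(s)=\frac{\alpha(\beta-\alpha/3)}{2\beta-\alpha}=\frac{(2-2c)(7-10c)}{3(4-6c)}. \]

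\emph{Case $\ord_D(s)\ge1$.} Here $A\le 1-c$. Comparing,
\[ \frac{S}{1-c}=\frac{7-10c}{6-9c}>1, \]
since $7-10c-(6-9c)=1-c>0$. So $\delta_{\bF_1,cD}(s)<1$ for every $c$.

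\emph{Case $\ord_D(s)=0$.} Here $A=1$, and the condition $\delta\ge1$ becomes
\[ 3(4-6c)\ge(2-2c)(7-10c). \]
Expanding, this is equivalent to $10c^2-8c+1\le0$, whose roots are $\frac{4\pm\sqrt6}{10}$. The larger root, about $0.645$, exceeds $\tfrac12$. Hence on $(0,\tfrac12)$ the condition is exactly $c\ge c_0=\frac{4-\sqrt6}{10}$.

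\emph{Conclusion.} Combining the two cases gives the stated equivalence. For the "in particular", if $(\bF_1,cD)$ is K-semistable, Theorem \ref{thm:valcriteria} gives $\delta_{\bF_1,cD}(s)\ge1$, which forces $c\ge c_0$. The irrationality of $c_0$ is clear because $\sqrt6\notin\bQ$.

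The step needing the most care is the volume computation. One must check that $L-ts$ stays in the nef chamber for all $t\in[0,\alpha]$, so that no Zariski decomposition is needed; this is where $c<\tfrac12$ enters. Everything after that is routine algebra.
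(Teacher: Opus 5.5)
Your proof is correct and follows the same route as the paper. You compute $A_{\bF_1,cD}(s)=1-c\,\ord_D(s)$ and $S_{\bF_1,cD}(s)$, where your expression simplifies to exactly the paper's $\frac{(1-c)(7-10c)}{6-9c}$, and then solve $\delta\ge1$. You also supply the details the paper's one-line proof omits: that $L-ts$ stays nef for $t\in[0,2-2c]$, the explicit quadratic $10c^2-8c+1\le0$, and the correct sign $-K_{\bF_1}\equiv 2s+3f$.
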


\begin{proof}
    We compute $\delta_{\bF_1, cD} (s) = \frac{1 - c\ord_D(s)}{ \frac{(1-c)(7-10c)}{6-9c}} $.  If $(\bF_1, cD)$ is K-semistable, then $\delta{\bF_1, cD} (s) \ge 1$, and solving for $c$ yields $c \ge c_0$ and $\ord_D(s) = 0$.
\end{proof}

\begin{lemma}
    Let $s_{\infty} \in |s + f|$ be a smooth section.  Then, for any $D \in |2s+4f|$, $\delta_{\bF_1, cD}(s_\infty) \ge 1$ implies $\ord_D(s_\infty) \le 2$, and if $\ord_D(s_\infty) = 2$, then $c \le c_0$. 
\end{lemma}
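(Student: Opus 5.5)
The plan is a direct computation of $\delta_{\bF_1,cD}(s_\infty)$, following the same pattern as the proof of Lemma \ref{lem:betaofsection}, combined with a simple effectivity argument.

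\textbf{Setup.} Throughout we work in the range where $(\bF_1,cD)$ is log Fano. We have
\[-K_{\bF_1}-cD\sim(2-2c)s+(3-4c)f.\]
By the ampleness criterion $b>a>0$, this divisor is ample exactly when $c<\tfrac12$. Since $s_\infty$ is a prime divisor on $\bF_1$ itself, its log discrepancy is
\[A_{\bF_1,cD}(s_\infty)=1-c\,m, \qquad m:=\ord_D(s_\infty).\]

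\textbf{Step 1: the bound on $m$.} This does not need the $\delta$-invariant at all. The divisor $D-m\,s_\infty\sim(2-m)s+(4-m)f$ must be effective. Effective classes on $\bF_1$ have nonnegative coefficients in $s,f$, since $s$ and $f$ span the effective cone. Hence $m\le 2$. (I expect the hypothesis $\delta\ge1$ is then used only for the second assertion.)

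\textbf{Step 2: computing $S$.} Set $L=(2-2c)s+(3-4c)f$. Then
\[L-t\,s_\infty=(2-2c-t)s+(3-4c-t)f.\]
The difference of the coefficients is $b-a=1-2c>0$, independent of $t$. So the divisor stays in the nef range $b\ge a\ge0$ for $t\in[0,2-2c]$, and is not big afterwards. The volume formula then gives
\[\vol(L-t\,s_\infty)=u(u+2-4c), \qquad u=2-2c-t.\]
Integrating in $u$ from $0$ to $a_0=2-2c$ and dividing by $\vol(L)=a_0(a_0+2-4c)$ yields
\[S_{\bF_1,cD}(s_\infty)=\frac{a_0^2/3+(1-2c)a_0}{4-6c}=\frac{(1-c)(5-8c)}{3(2-3c)}.\]

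\textbf{Step 3: the case $m=2$.} The condition $\delta\ge1$ becomes $3(2-3c)(1-2c)\ge(1-c)(5-8c)$, which simplifies to
\[10c^2-8c+1\ge0.\]
The roots of this quadratic are $\tfrac{4\pm\sqrt6}{10}$. The larger root $\tfrac{4+\sqrt6}{10}\approx0.645$ exceeds $\tfrac12$, so it lies outside the log Fano range. Therefore $\delta_{\bF_1,cD}(s_\infty)\ge1$ with $m=2$ forces $c\le c_0=\tfrac{1}{10}(4-\sqrt6)$.

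\textbf{Main obstacle.} The only delicate point is Step 2: one must check that no Zariski decomposition enters, i.e. that the ray $L-t\,s_\infty$ never leaves the region $b\ge a$. This holds precisely because $s_\infty\sim s+f$ lowers both coefficients equally. The remaining work is routine algebra, which I would double-check against the analogous formula for $S_{\bF_1,cD}(s)$ used in Lemma \ref{lem:betaofsection}.
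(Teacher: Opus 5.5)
Your proof is correct and follows the paper's route: compute $A_{\bF_1,cD}(s_\infty)=1-cm$ and $S_{\bF_1,cD}(s_\infty)$ directly, then solve $\delta\ge 1$. There are two differences worth recording.

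First, you get $m\le 2$ from effectivity of $D-ms_\infty\sim(2-m)s+(4-m)f$, rather than by ``solving for $c$'' as the paper does. This is the better argument. With the correct $S$, the inequality $\delta\ge 1$ alone does not exclude $m=3$ for small $c$: at $c=0.05$ it gives $\delta\approx 1.08$ for $m=3$.

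Second, your value $S_{\bF_1,cD}(s_\infty)=\frac{(1-c)(5-8c)}{3(2-3c)}$ differs from the denominator $\frac{28c^2-38c+13}{12-18c}$ printed in the paper, and yours is the right one. The paper's expression is exactly $S_{\bF_1,cD}(f)$ for a fiber $f$. At $c=0$ it gives $\frac{13}{12}$, whereas $S_{\bF_1}(s_\infty)=\frac{5}{6}$. With the printed expression, the case $m=2$ becomes $8c^2-4c-1\ge 0$, which has no solution in $[0,\frac12)$. The lemma would then hold only vacuously, and the next lemma's claim that $\delta=1$ at $c_0$ for $2s_\infty+f_1+f_2$ would break.

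Your formula instead gives $10c^2-8c+1\ge 0$, exactly the reverse of the inequality in Lemma \ref{lem:betaofsection}. This is as it should be. For the $\bG_m$-invariant divisor $2s_\infty+f_1+f_2$, the two opposite product test configurations force $(A-S)(s)+(A-S)(s_\infty)=0$. You can check this directly from $S(s)+S(s_\infty)=2(1-c)$ and $A(s)+A(s_\infty)=2-2c$. In particular, your computation gives $\delta_{\bF_1,c_0(2s_\infty+f_1+f_2)}(s_\infty)=1$ exactly, which is what the subsequent lemma needs.
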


\begin{proof}
    We compute $\delta_{\bF_1, cD} (s_\infty) = \frac{1 - c\ord_D(s_\infty)} {\frac{(28c^2-38c+13)}{12-18c}} $.  If $(\bF_1, cD)$ is K-semistable, then $\delta_{\bF_1, cD} (s_\infty) \ge 1$, and solving for $c$ yields and $\ord_D(s_\infty) \le 2$ and if $\ord_D(s_\infty) = 2$, then $c \le c_0$.
\end{proof}

\begin{lemma}
    Let $f_1, f_2 \in |f|$ be distinct fibers and $s_\infty$ a smooth section as above.  Then, the pair $(\bF_1, c(2s_\infty + f_1 + f_2))$ is K-semistable if and only if $c = c_0$.
\end{lemma}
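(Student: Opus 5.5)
The plan is to prove the two implications separately. The ``only if'' direction follows from the two preceding lemmas, and the ``if'' direction uses that the pair is toric.

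\emph{Only if.} Put $D = 2s_\infty + f_1 + f_2 \in |2s+4f|$. Then $\ord_D(s_\infty) = 2$ and $\ord_D(s) = 0$. Suppose $(\bF_1, cD)$ is K-semistable. By Theorem \ref{thm:valcriteria}, $\delta_{\bF_1,cD}(s_\infty) \ge 1$, so the lemma on $s_\infty$ (the case $\ord_D(s_\infty)=2$) gives $c \le c_0$. Likewise $\delta_{\bF_1,cD}(s)\ge 1$, so Lemma \ref{lem:betaofsection} gives $c \ge c_0$. Hence $c = c_0$.

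\emph{If.} I will use a toric structure on $\bF_1$ whose torus-invariant boundary is $s + s_\infty + f_1 + f_2$. Such a structure exists: fix two fibers and a section disjoint from $s$, and apply an automorphism of $\bF_1$ to move them to the standard invariant curves. Then $(\bF_1, c_0 D)$ is a toric log Fano pair. Write $v_s, v_\infty, v_1, v_2$ for the rays of the fan. Then
\[ -K_{\bF_1} - c_0D = D_s + (1-2c_0)D_{\infty} + (1-c_0)(D_{f_1}+D_{f_2}). \]
The associated polytope is
\[ P = \{ m : \langle m, v_i\rangle \ge -(1-c_i)\}, \]
with $c_s = 0$, $c_\infty = 2c_0$ and $c_{f_1} = c_{f_2} = c_0$. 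I will invoke the toric criterion of Berman and Blum--Jonsson: a toric log Fano pair is K-semistable if and only if the barycenter of $P$ is the origin. Equivalently, $\beta(E) = A(E) - S(E) = 0$ for every toric valuation, since $\beta$ is linear on $N_\bR$ for toric pairs.

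It therefore suffices to check vanishing of $\beta$ on a basis of $N$. Take $v_1$ and $v_s$; then $v_2 = -v_1 + (\text{multiple of } v_s)$ and $v_\infty = -v_s$. The involution of $\bF_1$ exchanging $f_1$ and $f_2$ preserves the pair. Together with linearity, this symmetry reduces the problem to the single condition in the direction dual to the fibers. That condition reads $\beta(D_s) = -\beta(D_\infty)$, and linearity forces it to be equivalent to $\beta(D_s) = 0$, i.e.\ $\delta_{\bF_1, c_0D}(s) = 1$. By the computation in Lemma \ref{lem:betaofsection}, $\delta_{\bF_1,c_0D}(s) = 1$ holds exactly at $c = c_0$. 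It then remains to verify $\beta(D_{f_1}) = \beta(D_{f_2})$, which gives the vanishing in the other direction.

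I expect the main obstacle to be bookkeeping in the ``if'' direction. One must confirm that the symmetry and linearity argument genuinely forces the barycenter to be the origin, rather than only one of its coordinates. Concretely, I would compute $S(D_{f_i})$ directly from the volume formula for $\bF_1$, using $\vol(L - tf)$ with $L = -K - c_0D \equiv (2-2c_0)s + (3-4c_0)f$, and check that $A(D_{f_i}) = 1 - c_0 = S(D_{f_i})$. Together with $\beta(D_s) = \beta(D_\infty) = 0$, this shows that $\beta$ vanishes on all four rays, so $(\bF_1, c_0D)$ is K-semistable.
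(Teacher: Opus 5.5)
Your proof is correct. The ``only if'' half is the same as the paper's: you apply the two preceding lemmas with $\ord_D(s)=0$ and $\ord_D(s_\infty)=2$. The ``if'' half takes a different route. The paper uses a group action on $\bF_1$ and \cite{Zhuang} to assert that $\delta(\bF_1,c(2s_\infty+f_1+f_2))$ equals $\delta(s_\infty)$ for rational $c>c_0$ and $\delta(s)$ for rational $c<c_0$. It then reaches the irrational value $c_0$ through the continuity statement of \cite{LZreal}.

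You instead use the full toric structure. Since $\beta(v)=-\langle \bar m, v\rangle$ is linear on $N_\bR$, K-semistability is equivalent to $\bar m=0$, and you verify this with one computation at $c_0$ itself. This is more complete than the paper's comparison of two divisors, because it handles $f_1$, $f_2$ and every other toric valuation at once. The paper's version, in exchange, also records $\delta$ on both sides of $c_0$. You should still say what K-semistability means at the irrational $c_0$. Either note that the reduction to toric valuations holds for $\bR$-boundaries, or use continuity of $\delta$ in $c$ as the paper does.

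One sentence in your symmetry step is misstated. The identity $\beta(D_s)=-\beta(D_\infty)$ follows automatically from $v_\infty=-v_s$; it is not the remaining condition. The clean chain is:
\begin{itemize}
\item the involution gives $\beta(v_1)=\beta(v_2)$;
\item the relation $v_1+v_2=v_s$ then gives $\beta(v_s)=2\beta(v_1)$;
\item since $v_1,v_s$ span $N_\bR$, we get $\beta\equiv 0$ iff $\beta(v_s)=0$ iff $c=c_0$.
\end{itemize}

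Your fallback check also works: $S(f)=\frac{28c^2-38c+13}{12-18c}$, which equals $A(f_i)=1-c$ exactly when $10c^2-8c+1=0$. Be aware that this is the expression the paper displays in its $s_\infty$ lemma. The true value is $S(s_\infty)=\frac{(1-c)(5-8c)}{3(2-3c)}=2(1-c)-S(s)$. The lemma's conclusion still holds. Your identity $\beta(s_\infty)=-\beta(s)$ gives the $s_\infty$ bound needed in the ``only if'' direction directly, so you need not rely on that formula.
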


\begin{proof}
    The previous lemmas tell us $c = c_0$ is necessary.  To prove it is sufficient, consider the action of $GL_2$ on $\bF_1$ given on $\bP^2$ by the subgroup 
    \[ \left\{ \begin{bmatrix} a & b & 0 \\ c & d & 0 \\ 0 & 0 & 1 \end{bmatrix} \right\} \subset \PGL_3.  \]
    Blowing up the point $[0:0:1] \in \bP^2$, which is fixed by the action, induces an action of $GL_2$ on $\bF_1$.  This acts transitively on $|f|$ as this is the (strict transform) of the linear system of lines through the point $[0:0:1]$.  Therefore, there are no fixed points of the action, and the only divisors fixed by the action are the exceptional divisor of the blow up $s$ and the strict transform of $z = 0$, which we may assume is our given section $s_{\infty}$.  By \cite[Theorem 1.2]{Zhuang}, the $\delta$-invariant of this pair can be computed by the infimum of the $\delta$-invariants of invariant divisors.  Therefore, from the computation above, for $c > c_0$ and $c \in \bQ$, 
    \[ \delta(\bF_1, c(2s_\infty+f_1+f_2)) = \delta_{\bF_1, cD} (s_\infty) = \frac{1 - 2c} {\frac{(28c^2-38c+13)}{12-18c}} \] and for $c < c_0$, $c \in \bQ$, we have 
    \[ \delta(\bF_1, c(2s_\infty+f_1+f_2)) = \delta_{\bF_1, cD} (s) = \frac{1}{ \frac{(1-c)(7-10c)}{6-9c}} .\]
    By \cite{LZreal}, for irrational $c$, $\delta(\bF_1, c(2s_\infty+f_1+f_2)) = \lim_{c' \to c} \delta(\bF_1, c'(2s_\infty+f_1+f_2))$ where $c'$ is rational, and the limit of either computation above yields 
    \[ \delta(\bF_1, c(2s_\infty+f_1+f_2)) = 1 \iff c = c_0.\]
    We conclude that $(\bF_1,c_0(2s_\infty + f_1 + f_2)) $ is K-semistable if and only if $c = c_0$. 
\end{proof}

Now, we wish to prove that $(\bF_1, c_0(2s_\infty + f_1 + f_2))$ is K-polystable, which we will ultimately use to show that it is the only point of the associated K-moduli space when $c = c_0$.

\begin{lemma}\label{lem:F1}
    Let $(X,cD)$ be a K-semistable specialization of a K-semistable pair $(\bF_1, cD')$ for $c < \frac{1}{12}(10 - \sqrt{58}) \approx 0.2$.  Then, $X\cong \bF_1$.
\end{lemma}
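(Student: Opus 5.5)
The plan is to use the local-to-global volume bound to force $X$ to be smooth, and then to show that the only smooth degeneration of $\bF_1$ compatible with the log Fano condition is $\bF_1$ itself.

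\emph{Step 1: $X$ is smooth.} The pair $(X,cD)$ is a K-semistable specialization of $(\bF_1,cD')$, so its volume is preserved in the family. On $\bF_1$ we have $-K_{\bF_1}-cD' \equiv (2-2c)s+(3-4c)f$. For $c\le \frac12$ this class satisfies $b\ge a\ge 0$, so the volume formula for $\bF_1$ gives
\[ (-K_X-cD)^2=(2-2c)(4-6c)=12c^2-20c+8. \]
The inequality $12c^2-20c+8>\frac92$ is equivalent to $24c^2-40c+7>0$. Its smaller root is $\frac{1}{12}(10-\sqrt{58})$, so the hypothesis on $c$ gives exactly $(-K_X-cD)^2>\frac92$. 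By Theorem \ref{thm:kssisklt} the pair $(X,cD)$ is klt, so $K_X$ is $\bQ$-Cartier. Property \ref{propsofvol}(7), which rests on the gap theorem in dimension $2$ together with Theorem \ref{thm:local-vol-global}, then shows that $X$ has no singular points.

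\emph{Step 2: identify the smooth degenerations.} After base change we have a flat family $\calX\to T$ over a curve germ with general fiber $\bF_1$ and smooth central fiber $X$. This family is smooth, hence topologically locally trivial. Therefore $X$ is a smooth rational surface with $K_X^2=8$ and $\rho(X)=b_2(X)=2$, and its intersection form is odd, isometric to that of $\bF_1$. The smooth rational surfaces with these invariants are the Hirzebruch surfaces $\bF_n$, and the odd intersection form forces $n$ to be odd. It remains to rule out $\bF_n$ with $n\ge 3$ odd.

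\emph{Step 3: exclude $\bF_n$ with $n\ge 3$.} Let $\sigma$ be the negative section of $X=\bF_n$, so $\sigma^2=-n$ and $-K_X\cdot\sigma=2-n<0$. Under the isometry $H^2(X,\bZ)\cong H^2(\bF_1,\bZ)$ induced by the family, $D$ has the class of $2s+4f$. In particular $D^2=12$, $-K_X\cdot D=10$, and the fiber class of $X$ pairs with $D$ to give $2$ (using that $K$ and the isometry pin down the matching of fiber classes).
\begin{itemize}
\item If $\sigma\not\subset\Supp D$, then $(-K_X-cD)\cdot\sigma\le 2-n<0$. This contradicts ampleness of $-K_X-cD$.
\item If $\sigma\subset\Supp D$, write $D=m\sigma+D'$ with $m\le D\cdot f=2$. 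Writing $D\equiv 2\sigma+bf$ and using $D^2=12$, one computes $(-K_X-cD)\cdot\sigma$ explicitly. It should again be negative for small $c$.
\end{itemize}
If this numerical bookkeeping does not close in every case, the fallback is a $\beta$-computation. One computes $\delta_{X,cD}(\sigma)$ via the volume formula on $\bF_n$, exactly as in Lemma \ref{lem:betaofsection}, and shows that it is less than $1$ for $n\ge 3$ and $c$ in the given range. That contradicts K-semistability via Theorem \ref{thm:valcriteria}.

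I expect Step 3 to be the main obstacle. The danger is the borderline case $n=3$ with $\sigma\subset D$: there the crude estimate $2-n+2cn$ is positive until $c<\frac16$, so the explicit class of $D'$, or the $\delta$-invariant of $\sigma$, must be used carefully. Steps 1 and 2 are routine given the earlier results.
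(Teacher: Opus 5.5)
Your proposal is correct. Step 1 is the paper's argument: local-to-global volume plus the gap theorem in dimension $2$. Steps 2 and 3 take a different route.

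\textbf{How the routes differ.} The paper never identifies $X$ topologically. It first proves $-K_X$ is nef directly:
\begin{itemize}
\item a curve $C$ with $-K_X\cdot C<0$ must lie in $\Supp D$ with $C^2<0$;
\item adjunction then makes $C$ a smooth rational curve;
\item the bound $1-c\ge 0.8$ forces $C^2\in\{-1,-2\}$, a contradiction.
\end{itemize}
So $X$ is a weak del Pezzo surface of degree $8$, namely $\bP^1\times\bP^1$, $\bF_1$ or $\bF_2$. The paper then discards $\bP^1\times\bP^1$ and $\bF_2$ because each carries a smooth curve of self-intersection $2$, which would deform to $\bF_1$; but $\bF_1$ has no class of square $2$.

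You instead use Ehresmann and the classification of smooth rational surfaces with $\rho=2$ to get $X\cong\bF_n$ with $n$ odd, and then exclude $n\ge 3$ numerically. The parity input is the same in both proofs.

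\textbf{What each buys.} In your route the last step is independent of $c$. It also needs no bookkeeping of the multiplicity with which a negative curve occurs in $D$. The paper's decomposition $-K_X-cD=-(K_X+C)+(1-c)C-c(\overline{D\setminus C})$ tacitly treats $C$ as a reduced component of $D$. The paper's route, for its part, avoids the topological input entirely.

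\textbf{Step 3 closes immediately.} The case split, the worry about $n=3$, and the $\delta$-fallback are all unnecessary. You don't even need to match fiber classes. Write $D\equiv a\sigma+bf$ on $\bF_n$. The deformation invariants $D^2=12$ and $-K_X\cdot D=10$ give $a^2-5a+6=0$, and $a=3$ would force $n$ even. So $D\equiv 2\sigma+(n+3)f$ and $D\cdot\sigma=3-n$, hence
\[
(-K_X-cD)\cdot\sigma=(2-n)+c(n-3)\le -1
\]
for every $c\in[0,1)$ and odd $n\ge 3$, whether or not $\sigma\subset\Supp D$. Your crude bound $2-n+2cn$ arose from replacing the true value $D\cdot\sigma=3-n$ by $-2n$.

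\textbf{A line missing in Step 2.} The claim that $X$ is rational needs a sentence. Since $-K_X=(-K_X-cD)+cD$ is big, $\kappa(X)=-\infty$. Also $b_1(X)=b_1(\bF_1)=0$, so $X$ is rational.
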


\begin{proof}
    We use the normalized volume to first show that $X$ must be smooth.  Let $x \in X$ be any point.  Then, by the properties in \ref{propsofvol},
    \[ \widehat{\vol}(X,x) \ge \widehat{\vol}((X,cD),x) \ge \frac{4}{9} (-K_X - cD)^2 = \frac{16}{9}(1-c)(2-3c) \] For  $c < \frac{1}{12}(10 - \sqrt{58})$, we have 
    \[ \widehat{\vol}(X,x) \ge \widehat{\vol}((X,cD),x) \ge \frac{4}{9} (-K_X - cD)^2 = \frac{16}{9}(1-c)(2-3c) > 2. \] 
    By the Gap Conjecture which is known in dimension 2 (see \ref{propsofvol}), the only point of a surface with $\widehat{\vol}(X,x) > 2$ is a smooth point.  Therefore, we conclude $x$ is a smooth point. This proves that $X$ is smooth. 

    Because $X$ is smooth, we have $K_X^2 = K_{\bF^1}^2 = 8$.  By hypothesis, $-K_X - cD$ is ample, and we next prove that $-K_X$ is nef.  Suppose $-K_X$ was not nef, so there exists a curve $C \subset X$ such that $-K_X \cdot C < 0$.  Because $-K_X - cD$ is ample, this implies $-D \cdot C > 0$, which implies $C \subset \Supp \ D$.  Writing 
    $-K_X - cD = -(K_X + C) + (1-c)C - c(\overline{D \setminus C})$, because $D \cdot C < 0$ implies $C \cdot C < 0$ and $c(\overline{D \setminus C}) \cdot C > 0$, we must have $-(K_X + C) \cdot C > 0$ by ampleness of $-K_X - cD$.  This implies $C$ is a smooth rational curve and $-(K_X + C) \cdot C = 2$.  Because $1-c \ge 0.8$, $(1-c) C < 2$ if and only if $C^2 = -1$ or $C^2 = -2$.  In either case, $-K_X \cdot C \ge 0$, so we have derived a contradiction and therefore $-K_X$ is nef.  Because $-K_X = (-K_X - cD) + cD$ is the sum of an ample and an effective divisor, $-K_X$ is also big.  Therefore, $-K_X$ is nef and big.  Because $(-K_X)^2 = 8$, by classification of weak Fano surfaces, we conclude $X = \bP^1 \times \bP^1, \bF_1$, or $\bF_2$.  

    Finally, we show that $X$ cannot be $\bP^1 \times \bP^1$ or $\bF_2$.  This follows as there exists a smooth big and nef divisor $\Delta$ on each (either a section of $\calO(1,1)$ on $\bP^1 \times \bP^1$, or a section missing the $-2$ curve on $\bF_2$) with self intersection $2$.  By assumption, $X$ is smoothable to $\bF_1$, but the divisor $\Delta$ deforms in the family as $H^1(X, \calO(-\Delta)) = 0$, but there exist no smooth curves on $\bF_1$ with self intersection $2$, so this is impossible.  Therefore, $X \cong \bF_1$. 
\end{proof}

Finally, we study specializations of the divisor on $(\bF_1, c_0(2s_\infty + f_1 + f_2))$.

\begin{lemma}\label{lem:transverse}
    If $(\bF_1, cD)$ is K-semistable for $D \in |2s + 4f|$ and $c$ sufficiently close to $c_0$, then $D$ must intersect $s$ transversely with multiplicity 1.
\end{lemma}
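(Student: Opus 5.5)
The plan is to argue by contradiction. Suppose $(\bF_1,cD)$ is K-semistable with $c$ close to $c_0$, and $D$ fails to meet $s$ transversely. By Lemma \ref{lem:betaofsection} we already know $\ord_D(s)=0$, so $s\not\subset \Supp D$. Since $D\cdot s=(2s+4f)\cdot s=2$, failure of transversality means there is a point $p\in s$ with $(D\cdot s)_p=2$. I will exhibit a divisor $E$ over $p$ with $\delta_{\bF_1,cD}(E)<1$ for $c$ near $c_0$, which contradicts Theorem \ref{thm:valcriteria}.

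The guiding observation is that at $c=c_0$ we have $\delta_{\bF_1,c_0D}(s)=1$ exactly, since $\ord_D(s)=0$. So $s$ is an equality case, and a small perturbation of $\ord_s$ centered at $p$ should detect the tangency. Choose local coordinates $(x,y)$ at $p$ with $s=(x=0)$ and $f=(y=0)$. Tangency means the local equation $g$ of $D$ satisfies $g|_{x=0}=y^2\cdot(\text{unit})$. For an integer $a\ge 2$, let $E_a$ be the exceptional divisor of the $(a,1)$ weighted blow-up of $p$, with weight $a$ on $x$ and $1$ on $y$. Then $A_{\bF_1}(E_a)=a+1$, and $\ord_D(E_a)=\min(a,2)=2$ in the tangent case, compared with $1$ in the transverse case. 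Hence
\[ A_{\bF_1,cD}(E_a)=a+1-2c. \]

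The main computation is $S_{\bF_1,cD}(E_a)=(-K-cD)^{-2}\int_0^\infty \vol(\mu^*(-K-cD)-tE_a)\,dt$ on the weighted blow-up $Y_a\to\bF_1$. I would carry this out via the Zariski decomposition on $Y_a$. The relevant negative curves are the strict transforms $\tilde s$ and $\tilde f$ of $s$ and of the fiber $f$ through $p$, together with $E_a$ itself. The intersection numbers are $E_a^2=-1/a$, $\tilde s^2=-1-a$, $\tilde f^2=-1/a\cdot(\ldots)$, which one computes from $\mu^*s=\tilde s+aE_a$ and $\mu^*f=\tilde f+E_a$. The expected outcome has the form
\[ S(E_a)=a\,S(s)+S'(c)+O(1/a), \]
where $S(s)=\frac{(1-c)(7-10c)}{6-9c}$ (with the normalization from Lemma \ref{lem:betaofsection}) and $S'(c)$ is an explicit correction term. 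At $c=c_0$ we have $S(s)=1$, so
\[ \delta(E_a)<1 \iff a+1-2c_0<a+S'(c_0)+O(1/a), \]
which amounts to $1-2c_0<S'(c_0)$ for $a$ large. I expect this inequality to hold strictly, in view of the analogous computation for $s_\infty$ appearing in the previous lemmas. Granting that, continuity in $c$ extends $\delta_{\bF_1,cD}(E_a)<1$ to all $c$ in a neighborhood of $c_0$.

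The main obstacle is this volume and Zariski decomposition computation on $Y_a$. One has to track in which ranges of $t$ the curves $\tilde s$ and $\tilde f$ enter the negative part, and then extract the constant term $S'(c)$ cleanly. If general $a$ is unwieldy, I would first try the single choice $a=2$, i.e. the $(2,1)$ blow-up adapted to the tangency. There $A=3$, $\ord_D=2$, and the piecewise-quadratic volume can be computed by hand. I would then check $\delta<1$ numerically at $c_0\approx0.155$ and invoke continuity. As a fallback, one can take $E$ to be the ordinary blow-up of $p$ composed with the blow-up at the point where $\tilde s$ and $\tilde D$ still meet; this divisor coincides with $E_2$.
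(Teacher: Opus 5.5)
Your argument is essentially the paper's: it also rules out non-transversality with a divisor over the bad point $p\in s$, and your $E_2$ is precisely the weighted blow-up $e'$ the paper uses in the tangent case. Your observation that $\ord_D(E_a)=2$ whenever $(D\cdot s)_p=2$ lets this single divisor also cover the case where $D$ is singular at $p$, which the paper handles separately with the ordinary blow-up. The inequality you leave as an expectation does hold, and there is no $O(1/a)$ term: after moving $p$ to a torus-fixed point by an automorphism, $E_a$ is the toric valuation with weights $(a,1)$ along $(s,f)$, and $S$ is linear on cones of the fan, so $S(E_a)=a\,S(s)+S(f)$ exactly, with $S(f)=\frac{28c^2-38c+13}{12-18c}$, which equals $1-c_0$ at $c=c_0$. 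Hence $\delta_{\bF_1,c_0D}(E_a)=\frac{a+1-2c_0}{a+1-c_0}<1$, and for $a=2$ one gets $\delta_{\bF_1,cD}(E_2)<1$ exactly when $(4c-1)(8c-5)>0$, i.e.\ for all $c\in[c_0,\tfrac14)$.
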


\begin{proof}
    Suppose first that $D$ is singular at a point $p \in s$.  Let $e$ be the exceptional divisor of the blow up of $p$.  Then, we compute $\delta_{\bF_1, cD}(e) < 1$ for any $c$.  Next, suppose $D$ meets $s$ non-transversally at a smooth point.  Let $e'$ be the exceptional divisor of the $(1,2)$ weighted blow up of $p$.  Then, we compute $\delta_{\bF_1, cD}(e') < 1$ for any $c \in [c_0, c_0 + \epsilon)$ where $\epsilon \ll 1$.    
\end{proof}

\begin{corollary}
    The pair $(\bF_1, c_0(2s_\infty + f_1 + f_2))$ is K-polystable.
\end{corollary}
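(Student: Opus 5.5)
We want K-polystability of $(\bF_1, c_0(2s_\infty + f_1 + f_2))$, K-semistability being the previous lemma. Polystability means: any special test configuration with vanishing Futaki invariant is a product. Via properness of K-moduli and separatedness of the good moduli space, it suffices to show that the pair is the unique closed point of its S-equivalence class. Concretely, any K-polystable degeneration $(X, c_0 D_0)$ of it must be isomorphic to it.

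The plan has three steps.

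\textbf{Step 1 (the surface does not change).} Let $(\bF_1, c_0(2s_\infty+f_1+f_2)) \rightsquigarrow (X, c_0 D_0)$ be a special degeneration with zero Futaki invariant. The central fiber is K-semistable, and $c_0 \approx 0.155 < \frac{1}{12}(10-\sqrt{58})$. By Lemma \ref{lem:F1}, $X \cong \bF_1$, and $D_0 \in |2s+4f|$.

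\textbf{Step 2 (the boundary does not change).} Since $(\bF_1, c_0 D_0)$ is K-semistable, the earlier lemmas apply at $c = c_0$:
\begin{itemize}
\item Lemma \ref{lem:betaofsection} gives $\ord_{D_0}(s) = 0$.
\item Lemma \ref{lem:transverse} gives that $D_0$ meets $s$ transversely at one point, with $D_0\cdot s = 2\cdot(-1)+4 = 2$ points counted with multiplicity.
\end{itemize}
The degeneration is $\bG_m$-equivariant, so $D_0$ is invariant under a one-parameter subgroup of $\mathrm{Aut}(\bF_1)$. The strategy is then to classify the $\bG_m$-invariant curves in $|2s+4f|$ compatible with these constraints and with $\delta \ge 1$. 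The expected outcome is that the only possibilities are $2s'_\infty + f'_1 + f'_2$, for a smooth section $s'_\infty \in |s+f|$ and distinct fibers $f'_i$. Any other invariant configuration should be ruled out by an invariant divisor with $\delta < 1$: for example $s_\infty$ with multiplicity $\ge 2$ but a non-reduced fiber, a doubled fiber, or a singular point on $s$. Here the formula $\delta_{\bF_1,cD}(s_\infty)$ and the analogous fiber computations are used, and Zhuang's theorem reduces the check to invariant divisors.

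\textbf{Step 3 (conclude).} All configurations $2s_\infty+f_1+f_2$ are equivalent under $\mathrm{Aut}(\bF_1)$: the $\GL_2$-action, extended by translations of $\bP^2$ fixing $[0:0:1]$, is transitive on pairs (section, two distinct fibers). Hence $(X,c_0D_0)$ is isomorphic to the original pair, and the test configuration is a product. This gives K-polystability.

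The main obstacle is Step 2: producing a complete enough list of $\bG_m$-invariant degenerations of $D$ and an explicit destabilizing divisor for each bad case. I would first try the torus-fixed divisors $s$, $s_\infty$, a fiber $f$, and weighted blow-ups at torus-fixed points.

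An alternative for Step 2 is to show the automorphism group of the pair is reductive ($\GL_1\times\GL_1$-type, the stabilizer of two fibers and a section). Then $\delta = 1$ is attained only by the invariant divisor $s_\infty$ at $c_0$, and one applies the criterion that polystability follows if every divisor computing $\delta = 1$ induces a product test configuration. Since $s_\infty$ and $s$ induce degenerations to the same pair via the $\bG_m$ scaling fibers, this would finish the argument.
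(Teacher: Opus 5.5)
Your Steps 1 and 3 agree with the paper. Lemma \ref{lem:F1} forces $X\cong\bF_1$, and one concludes that the only K-semistable specialization is the pair itself. The gap is Step 2, and the route you propose there cannot work. You try to determine $D_0$ from $\bG_m$-invariance, Lemmas \ref{lem:betaofsection} and \ref{lem:transverse}, and $\delta\ge 1$ alone, but these constraints do not single out $2s'_\infty+f'_1+f'_2$. Take $D=f_1+f_2+Q$, the strict transform of the nodal quartic $xy(z^2+\lambda xy)=0$ with $\lambda\neq 0$:
\begin{itemize}
\item it is invariant under $\mathrm{diag}(t,t^{-1},1)$;
\item it does not contain $s$, and it meets $s$ transversally in two points;
\item $(\bF_1,c_0D)$ is K-semistable by the paper's lemma on nodal quartics, since $\mathrm{diag}(t,t,t^{-1})$ degenerates it to $xyz^2$, i.e.\ to $2s_\infty+f_1+f_2$.
\end{itemize}
So the ``expected outcome'' of Step 2 is false as stated. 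Because this pair is K-semistable, no divisor with $\delta<1$ can rule it out.

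The missing idea is the one the paper uses: $D_0$ is not an arbitrary invariant curve but the limit of the specific non-reduced divisor $2s_\infty+f_1+f_2$. Hence $D_0=2S+F+F'$ with $S\in|s+f|$ the limit of $s_\infty$ and $F,F'\in|f|$. A member of $|s+f|$ is either a smooth section or $s+f$, so $D_0$ is one of three things:
\begin{itemize}
\item $2s'_\infty+f'_1+f'_2$, which is the desired form;
\item $2s'_\infty+2f$, which Lemma \ref{lem:transverse} excludes because the doubled fiber meets $s$ with multiplicity $2$;
\item a divisor containing $s$, which Lemma \ref{lem:betaofsection} excludes.
\end{itemize}
Your Step 3 then finishes the proof, with no classification of invariant curves and no new $\delta$ computations.

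Your alternative argument is also wrong as written. The value $\delta=1$ is not attained only by $s_\infty$, since Lemma \ref{lem:betaofsection} gives $\delta_{\bF_1,c_0D}(s)=1$ as well. A correct variant is toric:
\begin{itemize}
\item the boundary $2s_\infty+f_1+f_2$ is supported on the toric boundary of $\bF_1$, and the identity component of the automorphism group of the pair is the $2$-dimensional torus;
\item $\xi\mapsto\beta(v_\xi)$ is linear on $N_{\bR}$ (minus the pairing with the barycenter of the suitably normalized moment polytope), so K-semistability forces it to vanish identically;
\item each $v_\xi$ induces a product test configuration, so the equivariant K-polystability criterion gives the result.
\end{itemize}
That route avoids Lemmas \ref{lem:F1} and \ref{lem:transverse} entirely, but it is not the argument you gave.
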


\begin{proof}
    Suppose $(X, cD)$ is any K-semistable specialization of $(\bF_1, c_0(2s_\infty + f_1 + f_2))$.  Then, by Lemma \ref{lem:F1}, $X = \bF_1$.  Next, $D$ must be non-reduced with components that are specializations of $s_\infty$ and $f_i$.  This implies $D = 2s_\infty + f_1 + f_2$, or $2s_\infty + 2f$, or $D = 2s + f_i + f_j + f_k + f_{\ell}$ where $f_i,f_j,f_k,f_\ell$ are not necessarily distinct.  By Lemmas \ref{lem:betaofsection} and \ref{lem:transverse}, $D$ cannot contain $s$ and cannot contain a multiple fiber, so we must have $D = 2s_\infty + f_1 + f_2$.  Therefore, we have shown that any K-semistable specialization of $(\bF_1, c_0(2s_\infty + f_1 + f_2))$ is itself, so the pair is K-polystable. 
\end{proof}

\begin{corollary}
    For $\epsilon \ll 1$, every K-semistable pair $(\bF_1, cD)$ for $c \in [c_0, c_0 + \epsilon)$ is the strict transform of a nodal quartic curve in $\bP^2$.
\end{corollary}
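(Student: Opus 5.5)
We want: for $c\in[c_0,c_0+\epsilon)$, every K-semistable $(\bF_1,cD)$ has $D$ equal to the strict transform of a nodal plane quartic. The plan is to identify $|2s+4f|$ on $\bF_1$ with plane quartics through $p=[0:0:1]$. Pulling back $\calO_{\bP^2}(4)$ gives $4(s+f)=4s+4f$, and subtracting multiplicity $2$ along $s$ gives $2s+4f$. So a curve $D\in|2s+4f|$ with $\ord_D(s)=0$ is exactly the strict transform of a quartic $Q$ with a point of multiplicity (at least) $2$ at $p$. Its two tangent directions at $p$ are the points $D\cap s$, since $D\cdot s=(2s+4f)\cdot s=-2+4=2$.

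\textbf{Step 1: the singularity at $p$.} Lemma \ref{lem:betaofsection} gives $\ord_D(s)=0$. Lemma \ref{lem:transverse} gives that $D$ meets $s$ transversely in two distinct points. Hence $Q$ has multiplicity exactly $2$ at $p$ with two distinct tangents, i.e.\ an ordinary node at $p$.

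\textbf{Step 2: singularities of $D$ away from $s$.} The goal is that $D$ is at worst nodal there. Suppose $D$ had a point $q\notin s$ of multiplicity $\ge 2$ that is not a node. The simplest candidate is a cusp, whose destabilizing divisor is the $(2,3)$ weighted blowup $e_q$ with $A=5$, $\ord_D\ge 6$. Compute $\delta_{\bF_1,cD}(e_q)$ at $c=c_0\approx 0.155$ via the volume of $\pi^*(-K-cD)-te_q$ on the weighted blowup, using Zariski decomposition. The expectation is that this $\delta$ is not below $1$ for small $c$, since $A_{cD}=5-6c\approx 4.07$ stays large.

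So the honest route is likely different. Use the GL$_2$/torus symmetry and openness: near $c_0$, every K-semistable pair degenerates isotrivially to the unique polystable point $(\bF_1,c_0(2s_\infty+f_1+f_2))$ of the preceding corollaries. Conversely, a K-semistable $(\bF_1,cD)$ at $c$ near $c_0$ has $\delta$ close to $1$, and $\beta$-invariants must be controlled. The plan is to exploit the fact that the specialization is to $2s_\infty+f_1+f_2$ via a $\bG_m$ acting by scaling the fiber coordinate. That pins down which monomials of $D$ survive, giving a "GIT-like" Hilbert–Mumford condition.

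Concretely, I would write $D$ in Cox coordinates and apply the one-parameter subgroups $\lambda$ degenerating to $2s_\infty+f_1+f_2$. The Futaki invariant is linear in the weights, so for $c$ near $c_0$ the K-semistability condition becomes a weight inequality on monomials of $D$. This is analogous to Theorem \ref{thm:k=git}. I would then check which singularities (tacnode, cusp, triple point, non-reduced components such as a double fiber or a double $s_\infty$) violate it, excluding the doubled cases via the earlier $\delta(s_\infty)$ lemma and multiple-fiber arguments.

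\textbf{Main obstacle.} The hard step is showing that cusps and worse singularities away from $s$ are destabilizing near $c_0$. Local divisorial computations alone look too weak, so I would first try the one-parameter subgroup / Futaki weight approach for $\bG_m$-degenerations toward the polystable point. I would fall back on explicit $\delta$ computations of weighted blowups at $q$ only if the weight approach fails.
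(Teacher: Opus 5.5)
Your Step 1 is exactly the paper's proof, and it already proves the corollary. Lemma \ref{lem:betaofsection} forces $\ord_D(s)=0$. Hence $\pi_*D$ is a quartic $Q$ with $\pi^*Q=D+2s$, so $Q$ has multiplicity exactly $2$ at $p$. Lemma \ref{lem:transverse} then makes $D\cap s$ two distinct transverse points, i.e.\ two distinct tangent directions, so $Q$ has a node at $p$. That is the whole content of the statement. As the paper uses the term, ``nodal quartic'' means a quartic with a node at the blown-up point $p$, with no condition on its other singularities. The lemma immediately following the corollary confirms this: its proof starts from an arbitrary $g_4(x,y)+g_3(x,y)z+xyz^2$. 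So you should stop after Step 1.

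Step 2 is not merely unnecessary: the stronger claim it aims at is false. Neither the weighted-blowup computation nor the Futaki-weight approach can therefore succeed. Take any quartic $Q$ with a node at $p=[0:0:1]$ and a cusp elsewhere, and write it as $g_4+g_3z+xyz^2$. The subgroup $\mathrm{diag}(t,t,t^{-1})$ fixes $p$, lifts to $\bF_1$, and isotrivially degenerates $Q$ to $xyz^2$, whose strict transform is $2s_\infty+f_1+f_2$. Since $(\bF_1,c_0(2s_\infty+f_1+f_2))$ is K-semistable, openness of K-semistability makes $(\bF_1,c_0D)$ K-semistable. Here $D$ is the strict transform of a cuspidal-elsewhere quartic, and $c=c_0$ lies in the range of the statement.

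The same example shows why your one-parameter-subgroup idea cannot separate cusps from nodes away from $s$. Every $D$ with a node at $p$ admits this same special test configuration, with the same central fiber and the same $\bG_m$-action. The Futaki invariant depends only on that central data, so it is identical for all of them. Your own estimate $A_{\bF_1,cD}(e_q)=5-6c\approx 4.07$ was already pointing the right way.
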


\begin{proof}
    The blowup of a curve $C \subset \bP^2$ at a point $p$ meets the negative section $s \subset \bF_1$ at two distinct points if and only if $C$ has a node at $p$.  By the previous lemmas, any curve $D \in |2s + 4f|$ such that $(\bF_1, cD)$ is K-semistable meets $s$ at two distinct points, so contracting the section $s$ realizes $D$ as a nodal quartic curve. 
\end{proof}

\begin{lemma}
    Any nodal quartic curve $C \subset \bP^2$ admits an isotrivial specialization to the curve $xyz^2$.  In particular, the strict transform $D$ of any nodal quartic satisfies $(\bF_1, c_0D)$ is K-semistable.
\end{lemma}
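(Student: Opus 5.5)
The plan has two parts: first construct an explicit isotrivial degeneration using a one-parameter subgroup of $\PGL_3$ fixing the node, and then deduce the K-semistability statement from the openness of K-semistability.

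\textbf{Step 1: normal form.} Let $C \subset \bP^2$ be a nodal quartic, by which I mean a quartic whose singular point $p$ is an ordinary node. Put $p = [0:0:1]$ and choose coordinates so that the two branch tangents at $p$ are $x = 0$ and $y = 0$. In the affine chart $z = 1$ the equation of $C$ is then
\[ xy + f_3(x,y) + f_4(x,y) = 0, \]
so the homogeneous equation is $xyz^2 + f_3(x,y)z + f_4(x,y)$. Here $f_3$ and $f_4$ are homogeneous of degrees $3$ and $4$. The only monomial involving $z^2$ is $xyz^2$; the coefficients of $x^2z^2$, $y^2z^2$ and of all higher powers of $z$ vanish.

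\textbf{Step 2: the degeneration.} Consider the one-parameter subgroup $\lambda(t) = \mathrm{diag}(t, t, 1)$, which acts by $x \mapsto tx$, $y \mapsto ty$ and fixes $z$. It acts on the monomials above as follows:
\begin{itemize}
\item $xyz^2$ is scaled by $t^2$;
\item every monomial of $f_3 z$ is scaled by $t^3$;
\item every monomial of $f_4$ is scaled by $t^4$.
\end{itemize}
Dividing the transformed equation by $t^2$ gives
\[ xyz^2 + t f_3 z + t^2 f_4. \]
This family is isotrivial over $t \ne 0$, since every such fiber is projectively equivalent to $C$. Its limit at $t = 0$ is the curve $xyz^2 = 0$. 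The subgroup $\lambda$ fixes $[0:0:1]$, so it lifts to the blow-up $\bF_1$ of $\bP^2$ at $p$. The strict transforms therefore give an isotrivial specialization $(\bF_1, D) \rightsquigarrow (\bF_1, D_0)$ over $\bA^1$, where $D$ is the strict transform of $C$.

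\textbf{Step 3: identifying the limit divisor.} I expect this to be the step needing the most care. I need $D_0$ to be exactly $2s_\infty + f_1 + f_2$. The lines $x = 0$ and $y = 0$ pass through $p$, so their strict transforms are two distinct fibers $f_1, f_2$. The line $z = 0$ does not pass through $p$, so its strict transform is a smooth section $s_\infty \in |s + f|$ avoiding $s$. Hence the total transform of $xyz^2$ is
\[ 2s_\infty + f_1 + f_2 + 2s, \]
and the term $2s$ must be removed.

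To check that the flat limit of the strict transforms is $2s_\infty + f_1 + f_2$, I argue as follows. The limit must lie in $|2s + 4f|$, and it is contained in the flat limit of the total transforms, which is $2s_\infty + f_1 + f_2 + 2s$. Every fiber $D_t$ meets $s$ in the two points given by the tangent directions $x = 0$ and $y = 0$, and these directions are fixed by $\lambda$. So the limit meets $s$ in a zero-dimensional set and does not contain $s$. Subtracting $2s$ from the total transform then leaves exactly $2s_\infty + f_1 + f_2$, which is the right class since $(2s_\infty + f_1 + f_2) - 2s \sim 2s + 4f$. Alternatively, one can compute in the chart $x = uy$ of the blow-up: there the strict transform is $uz^2 + t(\dots) + t^2(\dots)$, with $y^2$ divided out, and setting $t = 0$ gives $u z^2$.

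\textbf{Step 4: K-semistability.} By the earlier lemma, $(\bF_1, c_0(2s_\infty + f_1 + f_2))$ is K-semistable; the corollary shows it is even K-polystable. K-semistability is open in $\bQ$-Gorenstein families, and here $\bF_1 \times \bA^1$ with the divisor $c_0\calD$ is such a family. Openness at the central fiber of the isotrivial family forces the general fiber $(\bF_1, c_0 D)$ to be K-semistable.

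The coefficient $c_0$ is irrational, so I should make sure openness applies in that setting. I would invoke the real-coefficient theory of \cite{LZreal}, as the earlier lemma already does. An alternative is to argue directly from the valuative criterion (Theorem \ref{thm:valcriteria}): $\delta$ is lower semicontinuous in families, so $\delta(\bF_1, c_0 D) \ge \delta(\bF_1, c_0(2s_\infty + f_1 + f_2)) = 1$.
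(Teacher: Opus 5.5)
Your proposal is correct and follows the paper's proof. Your subgroup $\mathrm{diag}(t,t,1)$ followed by dividing by $t^2$ is the paper's $\mathrm{diag}(t,t,t^{-1})$ after the reparametrization $t\mapsto t^2$, both arguments conclude by openness of K-semistability, and your Step 3 and your remark on the irrational coefficient $c_0$ (via \cite{LZreal} or lower semicontinuity of $\delta$) spell out details the paper leaves implicit. Two small repairs in Step 3: it is your chart computation that pins down the limit, not the observation that $D_t\cap s$ is the same two points for $t\neq 0$ (a pencil $F+tG$ with $F\supset s$ has this property yet its limit contains $s$); and the class check should read $(2s_\infty+f_1+f_2+2s)-2s = 2s_\infty+f_1+f_2\sim 2s+4f$.
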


\begin{proof}
    Suppose $C$ has a node at $p = [0:0:1]$, so $C$ can be written as the vanishing locus of an equation 
    \[ f(x,y,z) = g_4(x,y) + g_3(x,y)z + xyz^2\]
    up to change of coordinates, where $g_3, g_4$ are homogeneous polynomials of degrees 3 and 4, respectively.  Consider the action of $\bG_m$ given by $\mathrm{diag} (t,t,t^{-1})$, which acts on $f$ by
    \[  t \cdot f(x,y,z) = t^4g_4(x,y) + t^2g_3(x,y)z + xyz^2.\]  This gives a specialization of $C$ to the curve defined by $xyz^2$. 

    Taking the induced specialization on strict transforms yields an isotrivial specialization of divisors in $|2s + 4f|$ meeting $s$ at two distinct points to $2s_\infty + f_1 + f_2$.  Therefore, by openness of K-semistability, all such pairs are K-semistable. 
\end{proof}

To complete the proof of Theorem \ref{thm:kmodF1}, we must show that the moduli space is nonempty for any $c > c_0$.  To do this, we use the Abban-Zhuang method of admissible flags. 

\begin{theorem}
    For any $c \in (c_0, \frac{1}{2})$ and smooth curve $D \in |2s + 4f |$ that meets $s$ transversely, the pair $(\bF_1, cD)$ is K-stable. 
\end{theorem}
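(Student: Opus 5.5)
We will apply the Abban--Zhuang method of admissible flags to bound $\delta(\bF_1, cD)$ from below by a quantity strictly greater than $1$. By Theorem \ref{thm:valcriteria} together with the equivalence of uniform K-stability and K-stability, it suffices to show that $\delta_p(\bF_1, cD) > 1$ for every closed point $p \in \bF_1$.

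For each $p$ we choose a flag $p \in C \subset \bF_1$ with $C$ a smooth curve through $p$. The Abban--Zhuang estimate then gives
\[ \delta_p(\bF_1,cD) \ge \min\left\{ \frac{A_{\bF_1,cD}(C)}{S_{\bF_1,cD}(C)},\ \frac{A_{C,\Delta_C}(p)}{S(W^C_{\bullet,\bullet};p)} \right\}, \]
where $\Delta_C$ is the different, here $c D|_C$. Writing $L_c = -K_{\bF_1} - cD \sim (2-2c)s + (3-4c)f$, the Zariski decompositions of $L_c - tC$ follow from the explicit description of the nef and pseudoeffective cones recalled above. From these decompositions both $S(C)$ and $S(W^C;p)$ are explicit piecewise polynomial integrals in $c$ and $t$.

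We split into two cases according to the position of $p$.

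\emph{Case 1: $p \in s$.} Take $C = s$. The first term is the quantity computed in Lemma \ref{lem:betaofsection}, namely
\[ \frac{(6-9c)}{(1-c)(7-10c)}, \]
and it exceeds $1$ exactly when $c > c_0$. For the second term, $L_c - ts = (2-2c-t)s + (3-4c)f$ is nef for $t \le 1-2c$; beyond that the negative part is a multiple of $s$, until $t$ reaches $2-2c$. So $S(W^s;p)$ is computable, and $A_{s,cD|_s}(p) = 1 - c\cdot\mathrm{mult}_p(D|_s)$. Because $D$ meets $s$ transversely in two points, this multiplicity is at most $1$. We must then check the resulting rational function inequality on $(c_0,\tfrac12)$.

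\emph{Case 2: $p \notin s$.} Take $C$ to be the fiber $f$ through $p$; if $f$ is tangent to $D$ at $p$, a section in $|s+f|$ may work better. Here $A(f)=1$ since $D$ contains no fiber: a smooth $D$ in $|2s+4f|$ could contain a fiber only if it were reducible. We have $L_c - tf$ nef for $t \le 1-2c$, after which $s$ enters the negative part. On the curve, $A_{f,cD|_f}(p) = 1 - c\cdot\mathrm{mult}_p(D|_f)$, with $\mathrm{mult}_p(D|_f) \le 2$ because $D\cdot f = 2$. For the worst sub-case of tangency at $p$, we may instead choose a more refined flag, such as a weighted blowup or the section $s_\infty$ through $p$.

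The main obstacle is Case 2 with $D$ tangent to a fiber at $p$ and $c$ close to $\tfrac12$, where $1-2c$ is small and the local term may fall below $1$. If this happens, the first thing to try is replacing $f$ by a curve in $|s+f|$ through $p$ that is transverse to $D$. Failing that, we would use the $(1,2)$ weighted blowup at $p$ as the divisor in the flag and run the Abban--Zhuang computation on its exceptional curve.

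Finally, strictness is needed to obtain K-stability rather than semistability. The inequalities above should be strict on the open interval $(c_0,\tfrac12)$, and the only equality, in Case 1, occurs at $c_0$, which is excluded.
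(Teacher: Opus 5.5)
Your method (Abban--Zhuang with flags through $s$ and through curves in $|s+f|$) is the paper's. However, the flag you put first in Case 2 gives nothing at the points that matter, and this happens for every $c\in(c_0,\tfrac12)$, not only near $\tfrac12$.

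For $C=f$ and $p\notin s$, the negative part $(t-1+2c)s$ for $t>1-2c$ does not pass through $p$. This gives $S(W^f_{\bullet,\bullet};p)=\frac{(1-c)(5-8c)}{3(2-3c)}$. So at a point where $D$ is tangent to $f$ the local term is
\[
\frac{1-2c}{S(W^f_{\bullet,\bullet};p)}=\frac{3(1-2c)(2-3c)}{(1-c)(5-8c)},\qquad 3(1-2c)(2-3c)-(1-c)(5-8c)=10c^2-8c+1 .
\]
This is $<1$ on all of $(c_0,\tfrac12)$, with equality exactly at $c_0$.

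Such points always exist. $D$ is ample and smooth, hence irreducible, and adjunction gives $2g-2=D\cdot f=2$. So $D\to\bP^1$ is a genus $2$ double cover with $6$ ramification points, i.e. $6$ fiber tangencies, and at most two of them lie on $s$. The switch to $|s+f|$ is therefore not a fallback but the whole content of Case 2.

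It does work. For $p\notin s$, the smooth members of $|s+f|$ through $p$ are the transforms of lines through the image of $p$ missing the blown-up point. Their tangent directions at $p$ are all directions except the fiber direction. So you can always choose $C$ with $(D\cdot C)_p\le 1$, and at a fiber tangency every such $C$ works. Then $L_c-tC=(2-2c-t)s+(3-4c-t)f$ is nef on $[0,2-2c]$, $P(t)\cdot C=3-4c-t$, and
\[
\frac{A(C)}{S(C)}=\frac{3(2-3c)}{(1-c)(5-8c)}>1,\qquad \frac{1-c}{S(W^C_{\bullet,\bullet};p)}=\frac{6(1-c)(2-3c)}{28c^2-38c+13}>1\iff 10c^2-8c+1<0 .
\]
Both hold on $(c_0,\tfrac12)$. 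This is exactly the paper's choice ($s$, or a section in $|s+f|$ not tangent to $D$). The weighted blow-up is unnecessary, and you may simply use $|s+f|$ at every $p\notin s$.

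Your Zariski decomposition in Case 1 is also wrong. Since $(L_c-ts)\cdot s=1-2c+t>0$ and $(L_c-ts)\cdot f=2-2c-t$, the divisor $L_c-ts$ is nef on all of $[0,2-2c]$ with no negative part. Then $S(W^s_{\bullet,\bullet};p)=\frac{28c^2-38c+13}{6(2-3c)}$. At the two points of $D\cap s$ the local term is again $\frac{6(1-c)(2-3c)}{28c^2-38c+13}$, which exceeds $1$ precisely when $10c^2-8c+1<0$. So in Case 1 both terms, not only the first, are tight at $c_0$. This is harmless because $c_0$ is excluded. Since only finitely many bounds occur, each $>1$ on the open interval, you get $\delta(\bF_1,cD)>1$.
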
 

\begin{proof}
    Using the Abban-Zhuang method of admissible flags with the divisor $s$ or a generic section of $s_\infty$ which does not meet $D$ tangentially, it is straightforward to compute that $\delta(\bF_1, cD) > 1$ for all $c \in (c_0, \frac{1}{2})$.  
\end{proof}

With calculation using the theory of admissible flags, it is also possible to determine all elements parametrized by the K-moduli space for $ c \in [c_0, \frac{1}{2})$.  This is left for forthcoming work.

\bibliography{proposal}{}
\bibliographystyle{alpha}

\end{document}